\date{\today}
\newcommand{\curly}{\mathscr}
\theoremstyle{plain}  
\newtheorem{theorem}{Theorem}[section]
\newtheorem*{theorem*}{Theorem}
\newtheorem{lemma}[theorem]{Lemma}
\newtheorem{proposition}[theorem]{Proposition}
\newtheorem{definition}[theorem]{Definition}
\theoremstyle{remark}
\newtheorem{example}[theorem]{Example}
\newtheorem{remark}[theorem]{Remark}
\newtheorem*{claim*}{Claim}
\numberwithin{equation}{section}
\renewcommand{\leq}{\leqslant}
\renewcommand{\leq}{\leqslant}
\renewcommand{\geq}{\geqslant}
\renewcommand{\geq}{\geqslant}
\renewcommand{\setminus}{\smallsetminus}
\newcommand{\setC}{\mathbb{C}}
\newcommand{\setR}{\mathbb{R}}
\newcommand{\setZ}{\mathbb{Z}}
\newcommand{\eA}{{\mathscr{A}}}
\newcommand{\R}{\mathbb{R}}
\newcommand{\Z}{\mathbb{Z}}
\newcommand{\C}{\mathbb{C}}
\newcommand{\RR}{\mathbb{R}}
\newcommand{\ZZ}{\mathbb{Z}}
\newcommand{\CC}{\mathbb{C}}
\newcommand{\be}{\mathbf{e}}
\newcommand{\bE}{\mathbf{E}}
\newcommand{\AAA}{\curly{A}}
\newcommand{\CCC}{\curly{C}}
\newcommand{\HHH}{\curly{H}}
\newcommand{\LLL}{\curly{L}}
\newcommand{\NNN}{\curly{N}}
\newcommand{\PP}{\mathbb{P}}
\newcommand{\cH}{\mathcal{H}}
\newcommand{\cM}{\mathcal{M}}
\newcommand{\cO}{\mathcal{O}}
\newcommand{\cR}{\mathcal{R}}
\newcommand{\cS}{\mathcal{S}}
\newcommand{\dbar}{\bar{\partial}}
\newcommand{\zbar}{\bar{z}}
\newcommand{\lie}{\mathfrak}
\newcommand{\ra}{\to}
\newcommand{\lra}{\longrightarrow}
\newcommand{\PSL}{\mathrm{PSL}}
\newcommand{\SU}{\mathrm{SU}}
\newcommand{\U}{\mathrm{U}}
\newcommand{\GL}{\mathrm{GL}}
\newcommand{\Aut}{\mathrm{Aut}}
\newcommand{\Int}{\mathrm{Int}}
\newcommand{\SL}{\mathrm{SL}}
\DeclareMathOperator{\pardeg}{pardeg}
\DeclareMathOperator{\ad}{ad}
\DeclareMathOperator{\Ad}{Ad}
\DeclareMathOperator{\rk}{rk}
\DeclareMathOperator{\rank}{rank}
\DeclareMathOperator{\Hom}{Hom}
\DeclareMathOperator{\End}{End}
\DeclareMathOperator{\Id}{Id}
\DeclareMathOperator{\Res}{Res}
\DeclareMathOperator{\Stab}{Stab}
\DeclareMathOperator{\aff}{aff}
\DeclareMathOperator{\cochar}{cochar}
\DeclareMathOperator{\coroot}{coroot}
\DeclareMathOperator{\Conj}{Conj}
\newcommand{\Gr}{\operatorname{Gr}}
\newcommand{\liea}{\mathfrak{a}}
\newcommand{\lieg}{\mathfrak{g}}
\newcommand{\lieh}{\mathfrak{h}}
\newcommand{\liel}{\mathfrak{l}}
\newcommand{\liem}{\mathfrak{m}}
\newcommand{\lien}{\mathfrak{n}}
\newcommand{\liep}{\mathfrak{p}}
\newcommand{\lieq}{\mathfrak{q}}
\newcommand{\lier}{\mathfrak{r}}
\newcommand{\lies}{\mathfrak{s}}
\newcommand{\liesl}{\mathfrak{sl}}
\newcommand{\liesu}{\mathfrak{su}}
\newcommand{\liet}{\mathfrak{t}}
\newcommand{\lieu}{\mathfrak{u}}
\newcommand{\liez}{\mathfrak{z}}
\newcommand{\liemc}{\mathfrak{m}^{\mathbb{C}}}
\newcommand{\liehc}{\mathfrak{h}^{\mathbb{C}}}
\newcommand{\liegc}{\mathfrak{g}^{\mathbb{C}}}
\newcommand{\alie}{\mathfrak{a}}
\newcommand{\glie}{\mathfrak{g}}
\newcommand{\hlie}{\mathfrak{h}}
\newcommand{\llie}{\mathfrak{l}}
\newcommand{\mlie}{\mathfrak{m}}
\newcommand{\plie}{\mathfrak{p}}
\newcommand{\tlie}{\mathfrak{t}}
\newcommand{\zlie}{\mathfrak{z}}
\newcommand{\HC}{H^\setC}
\newcommand{\la}{\langle}
\renewcommand{\ra}{\rangle}
\newcommand{\imag}{\sqrt{-1}}
\newcommand{\ov}{\overline}
\begin{document}

\title[Parabolic Higgs bundles] {Parabolic Higgs bundles and
  representations of the fundamental group of a punctured surface into a
  real group}

\author[O.  Biquard]{Olivier Biquard}
\address{Sorbonne Universit\'e  and \'Ecole Normale Sup\'erieure, UMR 8553 du CNRS}

\author[O. Garc\'{\i}a-Prada]{Oscar Garc\'{\i}a-Prada}
\address{Instituto de Ciencias Matem\'aticas, CSIC-UAM-UC3M-UCM, Madrid}

\author[I. Mundet i Riera]{Ignasi Mundet i Riera}
\address{Universitat de Barcelona}

\begin{abstract}
  We study parabolic $G$-Higgs bundles over a compact Riemann surface
  with fixed punctures, when $G$ is a real reductive Lie group, and
  establish a correspondence between these objects and representations
  of the fundamental group of the punctured surface in $G$ with
  arbitrary holonomy around the punctures. This generalizes Simpson's
results for $\GL(n,\C)$ to arbitrary complex  and real reductive Lie groups.
 Three interesting features
  are the relation between the parabolic degree and the Tits geometry of
  the  boundary at infinity of the symmetric space, the treatment of the case
when the logarithm of the monodromy is on the boundary of a Weyl alcove, and the
  correspondence of the orbits encoding the singularity via the
  Kostant--Sekiguchi correspondence. We also describe some special features of
  the moduli spaces when $G$ is a split real form or a group of Hermitian type.
\end{abstract}

\maketitle
























\begin{flushright}
{\it Dedicated with admiration and gratitude to Narasimhan and Seshadri \\
in the fiftieth anniversary of their theorem} \\ \vspace*{0.25cm}
\end{flushright}

\section{Introduction}

The relation between representations of the fundamental group of a compact
Riemann surface $X$ into a compact Lie group and holomorphic bundles on $X$
goes back to the celebrated theorem of Narasimhan and Seshadri
\cite{NS}, which implies that the moduli space of irreducible representations
of $\pi_1(X)$ in the unitary
group $\U_n$ and the moduli space of rank $n$ and zero degree stable
holomorphic  vector bundles on $X$ are homeomorphic. Of course, this generalises the classical
case of representations in $\U_1=S^1$ and their relation with the Jacobian of $X$.
The Narasimhan--Seshadri  theorem has been  a paradigm and an inspiration for
more than 50 years now  for many similar problems.  The theorem  was
generalised by  Ramanathan \cite{ramanathan}  to representations into any
compact Lie group  \cite{ramanathan}.  The gauge-theoretic point of view of
Atiyah and Bott \cite{atiyah-bott}, and the new proof of the
Narasimhan--Seshadri theorem given by Donaldson following this approach
\cite{donaldson-ns}, brought new insight and new analytic tools into the
problem.

The case of representations into a non-compact reductive Lie group $G$
required the introduction of new holomorphic objects on the Riemann surface
$X$  called
$G$-Higgs bundles. These were introduced by Hitchin
\cite{hitchin,hitchin-duke}, who established a homeomorphism between the
moduli space of reductive representation in $\SL_2\C$ and polystable
$\SL_2\C$-Higgs bundles. This correspondence was generalised by Simpson
to any complex reductive Lie group (and in fact, to higher dimensional
K\"ahler manifolds) \cite{Sim88, Sim92}. The correspondence in the case of
non-compact $G$
needed an extra ingredient --- not present in the compact case --- having to
do with the existence of twisted harmonic maps into the symmetric space
defined by $G$. This theorem was provided by Donaldson for $G=\SL_2\C$
\cite{donaldson-twisted} and by Corlette \cite{corlette} for arbitrary $G$.
In fact Corlette's theorem, which  holds for any reductive real Lie group,
can be  combined with an existence theorem for solutions to the Hitchin
equations  for a $G$-Higgs bundle, given by the second and third authors in
collaboration with Bradlow and Gothen
\cite{bradlow-garcia-prada-mundet:2003,garcia-prada-gothen-mundet:2009a},
to prove the correspondence
for any real reductive Lie group $G$. In \cite{Sim92} Simpson gives an indirect
proof of this by embedding $G$ in its complexification when such embedding exists.

There is another direction in which the Narasimhan--Seshadri
theorem has been generalised. This is by allowing punctures in
the Riemann surface. Here one is interested in studying
representations of the fundamental group of the punctured
surface with fixed holonomy around the punctures. These
representations now relate to the parabolic vector bundles
introduced by Seshadri \cite{seshadri}. The correspondence in
this case for $G=\U_n$ was carried out by Mehta and Seshadri
\cite{mehta-seshadri}. A differential geometric proof modelled
on that of Donaldson for the parabolic case was given by the
first author in \cite{biquard-thesis}.  The case of a general
compact Lie group is studied in
\cite{bhosle-ramanathan,teleman-woodward,balaji-seshadri,balaji-biswas-pandey} under
suitable conditions on the holonomy around the punctures. One
of the main issues for general $G$ is about the appropriate
generalisation of parabolic principal bundles.

The non-compactness in the group and in the surface can be combined to
study representations of the fundamental group of a punctured surface into
a non-compact reductive Lie group $G$. Simpson considered this situation when
$G=\GL_n\C$ in \cite{Sim90}. A new ingredient in his work is the study of
filtered local systems. The aim of this paper is to extend this
correspondence to the case of an arbitrary real  reductive Lie group $G$
(including the case in which $G$ is complex). We establish a one-to-one
correspondence between reductive representations of the fundamental
group of a punctured surface $X$ with fixed arbitrary holonomy around the
punctures and polystable parabolic $G$-Higgs bundles on $X$.

One of the main technical issues to prove our correspondence,
already present in
\cite{bhosle-ramanathan,teleman-woodward,balaji-seshadri}, lies
in the definition of parabolic principal bundles. If $G$ is a
non-compact reductive Lie group and $H\subset G$ is a maximal
compact subgroup, we need to define parabolic $H^\C$-bundles.
This involves a choice for each puncture of an element in a
Weyl alcove of $H$ --- the weights.  If the element is in the
interior of the alcove everything goes smoothly, but if the
element is in a wall of the alcove, its adjoint may have
eigenvalues with modulus equal to $1$ (as opposed to the
elements in the interior, whose eigenvalues has modulus
strictly smaller than $1$), and this introduces complications
in the definition of the objects, as well as in the analysis to
prove our existence theorems. However, we give a suitable
definition of parabolic $G$-Higgs bundle including the case in
which the elements are in a `bad' wall of the alcove, which is
appropriate to carry on the analysis and to prove the
correspondence with representations. Of course the need of
including elements in the walls of the Weyl alcove is
determined by the fact that we want to have totally arbitrary
fixed holonomy (conjugacy classes) around the punctures.  Our
approach for the bad weights is rather pedestrian, using
holomorphic bundles and gauge transformations between them
which can have meromorphic singularities, so that to a
representation corresponds not a single holomorphic bundle, but
rather a class of holomorphic bundles equivalent under such
meromorphic transformations. A more formal algebraic point of
view is that of  parahoric torsors
\cite{balaji-seshadri,boalch,heinloth}, but we preferred to
stick to a more concrete definition which is sufficient to
state completely the correspondence (see Section
\ref{s:parahoric} for a comparison between the two points of
view). Our definition is also more natural from the
differential geometry viewpoint which sees holomorphic bundles
only outside the punctures, and defines the sheaves of sections
at the singular points by growth conditions with respect to
model metrics.

Our approach involves some features that we would like to point out.
As in Simpson $\GL_n\C$-case \cite{Sim90}, we need to consider a slight
extension of representations to the more general notion of filtered
local systems, what we call parabolic $G$-local system. The definition
of parabolic degree for both,  parabolic $G$-local systems and parabolic
$G$-Higgs bundles, involve the Tits geometry of the boundary at infinity
of the symmetric spaces $G/H$ and $H^\C/H$, respectively.
Another new feature is given by the fact that relating the data at the punctures for
the representation and  the parabolic Higgs bundle implies a relation between
$G$-orbits in $\lieg$ (the Lie algebra of $G$) and $H^\C$-orbits in $\liem^\C$,
where $\liem^\C$ is the complexification of $\liem$ given by the Cartan
decomposition $\lieg=\lieh \oplus \liem$, which in the case of nilpotent
orbits is known as the Kostant--Sekiguchi correspondence
\cite{sekiguchi,Kro90b,Kro90a,Ver95}.
For general orbits, this correspondence is proved in \cite{bielawski,biquard-note}.

We  give now a brief description of the different sections of
the paper. In Section \ref{parabolic-bundles} we define
parabolic principal bundles and some important notions related to them,
like their sheaves of (sometimes meromorphic) local automorphisms,
or their parabolic degree. In Section \ref{s:parahoric} we explain
the relation between parabolic and parahoric principal bundles,
and we discuss Hecke transformations; this gives a transparent interpretation
of the meromorphic local automorphisms of parabolic bundles. In Section
\ref{parabolic-higgs-bundles} we introduce parabolic
$G$-Higgs bundles and define the stability criteria. Section
\ref{hitchin-kobayashi} is one of the technical sections where
we carry out the analysis to prove the correspondence between
polystable parabolic $G$-Higgs bundles and solutions of the
Hermite--Einstein or Hitchin equation. As pointed out above,
one of the main difficulties here is in dealing with parabolic
structures lying in a `bad' wall of the Weyl alcove. In Section
\ref{parabolic-local-systems} we introduce the notion of
parabolic $G$-local system, prove an existence theorem for
harmonic reductions and establish the relation with parabolic
$G$-Higgs bundles completing the correspondence. In Section
\ref{moduli} we consider the moduli spaces of parabolic
$G$-Higgs bundles, parabolic $G$-local systems and
representations of the  fundamental group of the punctured
surface, and their various correspondences among them. We also
show how they depend on the parabolic weights, residues,
monodromy and conjugacy classes. For example, we show that if
$G$ is complex, for weights and residues of the Higgs field
satisfying a certain genericity condition, all the moduli
spaces of parabolic $G$-Higgs bundles are diffeomorphic. This
generalises the case of $\GL_2\C$-Higgs bundles studied by
Nakajima \cite{nakajima}.
 In Section \ref{sec:examples} we extend  results of Hitchin
\cite{hitchin-teichmueller} for split real forms and the theory of maximal
Higgs bundles \cite{BGG06,GGM,BGR} to the punctured set up.
We finish with two appendices containing  Lie-theoretic background and some
considerations on the Tits geometry of the boundary of our symmetric spaces,
to  define the relative degree of two parabolic subgroups, needed for the
definition of  parabolic  degree of Higgs bundles and local systems.

We believe that the results in this paper are a starting point for applying
Higgs bundle methods to the systematic study of the topology of the moduli
spaces of
representations of the fundamental group of a punctured surface in non-compact
reductive Lie groups. And, in particular, to exhibit the existence of higher
Teichm\"uller components in the punctured case, as we briefly show in
Section \ref{sec:examples}.
This has been extremely successful in the case of a
compact surface (see the survey paper \cite{garcia-prada}).
For punctured surfaces, Betti numbers have been computed  for
parabolic $\GL_2\C$-Higgs bundles
with compact holonomy by Boden
and Yokogawa \cite{boden-yokogawa}, and Nasatyr and Steer \cite{nasatyr-steer} in the case of
rational weights, and in \cite{GGMu} for
parabolic $\GL_3\C$-Higgs bundles
with compact
holonomy.  Some partial results on representations in $\U_{p,q}$ were
obtained in \cite{GLM} making some genericity assumptions,
although the relation between representations and parabolic
Higgs bundles which constitutes the main result in this paper was only
sketched there. The results in the present
paper  may also be used to translate the results on
the topology of parabolic $\U_{2,1}$-Higgs bundles obtained by Logares in
\cite{Log06}  to the context
of the moduli space of representations of the fundamental group.

One direction that can also be developed from this paper is the inclusion of
higher order poles in the Higgs field, relating to wild non-abelian
Hodge theory. This is  a problem on which we plan to come back
in the future.

A substantial amount of the content of this paper appeared in the notes
of a course given by the first author at the CRM (Barcelona) in 2010
\cite{crm-notes}. A survey on the subject has been given by the third author
in \cite{ignasi-hitchin70}. We apologize for having taken so long to produce this
paper.


\section{Parabolic principal bundles}
\label{parabolic-bundles}

\subsection{Definition of parabolic principal bundle}
\label{ss:definition-parabolic-bundle}

Let $X$ be  a compact connected Riemann surface and let $\{x_1,\cdots,x_r\}$
be  a finite set of different points of $X$. Let $D=x_1+\cdots +x_r$ be the
corresponding effective divisor.

Let $H^\setC$ be a reductive complex Lie group. We fix a maximal
compact subgroup $H\subset H^\setC$ and a maximal torus
$T\subset H$ with Lie algebra $\liet$.

Let $E$ be a holomorphic principal $H^{\setC}$-bundle over $X$.

If $M$ is any set on which $\HC$ acts on the left, we denote by
$E(M)$ the twisted product $E\times_{\HC}M$. If $M$ is a vector space
(resp. complex variety) and the action of $\HC$ on $M$ is linear
(resp. holomorphic) then $E(M)\to X$ is a vector bundle (resp.
holomorphic fibration). We denote by $E(H^{\setC})$, the
$H^{\setC}$-fibration associated to $E$ via  the adjoint
representation of $H^{\setC}$ on itself. Recall that for any $x\in X$
the fibre $E(H^{\setC})_x$ can be identified with the set of
antiequivariant maps from $E_x$ to $H^{\setC}$:
\begin{equation}
\label{eq:fibra-E-H-C} E(H^{\setC})_x=\{\phi:E_x\to H^{\setC}\mid
\phi(eh)=h^{-1}\phi(e)h \quad\forall\,e\in E_x,\,h\in H^{\setC}\,\}.
\end{equation}

We fix an alcove $\eA\subset\liet$ of $H$ such that
 $0\in \bar \eA$ (see Appendix \ref{alcoves}).  Let $\alpha_i\in \imag
\bar \eA$ and let $P_{\alpha_i}\subset H^{\setC}$ be the
parabolic subgroup defined by $\alpha_i$ as in Section
\ref{sec:parabolic-subgroups-1}. By Proposition
\ref{prop-alcove}, for $\alpha_i\in \imag\bar \eA$ the
eigenvalues of $\ad (\alpha_i)$ have an absolute value smaller
or equal than $1$, and $\liep_{\alpha_i}$ is the sum of the
eigenspaces of $\ad (\alpha_i)$ for nonpositive eigenvalues of
$\ad \alpha_i$. We shall distinguish the subalgebra
$\liep_{\alpha_i}^1\subset\liep_{\alpha_i}$ defined by
$$ \liep_{\alpha_i}^1 = \ker (\ad(\alpha_i)+1) , $$
and the associated unipotent group $P_{\alpha_i}^1\subset P_{\alpha_i}$.
Note that $P_{\alpha_i}^1$ is normal in $P_{\alpha_i}$.

We distinguish the subset $\eA'\subset \bar\eA$ of elements $\alpha$ such that the
eigenvalues of $\ad \alpha$ have an absolute value smaller than $1$.
If $\alpha_i\in \imag \eA'$ then $P_{\alpha_i}^1$ is trivial.

\begin{definition}
\label{def:par-structure}
We  define a \textbf{parabolic structure of weight $\alpha_i$ on $E$ over
a point $x_i$} as the choice of
a subgroup $Q_i\subset E(H^\setC)_{x_i}$ with the property
that
there exists some trivialization $e\in E_{x_i}$ for
which $P_{\alpha_i}=\{\phi(e)\mid\phi\in Q_i\}$ 
(here we use (\ref{eq:fibra-E-H-C}) to regard the elements of $Q_i$ as maps
$E_x\to H^{\setC}$).
\end{definition}

Note that the choice of $Q_i$ in the previous definition is equivalent to choosing
an orbit of the action of $P_{\alpha_i}$ on $E_x$, because the normalizer of
$P_{\alpha_i}$ inside $H^{\CC}$ is $P_{\alpha_i}$ itself.

\begin{definition}
\label{def:compatible-trivialization} A local holomorphic
trivialization of $E$ on a neighbourhood of $x_i$ is said to be
compatible with a parabolic structure $Q_i$ if, seen as a local
section of $E$ in
 the usual way, its value at $x_i$ is an element $e\in E_{x_i}$ satisfying $P_{\alpha_i}=\{\phi(e)\mid\phi\in Q_i\}$.
\end{definition}

If the parabolic structure is clear from the context, we will often simply say
that a given local trivialization is compatible.

Suppose that $Q_i\subset E(H^\setC)_{x_i}$ is a parabolic structure of
weight $\alpha_i$. Take any $e\in E_{x_i}$ such that
$P_{\alpha_i}=\{\phi(e)\mid\phi\in Q_i\}$. Define the following subgroup
of $Q_i$:
$$Q_i^1=\{\phi\in E(H^{\CC})_{x_i}\mid \phi(e)\in P_{\alpha_i}^1\}.$$
The subgroup $Q_i^1$ is intrinsic, i.e. it does not depend on
the choice of $e$. Indeed, if $e'\in E_{x_i}$  is another
element satisfying $P_{\alpha_i}=\{\phi(e')\mid\phi\in Q_i\}$
then $e'=eh$ for some $h\in P_{\alpha_i}$ because
$P_{\alpha_i}$ is its own normalizer inside $H^{\CC}$ is
$P_{\alpha_i}$. Since $P_{\alpha_i}^1$ is normal in
$P_{\alpha_i}$, we have
$$\{\phi\in E(H^{\CC})_{x_i}\mid \phi(e)\in P_{\alpha_i}^1\}=
\{\phi\in E(H^{\CC})_{x_i}\mid \phi(eh)\in P_{\alpha_i}^1\}.$$

We denote by $\lieq_i^1$ the Lie algebra of $Q_i^1$.


Let $\alpha=(\alpha_1,\cdots,\alpha_r)$ be a collection of elements
in $\imag \bar \eA$. A \textbf{parabolic principal bundle over}
$(X,D)$ of weight $\alpha$ is a (holomorphic) principal bundle
with a choice, for any $i$, of a parabolic structure of weight
$\alpha_i$ on $x_i$. We will usually not specify in the notation
the parabolic structure, so we will refer to them by the same
symbol denoting the underlying principal bundle. Similarly we will
often avoid referring to the weight $\alpha$.

Naturally associated to the parabolic principal bundle $E$ is the sheaf
$PE(H^\setC)$ of {\bf parabolic gauge transformations}, whose sections on
$X\setminus D$ are the holomorphic sections $g$ of $E(H^\setC)$, and near a marked
point $x_i$, the sections  are of the form $g(z) \exp(n/z)$ in some
  trivialization near $x_i$, such that $n\in \lieq_i^1$ and $g$ is
  holomorphic near $x_i$ with $g(0)\in Q_i$ (this is a group because
  $Q_i^1$ is a normal abelian subgroup of $Q_i$).

If $\alpha_i\in \imag\eA'$ then $PE(H^\setC)$ is the sheaf of
holomorphic sections of $E(H^\setC)$ such that $g(x_i)\in Q_i$, because
$\lieq_i^1=0$.

Of course we have the Lie algebra version $PE(\liehc)$, whose sections
are the sections $u$ of $E(\liehc)$ such that $u$ is meromorphic with
simple pole at $x_i$, $\Res_{x_i}u\in \lieq_i^1$ and the constant term
$u(x_i)\in \lieq_i$.

We next give a useful geometric interpretation of parabolic gauge transformations.
With respect to any compatible holomorphic trivialization of $E$ on a
neighbourhood of $x_i$ containing a disk $\Delta$, and any holomorphic
coordinate $z:\Delta\to\CC$ satisfying $z(x_i)=0$,
the holomorphic sections of $E(H^\setC)$ (resp. $E(\liehc)$) are
identified with maps $\Delta\to H^ \setC$ (resp. $\liehc$). Denoting
$\Delta^*=\Delta\setminus\{x\}$,
one can check immediately:
\begin{equation}\label{eq:33}\begin{split}
  \Gamma(\Delta,PE(\liehc))&= \{ u:\Delta^*\to\liehc\mid \Ad( |z|^{-\alpha_i})u(z) \text{ is uniformly bounded on }\Delta^*\},\\
  \Gamma(\Delta,PE(H^ \setC))&= \{ g:\Delta^*\to H^ \setC\mid |z|^{-\alpha_i}g(z)|z|^{\alpha_i} \text{ is uniformly bounded on }\Delta^*\}.
\end{split}\end{equation}
This means that parabolic transformations are holomorphic transformations
on the punctured disc $\Delta^*$, which remain bounded with respect to the metric $|z|^{-2\alpha_i}$. Recall that a metric for an $H^\C$-bundle
is a reduction of structure group to $H$ (see discussion before Definition
\ref{def:adapted_metric}).

\begin{remark}
\label{rmk:meromorphic-transformations}
  If $\alpha_i\in \imag(\bar\eA\setminus \eA')$ then there are local sections of
  $PE(H^ \setC)$ which are strictly meromorphic.
These can be understood as
  transforming the bundle $E$ into another principal bundle
of the same topological type but with a possibly different holomorphic structure.
In this case the
  holomorphic bundle underlying a parabolic bundle is only defined up
  to such transformations, although
the sheaves $PE(H^ \setC)$ and $PE(\liehc)$ are unique (this admits a transparent interpretation
  in terms of parahoric bundles, see Section \ref{s:parahoric} below).
\end{remark}

\begin{remark}
  The definition of parabolic bundle can be extended to arbitrary
  $\alpha$'s, i.e., not necessarily in $\imag\bar\eA$. However, this leads
  to more complicated objects that can be interpreted in terms of
  parahoric subgroups (see for example \cite{boalch}) for which
  our analysis to prove the Hitchin--Kobayashi correspondence does not
  apply directly. But we do not need to go to these objects since by
  taking $\alpha\in \imag\bar\eA$ we are able to parametrize all conjugacy
  classes of $H$ and also of a non-compact real reductive group $G$
  with maximal compact $H$ (see Appendix \ref{alcoves}) and hence
  obtain all possible monodromies at the punctures.
\end{remark}

\begin{example}\label{ex:defin-parab-princ}
  The basic example is $H=\U_n$  and $\HC=\GL_n\setC$, so an
  $\HC$-bundle is in one-to-one correspondence with the
  the  holomorphic vector bundle $V=E(\C^n)$ associated to the
  fundamental
representation of $\GL_n\C$. The parabolic structure at
  the point $x_i$ is given by a flag
  $V_{x_i}=V_i^1\supset V_i^2\supset\cdots\supset V_i^{\ell_i}$, with corresponding weights
  $1>\alpha_i^1>\alpha_i^2>\cdots >\alpha_i^{\ell_i}\geq 0$. The matrix $\alpha_i$ is diagonal,
  with eigenvalues the $\alpha_i^j$ with multiplicity
  $\dim(V_i^j/V_i^{j+1})$, and the eigenvalues $\alpha_i^j-\alpha_i^k$ of
  $\ad(\alpha_i)$ belong to $(-1,1)$. The parabolic subalgebra consists of
  the endomorphisms of $V_{x_i}$, preserving the flag: it is the sum
  of the non positive eigenspaces of $\ad \alpha_i$. Of course, in this case
  $P_{\alpha_i}^1=\{1\}$.
\end{example}

\begin{remark}
  It is important here to note that our convention for the parabolic
  weights is different from the usual convention in the literature, since
  we take a decreasing sequence of weights. This is somehow a more natural
  choice in the group theoretic context: in K\"ahler quotients, it is natural
  to have an action of the group on the symmetric space on the right; then,
  if $\alpha_i$ lies in a positive Weyl chamber, it defines a point on the
  boundary at infinity of the symmetric space $H\backslash H^\setC$, whose stabilizer is
  the parabolic subgroup whose Lie algebra is exactly the one defined
  above. See appendix \ref{sec:parabolic-subgroups} for details.

  A consequence of our convention is that the monodromy corresponding to the
  parabolic structure is $\exp(2\pi\imag \alpha_i)$ instead of $\exp(-2\pi\imag
  \alpha_i)$. Also note later the change of sign in the definition of the
  parabolic degree.
\end{remark}

\begin{remark}\label{rem:induced-par-str}
  If we have a morphism between two reductive groups $f:L^\CC \rightarrow H^\CC$, then given a parabolic principal $L^\CC$-bundle $E$, there is an induced principal $H^\CC$-bundle $E_f$. Suppose the parabolic structure of $E$ has weight $\alpha$ at a point $x$, then the description (\ref{eq:33}) shows that one can define parabolic transformations on $E_f$; but it is not completely obvious whether this comes from a parabolic structure on $E_f$ at $x$ with weight $f_*\alpha$.

  The simplest case (and the only one that we shall use) is that when $f_*\alpha\in \sqrt{-1}\bar\eA$: in that case we have standard parabolic groups $P_\alpha\subset L^\CC$ and $P_{f_*\alpha}\subset H^\CC$; there is a trivialization $e\in E_x$ in which the parabolic structure is given by $P_\alpha$, and we use the same $e$ seen as a trivialization of $(E_f)_x$ to define a parabolic structure given by the group $P_{f_*\alpha}$ on $E_f$ at $x$. It is not difficult to check that this does not depend on the choice of $e$ (as one may guess already from (\ref{eq:33})).

  We will not describe the general case, which requires to apply a Hecke transformation to $E_f$ in order to get back $f_*\alpha$ in the Weyl alcove, see also Section \ref{s:parahoric}.
\end{remark}


\subsection{Parabolic degree of parabolic reductions}\label{p-degree}

Let $E$ be a parabolic principal bundle over $(X,D)$ of weight $\alpha$ and
let $Q_i\subset E(H^{\setC})_{x_i}$ denote the parabolic subgroups specified by
the parabolic structure. For any standard parabolic subgroup $P\subset
H^{\setC}$, any antidominant character $\chi$ of $\liep$ (see Appendix
\ref{sec:parabolic-subgroups}),
and any holomorphic reduction $\sigma$ of
the structure group of $E$ from $H^{\setC}$ to $P$ we are going to define
a number $\pardeg(E)(\sigma,\chi)\in\setR$, which we call the parabolic degree. This
number will be the sum of two terms, one global and independent of the
parabolic structure, and the other local and depending on the
parabolic structure.

Before defining the parabolic degree, let us recall that the set of
holomorphic reductions of the structure group of $E$ from $\HC$ to $P$
is in one-to-one correspondence with the set of holomorphic sections
$\sigma$ of $E(\HC/P)$ (the latter is the bundle associated to the action
of $\HC$ on the left on $\HC/P$). Indeed, there is a canonical
identification $E(\HC/P)\simeq E/P$ and the quotient $E\to E/P$ has the
structure of a $P$-principal bundle.  So given a section $\sigma$ of
$E(\HC/P)$ the pullback $E_{\sigma}:=\sigma^*E$ is a $P$-principal bundle over
$X$, and we can identify canonically $E\simeq E_{\sigma}\times_{P}\HC$ as principal
$\HC$-bundles. Equivalently, we can look at $E_{\sigma}$ as a holomorphic
subvariety $E_{\sigma}\subset E$ invariant under the action of $P\subset\HC$ and
inheriting a structure of principal bundle.

%

Now fix a  parabolic subgroup $P=P_s\subset H^{\setC}$, for $s\in \imag\lieh$
(see Appendix \ref{sec:parabolic-subgroups-1}), an
antidominant character $\chi$ of $P$, and a holomorphic reduction
$\sigma$ of the structure group of $E$ from $H^{\setC}$ to $P$.

The global term in $\pardeg(E)(\sigma,\chi)$ is the degree $\deg(E)(\sigma,\chi)$
defined in \cite[(A.57)]{GGM}. We introduce it here using Chern--Weil
theory instead of the algebraic constructions of [op. cit.].

Let $E_{\sigma}$ be the $P$-principal bundle corresponding to the reduction
$\sigma$.  Given an antidominant character $\chi:\plie\to\CC$, where $\plie$ is the Lie algebra
of $P$, the degree is defined by the
Chern--Weil formula
\begin{equation}
\label{eq:def-deg-sigma}
\deg(E)(\sigma,\chi):=\frac{\imag}{2\pi}\int_X \chi_*(F_A)
\end{equation}
for any $P$-connection $A$ on $E_\sigma$. Here, $\chi_*(F_A)$ is the 2-form resulting from applying the
character $\chi$ to the $\plie$-valued 2-form $F_A$. Since $P\cap H$ is a maximal compact
subgroup of $P$ and the inclusion $P\cap H\hookrightarrow P$ is a homotopy equivalence
(see Appendix \ref{sec:parabolic-subgroups}), one can evaluate
(\ref{eq:def-deg-sigma}) using a $P\cap H$-connection, and it follows
that $\deg(E)(\sigma,\chi)$ is a real number. Recall that, by definition, an antidominant
character is real (see Appendix \ref{sec:parabolic-subgroups}).


At each marked point $x_i$ we have two parabolic subgroups of
$E(H^\setC)_{x_i}$ equipped with an antidominant character:
\begin{itemize}
\item one coming from the parabolic structure, $(Q_i,\chi_{\alpha_i})$, where
$\chi_{\alpha_i}$ is the  antidominant character of $\lieq_i$, defined in
Appendix \ref{sec:parabolic-subgroups-1};
\item one coming from the reduction, $(E_\sigma(P)_{x_i},\chi)$.
\end{itemize}
In appendix \ref{sec:parabolic-subgroups}, we define a relative degree
$\deg((Q_i,\alpha_i),(E_\sigma(P)_{x_i},\chi))$ of such a pair. Then we define the
parabolic degree as follows:
\begin{equation}
\label{eq:def-par-deg-sigma}
\pardeg_{\alpha}(E)(\sigma,\chi):=\deg(E)(\sigma,\chi)-\sum_i \deg\big((Q_i,\alpha_i),(E_\sigma(P)_{x_i},\chi)\big).
\end{equation}
When it is clear from the context we will omit the subscript
$\alpha$ in the notation of the parabolic degree.

The definition of parabolic degree of a reduction also makes sense
if one takes as parabolic subgroup the whole $\HC$. In this case
the set of antidominant characters is simply
$\Hom_{\setR}(\zlie,\imag\setR)$, where $\zlie$ is the centre of
$\hlie$. Trivially, in this case there is a unique reduction of
the structure group, which we denote by $\sigma_0$. We then define
for any $\chi\in \Hom_{\setR}(\zlie,\imag\setR)$
$$\pardeg_\chi E:=\pardeg_\alpha(E)(\sigma_0,\chi).$$

Of course, a priori the parabolic degree of a reduction does not seem to be well defined, because we can change the bundle by a meromorphic gauge transformation. Actually we will see that:
\begin{itemize}
\item after a meromorphic gauge transformation, the parabolic reductions are the same (proposition \ref{reduction});
\item there is an analytic formula for the degree (next section), which by its very definition is invariant my meromorphic gauge transformation.
\end{itemize}
So the parabolic reductions and their degree make perfect sense.

\subsection{Analytic formula for the parabolic degree of a
parabolic reduction}
\label{ss:analytic-global-degree}

Let $s\in\imag\lieh$ be any element, let $P=P_s\subset\HC$ be the
corresponding parabolic subgroup and let $\chi:\plie\to\CC$ be the
antidominant character defined as $\chi(\alpha)=\langle\alpha,s\rangle,$
where $\langle\cdot,\cdot\rangle:\liehc\times
\liehc\to\CC$ is the extension of an invariant scalar product on $\lieh$ to a
Hermitian pairing. Note that the intersection of $P$ and $H$ can be
identified with the centralizer of $s$ in $H$:
\begin{equation}
\label{eq:compact-parabolic} P\cap H=Z_H(s)=\{h\in H\mid
\Ad(h)(s)=s\}.
\end{equation}
Let $\sigma$ be a holomorphic reduction of the structure group
of $E$ from $\HC$ to $P$, and $E_\sigma$ the corresponding
$P$-principal bundle. Let $N\subset P$ be the unipotent part of
$P$, and choose the Levi subgroup $L=Z_{H^\setC}(s)\subset P$.
Then there is a well-defined $L$-action on $E_\sigma/N$ which
turns it into a principal $L$-bundle, which we denote
$E_{\sigma,L}$. (In the vector bundle case, the $P$-reduction
is a flag of sub-vector bundles, and the $L$-bundle is the
associated graded bundle.) Note that since $L=Z_{H^\setC}(s)$,
the element $s\in \imag\lieh$ defines a canonical section
\begin{equation}
  \label{eq:9}
  s_\sigma\in \Gamma(E_{\sigma,L}(\liel\cap \imag\lieh)).
\end{equation}

Let $h$ be a smooth metric on $E$, defined on the whole curve $X$, and
let $\bE$ be the $H$-principal bundle obtained by reducing the
structure group of $E$ from $\HC$ to $H$ using $h$. Combining $\sigma$ and
$h$ we obtain a reduction of the structure group of $E$ from $\HC$ to
$P\cap H$. Denote by $\bE_{\sigma}$ the resulting bundle. But $P\cap H$ is a
compact form of $L$, and the complexified bundle clearly identifies to
$E_{\sigma,L}$, so we can also think of $\bE_\sigma$ as the bundle $E_{\sigma,L}$
equipped with the metric $h_{\sigma,L}$ induced by $h$.
From this point of view, the section
$s_\sigma$ above can be seen as a section
\begin{equation}
  \label{eq:10}
  s_{\sigma,h}\in \Gamma(\bE_{\sigma}(\imag\lieh))\simeq\Gamma(\bE(\imag\lieh)).
\end{equation}
Note the difference between (\ref{eq:9}) and (\ref{eq:10}): the
section $s_\sigma$ is canonical, while different choices of $h$ lead to
different sections $s_{\sigma,h}$ of $E(\lieh^\setC)$.

We now introduce some notation. Let $V$ be a Hermitian vector space,
let $\rho:\lieh\to\lieu(V)$ be a morphism of Lie algebras, and denote also
by $\rho:\liehc\to\End V$ its complex extension.  Choose elements
$a\in\imag\lieh$ and $v\in V$. Then $\rho(a)$ diagonalizes and has real
eigenvalues, so that we may write $v=\sum v_j$ in such a way that
$\rho(a)(v)=\sum l_jv_j$. Now, for any function $f:\setR\to\setR$ we define
$f(a)(v):=\sum f(l_j)v_j$.

\begin{lemma}\label{lemma:analytic-degree-global}
  Define the function $\varpi:\setR\to\setR$ as $\varpi(0)=0$ and as $\varpi(x)=x^{-1}$ if $x\neq
  0$. Applying the previous definition to the adjoint representation,
  and extending it to sections of $E(\liehc)\otimes K$, we have:
$$\deg(E)(\sigma,\chi)=\frac{\imag}{2\pi}
\int_X\langle F_h,s_{\sigma,h}\rangle-\langle\varpi(s_{\sigma,h})(\overline{\partial}s_{\sigma,h}),\overline{\partial}s_{\sigma,h}\rangle.$$
\end{lemma}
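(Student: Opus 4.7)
The plan is to evaluate the Chern--Weil integral $\frac{\imag}{2\pi}\int_X \chi_*(F_A)$ for a canonical choice of $P$-connection on $E_\sigma$ built from the metric $h$, and then translate the result into a formula on $\bar E$. First, I would reduce to the Levi quotient: since $\chi$ vanishes on the unipotent radical $\lien$ of $\liep=\liel\oplus\lien$, the integrand $\chi_*(F_A)$ depends only on the induced $L$-connection on $E_{\sigma,L}$. I would take $A_L$ to be the Chern connection of the Hermitian metric on $E_{\sigma,L}$ induced by $h$; equivalently, the canonical $K_L$-connection on the $K_L$-subbundle $\bar E_\sigma := \bar E\cap E_\sigma \subset \bar E$. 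Using $\chi(\cdot) = \langle\cdot, s\rangle$ and the identification of $s$ with the canonical section $s_\sigma$ of $E_{\sigma,L}(\liel\cap \imag\lieh)$, one gets
$$\deg(E)(\sigma,\chi) = \frac{\imag}{2\pi}\int_X \langle F_{A_L}, s\rangle.$$

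The next step is to compare $F_{A_L}$ to the Chern curvature $F_h$ of $\bar E$ via a Gauss--Codazzi identity. I would pull back $A_h$ to $\bar E_\sigma$ and decompose according to the orthogonal splitting $\lieh=\liek_L\oplus\liek_L^\perp$, whose complexification is $\lieh^\C = \liel \oplus \lien \oplus \lien^+$: write $A_h|_{\bar E_\sigma} = A_L + \beta$ with second fundamental form $\beta \in \Omega^1(\bar E_\sigma(\liek_L^\perp))$. A direct computation from $F = dA + A\wedge A$ gives $F_{A_L} = (F_h|_{\bar E_\sigma})_{\liek_L} - (\beta\wedge\beta)_{\liek_L}$. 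Since $\ad s$ is self-adjoint for the Hermitian pairing with distinct-eigenvalue eigenspaces mutually orthogonal, $\liek_L \perp \liek_L^\perp$, so pairing with $s\in \imag\liek_L$ drops the $\liek_L^\perp$-component of $F_h$ and yields
$$\langle F_{A_L}, s\rangle = \langle F_h, s_{\sigma,h}\rangle - \langle \beta\wedge\beta, s\rangle.$$

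The final step is to express the bracket term in terms of $\bar\partial s_{\sigma,h}$. In the $\bar E_\sigma$-trivialization the section $s_{\sigma,h}$ is represented by the constant $s$, so the Chern covariant derivative is
$$\nabla s_{\sigma,h} = [A_h|_{\bar E_\sigma}, s] = [\beta, s] = -\ad(s)\beta,$$
because $A_L$ takes values in $\liek_L = \ker(\ad s|_{\lieh})$. Taking the $(0,1)$-part, which equals $\bar\partial$ for the Chern connection, and using that $\ad s$ is invertible on $\liek_L^\perp\otimes\C = \lien\oplus\lien^+$, I obtain $\beta^{0,1} = -\varpi(\ad s)(\bar\partial s_{\sigma,h})$; reality of $A_h$ gives the conjugate expression for $\beta^{1,0}$. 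On a Riemann surface only the $(1,1)$-part of $\beta\wedge\beta$ survives; a short calculation using the invariance $\langle[X,Y],Z\rangle = -\langle Y,[\tau(X),Z]\rangle$ (where $\tau$ is complex conjugation fixing $\lieh$, swapping $\lien\leftrightarrow\lien^+$) together with the reversal of orientation $\overline{d\bar z}\wedge d\bar z = -dz\wedge d\bar z$ in the Hermitian pairing of $(0,1)$-forms identifies
$$-\langle \beta\wedge\beta, s\rangle = \langle \varpi(s_{\sigma,h})(\bar\partial s_{\sigma,h}), \bar\partial s_{\sigma,h}\rangle.$$
Integrating and multiplying by $\frac{\imag}{2\pi}$ completes the proof.

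The main obstacle is the last step: carrying out the algebraic bookkeeping in the Gauss--Codazzi identity and then tracking the signs, complex conjugations, and form types so that the inversion of $\ad s$ by $\varpi$ on the nonzero eigenspaces produces exactly the Hermitian expression $\langle\varpi(s_{\sigma,h})(\bar\partial s_{\sigma,h}), \bar\partial s_{\sigma,h}\rangle$ rather than the superficially different real bilinear form one first obtains.
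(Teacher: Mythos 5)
Your proposal is correct and takes essentially the same approach as the paper: you reduce the Chern--Weil degree to the curvature $F_{h,L}$ of the Chern connection on the Levi bundle $E_{\sigma,L}$ induced by $h$, and then establish the pointwise identity $\langle F_{h,L},s_\sigma\rangle=\langle F_h,s_{\sigma,h}\rangle+\langle\varpi(s_{\sigma,h})(\overline{\partial}s_{\sigma,h}),\overline{\partial}s_{\sigma,h}\rangle$, which is exactly the paper's Lemma \ref{lemma:curvatura-Levi}. The only difference is one of detail: the paper dismisses that identity as an ``easy computation,'' whereas you carry it out via the Gauss--Codazzi decomposition $A_h|_{\bar E_\sigma}=A_L+\beta$ and the relation $\beta^{0,1}=-\varpi(\ad s)(\overline{\partial}s_{\sigma,h})$, and your bookkeeping of signs and form types is consistent (as one can check in the vector-bundle model, where $-\langle\beta\wedge\beta,s\rangle$ reduces to $(\lambda_i-\lambda_j)$-weighted sums of $\overline{A_{ji}}\wedge A_{ji}$ matching the Hermitian pairing of $\overline{\partial}s_{\sigma,h}$ with itself).
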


Here $F_h$ is the curvature of the unique connection compatible with $h$ and the holomorphic structure of $E$, the Chern connection \cite{singer}.
The proof follows Chern--Weil theory and
from identifying the RHS of the formula with the
curvature $F_{h,L}$ of the Chern connection of the metric
$h_{\sigma,L}$ on $E_{\sigma,L}$ defined by $h$. More precisely,

\begin{lemma}\label{lemma:curvatura-Levi}
$\langle F_{h,L},s_{\sigma,h}\rangle= \langle F_h,s_{\sigma,h}\rangle-\langle\varpi(s_{\sigma,h})(\overline{\partial}s_{\sigma,h}),\overline{\partial}s_{\sigma,h}\rangle.$
\end{lemma}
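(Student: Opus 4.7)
The plan is to reduce the identity to a direct calculation in a local holomorphic frame for $E$ that is simultaneously adapted to the $P$-reduction $\sigma$ and to the metric $h$. Starting from any local trivialization compatible with $\sigma$, I would apply a gauge transformation valued in the unipotent radical $N\subset P$ (the Lie-group analogue of the Schur-complement step for vector bundles) to bring $h$ into block-diagonal form, i.e.\ with values in the Levi $L$. Write $\liehc=\liel\oplus\lien\oplus\bar\lien$, where $\bar\lien$ is the $h$-orthogonal of $\liep$. In the resulting adapted frame the Chern connection of $(E,h)$ splits as
\[
A_h \;=\; A_{h,L} + \psi,\qquad \psi=\psi^{1,0}+\psi^{0,1},
\]
with $\psi^{0,1}\in\Omega^{0,1}(E_\sigma(\lien))$ (because $A_h^{0,1}$ is $\liep$-valued as $\sigma$ is holomorphic, and its $\liel$-component is killed in the adapted frame) and $\psi^{1,0}\in\Omega^{1,0}(E_\sigma(\bar\lien))$ by the reality $A_h\in\lieh$. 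A short computation from metric-compatibility shows that the $\liel$-part $A_{h,L}=h_L^{-1}\partial h_L$ is precisely the Chern connection of $(E_{\sigma,L},h_L)$.

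I would then expand
\[
F_h - F_{h,L} \;=\; d_{A_{h,L}}\psi+\tfrac12[\psi\wedge\psi].
\]
The first summand takes values in $\lien\oplus\bar\lien$, since $A_{h,L}$ preserves the Levi decomposition, and on a curve the self-brackets $[\psi^{1,0}\wedge\psi^{1,0}]$ and $[\psi^{0,1}\wedge\psi^{0,1}]$ vanish, so $\tfrac12[\psi\wedge\psi]=[\psi^{1,0}\wedge\psi^{0,1}]$. As $s$ lies in the centre of $\liel$ and the invariant pairing is block-orthogonal for $\liel\oplus\lien\oplus\bar\lien$, pairing with $s$ annihilates the $\lien$- and $\bar\lien$-components, leaving the difference $\langle F_h,s_{\sigma,h}\rangle-\langle F_{h,L},s_\sigma\rangle$ equal to $\langle [\psi^{1,0}\wedge\psi^{0,1}],s\rangle$ up to an overall sign fixed by conventions.

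The last step is to express this bracket in terms of $\bar\partial s_{\sigma,h}$. In the adapted frame $s_{\sigma,h}$ is represented by the constant $s\in\liel\subset\liehc$, so
\[
\bar\partial s_{\sigma,h} \;=\; [A_h^{0,1},s] \;=\; [\psi^{0,1},s] \;=\; -\ad(s)\,\psi^{0,1},
\]
and since $L=Z_{\HC}(s)$ the operator $\ad(s)$ is invertible on $\lien$ with inverse $\varpi(\ad s)$, giving $\psi^{0,1}=-\varpi(\ad s)(\bar\partial s_{\sigma,h})$. Then $\Ad$-invariance of the pairing gives $\langle[\psi^{1,0},\psi^{0,1}],s\rangle=\langle\psi^{1,0},\bar\partial s_{\sigma,h}\rangle$, and the reality of $A_h$ identifies $\psi^{1,0}$ with the $h$-adjoint of $\psi^{0,1}$; this is precisely the operation that converts the invariant pairing on $\lien\times\bar\lien$ into the Hermitian pairing of the statement, and substituting the formula above for $\psi^{0,1}$ produces $\langle\varpi(s_{\sigma,h})(\bar\partial s_{\sigma,h}),\bar\partial s_{\sigma,h}\rangle$.

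The main obstacle is the careful bookkeeping of signs and conventions in this last step: the sign of $\ad(s)$ on $\lien$ (nonpositive by the alcove convention of Proposition~\ref{prop-alcove}), the passage between the invariant pairing and its Hermitian extension via the Cartan involution $\tau_H$, and the convention for wedging $(0,1)$-forms must all align so that $\psi^{1,0}$, viewed as the metric-adjoint of $\psi^{0,1}$, yields exactly $\varpi(s_{\sigma,h})(\bar\partial s_{\sigma,h})$ paired with $\bar\partial s_{\sigma,h}$ with the correct sign; this is the one point where an abstract argument does not fully suffice and a direct computation is unavoidable.
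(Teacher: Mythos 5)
Your proposal is correct, and it is exactly the computation the paper has in mind: the paper's own proof of this lemma is literally the words ``Easy computation,'' and your $N$-valued gauge bringing $h$ into Levi-valued form is the same $\HC=HLN$ decomposition ($h=\tau(n)^{-1}l^2n$, hence $h_L=l^2$) that the paper deploys explicitly in the proof of Lemma \ref{lemma:analytic-degree-parabolic}. All steps check out --- in particular $\bar\partial s_{\sigma,h}=[\psi^{0,1},s]=-\ad(s)\psi^{0,1}$ with $\ad(s)$ invertible on $\lien$, so $\varpi(s_{\sigma,h})(\bar\partial s_{\sigma,h})=-\psi^{0,1}$ --- and the sign bookkeeping you flag does align with the Hermitian-pairing and alcove conventions, as one can confirm on the rank-two $\GL_2$ model where both sides equal $(\lambda_1-\lambda_2)\,\lvert b\rvert^2\, d\zbar\wedge dz$.
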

\begin{proof}
  We can think of $E$ and $E_{\sigma,L}$ as giving two holomorphic structures on the same principal bundle obtained by complexifying $\bE$. Therefore the difference between the corresponding $\dbar$ operators is a $\bE(\liehc)$-valued $(0,1)$-form: $\dbar^E-\dbar^{E_{\sigma,L}}=a$ with $a\in \Omega^{0,1}(\lien)$ since $E_{\sigma,L}$ is a reduction of $E$. Noting $\dbar^{E_{\sigma,L}}+\partial_h^{E_{\sigma,L}}$ the Chern connection of $E_{\sigma,L}$, we have $F_h=F_{h,L}+\partial_h^{E_{\sigma,L}}a+\dbar^{E_{\sigma,L}}\tau_h(a)+[a,\tau_h(a)]$, and therefore \[\langle F_{h,L},s_{\sigma,h}\rangle= \langle F_h,s_{\sigma,h}\rangle+\langle[a,\tau(a)],s_{\sigma,L}\rangle= \langle F_h,s_{\sigma,h}\rangle+\langle a,[a,s_{\sigma,h}]\rangle.\] The formula follows since $\dbar^Es_{\sigma,L}=[a,s_{\sigma,h}]$ hence $\varpi(s_{\sigma,h})(\overline{\partial}s_{\sigma,h})$ is the projection of $-a$ on nonzero eigenspaces of $\ad s_{\sigma,h}$.
\end{proof}

Our aim in the remainder of this section is to state and prove an
analogue of Lemma \ref{lemma:analytic-degree-global} giving the
parabolic degree. For that it will be necessary to replace the
metric $h$ (which was chosen to be smooth on the whole $X$) by a
metric which blows up at the divisor $D$ at a speed specified by
the parabolic weights $\alpha_i$.

\subsection*{$\alpha$-adapted metrics and parabolic degree}

It may be useful here to remind in a few words how we write local formulas
for the metrics of principal bundles.  There is a right action $h\mapsto h\cdot g$ of
gauge transformations $g\in \Gamma(E(\HC))$ on metrics $h\in\Gamma(E/H)$, which
identifies in each fibre to the standard action of $\HC$ on the symmetric
space $H\backslash \HC$. In concrete terms, a choice of $h\in\Gamma(E/H)$
is equivalent to a map $\chi:E\to H\backslash H^{\CC}$ satisfying $\chi(e\gamma)=\chi(e)\gamma$
for $e\in E$ and $\gamma\in H^{\CC}$ (any such $\chi$ corresponds to the section
$h\in\Gamma(E/H)$ such that $h(x)=\{[e]\mid e\in E_x,\,\chi(e)\in H\in H\backslash H^{\CC}\}$),
and a gauge transformation $g\in \Gamma(E(\HC))$ is equivalent to a map
$\zeta:E\to H^{\CC}$ satisfying $\zeta(e\gamma)=\gamma^{-1}\zeta(e)\gamma$.
Then $h\cdot g$ is the section of $E/H$ corresponding to the map $\chi\zeta:E\to H\backslash H^{\CC}$.

Let $\tau$ be the involution of $\HC$ fixing $H$.
A local trivialization $e$ of $E$ defines a metric $h_0$;
any another metric is given by $h=h_0\cdot g$ for some $g$ with values in $\HC$; of
course $h$ depends only on $\tau(g)^{-1}g$ and we identify $h=\tau(g)^{-1}g$ (in
the linear case, this is just writing $h=g^*g$). Of course it is always
possible to move $g$ by an element of $H$ so that $\tau(g)^{-1}=g$ and
then $h=g^2$.

Let us now consider a parabolic bundle $E$, and a metric
$h\in\Gamma(X\setminus D;E/H)$ defined away from the divisor
$D$.
\begin{definition}\label{def:adapted_metric}
  We say that $h$ is an {\bf $\alpha$-adapted metric} if for any parabolic point
  $x_i$ the following holds. Choose a local holomorphic coordinate $z$ and a local holomorphic trivialization $e_i$
  of $E$ near $x_i$ compatible
  with the parabolic structure (see Definition \ref{def:compatible-trivialization}).
  Then there is some meromorphic gauge transformation $g\in PE(H^\CC)$ near $x_i$, such that in the trivialization $g(e_i)$ one has
  \begin{equation}
    \label{eq:h-model}
    h = h_0 \cdot |z|^{-\alpha_i}e^c,
  \end{equation}
  where  $h_0$ is the standard constant metric,
  $\Ad(|z|^{-\alpha_i})c=o(\log |z|)$, $\Ad(|z|^{-\alpha_i})dc\in L^2$ and
  $\Ad(|z|^{-\alpha_i})F_h\in L^1$.
\end{definition}
The definition of $h$ and the conditions on $c$ are clarified by observing that in the trivialization $e_i\cdot|z|^{\alpha_i}$ (which is orthonormal for the metric $|z|^{-2\alpha_i}=h_0\cdot |z|^{-\alpha_i}$), the metric $h$ can be written as $h_0\cdot\exp(\Ad(|z|^{-\alpha_i})c)$. If one chooses $c$ so that $\tau(\Ad(|z|^{-\alpha_i})c)^{-1}=\Ad(|z|^{-\alpha_i})c$, then $h$ can simply be written as $$h=|z|^{-2\alpha_i}e^{2c}.$$

Note that the $L^2$ and $L^1$ conditions on $dc$ and $F_h$ are conformally
invariant.

Some discussion is in order on the role of the meromorphic gauge transformation in the definition. Suppose we are given a holomorphic $g=e^c\in PE(H^ \setC)$, so $c\in PE(\lieh^ \setC)$. Then $\Ad( |z|^{-\alpha_i})c$ is bounded so the first condition is satisfied (this says that the gauge transformed metric remains at bounded distance in $H\backslash H^\setC$). Now analyse $\Ad(|z|^{-\alpha_i})dc$ by decomposing $c=\sum c_\lambda$ along the eigenvalues $\lambda$ of $\ad \alpha_i$:

\begin{itemize}
\item if $-1<\lambda\leq 1$ we get a term $|z|^\lambda dz\in L^2$;
\item if $\lambda=\pm 1$ we get a term $(\frac z{|z|})^{\pm 1}\frac{dz}z$ which is not $L^2$, so the conditions are not invariant by meromorphic equivalence.
\end{itemize}
More generally, it is easy to check that if the weight $\alpha_i\in \imag\AAA'$, then the conditions on $c$ actually do not depend on the choice of the holomorphic trivialization, so one can test the condition in any given trivialization. This is different when $\ad \alpha_i$ has $\pm 1$ eigenvalues, and this reflects a larger choice of `good' metrics. We have not tried to give a more precise description of  this class of metrics here: the definition is sufficient for the calculation of the degree in the next lemma, and from the proof follows that the result does not depend on the meromorphic gauge. The actual space of metrics used to solve the equations is defined in (\ref{eq:27}) and does not depend on any gauge.



We are now ready to state and prove the analogue of Lemma
\ref{lemma:analytic-degree-global} for the parabolic degree.

\begin{lemma}
\label{lemma:analytic-degree-parabolic} Let
$\varpi:\setR\to\setR$ be defined as in Lemma
\ref{lemma:analytic-degree-global}, and let $h$ be an
$\alpha$-adapted metric. Then:
$$\pardeg_{\alpha}(E)(\sigma,\chi)=
\frac{\imag}{2\pi} \int_{X\setminus D}\langle F_h,s_{\sigma,h}\rangle -
  \langle\varpi(s_{\sigma,h})(\overline{\partial}s_{\sigma,h}),\overline{\partial}s_{\sigma,h}\rangle .$$
\end{lemma}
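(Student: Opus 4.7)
My plan is to reduce everything to a Chern--Weil computation on the Levi quotient and then handle the singularities at the punctures by a careful Stokes-type argument, matching the resulting boundary residues with the local term in the definition of the parabolic degree.

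First, applying Lemma \ref{lemma:curvatura-Levi} pointwise on $X\setminus D$, the integrand in the statement equals $\langle F_{h,L}, s_\sigma\rangle$, the pairing of $s_\sigma$ with the curvature of the Chern connection on the Levi bundle $E_{\sigma,L}$. So it suffices to show
\[
\pardeg_\alpha(E)(\sigma,\chi) = \frac{\imag}{2\pi}\int_{X\setminus D}\langle F_{h,L},s_\sigma\rangle.
\]
The integrability of this expression is what the $L^1$ condition in the definition of $\alpha$-adapted metric is designed to guarantee (after conjugation by $|z|^{-\alpha_i}$ the curvature is $L^1$, and $s_\sigma$ is bounded in the $|z|^{-\alpha_i}$-gauge).

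Next, excise small disks $\Delta_i(\epsilon)$ around each $x_i$ and set $X_\epsilon=X\setminus\bigcup_i\Delta_i(\epsilon)$. Choose an auxiliary \emph{smooth} metric $h^{\mathrm{sm}}$ on the underlying principal bundle (equivalently, on the holomorphic bundle obtained from a fixed choice of meromorphic representative near each puncture). By Lemma \ref{lemma:analytic-degree-global} applied to $h^{\mathrm{sm}}$,
\[
\deg(E)(\sigma,\chi)=\frac{\imag}{2\pi}\int_X\langle F_{h^{\mathrm{sm}},L},s_\sigma\rangle.
\]
On $X_\epsilon$, write $\langle F_{h,L},s_\sigma\rangle=\langle F_{h^{\mathrm{sm}},L},s_\sigma\rangle+\langle F_{h,L}-F_{h^{\mathrm{sm}},L},s_\sigma\rangle$. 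Using $F_{h,L}-F_{h^{\mathrm{sm}},L}=\bar\partial\beta$ for $\beta=A_{h,L}-A_{h^{\mathrm{sm}},L}$, and the fact that $s_\sigma$ is holomorphic as a section of $E_{\sigma,L}(\liel)$, Stokes' theorem converts the difference into a boundary integral $\sum_i\int_{\partial\Delta_i(\epsilon)}\langle\beta,s_\sigma\rangle$.

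The crux is then to evaluate $\lim_{\epsilon\to 0}\sum_i\int_{\partial\Delta_i(\epsilon)}\langle\beta,s_\sigma\rangle$ and identify it with $-\tfrac{2\pi}{\imag}\sum_i\deg\bigl((Q_i,\alpha_i),(E_\sigma(P)_{x_i},\chi)\bigr)$. Locally, in a compatible trivialization adapted to $\sigma$, the Chern connection of $h=h_0\cdot|z|^{-\alpha_i}e^c$ has the form $-\alpha_i\,dz/z+$ (terms of higher order), while $A_{h^{\mathrm{sm}},L}$ is bounded. The projection of the leading singularity to the Levi direction, paired with $s_\sigma$, gives precisely $-\chi(\alpha_i)\,dz/z$ restricted to the Levi component of $E_\sigma$ determined by the antidominant data $(Q_i,\alpha_i)$ and $(E_\sigma(P)_{x_i},\chi)$. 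The circle integral then reproduces exactly the relative degree defined in Appendix \ref{sec:parabolic-subgroups}. Combining this with the previous display yields $\pardeg_\alpha(E)(\sigma,\chi)$ as desired.

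The hard part, and the one that requires the most care, is this boundary residue computation when $\alpha_i$ lies in a bad wall of the alcove, i.e.\ $\ad(\alpha_i)$ has eigenvalues $\pm 1$: in that case $\beta$ contains terms of size $(z/|z|)^{\pm 1}dz/z$ which are not $L^2$, and the meromorphic gauge transformation used to put $h$ in model form genuinely changes the holomorphic trivialization. One must verify that (i) the higher-order terms of $c$ contribute $o(1)$ to $\int_{\partial\Delta_i(\epsilon)}\langle\beta,s_\sigma\rangle$ as $\epsilon\to 0$, using the $L^2$ estimate on $\mathrm{Ad}(|z|^{-\alpha_i})dc$ and the fact that the $\pm 1$ eigenspaces of $\ad(\alpha_i)$ lie in $\liep_{\alpha_i}^1$, hence are paired trivially with the antidominant character; and (ii) the final answer is independent of the choice of meromorphic gauge $g\in PE(H^\CC)$ and of the smooth reference $h^{\mathrm{sm}}$, as required by Remark \ref{rem:invariance_du_degre}. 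Granted these two points, the computation localizes to the residue of $-\chi(\alpha_i)\,dz/z$ pulled back along $s_\sigma$, which by construction is the relative degree of the pair $\bigl((Q_i,\alpha_i),(E_\sigma(P)_{x_i},\chi)\bigr)$ and the proof is complete.
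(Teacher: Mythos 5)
Your global strategy --- use Lemma \ref{lemma:curvatura-Levi} to replace the integrand by $\langle F_{h,L},s_\sigma\rangle$, compare with a reference metric, and localize the discrepancy into circle integrals of connection forms around the punctures via Stokes --- is the same as the paper's (the paper works with a cutoff metric $h_v$ agreeing with $h$ near $X_v$ rather than a fixed smooth $h^{\mathrm{sm}}$, but that difference is cosmetic). The genuine gap is precisely in the step you call the crux: the evaluation of the boundary term.

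You assert that, in a suitable trivialization, the leading singularity of the Levi connection form paired with $s_\sigma$ is $-\chi(\alpha_i)\,dz/z$, so that the circle integral produces the residue $\chi(\alpha_i)=\langle s,\alpha_i\rangle$, and you then identify this with the relative degree ``by construction''. But the relative degree of the paper is \emph{not} this naive pairing: by definition (\ref{eq:32}) it is the asymptotic slope of Tits geometry,
$$\deg\big((P,\chi),(Q_i,\alpha_i)\big)=\mu_s(\alpha_i)=\lim_{t\to\infty}\langle s\cdot e^{-t\alpha_i},\alpha_i\rangle=\lim_{t\to\infty}\langle s,\Ad(\tilde k(t))^{-1}\alpha_i\rangle ,$$
where $e^{t\alpha_i}=\tilde k(t)\tilde p(t)$ with $\tilde k(t)\in H$, $\tilde p(t)\in P_s$, as in (\ref{eq:29}). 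This limit equals $\langle s,\alpha_i\rangle$ only when $\tilde k(t)\equiv 1$, i.e.\ when the one-parameter subgroup $e^{t\alpha_i}$ lies in the parabolic subgroup determined by $\sigma(x_i)$; that is a compatibility between the reduction and the parabolic structure which fails in general, and whose failure is exactly what the relative degree is designed to measure. Correspondingly, there is no trivialization which is simultaneously compatible with $Q_i$ and in which $\sigma$ is constant in standard position relative to $\alpha_i$, so your residue computation cannot be carried out as stated. In the paper's proof this is where the real work happens: writing $h=\tau(g)^{-1}g$ with $g=|z|^{-\alpha_i}e^c$ in a trivialization where $\sigma$ is constant, and decomposing $g=k(z)l(z)n(z)$ along $\HC=HLN$, the Levi metric is $h_L=l(z)^2$ and the boundary integrand becomes $\langle\Ad(k(z))^{-1}(\partial g\,g^{-1})-\tfrac12 k^{-1}\partial_\theta k,\,s\rangle$ as in (\ref{eq:28}); the compact factor $k(z)$ does \emph{not} converge as $z\to 0$, and it is the limit of $\langle\Ad(k(z))^{-1}\alpha_i,s\rangle$ that reproduces (\ref{eq:29}). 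Your proposal contains no mechanism producing this conjugation. (Secondarily, your point (i) misstates the wall case: the $+1$ eigenspace of $\ad(\alpha_i)$ is not contained in $\liep_{\alpha_i}^1$ --- it is not even in $\liep_{\alpha_i}$ --- and in Remark \ref{rem:invariance_du_degre} the dangerous terms disappear because their circle integrals vanish in the limit, the integrands oscillating like $e^{\pm\imag\theta}$, not because they pair trivially with the character.)
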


\begin{proof}
  For any $v>0$ let $X_{v}=\{x\in X\mid d(x,D)\geq e^{-v}\}$ and $B_{v}=X\setminus X_{v}$.
  Let $h_{v}$ be a smooth metric on $E$ (defined on the whole $X$) which
  coincides with $h$ in a neighbourhood of $X_{v}\subset X$. The metrics $h$ and
  $h_v$ induce metrics $h_L$ and $h_{L,v}$ on $E_{\sigma,L}$, and we denote by
  $\bE_{\sigma,v}$ the resulting Hermitian holomorphic bundle.  Denote the
  curvatures of $h_L$ and $h_{L,v}$ by $F_{h,L}$ and $F_{h_{v},L}$. It
  follows from the definition of $\deg(E)(\sigma,\chi)$ that
  \begin{align*}
    \deg(E)(\sigma,\chi)&=\frac{\imag}{2\pi}\int_{X}\langle
    F_{h_{v},L},s_\sigma\rangle \\
    &=\frac{\imag}{2\pi}\left( \int_{X_{v}}\langle F_{h_{v},L},s_\sigma\rangle + \int_{B_{v}}\langle
      F_{h_{v},L},s_\sigma\rangle\right).
  \end{align*}
  By Lemma \ref{lemma:curvatura-Levi} we have
  \begin{align*}
    \lim_{v\to\infty}\int_{X_{v}}\langle F_{h_{v},L},s_\sigma\rangle
    & =\int_{X}\langle F_{h,L},s_\sigma\rangle \\
    & =\int_{X\setminus D}\langle F_h,s_{\sigma,h}\rangle-\langle\varpi(s_{\sigma,h})(\overline{\partial}s_{\sigma,h}),\overline{\partial}s_{\sigma,h}\rangle.
  \end{align*}
  Hence we need to prove that the remaining integral converges to the local
  terms in the definition of the parabolic degree, i.e.:
  \begin{equation}
    \lim_{v\to\infty}\frac{\imag}{2\pi}\int_{B_{v}}\langle F_{h_{v},L},s_\sigma\rangle
    =\sum_i\deg\big((P,\chi),(Q_i,\alpha_i)\big).\label{eq:8}
  \end{equation}
  Observe that on a small ball, we can use a holomorphic trivialization of
  the bundle $E_\sigma$, and the connection form becomes $A_v=h_{L,v}^{-1}\partial
  h_{L,v}$ with curvature $F=dA_v$. It follows that the quantity we have to
  study is
  $$ \lim_{v\to\infty} \frac{\imag}{2\pi} \int_{\partial B_v} \langle A_v,s\rangle
  =\lim_{v\to\infty} \frac{\imag}{2\pi} \int_{\partial B_v} \langle h_L^{-1}\partial h_L,s\rangle, $$ since
  $h_{L,v}$ coincides with $h_L$ in a neighbourhood of $X_v$.

  We can choose a holomorphic trivialization of $E$ in which the reduction
  $\sigma$ has constant coefficients. This induces also a holomorphic
  trivialization of the $L$-bundle $E_{\sigma,L}$.

  We begin by the case where the metric $h$ on   $E$ in this trivialization can be written as
  \begin{equation}
  h=\tau(g)^{-1}g, \text{ where }g=|z|^{-\alpha_i}e^c, \label{eq:35}
  \end{equation}
  and using $\HC=HP=HLN$ we decompose
  $$ g = k(z) l(z)n(z) , \quad \text{with } k(z)\in H, l(z)\in L, n(z)\in N. $$
  The pair $(k(z),l(z))$ is defined only up to the action of $P\cap H$, so we
  can as well suppose that $\tau(l)=l^{-1}$. It then follows
  that $h=\tau(n(z))^{-1}l(z)^2n(z)$, so the induced metric on
  $E_{\sigma,L}$ is
  $$ h_L = l(z)^2 . $$
  Then, because $s$ is $L$-invariant, the limit to calculate becomes
  $$ \lim_{v\to\infty} \frac{\imag}{\pi} \int_{\partial B_v} \langle \partial l l^{-1},s\rangle. $$

  Now one has
  $$ \partial l l^{-1} = \Ad(k)^{-1}(\partial g g^{-1})-k^{-1}\partial k - \Ad(l) (\partial n n^{-1}) .
  $$
  Observe that since $s\in \imag\lieh$ and $k\in H$, the term
  $\frac{\imag}\pi\int_{\partial B_v}\langle k^{-1}\partial k,s\rangle$ reduces to $\frac1{2\pi}\int_{\partial
    B_v}\langle\imag k^{-1}\partial_\theta k,s\rangle$.
  Therefore the limit reduces to
  \begin{equation}
    \label{eq:28}
    \lim_{v\to\infty} \frac{\imag}{\pi} \int_{\partial B_v} \langle\Ad(k)^{-1}(\partial g g^{-1})-\tfrac12 k^{-1}\partial_\theta k,s\rangle .
  \end{equation}
  On the other hand, decompose similarly $e^{t\alpha}=\tilde k(t)\tilde p(t)$
  with $\tilde k(t)\in H$ and $\tilde p(t)\in P$ then $\langle s\cdot e^{-t\alpha},\alpha) =
  \langle\Ad(\tilde k(t))s,\alpha\rangle $, so we obtain:
  \begin{equation}
    \label{eq:29}
    \deg\big( (P,\chi),(Q_i,\alpha_i)\big)
    = \lim_{t\to\infty}\langle s\cdot e^{-t\alpha},\alpha\rangle
    = \lim_{t\to\infty}\langle s,\Ad(\tilde k(t))^{-1}\alpha\rangle.
  \end{equation}
  If we have exactly the model behaviour $g=|z|^{-\alpha_i}$, then one has
  $k(z)=\tilde k(-\ln|z|)$ and $\partial g g^{-1}=-\frac{\alpha_i}2\frac{dz}z$, so $\partial_\theta k=0$ and
  $$
  \frac{\imag}{\pi} \int_{\partial B_v} \langle\Ad(k(z))^{-1}(\partial g g^{-1}),s\rangle = \langle\Ad(\tilde
  k(v)^{-1})\alpha_i,s\rangle ,
  $$
  so the two limits (\ref{eq:28}) and (\ref{eq:29}) are the same. It is
  then easy to check that this remains true if $g=|z|^{-\alpha_i}e^c$, where $c$
  is a perturbation satisfying the conditions below (\ref{eq:h-model}).

Finally we must see what is happening if the metric has the form (\ref{eq:35}) in another meromorphic gauge. It is sufficient to check this when the metric comes from the
standard metric $|z|^{-\alpha_i}$ via a meromorphic gauge
transformation. So write $g=|z|^{-\alpha_i}q$ with $q\in
PE(\lieh^ \setC)$ holomorphic outside $x_i$. Then in
(\ref{eq:28}) one has
$$ \partial g g^{-1} = - \frac{\alpha_i}2 \frac{dz}z + \Ad( |z|^{-\alpha_i} ) (\partial q q^{-1}). $$
As observed before the lemma, the only problems come from the eigenspaces of $\ad \alpha_i$ for the eigenvalues $\pm 1$: we have $\partial q q^{-1}=a \frac{dz}{z^2} + b dz + \cdots $, where $[\alpha_i,a]=-a$ and we distinguish the component $b_1$ of $b$ on the eigenspace of $\ad \alpha_i$ for the eigenvalue $1$. Then
$$  \Ad( |z|^{-\alpha_i} ) (\partial q q^{-1}) = a \frac{|z|dz}{z^2} + b_1 \frac{dz}{|z|} + O(\frac1{|z|^{1-\epsilon}}). $$
The integration on the circle can therefore bring no contribution into (\ref{eq:28}) and the result does not depend on the meromorphic gauge.
\end{proof}

\section{Parahoric bundles}
\label{s:parahoric}

\newcommand{\hol}{\operatorname{hol}}
\newcommand{\mer}{\operatorname{mer}}

\newcommand{\aA}{{\EuScript{A}}}
\newcommand{\bB}{{\EuScript{B}}}
\newcommand{\cC}{{\EuScript{C}}}
\newcommand{\dD}{{\EuScript{D}}}
\newcommand{\eE}{{\EuScript{E}}}
\newcommand{\fF}{{\EuScript{F}}}
\newcommand{\gG}{{\EuScript{G}}}
\newcommand{\hH}{{\EuScript{H}}}
\newcommand{\iI}{{\EuScript{I}}}
\newcommand{\jJ}{{\EuScript{J}}}
\newcommand{\kK}{{\EuScript{K}}}
\newcommand{\lL}{{\EuScript{L}}}
\newcommand{\mM}{{\EuScript{M}}}
\newcommand{\oO}{{\EuScript{O}}}
\newcommand{\qQ}{{\EuScript{Q}}}
\newcommand{\pP}{{\EuScript{P}}}
\newcommand{\sS}{{\EuScript{S}}}
\newcommand{\tT}{{\EuScript{T}}}
\newcommand{\vV}{{\EuScript{V}}}
\newcommand{\wW}{{\EuScript{W}}}
\newcommand{\xX}{{\EuScript{X}}}
\newcommand{\zZ}{{\EuScript{Z}}}

\newcommand{\un}{\underline}

\newcommand{\EE}{{\Bbb E}}

\newcommand{\std}{\operatorname{std}}

Our definition of parabolic bundles is suitable for prescribing
the asymptotic behaviour near a divisor $D\subset X$ of
reductions, defined on $X\setminus D$, of the structure group
of an $H^\CC$-principal bundle to the maximal compact subgroup
$H\subset H^{\CC}$. But it has the obvious inconvenient that in
the situations when $P_{\alpha_i}^1$ is nontrivial the sheaf
$PE(H^{\CC})$ contains meromorphic sections whose effect on $E$
is somewhat unclear (see Remark
\ref{rmk:meromorphic-transformations} above), in that they
transform $E$ into a different bundle. This issue becomes
transparent from the point of view of parahoric bundles
introduced by Pappas and Rapoport \cite{pappas-rapoport}, and
whose moduli problem was studied by Balaji and Seshadri in
\cite{balaji-seshadri} in relation with local systems with
compact structure group on punctured Riemann surfaces. In this
section we recall the definition of parahoric bundles and we
relate it to the objects we just defined.
We would like to emphasize that this section is not strictly
necessary for our arguments, except for the definition of
meromorphic equivalence given in Definition
\ref{def:meromorphic-equivalence} and the results in Section
\ref{ss:meromorphic-maps}. The main purpose of most of this
section is to clarify the nature of the objects we are going to
work with.

We also show how the notion of parahoric bundles is the natural
context for Hecke transformations.
This is of course well known in the algebraic world (see e.g.
\cite[Section 8.2.1]{balaji-seshadri}), but our approach, much
more analytic than that of
\cite{balaji-seshadri,pappas-rapoport}, seems to be new (see
Subsection \ref{ss:Hecke-transformations} below).

The existence of Hecke transformations implies that even restricting
to the case $\alpha_i\in\imag\bar\eA$ we cover, up to
isomorphism, all possible choices of weights $\alpha$. This is
one reason for avoiding parahoric bundles in our approach. A
second reason, not unrelated to the first, is that taking
weights in $\imag\bar\eA$ suffices to realize all possible
monodromies around the punctures in the correspondence between
parabolic Higgs bundles and local systems, as we prove in the
paper. On the other hand one can use Hecke transformations to
identify, for any choice of weights $\alpha$, the category of
parahoric bundles of weight $\alpha$ with the category of
parabolic orbibundles on $X$ for suitable choices of: weights
in $\imag\eA'$, orbifold structure on the divisor $D$, and
topological type of orbibundles. This can be done thanks to
Proposition \ref{prop-alcove}.


The Hecke transformations are compatible with the reductions of the
structure group to standard parabolic subgroups, since they
{\it use} only the action of the complexified maximal torus
$T^{\CC}$, which is contained in all standard parabolic
subgroups. This, combined with an easy computation of degrees,
implies that Hecke correspondence is compatible with the
stability notions {\it \`a la} Ramanathan, see Subsection
\ref{stability}.

Finally, Hecke transformations can be used to extend the definition of parabolic
Higgs bundles for a choice of parabolic structure of weights in $\imag\bar\eA$
(see Section \ref{sec:defin-parab-g}) to a notion of parahoric Higgs bundles
for an arbitrary choice of weights $\alpha$.



\subsection{Definition of parahoric bundles}
\label{ss:parahoric}

Let $\alpha=(\alpha_1,\dots,\alpha_r)$ be a collection of arbitrary elements of $\imag\tlie$.
Let $X^*=X\setminus D=X\setminus\{x_1,\dots,x_r\}$ and for any open subset $\Omega\subset X$ denote
$\Omega^*:=\Omega\cap X^*$. Choose for each $j$ a local holomorphic coordinate $z_j$ on
a neighbourhood of $x_j$ satisfying $z_j(x_j)=0$.

Let $\gG_{\alpha}$ be the sheaf of groups on $X$ defined as
follows: for any open subset $\Omega\subset X$,
$\gG_{\alpha}(\Omega)$ is equal to the set of all holomorphic
maps $\phi:\Omega^*\to \HC$ with the property that for every
$j$ such that $x_j\in\Omega$
$$|z_j(p)|^{-\alpha_j}\phi(p)|z_j(p)|^{\alpha_j}=\exp(-\ln|z_j(p)|\alpha_j)\phi(p) \exp(\ln|z_j(p)|\alpha_j)$$
stays in a compact subset of $\HC$ as $p\to x_j$. It is easy to
check that this definition is independent of the choice of
local holomorphic coordinates. The sheaf $\gG_{\alpha}$ is the
holomorphic analogue of the Bruhat--Tits group scheme
$\gG_{\Theta,X}$ in \cite{balaji-seshadri}.

A {\bf parahoric bundle} over $(X,D)$ of weight $\alpha$ is a sheaf of torsors over the
sheaf of groups $\gG_{\alpha}$.

Let $W$ denote the Weyl group of $H$ and let
$W\eA'=\bigcup_{w\in W}w\eA'$. We next prove that if
$\alpha_j\in \imag W\eA'$ for every $j$ then a parahoric bundle
of weight $\alpha$ is equivalent to a parabolic principal
bundle in the usual sense. Define $\gG_{\alpha}^{\std}$ to be
the sheaf of groups on $X$ such that, for any open
$\Omega\subset X$, $\gG_{\alpha}^{\std}(\Omega)$ is equal to the set of all
holomorphic maps $\phi:\Omega\to H^{\CC}$ satisfying
$\phi(x_j)\in P_{\alpha_j}$ for each $j$ such that $x_j\in
\Omega$. We now have
\begin{equation}
\label{eq:parabolic-standard}
\gG_{\alpha}=\gG_{\alpha}^{\std}\qquad\Longleftrightarrow\qquad
\alpha_j\in \imag W\eA'.
\end{equation}
This is an immediate consequence of the characterisation of
$\imag W\eA'$ as the subset of $\imag\tlie$ consisting of those
elements $\beta$ such that all eigenvalues of $\ad(\beta)$ have
absolute value smaller than $1$ (see (3) in Proposition \ref{prop-alcove}).
To conclude our argument, let
us see that a sheaf of torsors over $\gG_{\alpha}^{\std}$ is
the same thing as a holomorphic $H^{\CC}$-principal bundle $E$
and a choice of parabolic structures $\{Q_j\subset
E(H^{\CC})_{x_j}\}_j$.

Suppose given $(E,\{Q_j\})$.
For each $j$ define $R_j\subset E_{x_j}$
to be the set of all $e\in E_{x_j}$ such that $P_{\alpha_j}=\{\phi(e)\mid \phi\in Q_j\}$.
Let $\eE$ be the sheaf whose sections on an open subset
$U\subset X$ are the holomorphic sections of $E|_U$ whose value at each $x_j$
belongs to $R_j$. Then $\eE$ is a sheaf of $\gG_{\alpha}^{\std}$-torsors.
For the converse, let $\gG$ denote the sheaf of local holomorphic maps to $\HC$.
Clearly $\gG_{\alpha}^{\std}$ is a subsheaf of $\gG$, so if $\eE$ is a sheaf
of $\gG_{\alpha}^{\std}$-torsors then $\eE'=\eE\times_{\gG_{\alpha}^{\std}}\gG$ is
a sheaf of $\gG$-torsors, which can be identified with the sheaf of local holomorphic
sections of a holomorphic principal $\HC$-bundle $E$ over $X$. Now, $\eE$ is naturally
a subsheaf of $\eE'$, and defining for every $j$ the subset $R_j\subset E_{x_j}$ as the
set of images at $x_j$ of local sections of $E$ contained in $\eE$ we obtain an orbit
of the action of $P_{\alpha_j}$ at $E_{x_j}$. Then setting $Q_j=\{\phi\in E(\HC)_{x_j}\mid \phi(R_j)=P_{\alpha_j}\}$ we obtain a parabolic structure at $x_j$. This construction
is clearly the inverse of the previous one.

\subsection{Parahoric bundles vs. parabolic bundles with weights in $\imag\bar\eA$}
Our next aim is to understand the parabolic bundles defined in
Subsection \ref{ss:definition-parabolic-bundle} (which may have weights in $\imag\bar\eA$)
from the viewpoint of parahoric bundles.

\newcommand{\wall}{\operatorname{wall}}

For any $\alpha=(\alpha_1,\dots,\alpha_r)$,
define a sheaf of groups
$\gG_{\alpha}^{\wall}$
over $X$ by the prescription that, for any open subset $\Omega\subset X$,
$\gG_{\alpha}^{\wall}(\Omega)$
is equal to the group of holomorphic maps $\Omega^*\to H^{\CC}$
such that for every $x_j\in\Omega$ and any local holomorphic coordinate
$z$ defined on a neighbourhood of $x_j$ and satisfying $z(x_j)=0$ we have
$\phi=g\exp(n/z),$
where $g$ is holomorphic map from a neighbourhood of $x_j$ to $\HC$ satisfying
$g(x_j)\in P_{\alpha_j}$ and
$n$ is an element of $\plie_{\alpha_j}^1$. Similarly to (\ref{eq:parabolic-standard}), we
have
\begin{equation}
\label{eq:parabolic-wall}
\gG_{\alpha}=\gG_{\alpha}^{\wall}\qquad\Longleftrightarrow\qquad
\alpha_j\in \imag W\bar{\eA}=\imag\bigcup_{w\in W}w\bar\eA.
\end{equation}
%

Let $\alpha=(\alpha_1,\dots,\alpha_r)$ be a collection of elements of $\imag \bar \eA$.
%
%
Let $(E,\{Q_i\})$ be a parabolic bundle of weight $\alpha$, and let
$\eE^{\std}$ be the corresponding sheaf of $\gG_{\alpha}^{\std}$-torsors,
defined in the subsection above. Now,
$\gG_{\alpha}^{\std}$ is a subsheaf of $\gG_{\alpha}$
(this is not true for arbitrary values of $\alpha$: in fact, it is equivalent to the condition that
$\alpha_j$ belongs to $\bar\eA$ for each $j$).
Hence, one can associate a parahoric
bundle to $(E,\{Q_i\})$ by extending the structure group of $\eE^{\std}$. Namely,
$$\eE=\eE^{\std}\times_{\gG_{\alpha}^{\std}}\gG_{\alpha}.$$
Then equality (\ref{eq:parabolic-wall})
implies that the sheaf of groups $PE(H^{\CC})$ is canonically isomorphic to
the sheaf of automorphisms of $\eE$.

It is interesting to understand how to go the other way round, since this
explains why we need to identify different principal bundles.
Passing from $\eE$ to $\eE^{\std}$ is the same as reducing the structure group
from $\gG_{\alpha}$ to $\gG_{\alpha}^{\std}$, equivalently, choosing a section of $\eE/\gG_{\alpha}^{\std}$.
The latter is supported on $D$, and the stalk over $x_j$ can be identified, using
the residue map (hence, non canonically),
with $(\gG_{\alpha})_{x_j}/(\gG_{\alpha}^{\std})_{x_j}\simeq\plie_{\alpha_j}^1$.
Hence the collection of
all reductions from $\eE$ to $\eE^{\std}$ is parametrized by the vector space
$\prod_j\plie_{\alpha_j}^1$. The set of all reductions defines a holomorphic principal bundle
over $X\times \prod_j\plie_{\alpha_j}^1$,
so all possible reductions have topologically equivalent underlying principal $H^{\CC}$-bundles.
However, they will usually be different as holomorphic bundles.

\begin{example}
Suppose that $H=\SU_2$, so that $H^{\CC}=\SL_2\CC$ and assume that $V=L\oplus L^{-1}$ for some
line bundle $L\to X$. Such $V$ defines in the usual way a principal $\SL_2\CC$-bundle $E$.
Take $D=x$, and choose $\alpha\in \sqrt{-1}\liesu_2$ to be the diagonal matrix
with entries $1/2$ and $-1/2$.
Let $Q$ be the group of automorphisms of $L_x\oplus L_x^{-1}$ preserving the
summand $L_x$. Then $(E,Q)$ is a parabolic principal bundle with weight $\alpha$.
Let $\eE^{\std}$ be the sheaf of $\gG_{\alpha}^{\std}$-torsors associated to $(E,Q)$ and let $\eE=\eE^{\std}\times_{\gG_{\alpha}^{\std}}\gG_{\alpha}$. We have
$$\plie^1=\left\{\left(\begin{array}{cc} 0 & \lambda \\ 0 & 0\end{array}\right):\lambda\in\CC\right\}.$$
A choice of $\lambda$ defines a reduction of $\eE$ whose underlying principal $\SL_2\CC$-bundle
corresponds to a holomorphic vector bundle $V_{\lambda}$ sitting in a short exact sequence
$$0\to L\to V_{\lambda}\to L^{-1}\to 0.$$
We claim that in general this sequence does not split. The relation
between $V$ and $V_{\lambda}$ can be described explicitly in
terms of local trivialization; namely, passing from $V$ to
$V_{\lambda}$ consists on multiplying the patching map for
suitable trivializations of $V$ on $X\setminus\{x\}$ and
a disk $\Delta$ centred at $x$ by the map $$\Delta\setminus\{0\}\to\SL_2\CC,\qquad
z\mapsto \left(\begin{array}{cc} 1 & \lambda z^{-1} \\ 0 &
1\end{array}\right).$$
Choosing $L$ appropriately, the vector
bundle $V_{\lambda}$ is going to be (holomorphically) different
from $V$ when $\lambda\neq 0$.
\end{example}


\begin{definition}
\label{def:meromorphic-equivalence}
Let $E_0$, $E_1$ be two holomorphic
$H^{\CC}$-principal bundles and let
$\{Q_{i,j}\subset E_i(H^{\CC})_{x_j}\}$,
be parabolic structures on $E_i$, $i=0,1$, defined along $D$.
We say that $(E_0,\{Q_{0,i}\})$ and $(E_1,\{Q_{1,i}\})$ are {\bf meromorphically
equivalent} if, denoting by $\eE_0^{\std}$ and $\eE_1^{\std}$ the corresponding sheaves
of $\gG_{\alpha}^{\std}$-torsors, there is an isomorphism
$$\Psi:\eE_0\times_{\gG_{\alpha}^{\std}}\gG_{\alpha}
\to \eE_1\times_{\gG_{\alpha}^{\std}}\gG_{\alpha}$$ of
sheaves of $\gG_{\alpha}$-torsors. We call $\Psi$ a
{\bf meromorphic equivalence} between $(E_0,\{Q_{0,i}\})$ and
$(E_1,\{Q_{1,i}\})$.
\end{definition}

A choice of meromorphic equivalence between
$(E_0,\{Q_{0,i}\})$ and $(E_1,\{Q_{1,i}\})$
is the same thing as an
isomorphism of principal bundles
$$\psi:E_0|_{X^*}\to E_1|_{X^*}$$
satisfying the following property, for every $j$. Let $R_{i,j}\subset E_{i,x_j}$
be defined as at the end of the previous subsection. Let $U\subset X$ be a small
disk centred at $x_j$ and disjoint from all other points
of the support of $D$. Let $\sigma_i\in\Gamma(U,E_i)$, $i=0,1$,
be holomorphic sections satisfying $\sigma_i(x_j)\in R_{i,j}$.
Consider the holomorphic map $f:U\setminus\{x_j\}\to H^{\CC}$ defined by the condition
$$\psi(\sigma_0(y))=\sigma_1(y)f(y)\qquad\text{for every $y\in U\setminus\{x_j\}$}.$$
Then
$f\in\gG_{\alpha}=\gG_{\alpha}^{\wall}$.

\subsection{Hecke transformations}
\label{ss:Hecke-transformations}

Consider for each $j$
an element $\lambda_j\in \imag\tlie$ such that
$2\pi\imag\lambda\in\Lambda_{\cochar}$ (see Appendix
\ref{alcoves})
and let $\lambda=(\lambda_1,\dots,\lambda_r)$.
In this subsection we define
the Hecke transformation $\tau_{\lambda}$, which is
a 
natural 1---1 correspondence between
sheaves of $\gG_{\alpha}$-torsors
and sheaves of $\gG_{\alpha+\lambda}$-torsors.
%

Choose an isomorphism $T\simeq (S^1)^k$ and let
$\theta_i:S^1\to H$ be the composition of the inclusion of the
$i$-th factor $S^1\hookrightarrow T$ with the inclusion
$T\hookrightarrow H$. Let $\chi_i\in\imag\Lambda_{\cochar}$ be
defined by the condition that $\theta_i(e^{2\pi\imag
u})=\exp(u\chi_i)$ and write
$$\lambda_j=\sum_i \frac{\imag}{2\pi} a_{ji}\chi_i,$$
where $a_{ji}\in\ZZ$. Consider the invertible sheaf
$\lL_i=\oO(\sum_j a_{ji}x_j)$, and let $\sigma_i\in
H^0(X,\lL_i)$ satisfy $\sigma_i^{-1}(0)=\sum_j a_{ji}x_j$. Let
$\lL_i^*\subset\lL_i$ be the subsheaf of local nowhere
vanishing sections and let $\lL^*=\prod_i\lL_i^*$. Let $\oO^*$
be the sheaf of local nowhere vanishing holomorphic functions
on $X$. Note that $\lL^*$ is in a natural way a sheaf of
$(\oO^*)^k$-torsors.

Denote the complexification of $\theta_i$ by $\theta_i^{\CC}:\CC^*\to H^{\CC}$.
The morphisms $(\theta_1^{\CC},\dots,\theta_k^{\CC})$ give a monomorphism
of sheaves of groups
$$\Theta:(\oO^*)^k\to\gG$$
(recall that $\gG$ is the sheaf on $X$ of local holomorphic maps to $\HC$).

Fix some $\alpha=(\alpha_1,\dots,\alpha_r)$ with arbitrary components $\alpha_j\in\imag\tlie$.
Define the sheaf $\lL^*\otimes_{\Ad}\gG_{\alpha}$ to be the quotient of
$\lL^*\times\gG_{\alpha}$ by the relation that identifies
$$((l_1\psi_1,\dots,l_k\psi_k),\phi) \sim ((l_1,\dots,l_k),\Theta(\psi_1,\dots,\psi_k)\phi
\Theta(\psi_1,\dots,\psi_k)^{-1})$$ for every $x\in X$ and
elements $l_i\in(\lL_i^*)_x$, $\psi_i\in\oO^*_x$ and
$\phi\in(\gG_{\alpha})_x$ (the subindex $x$ denotes here as
usual the stalk over $x$). We next define a structure of sheaf
of groups on $\lL^*\otimes_{\Ad}\gG_{\alpha}$. It suffices to
describe the structure at the level of stalks. Given $x\in X$
and two elements
$\zeta,\zeta'\in(\lL^*\otimes_{\Ad}\gG_{\alpha})_x$ one can
take representatives of $\zeta$ and $\zeta'$ of the form
$\zeta=[((l_1,\dots,l_k),\phi)]$ and
$\zeta'=[((l_1,\dots,l_k),\phi')]$, and then we set
$\zeta\zeta'=[((l_1,\dots,l_k),\phi\phi')].$ This operation is
well defined and endows $\lL^*\otimes_{\Ad}\gG_{\alpha}$ with
the structure of sheaf of groups.

\begin{lemma}
\label{lemma:Hecke-sheaves-groups} The sheaf of groups
$\lL^*\otimes_{\Ad}\gG_{\alpha}$ is isomorphic to
$\gG_{\alpha+\lambda}$.
\end{lemma}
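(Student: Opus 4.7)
The plan is to construct an explicit morphism of sheaves of groups $\Phi: \lL^*\otimes_{\Ad}\gG_{\alpha} \to \gG_{\alpha+\lambda}$ together with its inverse. The key structural feature is that every $\theta_i^{\CC}$ factors through the abelian torus $T^{\CC}\subset H^{\CC}$, so any two elements of the form $\Theta(\vec\psi)$ commute with each other and with $|z_j|^{\alpha_j}$ and $|z_j|^{\lambda_j}$. This abelianness does most of the heavy lifting.

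First I would use the meromorphic section $\vec\sigma=(\sigma_1,\dots,\sigma_k)$ of $\lL^*$, which is nowhere vanishing on $X^*$, to define $\Phi$ uniformly: for a local section $[(\vec l,\phi)]$, set $\vec\psi := \vec l/\vec\sigma$ (componentwise) and put
$$\Phi\bigl([(\vec l,\phi)]\bigr) := \Theta(\vec\psi)\,\phi\,\Theta(\vec\psi)^{-1}.$$
Well-definedness on equivalence classes and multiplicativity both reduce to abelianness of $T^{\CC}$: if $(\vec l\cdot\vec\mu,\phi)\sim(\vec l,\Theta(\vec\mu)\phi\Theta(\vec\mu)^{-1})$, the two outputs agree because $\Theta(\vec\mu)$ and $\Theta(\vec\psi)$ commute; and $[(\vec l,\phi)]\cdot[(\vec l,\phi')]=[(\vec l,\phi\phi')]$ is preserved tautologically.

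The main obstacle, and the only genuinely nontrivial content, is showing that the image lands in $\gG_{\alpha+\lambda}$, which is the analytic growth condition at each $x_j$. This reduces to a local asymptotic calculation: near $x_j$ one has $l_i/\sigma_i = c_i z_j^{-a_{ji}}(1+O(z_j))$ with $c_i\neq 0$, and splitting $\log z_j=\ln|z_j|+\imag\arg z_j$ gives
$$\theta_i^{\CC}(l_i/\sigma_i) = \exp\!\bigl(-\ln|z_j|\cdot\tfrac{\imag a_{ji}}{2\pi}\chi_i\bigr)\cdot B_i(z_j),$$
where $B_i(z_j)$ is bounded and takes values in $T^{\CC}$. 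Taking the product over $i$ and using the definition $\lambda_j=\sum_i\tfrac{\imag}{2\pi}a_{ji}\chi_i$, one gets $\Theta(\vec\psi)=|z_j|^{\lambda_j}B(z_j)$ for some bounded $B(z_j)\in T^{\CC}$. Then, exploiting the fact that $|z_j|^{\alpha_j}$, $|z_j|^{\lambda_j}$ and $B$ all commute in $T^{\CC}$,
$$|z_j|^{-(\alpha_j+\lambda_j)}\,\Phi([(\vec l,\phi)])\,|z_j|^{\alpha_j+\lambda_j}=B\bigl(|z_j|^{-\alpha_j}\phi\,|z_j|^{\alpha_j}\bigr)B^{-1},$$
which is bounded since $\phi\in\gG_\alpha$.

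The inverse is built by the symmetric recipe: on a small disk $U$ around each $x_j$, choose a holomorphic trivialization $\vec e$ of $\lL^*|_U$ and send $\phi'\in\gG_{\alpha+\lambda}(U)$ to $[(\vec e,\Theta(\vec\sigma/\vec e)\,\phi'\,\Theta(\vec\sigma/\vec e)^{-1})]$. The same asymptotic analysis applied to $\vec\sigma/\vec e$ produces a factor $|z_j|^{-\lambda_j}$ that absorbs the extra weight, so the group component lies in $\gG_\alpha$; independence from $\vec e$ is built into the equivalence relation, so the local definitions glue to a global inverse. Away from $D$ both sheaves coincide with $\gG$ and $\Phi$ is visibly an inner automorphism, so the entire content of the lemma is concentrated in the local asymptotic identity $\Theta(\vec\psi)\sim|z_j|^{\lambda_j}$ at the punctures.
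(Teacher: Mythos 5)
Your proof is correct and takes essentially the same route as the paper's: the same conjugation map $\Theta(\vec l/\vec\sigma)\,\phi\,\Theta(\vec l/\vec\sigma)^{-1}$, the same reduction of the growth condition at $x_j$ to the local identity $\Theta(z^{-a_{j1}},\dots,z^{-a_{jk}})=z^{\lambda_j}$, and the same use of commutativity of $T^{\CC}$ to pull the bounded factor outside; you are merely more explicit than the paper about well-definedness on equivalence classes and about the inverse, which the paper leaves as "one checks easily". One cosmetic slip: the sign in your intermediate display $\exp\bigl(-\ln|z_j|\cdot\tfrac{\imag a_{ji}}{2\pi}\chi_i\bigr)$ is inconsistent with your (correct) conclusion $\Theta(\vec\psi)=|z_j|^{\lambda_j}B(z_j)$, on which the final boundedness computation rightly relies — the exponent should carry $+\ln|z_j|$.
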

\begin{proof}
We construct a morphism of sheaves of groups
$\Xi:\lL^*\otimes_{\Ad}\gG_{\alpha}\to \gG_{\alpha+\lambda}$ at
the level of stalks. Given $\zeta=[((l_1,\dots,l_k),\phi)]\in
(\lL^*\otimes_{\Ad}\gG_{\alpha})_x$ consider the expression
$$\mu=\Theta(l_1\sigma_1^{-1},\dots,l_k\sigma_k^{-1})
\cdot\phi\cdot
\Theta(l_1\sigma_1^{-1},\dots,l_k\sigma_k^{-1})^{-1}.$$ Taking
into account that each $l_i\sigma_i^{-1}$ is a germ of
meromorphic function at $x$, we can view $\mu$ as a germ of
meromorphic map from a neighbourhood $U$ of $x$ to $\HC$. We
claim that $\mu\in(\gG_{\alpha+\lambda})_x$. If
$x\notin\{x_1,\dots,x_r\}$ then this is obvious, since $\mu$ is
actually holomorphic. Suppose now that $x=x_j$. Let $z=z_j$ be
the local holomorphic coordinate near $x$ chosen in Subsection
\ref{ss:parahoric}. One checks
that 
$\Theta(z^{-a_{j1}},\dots,z^{-a_{jk}})=\exp((\ln
z)\lambda_j)=z^{\lambda_j}.$ Now:
\begin{align*}
\mu\in(\gG_{\alpha+\lambda})_x & \quad \Longleftrightarrow \quad
\Theta(z^{-a_{j1}},\dots,z^{-a_{jk}})\cdot\phi\cdot \Theta(z^{-a_{j1}},\dots,z^{-a_{jk}})^{-1} \in(\gG_{\alpha+\lambda})_x\\
&\quad \Longleftrightarrow \quad z^{\lambda_j}\phi z^{-\lambda_j} \in(\gG_{\alpha+\lambda})_x\\
&\quad \Longleftrightarrow \quad
|z_j|^{-(\alpha_j+\lambda_j)}z^{\lambda_j}\phi z^{-\lambda_j}|z_j|^{\alpha_j+\lambda_j}
\quad\text{uniformly bounded near $x$}\\
&\quad \Longleftrightarrow \quad
|z_j|^{-\alpha_j}\phi |z_j|^{\alpha_j}
\quad\text{uniformly bounded near $x$}\\
&\quad \Longleftrightarrow \quad
\phi\in(\gG_{\alpha})_x.
\end{align*}
This proves that setting $\Xi(\zeta)=\mu$ defines a morphism of
sheaves from $\lL^*\otimes_{\Ad}\gG_{\alpha}$ to
$\gG_{\alpha+\lambda}$. One checks easily that $\Xi$ is an
isomorphism of sheaves of groups.
\end{proof}

Now let $\eE$ be a parahoric bundle of weight $\alpha$. We
define its $\lambda$-Hecke transformation to be
$$\tau_{\lambda}(\eE)=\eE\otimes_{(\oO^*)^k}\lL^*,$$
where $\eE\otimes_{(\oO^*)^k}\lL^*$ denotes the quotient of
$\eE\times\lL^*$ by the relation that identifies, at any point
$x$, $$(\epsilon,(l_1\psi_1,\dots,l_k\psi_k))\sim
(\epsilon\cdot\Theta(\psi_1,\dots,\psi_k)^{-1},(l_1,\dots,l_k))$$
for every $\epsilon\in\eE_x$, $l_i\in(\lL_i^*)_x$ and
$\psi_i\in\oO^*_x$. This makes sense because
$\Theta(\psi_1,\dots,\psi_k)\in(\gG_{\alpha})_x$ (in fact this
holds for every $\alpha$), and $\eE$ is a sheaf of torsors over
$\gG_{\alpha}$. Now we define on $\eE\otimes_{(\oO^*)^k}\lL^*$
a structure of sheaf of
$\lL^*\otimes_{\Ad}\gG_{\alpha}$-torsors, by the condition that
$$(\epsilon,(l_1,\dots,l_k))\cdot((l_1,\dots,l_k),\phi)=(\epsilon\cdot\phi,(l_1,\dots,l_k)).$$
It is easy to prove that $\tau_{-\lambda}(\tau_{\lambda}(\eE))$
is naturally isomorphic to $\eE$. Taking into account Lemma
\ref{lemma:Hecke-sheaves-groups}, we obtain the following.

\begin{theorem}
\label{thm:Hecke} The $\lambda$-Hecke transformation
$\tau_{\lambda}$ establishes a natural 1--1 correspondence
between parahoric bundles of weight $\alpha$ and parahoric
bundles of weight $\alpha+\lambda$.
\end{theorem}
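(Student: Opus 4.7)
The plan is a short assembly of the ingredients already in hand. First, by construction $\tau_\lambda(\eE) = \eE \otimes_{(\oO^*)^k} \lL^*$ carries a canonical right action by $\lL^* \otimes_{\Ad} \gG_\alpha$, spelled out in the paragraph preceding the statement, making it a sheaf of $(\lL^* \otimes_{\Ad} \gG_\alpha)$-torsors. Composing this action with the group isomorphism $\Xi : \lL^* \otimes_{\Ad} \gG_\alpha \simeq \gG_{\alpha+\lambda}$ from Lemma \ref{lemma:Hecke-sheaves-groups} turns $\tau_\lambda(\eE)$ into a $\gG_{\alpha+\lambda}$-torsor, i.e.\ a parahoric bundle of weight $\alpha+\lambda$. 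Functoriality on morphisms is immediate from the tensor-product construction.

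For the inverse, I would apply the same recipe with $\alpha$ replaced by $\alpha+\lambda$ and $\lambda$ replaced by $-\lambda$; this is legitimate since $2\pi\imag(-\lambda) = -2\pi\imag\lambda \in \Lambda_{\cochar}$. This yields a functor $\tau_{-\lambda}$ from parahoric bundles of weight $\alpha+\lambda$ to parahoric bundles of weight $\alpha$. The natural isomorphism $\tau_{-\lambda}(\tau_\lambda(\eE)) \simeq \eE$ noted just above the theorem, together with its symmetric counterpart $\tau_\lambda(\tau_{-\lambda}(\fF)) \simeq \fF$, shows that $\tau_\lambda$ and $\tau_{-\lambda}$ are mutually inverse, completing the 1--1 correspondence.

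The only point that warrants a little care is the verification that the canonical pairings $\lL_i \otimes \lL_i^{-1} \simeq \oO$, which at the level of nowhere-vanishing sections give $\lL^* \otimes_{(\oO^*)^k} (\lL^*)^{-1} \simeq (\oO^*)^k$, are compatible with the two torsor actions under the group isomorphisms $\Xi$ associated to $\lambda$ and to $-\lambda$. This is a formal check at the level of stalks, using the explicit representatives $[((l_1,\dots,l_k),\phi)]$ for elements of $\lL^* \otimes_{\Ad} \gG_\alpha$ and $(\epsilon,(l_1,\dots,l_k))$ for elements of $\eE \otimes_{(\oO^*)^k} \lL^*$, together with the fact that $\Theta$ is a group homomorphism so that conjugation by $\Theta(l_1 \sigma_1^{-1},\dots,l_k \sigma_k^{-1})$ and by its inverse cancel when the two Hecke operations are composed. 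Once this compatibility is unwound, the theorem reduces entirely to Lemma \ref{lemma:Hecke-sheaves-groups} and the involutivity of the Hecke construction.
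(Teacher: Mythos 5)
Your proof is correct and follows essentially the same route as the paper's: the tensor-product construction of $\tau_{\lambda}$, the group isomorphism $\lL^*\otimes_{\Ad}\gG_{\alpha}\simeq\gG_{\alpha+\lambda}$ of Lemma \ref{lemma:Hecke-sheaves-groups}, and the natural isomorphism $\tau_{-\lambda}(\tau_{\lambda}(\eE))\simeq\eE$ applied symmetrically to obtain a two-sided inverse. Your additional remarks --- the symmetric counterpart $\tau_{\lambda}(\tau_{-\lambda}(\fF))\simeq\fF$ and the compatibility of the pairing $\lL_i\otimes\lL_i^{-1}\simeq\oO$ with the torsor actions --- merely spell out what the paper leaves as ``easy to prove.''
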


\subsection{Meromorphic maps to $H^{\CC}$}
\label{ss:meromorphic-maps}

Let $\Delta$ be the unit disc in $\CC$ centred at $0$, and let
$\Delta^*=\Delta\setminus\{0\}$. A holomorphic map
$g:\Delta^*\to H^{\CC}$ is said to be {\bf meromorphic} if for
any holomorphic morphism
$\rho:H^{\CC}\to\SL_N\CC \subset \End\CC^N$ there exists some
integer $k$ such that
$$\Delta^*\ni z\mapsto z^k\rho(g(z))\in \End\CC^N$$
extends to a holomorphic map $\Delta\to\End\CC^N$. By Riemann's
extension theorem the sections of the sheaves $\gG_{\alpha}$
are meromorphic in the previous sense.

%

\begin{lemma}\label{reduction-0}
  Let $P\subset H^\CC$ be a parabolic
  subgroup.  Let $\sigma: \Delta \lra H^\CC/P$ be a holomorphic map and let
  $g:\Delta^*\lra H^\CC$ be a meromorphic map at $0$. Then the map
  $g\cdot\sigma:\Delta^*\lra H^\CC/P$ defined by $g(z)\cdot\sigma(z)$ can be extended to a
  holomorphic map $\Delta\lra H^\CC/P$.
\end{lemma}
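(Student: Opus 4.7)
The plan is to reduce to the classical fact that a holomorphic map from a punctured disc into a projective variety extends holomorphically across the puncture. First I would realize $H^\CC/P$ as a closed subvariety of some projective space. Since $H^\CC$ is reductive and $P$ is parabolic, there is an ample $H^\CC$-equivariant line bundle $\lL$ on $H^\CC/P$, and the Borel--Weil construction produces a closed equivariant embedding $\iota:H^\CC/P\into\PP(V)$ with $V=H^0(H^\CC/P,\lL)^*$ for which $P$ is the $H^\CC$-stabilizer of a line $[v_0]\in\PP(V)$. To fit the format in the definition of meromorphic map, I would replace $\rho:H^\CC\to\GL(V)$ by $\tilde\rho:=\rho\oplus(\det\rho)^{-1}:H^\CC\to\SL(V\oplus\CC)$; the orbit of $[v_0\oplus 0]$ under $\tilde\rho$ coincides with the original orbit, so the embedding is unchanged.

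Next, because $\Delta$ is contractible I would pick a nowhere-vanishing holomorphic lift $v:\Delta\to V\smallsetminus\{0\}$ with $[v(z)]=\iota(\sigma(z))$ for all $z\in\Delta$. Applying the definition of meromorphic map to $\tilde\rho$ yields an integer $k\geq 0$ such that $A(z):=z^k\rho(g(z))$ extends to a holomorphic map $A:\Delta\to\End V$ (the extra summand $\CC$ plays no role). On $\Delta^*$ we then have
$$(\iota\circ(g\cdot\sigma))(z)=[\rho(g(z))v(z)]=[z^k\rho(g(z))v(z)]=[A(z)v(z)].$$
The holomorphic $V$-valued function $z\mapsto A(z)v(z)$ is not identically zero, since $v(z)\neq 0$ and $\rho(g(z))\in\GL(V)$ for $z\in\Delta^*$. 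Writing $A(z)v(z)=z^m w(z)$ with $w:\Delta\to V$ holomorphic and $w(0)\neq 0$, the map $z\mapsto[w(z)]$ is a holomorphic extension of $\iota\circ(g\cdot\sigma)$ to all of $\Delta$ as a map into $\PP(V)$.

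Finally, since $\iota(H^\CC/P)$ is closed in $\PP(V)$ and the extension lands in this subvariety on the dense subset $\Delta^*$, it must land there at $z=0$ as well. Composing with $\iota^{-1}$ then produces the desired holomorphic extension $\Delta\to H^\CC/P$.

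The only step that uses any Lie-theoretic structure is the initial construction of the equivariant projective embedding, which is entirely standard; everything else reduces to the elementary observation that a nowhere-vanishing holomorphic $V$-valued function on $\Delta^*$, multiplied by a meromorphic factor, still projects to a map into $\PP(V)$ that extends across $0$ after dividing out the order of vanishing. So I do not expect any serious obstacle.
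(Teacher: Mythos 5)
Your proof is correct, and it shares the paper's overall skeleton --- produce an equivariant closed embedding of $H^\CC/P$ into a projective space and then run an elementary extension argument there --- but the way you construct the embedding is genuinely different. The paper first fixes a faithful representation $\xi:H^\CC\into\SL_N\CC$ (extending an embedding $H\into\SU_N$), verifies that $P=P'\cap H^\CC$ for the corresponding parabolic $P'\subset\SL_N\CC$ so that $H^\CC/P$ sits as a closed subvariety of the partial flag variety $\SL_N\CC/P'$, and only then projectivizes via the Pl\"ucker embedding; the payoff of that detour is that the meromorphy hypothesis, which in the paper's definition is phrased for morphisms into $\SL_N\CC$, applies verbatim to $\xi\circ g$ with no adjustment. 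You instead embed the flag variety directly and equivariantly by Borel--Weil, collapsing the paper's two reduction steps into one, at the cost of having to repair the fact that your representation $\rho$ need not land in $\SL(V)$; your twist $\rho\oplus(\det\rho)^{-1}$ handles this correctly, since the $V$-block of $z^k\tilde\rho(g(z))$ is exactly $z^k\rho(g(z))$. You also spell out the final step that the paper dismisses as ``an immediate computation'': the nowhere-vanishing holomorphic lift $v$ on the contractible disc, the factorization $A(z)v(z)=z^m w(z)$ with $w(0)\neq 0$, and the use of closedness of the image to conclude that the extension lands back in the flag variety --- all of which is correct. In short, the paper's route needs the compatibility of parabolics under a faithful representation but stays with concrete linear-algebra embeddings, while yours invokes the (standard) Borel--Weil machinery and in exchange never has to compare the flag variety of $H^\CC$ with that of a larger group.
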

\begin{proof}
Choose an embedding $\xi:H\to\SU_N$ for some big $N$, and
denote by the same symbol both the holomorphic extension to the
complexifications $\xi:H^{\CC}\to\SL_N\CC$ and the induced
morphism $\xi:\hlie^{\CC}\to\liesl_N\CC$ of Lie algebras.
Suppose that $P=P_{\beta}$ for some $\beta\in\imag\hlie$ and
let $P'=P_{\xi(\beta)}\subset\SL_N \CC$ be the corresponding
parabolic subgroup. Identifying $H^\CC$ with $\xi(H^{\CC})$ we
may write $P=P'\cap H^{\CC}$, which implies that the inclusion
$\xi:H^{\CC}\hookrightarrow \SL_N\CC$ induces a (holomorphic)
inclusion $H^{\CC}/P\hookrightarrow \SL_N\CC/P'$ with closed
image. Hence it suffices to consider the case
$H^{\CC}=\SL_N\CC$. Then $\SL_N\CC/P'$ is a partial flag
variety. Via the Pl\"ucker embedding, we may further reduce our
statement to the following one: if $g:\Delta^*\lra \SL_N\CC$
is a map whose composition with the inclusion
$\SL_N\CC\hookrightarrow\End\CC^N$ is meromorphic, if
$\sigma:\Delta\to\PP^{N'}$ is a holomorphic map, and if there
is a linear action of $\SL_N\CC$ on $\PP^{N'}$, then
$g\cdot\sigma:\Delta^*\to \PP^{N'}$ extends to a holomorphic
map $\Delta\to\PP^{N'}$. This last statement follows from an
immediate computation, so the proof of the lemma is complete.
\end{proof}
%

This has the following immediate consequence.

\begin{lemma}
\label{reduction} Suppose that two parabolic principal
$H^{\CC}$-bundles $(E,\{Q_i\})$ and $(E',\{Q_i'\})$ are
meromorphically equivalent in the sense of Definition
\ref{def:meromorphic-equivalence}. Then for any parabolic
subgroup $P\subset H^{\CC}$ the holomorphic reductions of the
structure groups $\Gamma(E/P)$ and $\Gamma(E'/P)$ are in
bijection. The bijection is given by extending across $D$ the
natural holomorphic isomorphism $E|_{X\setminus D}\to
E'|_{X\setminus D}$ which exists in each of the two cases.
\end{lemma}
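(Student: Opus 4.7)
The plan is to reduce the global statement to the local extension problem at each puncture, which is exactly what Lemma~\ref{reduction-0} solves. Recall that reductions of the structure group from $H^{\CC}$ to $P$ are identified with holomorphic sections of $E/P = E(H^{\CC}/P)$, and similarly for $E'$. A meromorphic equivalence $\Psi$ unpacks, by the explicit description at the end of Subsection~3.2, into a holomorphic isomorphism $\psi:E|_{X^*}\to E'|_{X^*}$ with controlled behavior near $D$. Given $\sigma\in\Gamma(E/P)$, pushing forward by $\psi$ yields a holomorphic section $\psi_*\sigma\in\Gamma(X^*,E'/P)$; the whole content of the lemma is that $\psi_*\sigma$ extends holomorphically across each $x_i$ to give a section of $E'/P$ over all of $X$.

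To verify the extension, I would work locally on a disk $\Delta$ around a fixed puncture $x_i$, disjoint from the other $x_j$'s. Choose a local holomorphic trivialization of $E$ compatible with $Q_i$ and a local holomorphic trivialization of $E'$ compatible with $Q_i'$. In these trivializations, $\sigma$ becomes a holomorphic map $\tilde\sigma:\Delta\to H^{\CC}/P$, and the isomorphism $\psi$ is encoded by a holomorphic map $f:\Delta^*\to H^{\CC}$ defined (as at the end of Subsection~3.2) by $\psi(\sigma(y))=\sigma'(y)f(y)$. The meromorphic equivalence hypothesis tells us precisely that $f\in\gG_\alpha(\Delta)=\gG_\alpha^{\mathrm{wall}}(\Delta)$, and by the discussion opening Subsection~3.4 sections of $\gG_\alpha$ are meromorphic maps $\Delta^*\to H^{\CC}$ in the sense required by Lemma~\ref{reduction-0}.

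Thus Lemma~\ref{reduction-0} applies to the pair $(f,\tilde\sigma)$ and shows that $z\mapsto f(z)\cdot\tilde\sigma(z)$ extends to a holomorphic map $\Delta\to H^{\CC}/P$. Reinterpreting this back in terms of $E'/P$, the section $\psi_*\sigma$ extends holomorphically across $x_i$. Performing this extension at each of the finitely many points of $D$ yields the desired global reduction of $E'$ to $P$, which I denote $\Psi_*\sigma$. Symmetrically, the inverse meromorphic equivalence $\Psi^{-1}$ provides an inverse map $\Gamma(E'/P)\to\Gamma(E/P)$; the composition in either order is the identity on $X^*$ and hence, by holomorphicity and connectedness of the fibers of $E/P\to X$, the identity globally. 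This gives the required bijection.

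The only substantive point in the argument is the extension of $\psi_*\sigma$ across each puncture, and that is already handled cleanly by Lemma~\ref{reduction-0}; everything else is just unpacking of definitions. There is no hidden obstacle because the meromorphic-equivalence hypothesis was essentially tailored so that the local transition map $f$ lands in $\gG_\alpha$ and thereby satisfies the meromorphy condition needed to invoke Lemma~\ref{reduction-0}.
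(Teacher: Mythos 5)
Your proof is correct and follows exactly the route the paper takes: the paper states Lemma~\ref{reduction} as an immediate consequence of Lemma~\ref{reduction-0}, and your argument is precisely that derivation spelled out (unpack the meromorphic equivalence into a local transition map $f\in\gG_{\alpha}$ at each puncture, note that such maps are meromorphic in the sense of Subsection~\ref{ss:meromorphic-maps}, apply Lemma~\ref{reduction-0} to extend $f\cdot\tilde\sigma$, and use the inverse equivalence plus density of $X\setminus D$ for bijectivity). No gaps.
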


This statement can be summarized by saying that $P$-reductions are not affected by meromorphic equivalence of bundles. It it interesting to note that if $L\subset P$ is a Levi subgroup, the same is not true for $L$-reductions. More precisely, if $E$ and $E'$ are as in the lemma, then a $L$-reduction for $E$ may not give a $L$-reduction to $E'$ (it is easy to construct an explicit example on a $\SL_2\CC$-bundle). This is because $H^\CC/P$ is compact but $H^\CC/L$ is not so the reduction may `escape' at infinity.

\section{Parabolic $G$-Higgs bundles}
\label{parabolic-higgs-bundles}

\subsection{Definition of parabolic $G$-Higgs bundle}
\label{sec:defin-parab-g}

Following the definition and notation in Appendix \ref{reductive}, let
$G=(G,H,\theta,B)$ be a real reductive Lie group.  Let $X$ be a compact
connected Riemann surface and let $\{x_1,\cdots,x_r\}$ be a finite set of
different points of $X$. Let $D=x_1+\cdots +x_r$ be the corresponding
effective divisor.  Let $E$ be a parabolic principal $H^\setC$-bundle over
$(X,D)$.  Let $E(\liemc)$ be the bundle associated to $E$ via the
isotropy representation (see Appendix \ref{reductive}).

We now define the \textbf{sheaf $PE(\liemc)$ of parabolic sections of}
$E(\liemc)$ and the \textbf{sheaf $NE(\liemc)$ of strictly parabolic
  sections of $E(\liemc)$}. These consist of meromorphic sections of
$E(\liemc)$, holomorphic on $X\setminus D$, with singularities
of a certain type on $D$.  More precisely, choose a holomorphic
trivialization $e_i$ of $E$ near $x_i$ compatible with the
parabolic structure. Let $\alpha_i\in \imag\bar \eA$ be the
parabolic weight at $x_i$. In the trivialization $e_i$, we can
decompose the bundle $E(\liemc)$ under the eigenvalues of
$\ad(\alpha_i)$ (acting on $\liem^\CC$),
$$ E(\liemc)=\oplus_\mu\liemc_\mu . $$
Decompose accordingly a section $\varphi$ of $E(\liemc)$ as $\varphi=\sum \varphi_\mu$, then
we say that $\varphi$ is a section of the sheaf $PE(\liemc)$ (resp.
$NE(\liemc)$) if $\varphi$ is meromorphic at $x_i$, and $\varphi_\mu$ has order
\begin{equation}
  \label{eq:12}
  v(\varphi_\mu) \geq -\lfloor-\mu\rfloor  \quad (\text{resp. }v(\varphi_\mu)>-\lfloor-\mu\rfloor).
\end{equation}
This means that if $a-1< \mu\leq a$ (resp. $a-1\leq \mu<a$) for some integer
$a$, then $\varphi_\mu=O(z^a)$.

An equivalent way to define it is to say that a section of
$PE(\liemc)$ is a holomorphic section of the bundle
$$\oplus_\mu\liemc_\mu(\lfloor-\mu\rfloor x_i).$$ Of course, in general, if we take a
holomorphic bundle with some decomposition at a point, this
construction does not make sense, because the result depends on the
extension of the decomposition near the point. However, the following
lemma proves that, in our case, the definition does not depend on the
choice of the trivialization.

\begin{lemma}
  The action of  a section $g$ of the sheaf $PE(H^\setC)$ preserves the
  set of sections of the sheaves  $PE(\liemc)$ and $NE(\liemc)$.
\end{lemma}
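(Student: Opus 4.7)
The plan is to reduce the statement to a boundedness computation by re\-expressing the definitions of $PE(\liemc)$ and $NE(\liemc)$ in the spirit of the equivalent description (\ref{eq:33}) of $PE(H^\setC)$. Working in a fixed compatible holomorphic trivialization $e_i$ near $x_i$ and using the decomposition $E(\liemc)=\bigoplus_{\mu}\liemc_{\mu}$ into eigenspaces of $\ad(\alpha_i)$, the first step is the observation that $\Ad(|z|^{-\alpha_i})$ acts as multiplication by $|z|^{-\mu}$ on $\liemc_{\mu}$. Hence a meromorphic section $\varphi=\sum\varphi_{\mu}$ satisfies $v(\varphi_{\mu})\geq -\lfloor-\mu\rfloor$ (resp.~$>-\lfloor-\mu\rfloor$) if and only if $\Ad(|z|^{-\alpha_i})\varphi$ is uniformly bounded on the punctured disk (resp.~tends to $0$ as $z\to x_i$). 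The equivalence is immediate because the $z$-adic valuation of a nonzero meromorphic function is an integer, so boundedness of $|z|^{-\mu}\varphi_{\mu}$ forces $v(\varphi_{\mu})\geq\lceil\mu\rceil=-\lfloor-\mu\rfloor$.

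Once this reformulation is in hand, the rest is a direct computation. For $g\in PE(H^\setC)$ and $\varphi\in PE(\liemc)$, the identity
\[
\Ad(|z|^{-\alpha_i})\bigl(\Ad(g)\varphi\bigr)=\Ad\!\bigl(|z|^{-\alpha_i}g(z)|z|^{\alpha_i}\bigr)\,\Ad(|z|^{-\alpha_i})\varphi,
\]
together with (\ref{eq:33}) and continuity of $\Ad\colon H^\setC\to\End(\liemc)$, displays the left-hand side as the product of a uniformly bounded family of operators and a uniformly bounded section. Hence $\Ad(g)\varphi$ still satisfies the boundedness characterization, and the identical argument with ``bounded'' replaced by ``vanishing at $x_i$'' yields the statement for $NE(\liemc)$.

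It remains to check meromorphy of $\Ad(g)\varphi$ in the trivialization $e_i$. Writing a local section of $PE(H^\setC)$ as $g=g_0(z)\exp(n/z)$ with $g_0$ holomorphic near $x_i$ and $n\in\lieq_i^1$, the fact (noted just after the definition of $PE(H^\setC)$) that $\lieq_i^1$ is the Lie algebra of the unipotent abelian group $Q_i^1$ makes $\ad(n)$ nilpotent on $\liemc$ (indeed $\ad(n)$ shifts $\ad(\alpha_i)$-eigenvalues by $-1$, and those eigenvalues on $\liemc$ lie in a bounded range because $\alpha_i\in\imag\bar{\eA}$). Therefore $\Ad(\exp(n/z))=\exp(\ad(n/z))$ is a polynomial in $1/z$, so $\Ad(g)\varphi$ is meromorphic at $x_i$ as a section of $E(\liemc)$, completing the verification.

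I expect no serious obstacle: the only slightly delicate point is the equivalence between the order condition and the boundedness condition at integer eigenvalues $\mu$, but the meromorphy hypothesis on $\varphi$ rules out logarithmic subtleties, and the continuity of $\Ad$ takes care of everything else.
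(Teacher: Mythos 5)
Your argument is sound, but it goes by a genuinely different route than the paper's. The paper proves the lemma by direct expansion: it factors a local section of $PE(H^\setC)$ into $\exp(n/z)$ with $n\in\lieq_i^1$, a constant element of $Q_i$, and a holomorphic factor equal to $1$ at $x_i$, and in each of the three cases expands $\Ad(g)\varphi$ in iterated brackets, checking that every term still satisfies (\ref{eq:12}): $\ad(n)$ shifts $\ad(\alpha_i)$-eigenvalues down by $1$ against the factor $1/z$; bracketing with the unipotent part of $Q_i$ moves components to lower eigenvalues, where the condition is weaker; and writing the third factor as $\exp(zu)$, the term $z^k\ad(u)^k\varphi$ gains valuation $k$ while eigenvalues rise by at most $k$ (this is where the bound $|\lambda|\leq 1$ on the $\liehc$-spectrum from Proposition \ref{prop-alcove} enters). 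You instead recast the definitions of $PE(\liemc)$ and $NE(\liemc)$ as growth conditions, in exact parallel with (\ref{eq:33}), after which gauge invariance is essentially formal — a uniformly bounded operator family applied to a bounded (resp.\ decaying) section — plus a short nilpotency argument for meromorphy (which, incidentally, you could also get for free from the growth bound via Riemann extension, as the paper does for $\gG_\alpha$ in Section \ref{ss:meromorphic-maps}). Your route is shorter, treats an arbitrary section of $PE(H^\setC)$ at once, and fits the paper's stated philosophy of defining the sheaves at the punctures by growth conditions with respect to model metrics; the paper's computation, on the other hand, records exactly how each eigencomponent transforms, which it immediately reuses to show that $\Gr\Res_{x_i}\varphi$ changes by $\Ad(e^n)$ under such gauge transformations.

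The one substantive caveat concerns $NE(\liemc)$. Your claimed equivalence, decay of $\Ad(|z|^{-\alpha_i})\varphi$ iff $v(\varphi_\mu)>-\lfloor-\mu\rfloor$ for all $\mu$, fails at non-integer eigenvalues: for $\mu=1/2$ and $\varphi_\mu=z$ one has $|z|^{-\mu}\varphi_\mu\to 0$ while $v(\varphi_\mu)=1=-\lfloor-\mu\rfloor$. Decay is equivalent to $v(\varphi_\mu)>\mu$, i.e.\ $v(\varphi_\mu)\geq\lfloor\mu\rfloor+1$. This discrepancy is inherited from the paper itself, whose definition of $NE(\liemc)$ is internally inconsistent at non-integer $\mu$: the displayed strict inequality in (\ref{eq:12}) conflicts with the parenthetical gloss ``if $a-1\leq\mu<a$ then $\varphi_\mu=O(z^a)$'' and with the description, for weights in $\imag\eA'_\lieg$ of (\ref{very-good-guys}), of $NE(\liemc)$ as holomorphic sections valued in $\lien_i$ at $x_i$. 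Your decay condition agrees with the latter two, i.e.\ with the evidently intended definition, so your proof is correct for it; but under the literal inequality of (\ref{eq:12}) your multiplication argument would not go through, since $\Ad(|z|^{-\alpha_i}g|z|^{\alpha_i})$ mixes eigencomponents with different required decay rates. Two minor points of hygiene: boundedness of the $\ad(\alpha_i)$-spectrum on $\liemc$ does not follow from $\alpha_i\in\imag\bar\eA$ — Proposition \ref{prop-alcove} controls the spectrum on $\liehc$ only, and on $\liemc$ eigenvalues can exceed $1$, which is precisely why the paper introduces $\eA'_\lieg$ — but finite-dimensionality of $\liemc$ is all your nilpotency argument needs; and ``uniformly bounded'' in (\ref{eq:33}) should be read as ``contained in a compact subset of $H^\setC$'' (as in the definition of $\gG_\alpha$ in Section \ref{s:parahoric}), so that continuity of the isotropy representation yields the uniform operator bound your estimate requires.
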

\begin{proof}
  We write the proof only for $PE(\liemc)$.  The first case is that of
  a section $g=\exp(n/z)$ with $n\in \lieq_i^1$.  Then, for $\varphi\in E(\liemc)$,
  \begin{align*}
    \Ad(g)\varphi&=e^{\ad \frac nz}\varphi\\
    &=\varphi+[\frac nz,\varphi]+\frac{1}{2}[\frac nz,[\frac nz,\varphi]]+\cdots
  \end{align*}
  Since $[\alpha_i,n]=-n$, one has $[n,\liemc_\mu]\subset\liemc_{\mu-1}$, so
  $\Ad(g)\varphi$ satisfies (\ref{eq:12}) if $\varphi$ does.

  The second case is that of a constant $g\in Q_i$: it is clear that
  nothing is changed if $g$ belongs to the Levi subgroup $L_{\alpha_i}$
  (see Section \ref{sec:parabolic-subgroups-1}), so we can suppose that $g$
  belongs to the unipotent part of $Q_i$: let us write $g=\exp(n)$
  with
  \begin{equation}
    n\in \oplus_{\lambda<0}\liehc_\lambda,\label{eq:1}
  \end{equation}
  where $\liehc=\oplus_\lambda\liehc_\lambda$ is the eigenspace decomposition of
  $\liehc$ under the action of $\ad(\alpha_i)$. Then, as above,
  \begin{equation*}
    \Ad(g)\varphi =\varphi+[n,\varphi]+\frac{1}{2}[n,[n,\varphi]]+\cdots
  \end{equation*}
  Because of (\ref{eq:1}), if $\varphi\in \liemc_\mu$ then $[n,\varphi]\in
  \oplus_{\lambda<0}\liemc_{\lambda+\mu}$, and more generally $\Ad(g)\varphi-\varphi\in
  \oplus_{\mu'<\mu}\liemc_{\mu'}$, so that $\Ad(g)\varphi$ again satisfies
  (\ref{eq:12}).

  The third and last case consists in applying a holomorphic change of
  trivialization by a $g\in H^\setC$ such that $g(x_i)=1$. Let us write
  $g=\exp(zu)$ with $u\in\liehc$ holomorphic, and decompose $u=\oplus_\lambda u_\lambda$.
  Then, again,
  $$ \Ad(g)\varphi = \varphi + z[u,\varphi] + \frac{z^2}{2}[u,[u,\varphi]] + \cdots $$
  Here the important point is that all the eigenvalues $\lambda$ satisfy
  $|\lambda|\leq 1$, so that if $\varphi\in \liemc_\mu$, then $\ad(u)^k\varphi\in \oplus_{\mu'\leq
    \mu+k}\liemc_{\mu'}$ and $z^k\ad(u)^k\varphi$ again satisfies
  (\ref{eq:12}). This concludes the proof that the sheaf $PE(\liemc)$
  is well defined.
\end{proof}

\begin{remark}
\label{rmk:meromorphic-equivalence} In terms of meromorphic
equivalences (see Section
\ref{ss:parahoric}), the previous lemma can be
stated as follows. If $(E,\{Q_i\})$ and $(E',\{Q_i'\})$ are two
parabolic principal $H^{\CC}$-bundles, then any meromorphic
equivalence $\psi:E|_{X\setminus D}\to E'|_{X\setminus D}$
induces an isomorphisms of sheaves $PE(\mlie^{\CC})\to
PE'(\mlie^{\CC})$ and $NE(\mlie^{\CC})\to NE'(\mlie^{\CC})$.
\end{remark}

The sheaves $PE(\liemc)$ and $NE(\liemc)$ have a much simpler
description when $\alpha_i\in \imag\eA'_\lieg$, where

\begin{equation}\label{very-good-guys}
\eA'_\lieg=\{\alpha \in \bar\eA \;\;  \mbox{such that the eigenvalues $\lambda$
  of}\;\ad(\alpha) \; \mbox{on} \;  \lieg \; \mbox{satisfy}\;|\lambda|<1\}.
\end{equation}

So the eigenvalues of $\ad(\alpha)$ have
modulus smaller than 1, not only on $\lieh$, but on the whole $\lieg$,
and in particular on $\liem$ (one can often choose $\eA$ so that this
happens). To show this, consider for $\alpha\in \imag \lieh$ the subspaces
of $\liem^\C$ defined by
\begin{align*}
&\liem_{\alpha}=\{v\in \liem^\C\ :\ \Ad(e^{t\alpha})v
\text{ is bounded as}\;\; t\to\infty\}\\
&\liem^0_{\alpha}=\{v\in \liem^\C\ :\ \Ad(e^{t\alpha})v=v\;\;
\mbox{for every} \;\; t\}.
\end{align*}
We have that $\liem_\alpha^0\subset \liem_\alpha$ and we can choose a complement
$\lien_\alpha$ so that $\liem_\alpha =\liem_\alpha^0\oplus \lien_\alpha$

Recall that when $\alpha_i\in\imag \eA'$, the parabolic structure at $x_i$ is
given by a parabolic subgroup $Q_i\subset E(H^{\setC})_{x_i}$ isomorphic to
$P_{\alpha_i}$.  This determines an isomorphism of $E(\liem^\CC)_{x_i}$
with $\liem^\C$.  We can then define the subspaces $\liem_i$,
$\liem^0_i$ and $\lien_i$ of $E(\liem^\CC)_{x_i}$ corresponding to
$\liem_{\alpha_i}$, $\liem_{\alpha_i}^0$ and $\lien_{\alpha_i}$, respectively.
Then, when $\alpha_i\in\imag \eA'_\lieg$ the \textbf{sheaf $PE(\liemc)$ of
  parabolic sections of} $E(\liemc)$ is the sheaf of local holomorphic
sections $\psi$ of $E(\liemc)$ such that $\psi(x_i)\in \liem_i$. Similarly,
the \textbf{sheaf $NE(\liemc)$ of nilpotent sections of} $E(\liemc)$
is the sheaf of local holomorphic sections $\psi$ of $E(\liemc)$ such
that $\psi(x_i)\in \lien_i$.  We then have short exact sequences of sheaves
\begin{displaymath}
  0 \to PE(\liemc) \to E(\liemc) \to   \bigoplus_i
  E(\liemc)_{x_i}/\liem_i \to  0,
\end{displaymath}
and
\begin{displaymath}
  0 \to NE(\liemc) \to E(\liemc) \to   \bigoplus_i
  E(\liemc)_{x_i}/\lien_i \to  0.
\end{displaymath}

After these preliminaries, we can define a \textbf{parabolic $G$-Higgs
  bundle} to be a pair of the form $(E,\varphi)$, where $E$ is a parabolic
$\HC$-principal bundle over $(X,D)$ and $\varphi$ --- the
\textbf{Higgs field} --- is a holomorphic section
of $PE(\liemc)\otimes K(D)$. We shall say that $(E,\varphi)$ is
\textbf{strictly parabolic} if in addition $\varphi$ is a section of
$NE(\liemc)\otimes K(D)$.

\begin{remark}\label{general-pairs} It is worth point out that these definitions can be extended to more general
Higgs pairs, where we replace $\liem^\C$ by an arbitrary representation
of $H^\C$ (see \cite{garcia-prada-gothen-mundet:2009a}).
\end{remark}

We now define the residue of $\varphi$ at the points $x_i$. This is again
much simpler if the weights $\alpha_i\in \imag\eA'_\lieg$.  The Higgs field $\varphi$ is
then a meromorphic section of $E(\liemc)\otimes K$ with a simple pole at
$x_i\in D$ and the \textbf{residue} of $\varphi$ at $x_i$ is hence an element
$$
\Res_{x_i}\varphi\in \liem_i.
$$
We denote the projection of $\Res_{x_i}\varphi$ in $\liem^0_i$
by $\Gr\Res_{x_i}\varphi$.  The space $\liem^0_i$ is invariant
under the action of $L_i\subset Q_i$, the subgroup
corresponding to the Levi subgroup $L_{\alpha_i}\subset
P_{\alpha_i}$. As we will see, the orbit of this projection
under $L_i$  is what is relevant in relation to local systems
and the construction of the appropriate moduli space.

More generally, if $\alpha_i\in \imag\bar\eA$, $\varphi$ is a
section of $PE(\liemc)\otimes K(D)$, with
$PE(\liemc)\simeq\oplus_\mu\liemc_\mu(\lfloor-\mu \rfloor x_i)$
in a neighbourhood of $x_i$. Choosing a holomorphic
trivialization of $\cO(D)$ at the point $x_i$, we can identify
the fibre of $PE(\liemc)$ at $x_i$ with $\liemc$, and then
project the residue of $\varphi$ at $x_i$ to the space
$\widetilde{\liem^0_i}\subset E(\liem^\C)_{x_i}$ corresponding
to
\begin{equation}\label{centralizer}
\widetilde{\liem^0_{\alpha_i}}:=\ker_{\liem^\C}(\Ad(\exp{2\pi\imag \alpha_i})-1).
\end{equation}

 We denote again  this projection by
$\Gr\Res_{x_i}\varphi$. (In the case where $\alpha_i\in \imag\eA'_{\lieg}$,
$\widetilde{\liem^0_i}= \liem^0_i$, and this is just
the projection on the Levi part $\liem_i^0$ as mentioned above.)
Now what will be relevant is the orbit of $\Gr\Res_{x_i}\varphi$ under the
group $\widetilde{L_i}$ corresponding
to
\begin{equation}\label{stabilizer}
\widetilde{L_{\alpha_i}}:= \Stab_{H^\C}(\exp{2\pi\imag \alpha_i})
\end{equation}
under the isomorphism of $Q_i$ with $P_{\alpha_i}$ given by the parabolic
structure of $E$.

In concrete terms, if we have a local coordinate $z$ near $x_i$ and a
holomorphic trivialization of $E$ near $x_i$, we can write
$$ \varphi = \sum_{\mu} \frac{\varphi_\mu}{z^{\lfloor-\mu\rfloor}} \frac{dz}{z} , $$
with $\varphi_\mu$ holomorphic, and then
$$\Gr\Res_{x_i}\varphi=\sum_{\mu\in \setZ}\varphi_\mu(0).$$
If we change the coordinate, $z'=fz$, then
$$ \varphi = \sum_{\mu} f^{\lfloor-\mu\rfloor}
\frac{\varphi_\mu}{(z')^{\lfloor-\mu\rfloor}} \left(\frac{dz'}{z'}-\frac{df}{f}\right) , $$
and $\sum_{\mu\in \setZ}\varphi_\mu(0)$ is changed into
$$ \sum_{\mu\in \setZ}f(0)^{-\mu} \varphi_\mu(0) = \Ad(e^{-\alpha_i\ln f(0)})
\sum_{\mu\in \setZ}\varphi_\mu(0) , $$ for any choice of logarithm of $f(0)$. So we
deduce that $\Gr\Res_{x_i}\varphi$ is well defined up to the action of the
1-parameter group generated by $\alpha_i$. Note that this ambiguity exists
only in the case where $\ad(\alpha_i)$ has non zero integer eigenvalues on
$\liemc$. In particular, the orbit of $\Gr\Res_{x_i}\varphi$ is well-defined
under the action of $\widetilde{L_i}$.

We must also verify that the definition of $\Gr\Res_{x_i}\varphi$ does not
depend on the choice of a gauge $g\in PE(H^\setC)$. The only significant
case is that of a $g(z)=\exp \frac nz$ with $n\in \lieq_i^1$. Then
$$ \Ad(g(z))\varphi = \varphi+\frac 1z[n,\varphi]+\frac 1{2z^2}[n,[n,\varphi]]+\cdots $$
Since $[n,\liemc_\mu]\subset\liemc_{\mu-1}$, it follows that $\Gr\Res_{x_i}\varphi$ is
transformed into $$\Ad(e^n)\Gr\Res_{x_i}\varphi.$$ The other cases are left to
the reader.

Let us now see what the definition means in three simple cases. The
first case is the one of Example \ref{ex:defin-parab-princ}, that is
$G=\GL_n\setC$. Here a $G$-Higgs bundle is just an ordinary Higgs bundle.
In that case, the ambiguity on $\Gr\Res_{x_i}\varphi$ does not show up. The
Higgs field $\varphi$ is a meromorphic 1-form with simple poles at the $x_i$
such that $\Res_{x_i}\varphi$ preserves the flag, and $\Gr\Res_{x_i}\varphi$ is
the endomorphism induced by $\Res_{x_i}\varphi$ on the associated graded
space (hence our notation $\Gr\Res_{x_i}\varphi$).

The second example, in which the ambiguity shows up, is $G=\SL_2\setR$ and
$H=U_1$. A $G$-Higgs bundle in this case is given by an $H^\setC$-bundle
$E$, which is equivalent to the data of the  line bundle $L=E(\setC)$, and a Higgs field $\varphi$
which is a
1-form with values in $E(\liemc)=L^2\oplus L^{-2}$. The parabolic structure
at a point $x$ is given by a weight $\alpha$ which we can take in the
interval $(-\frac12,\frac12]$. The eigenvalues of $\ad(\alpha)$ on $\liemc$
are $\pm 2\alpha$, and integer eigenvalues $\pm 1$ appear only for
$\alpha=\frac12$. Let us examine this case more carefully. If we represent
$\alpha\in \imag\lieu_1$ as the matrix
\begin{equation}
 \alpha=\begin{pmatrix} \frac12 & 0 \\ 0 & -\frac12 \end{pmatrix} ,\label{eq:30}
 \end{equation}
then we obtain, for holomorphic $\varphi_\pm$,
\begin{equation}
  \label{eq:2}
  \varphi = \begin{pmatrix} 0 & z\varphi_+ \\ \frac1z\varphi_- & 0 \end{pmatrix}\frac{dz}z,
\quad \Gr\Res_x\varphi=\begin{pmatrix} 0 & \varphi_+(0) \\ \varphi_-(0) & 0 \end{pmatrix},
\end{equation}
and $\Gr\Res_{x_i}\varphi$ is defined up to
$$\begin{pmatrix} 0 & \varphi_+(0) \\ \varphi_-(0) & 0 \end{pmatrix}
\longrightarrow
\begin{pmatrix} 0 & c\varphi_+(0) \\ c^{-1}\varphi_-(0) & 0 \end{pmatrix}.
$$
Of course there is another way to consider the same object: we can
think of $(E,\varphi)$ as a $\GL_2\setC$-Higgs bundle, the underlying holomorphic
bundle of which is $L\oplus L^{-1}$, with parabolic weights
$(\frac12,-\frac12)$.  Actually, to get weights in a correct interval,
it is better to consider $L\oplus L^{-1}(D)$, with weights
$(\frac12,\frac12)$.  If $(e_1,e_2)$ is a trivialization of $L\oplus
L^{-1}$, then $(e_1,z^{-1}e_2)$ is a trivialization of $L\oplus L^{-1}(D)$
and the Higgs field (\ref{eq:2}) becomes
$$\varphi = \begin{pmatrix} 0 & \varphi_+ \\ \varphi_- & 0 \end{pmatrix}\frac{dz}z,$$
so that $\Gr\Res_{x_i}\varphi$ coincides with the one obtained in
(\ref{eq:2}), with the same ambiguity coming from the choice of
a trivialization of $\cO(D)$.

The third and last example is just our second example considered for
the group $G=\SL_2\setC$, so $H=\SU_2$. Then $H^\setC=\SL_2\setC$ and
$\liemc=\lies\liel_2\setC$. The difference is now that the weight
(\ref{eq:30}) has nonzero integer eigenvalues on $\liehc$ itself, so
one must consider gauge transformations which are meromorphic:
$$ g =
\begin{pmatrix}
  O(1) & O(z) \\ O(\frac 1z) & O(1)
\end{pmatrix}.
$$
This gauge transformation becomes holomorphic in the $\GL_2\setC$-gauge
$(e_1,z^{-1}e_2)$ considered above, and all the data of the $G$-Higgs
bundle is equivalent to that of the $\GL_2\setC$-Higgs bundle obtained
after this Hecke transformation. Nevertheless, it is useful to have a
general definition which does not require to change the group.

\subsection{Stability of parabolic $G$-Higgs bundles}\label{stability}

The notion of stability, semistability and polystability of a
parabolic $G$-Higgs bundle depends on an element of $\imag\liez$,
where $\liez$ is the centre of $\hlie$.  We will develop the theory
here for any element in $\imag\zlie$.  However, in order to relate
parabolic $G$-Higgs bundles to $G$-local systems, one requires this
element to lie also in the centre of $\lieg$, and actually to be
$0$. This is always the case in particular if $G$ is semisimple.

Let $s\in \imag \lieh$. We consider the parabolic subgroup $P_s$ of
$H^\C$, and the corresponding Levi subgroup $L_s$ as defined in
Section \ref{sec:parabolic-subgroups-1}.  Let $\chi_s$ be the corresponding
antidominant character of $\liep_s$, where $\liep_s$ is the Lie
algebra of $P_s$.  We consider

\begin{align*}
&\liem_{s}=\{v\in \liem^\C\ :\ \Ad(e^{ts})v
\text{ is bounded as}\;\; t\to\infty\}\\
&\liem^0_{s}=\{v\in \liem^\C\ :\ \Ad(e^{ts})v=v\;\;
\mbox{for every} \;\; t\}.
\end{align*}

One has that $\liem_s$ is invariant under the action of $P_s$ and
$\liem^0_s$ is invariant under the action of $L_s$. If $G$ is complex,
$\liem^\C=\mathfrak{g}$ and hence the isotropy representation $\iota$
coincides with the adjoint representation, then
$\liem_s=\mathfrak{p}_s$ and $\liem^0_s=\mathfrak{l}_s$.

We need to consider the subalgebra of $\lieh$ defined by
\begin{equation}
\lieh_0=\lieh\cap(\cap_{\chi \text{ character of }\lieg} \ker \chi),\label{eq:44}
\end{equation}
or, equivalently, by $\lieh_0=(\lieh\cap \liez(\lieg))^\perp$.

Let $(E,\varphi)$ be a parabolic $G$-Higgs bundle over $(X,D)$.  Let $\liez$
be the Lie algebra of $Z(H)$, and let $c\in\imag\liez$. We say that
$(E,\varphi)$ is $c$-\textbf{semistable} if for every $s\in\imag \lieh$ and
any holomorphic reduction of the structure group of $E$ to $P_s$, $\sigma$,
such that $\varphi|_{X\setminus D}\in H^0(X\setminus D, E_{\sigma}(\liem_s)\otimes K)$, where $E_\sigma$ is
the principal $P_s$-bundle obtained from the reduction $\sigma$, we have
\begin{equation}\label{stability-condition}
\pardeg E(\sigma,\chi_s)-\la c,s\ra\geq 0.
\end{equation}
If we always have strict inequality for $s\in\imag \lieh_0$ we say that
$(E,\varphi)$ is $c$-\textbf{stable}.  The Higgs bundle $(E,\varphi)$
is $c$-\textbf{polystable} if it is semistable, and if equality occurs
for some $s\in\imag \lieh_0$ and $\sigma$, than there is a meromorphically equivalent Higgs bundle $(E',\varphi)$ and a further holomorphic reduction
$\sigma_{L_s}$ of the structure group to $L_s$, so that
\begin{enumerate}
\item $\varphi|_{X\setminus D}\in H^0(X\setminus D,E_{\sigma_{L_\sigma}}(\liem^0_s)\otimes K)$;
\item the bundle $E_{L_s}$ has a parabolic structure at the points of $D$ which is compatible with that of $E'$ in the sense that the parabolic bundle $E'$ is induced from $E_{L_s}$ through the injection $L_s\hookrightarrow H^\CC$ as in Remark \ref{rem:induced-par-str}; this implies in particular that the parabolic weights $\alpha_i$ of $E$ at the punctures actually lie in $\liel_s$.
\end{enumerate}
In this definition, in contrast to parabolic reductions, it is important to allow the reduction to the Levi to exist on a meromorphically equivalent bundle, see Lemma \ref{reduction} and the remarks following it. It then follows that our various stability conditions are invariant by meromorphic equivalence.

We will refer to $0$-stability simply as stability.

\begin{remark}
  If the weights are in $\imag \eA'_\lieg$ we can define the sheaf
  $PE_{\sigma}(\liem_s)$ of parabolic sections of $E_{\sigma}(\liem_s)$ as the
  sheaf of holomorphic sections $\psi$ of $E_\sigma(\liem_s)$ such that
  $\psi(x_i)\in \liem_i\cap E_{\sigma}(\liem_s)_{x_i}$, and require that $\varphi \in
  H^0(X,PE_{\sigma}(\liem_s)\otimes K(D))$.
\end{remark}

\begin{remark}
\label{integral-characters} Semi(stability) can be formulated
as above in terms of any parabolic subgroup $P\subset H^\C$
conjugated to a parabolic subgroup of the form $P_s$, and any
antidominant character $\chi$ of $\liep$, the Lie algebra of
$P$. Under certain conditions on the reductive structure of
$(G,H,\theta,B)$, which must be satisfied in order for the
moduli space to admit a GIT  construction (see \cite{schmitt}),
it is enough to check (\ref{stability-condition}) for
antidominant characters $\chi$ that lift to a character
$\widetilde{\chi}:P\to \C^*$. In this situation
$\deg(E)(\sigma,\widetilde{\chi})$ is the degree of the line
bundle associated to $E$ via $\widetilde{\chi}:P \to \C^*$, and
hence an integer. This is satisfied, in particular if $G$ is
complex. The set of characters of $P$ is an abelian free group,
and the subset of antidominant characters is a cone $A$ inside
of this group. The characters for which the Higgs field
$\varphi$ satisfies $\varphi|_{X\setminus D}\in H^0(X\setminus
D, E_{\sigma}(\liem_s)\otimes K)$ is a subcone $B\subset A$. It
is hence enough to check the numerical condition for the
elements of the $1$-dimensional faces of $B$, and hence for a
finite number of antidominant characters of $P$. If $G$ is
complex, these antidominant characters define characters of
maximal parabolic subgroups (not merely subalgebras).
\end{remark}

\section{Hitchin--Kobayashi correspondence }
\label{hitchin-kobayashi}
Let $X$ be a compact Riemann surface and let $D$ a divisor of
$X$ as above. Choose a smooth 2-form $\omega$ on $X\setminus
D$. Suppose either that $\omega$ extends smoothly across $D$ or
that it blows up near $D$ less rapidly than the Poincar\'e
metric in $X\setminus D$ (given by \ref{poincare}). In any case
we assume that $\int \omega=2\pi$. Let $G=(G,H,\theta,B)$ be a
real reductive Lie group. Let $(E,\varphi)$ be a parabolic
$G$-Higgs bundle on $(X,D)$.  Let $c\in\imag\liez$, as in
Section \ref{stability}. We are looking for a metric $h$ on $E$
outside the divisor $D$, i.e. $h \in \Gamma(X\setminus
D,E(H\backslash H^\setC))$, satisfying the
$c$-Hermite--Einstein equation:
\begin{equation}
 R(h)-[\varphi,\tau_h(\varphi)] +\imag c\omega =0,\label{eq:11}
\end{equation}
where $R(h)$ is the curvature of the unique connection $A(h)$
compatible with the holomorphic structure of $E$
and the metric $h$, and $\tau_h$ is the conjugation on
$\Omega^{1,0}(E(\liem^\C))$
defined by combining the metric $h$  and the standard conjugation on $X$ from
$(1,0)$-forms to $(0,1)$-forms. We shall denote
$$ F(h)=R(h)-[\varphi,\tau_h(\varphi)]+\imag c\omega. $$

\subsection{Initial metric}
\label{sec:initial-metric}

We begin by constructing a singular metric $h_0$ which gives an
approximate solution to the equations. This metric is
$\alpha$-adapted, in the sense of Section \ref{ss:analytic-global-degree}.

We first construct a model metric near each singular point $x_i$.
We decompose $\Gr\Res_{x_i}\varphi$ into its semisimple part and its
nilpotent part,
\begin{equation}
  \label{eq:3}
  \Gr\Res_{x_i}\varphi=s_i+Y_i .
\end{equation}

Let $e_i\in E_{x_i}$ be an element belonging to the $P_{\alpha_i}$
orbit specified by the parabolic structure. Choose a local
holomorphic coordinate $z$, and extend the trivialization $e_i$
into a holomorphic trivialization of $E$ around $x_i$, so we can
identify locally the metric with a map into $H\backslash H^\setC$. If $Y_i=0$,
then the model metric is
\begin{equation}
  \label{eq:4}
  h_0 = |z|^{-2\alpha_i} (=e^{-2\alpha_i\ln |z|}) .
\end{equation}
If we change the trivialization by a gauge transformation $g\in
\Gamma(X,PE(H^\setC))$, the resulting metric will remain at
bounded distance of $h_0$ in $H\backslash H^\setC$, so $h_0$ defines a
\emph{quasi-isometry} class of metrics on $E$ near $x_i$.

If $Y_i\neq 0$, then consider the reductive subalgebra
\begin{equation}
\lier=\ker(\Ad(e^{2\pi\imag \alpha_i})-1)\cap\ker(\ad
s_i)\label{eq:20}
\end{equation}
of $\lieg^\setC$. Recall that $\Gr\Res_{x_i}\varphi$ is the
projection of the residue of $\varphi$ to
$\widetilde{\liem^0_i}$ the centralizer of $e^{2\pi\imag \alpha_i}$,
defined in (\ref{centralizer}), and hence $Y_i$ belongs to $\lier$. We can
complete $Y_i$ into a Kostant--Rallis $\liesl_2$-triple $(H_i,X_i,Y_i)$
(see Appendix \ref{triples}) such that
\begin{equation}
  \label{eq:5}
  H_i\in \liehc\cap \lier, \quad X_i,Y_i\in \liem^\setC\cap \lier .
\end{equation}
Moreover, maybe after conjugation (which means that we change the
trivialization), we can assume that
\begin{equation}
  \label{eq:6}
  H_i \in \imag\lieh, \quad X_i=-\tau(Y_i),
\end{equation}
where $\tau$ is the conjugation of $\lieg^\C$ with respect to a compact
form so that $\theta$ and $\tau$ commute.
Now the Higgs field has the form
\begin{equation}
  \label{eq:31}
  \varphi = \big(s_i + \Ad(z^{\alpha_i}) (Y_i)+ \psi \big) \frac{dz}z, \quad\psi\in NE(\liem^\setC).
\end{equation}
This is well defined because $Y_i\in \ker(\Ad(e^{2\pi\imag\alpha_i})-1)$
so that $Y_i$ decomposes on eigenspaces of $\ad \alpha_i$ corresponding to
the integral eigenvalues. The model for the metric is
\begin{equation}
  \label{eq:7}
  h_0 = |z|^{-\alpha_i} (-\ln|z|^2)^{\Ad(e^{\imag \theta \alpha_i})H_i} |z|^{-\alpha_i} .
\end{equation}
Again this is well defined because $H_i\in \ker(\Ad(e^{2\pi\imag\alpha_i})-1)$. In the case where $\alpha_i\in\eA'$ (that is all eigenvalues of $\ad \alpha_i$ in $\lieh^\setC$ have modulus less than 1) then $[\alpha_i,H_i]=0$ and the formula simplifies to $h_0= |z|^{-2\alpha_i} (-\ln|z|^2)^{H_i}$. The general formula (\ref{eq:7}) is obtained from the gauge $z^{\alpha_i}$, which exists only on the universal cover of the punctured disc: in that gauge the Higgs field writes $(s_i+Y_i)\frac{dz}z$ and the metric $h_0$ is simply $(-\ln|z|^2)^{H_i}$.

Again, a different choice of trivialization leads to a metric which
remains quasi-isometric to $h_0$. Also note that $h_0$ is
$\alpha_i$-adapted in the sense of definition \ref{def:par-structure}.

Now extend the local model to some global metric $h_0$ on $E$. The
quasi-isometry class of $h_0$ is well-defined.
We shall now prove that the metric $h_0$ gives an approximate
solution of the Hermite--Einstein equation near the marked point
$x_i$. Instead of working in the holomorphic gauge $e$ of $E$, we
choose the unitary gauge
\begin{equation}
\label{eq:unitary-holomorphic-gauge}
\be=eg_0, \quad g_0=|z|^{\alpha_i}(-\ln|z|^2)^{-\Ad(e^{\imag \theta \alpha_i})H_i/2} ,
\end{equation}
so that the $\dbar$-operator of $E$  can be written locally as
\begin{equation}
\label{eq:d-bar-E}
 \dbar^E = \dbar + g_0^{-1}\dbar g_0 =
\dbar + \big(\alpha_i-\frac{\Ad(e^{\imag \theta \alpha_i})H_i}{\ln|z|^2}\big)
\frac{d\zbar}{2\zbar}
\end{equation}
and the associated $H$-connection is
\begin{equation}
 A_0 = d - \imag \big(\alpha_i-\frac{\Ad(e^{\imag \theta \alpha_i})H_i}{\ln|z|^2}\big) d\theta .
\label{eq:14}
\end{equation}

Here by a $\dbar$-operator on $E$ we mean a holomorphic structure on the  underlying  smooth $H^\C$-bundle. The space of holomorphic structures on this  smooth bundle is an affine space modelled on  the space of $(0,1)$-forms with values in the adjoint bundle $E(\liehc)$. The $\dbar$ appearing in (\ref{eq:d-bar-E}) represents an origin in this
affine space. After choosing a faithful representation of $H^\C$, one obtains a true $\dbar$-operator on the associated vector bundle.

On the other hand, still in the unitary gauge $\be$, we obtain the
expression for the Higgs field by the action of $\Ad(g_0^{-1})$,
which gives
\begin{equation}
 \varphi = \big(s_i-\frac{\Ad(e^{\imag\theta\alpha_i})Y_i}{\ln|z|^2}\big)\frac{dz}{z} + O(|z|^\epsilon) \frac{dz}{z}.\label{eq:15}
\end{equation}
As in formula (\ref{eq:31}) this is well defined.
Therefore we obtain, using (\ref{eq:6}),
\begin{equation}
\begin{split}
   R(h_0) &= \Ad(e^{\imag \theta \alpha_i})H_i \frac{dz\land d\zbar}{|z|^2(\ln|z|^2)^2} , \\
[\varphi,\tau_{h_0}(\varphi)] &= \Ad(e^{\imag \theta \alpha_i})H_i \frac{dz\land
d\zbar}{|z|^2(\ln|z|^2)^2}
           + O(|z|^\epsilon)\frac{dz\land d\zbar}{|z|^2} ,
\end{split}\label{eq:13}
\end{equation}
and we clearly have an approximate solution of the
Hermite--Einstein equation (\ref{eq:11}).

We can regard the unitary gauge $\be$ as
defining  a different extension of $E$ near the punctures, which is
a unitary extension. The resulting $H$-principal bundle on $X$
will be denoted $\bE$.

\subsection{The correspondence}
\label{sec:correspondence}

We are now in a position to state the main theorem in this section. First
remark that if $\chi$ is a character of $G$, then $\chi([\varphi,\theta(\varphi)])=0$ and
therefore the Hermite--Einstein equation (\ref{eq:11}) implies $\pardeg_\chi
E=\chi(c)$.  In particular, if $c=0$, $\pardeg_\chi E=0$.  It is important to
note that this is no longer true in general for a character of $H$ alone,
so we cannot conclude that the total parabolic degree of $E$ must
vanish. This justifies the topological condition in the following theorem.
\begin{theorem}\label{th:correspondence-1}
  Let $(E,\varphi)$ be a parabolic $G$-Higgs bundle, equipped with an adapted
  initial metric $h_0$. Let $c\in \imag\,\liez(\lieh)$ such that $\pardeg_\chi E=\chi(c)$ for all characters
  $\chi$ of $\glie$.  Then $(E,\varphi)$ admits a $c$-Hermite--Einstein metric $h$
  quasi-isometric to $h_0$ and $\alpha_i$-adapted at each puncture $x_i$, if and only if $(E,\varphi)$ is $c$-polystable.
  Moreover, any two such Hermite-Einstein metrics are related by an automorphism of $(E,\varphi)$, that is, an element of $PE(H^\C)$ fixing $\varphi$.
\end{theorem}
It seems difficult to reduce Theorem \ref{th:correspondence-1} to the
theorem of Simpson \cite{Sim90} for the case $G=\GL_n\setC$ by taking a faithful
representation, since in particular it is not clear how the stability
conditions would relate. Instead, we prefer to give a direct proof by
checking that the proof in \cite{Biq97} still applies here.

We prove the Theorem in the next sections. For clarity, we restrict to
the case $c=0$ (it is well-known how to modify the proof to handle
nonzero $c$, see for example \cite{bradlow-garcia-prada-mundet:2003}),
and we will refer to $0$-polystability as polystability.

\subsection{The polystability condition is necessary}
\label{sec:polyst-cond}

Suppose that $s\in \sqrt{-1}\lieh$ and we have a holomorphic reduction $\sigma$ of the structure group of $E$ to the parabolic group $P_s$, such that
\begin{equation}
\varphi|_{X\setminus D}\in H^0(X\setminus D,E_\sigma(\liem_s)\otimes K).\label{eq:36}
\end{equation}
Using the operator $D''$ defined by $D''s=\overline{\partial}s+[\varphi,s]$, we can rewrite the formula in Lemma \ref{lemma:analytic-degree-parabolic} as
$$\pardeg_{\alpha}(E)(\sigma,\chi)=
\frac{\imag}{2\pi} \int_{X\setminus D}\langle R_h-[\varphi,\tau_h(\varphi)],s_{\sigma,h}\rangle -
  \langle\varpi(s_{\sigma,h})(D''s_{\sigma,h}),D''s_{\sigma,h}\rangle .$$
(The additional terms involving the Higgs field cancel out).
If $h$ is a Hermite-Einstein metric, then  the first term in the sum vanishes and there remains
$$\pardeg_{\alpha}(E)(\sigma,\chi)=
\frac{\imag}{2\pi} \int_{X\setminus D}  - \langle\varpi(s_{\sigma,h})(D''s_{\sigma,h}),D''s_{\sigma,h}\rangle \geq 0.$$
The inequality comes from the fact that for a holomorphic reduction satisfying (\ref{eq:36}), then $D''s_{\sigma,h}$ lives in the negative eigenspaces of $\ad s_{\sigma,h}$. Equality therefore occurs if and only if $D''s_{\sigma,h}=0$. Since $s\in \sqrt{-1}\lieh$, this implies that $s_{\sigma,h}$ is parallel: $$\nabla_hs_{\sigma,h}=[\varphi,s_{\sigma,h}]=[\tau_h(\varphi),s_{\sigma,h}]=0.$$
It is now clear that $s_{\sigma,h}$ induces a reduction of the Higgs bundle $(E,\varphi)$ to the Levi subgroup $L_s$ over $X\setminus D$, and there remains to understand the behavior at the punctures $x_i$.

Near $x_i$ we choose to write the equations in the bundle $E'$ meromorphically equivalent to $E$, such that the $\alpha_i$-adapted metric $h$ can be written $h=h_0\cdot |z|^{-\alpha_i}e^c$, where $c$ satisfies the conditions of Definition \ref{def:adapted_metric}. Since $s_{\sigma,h}$ is parallel, it has constant norm, which implies that in the trivialization the coefficients of $s_{\sigma,h}$ satisfy $\Ad(|z|^{-\alpha_i})s_{\sigma,h}=O(|z|^{-\epsilon})$ for every $\epsilon>0$. Since $s_{\sigma,h}=-\tau_h(s_{\sigma,h})$ we have as well $\Ad(|z|^{\alpha_i})s_{\sigma,h}=O(|z|^{-\epsilon}$, and since $s_{\sigma,h}$ is holomorphic, it follows that it extends at the puncture $x_i$, with $[\alpha_i,s_{\sigma,h}(x_i)]=0$ (in particular $\alpha_i\in \liel_s$). It follows that the $L_s$-reduction of $E|_{X\setminus D}$ extends to a $L_s$-reduction of $E'$, which is a parabolic bundle over $X$ with parabolic weights $\alpha_i$ at $x_i$. This finishes the proof of polystability.

Observe the role of meromorphic equivalence here: the metric $h$ selects a particular bundle $E'$ in the class of bundles meromorphically equivalent to $E$, and it is this bundle $E'$ which carries the reduction to the Levi subgroup.

\subsection{Preliminaries: functional spaces}
We now pass to the existence of the Hermite-Einstein metric. The basic idea to prove that polystability is sufficient is
common to a whole collection of results extending the original
Hitchin--Kobayashi correspondence on existence of
Hermite--Einstein metrics on holomorphic vector bundles. Recall
that when looking for Hermite--Einstein metrics one considers
the Donaldson functional $M(h,h')$, defined for two metrics $h$
and $h'$ on $E$ 
such that
\begin{align}
  M(h,h'')&=M(h,h')+M(h',h'') \label{eq:cocycle} \\
  \frac{d}{dt}M(h,he^{ts})\Big|_{t=0} &= \int_X h(\imag F(h),s)
  \label{eq:critical-points-solve-equation}
\end{align}
(see Section \ref{s:Donaldson-functional} below for some
details and references). In particular the critical points of
$M$ are the Hermite--Einstein metrics.
Roughly speaking, polystability enables to prove
$C^0$-convergence of a sequence of metrics $\{h_i\}$ such that
$M(h,h_i)$ converges to $\inf_{h'}M(h,h')$, and then
convergence in stronger functional spaces follows. Finally the
limit is proved to be a solution of the Hermite--Einstein
equation.

We shall not give full details of the argument in our situation
since the proof follows the one in \cite{Biq97}, and we refer
to this reference for details. We give only the general setup.

The equation to solve does not depend on the metric on the Riemann
surface. Because of the calculation (\ref{eq:13}), it is natural to work
with a cusp metric near the punctures, that is equal to
\begin{equation}\label{poincare}
ds^2=\frac{|dz|^2}{|z|^2\ln^2|z|^2}
\end{equation}
in some fixed local coordinate $z$ near each puncture. Writing
$z=|z|e^{\imag\theta}$ and $t=\ln(-\ln|z|^2)$, this can be written
$$ ds^2=dt^2+e^{-2t}d\theta^2 . $$
Extend $t$ by a smooth function in the interior of $X$. We define weighted
$C^0$ and $L^p$ spaces by
$$ C^0_\delta = e^{-\delta t}C^0 , \quad L^p_\delta = e^{-(\delta+\frac1p)t}L^p . $$
The curious choice for $L^p$ is motivated by the compatibility with $C^0$:
indeed, with this choice, we have $C^0_\delta\subset L^p_{\delta'}$ as soon as $\delta>\delta'$,
since $vol=e^{-t}dtd\theta$ near the punctures. The exponent $p$ is thought as
being very large---a replacement of $\infty$ because $C^k$ spaces are not
suitable for elliptic analysis.

Consider the $H$-connection $\nabla^+$ induced by $h_0$ on $E$, and define
$$
\nabla=\nabla^+ \oplus \ad(\varphi) : \bE(\liegc) \to
\Omega^1\otimes(\bE(\liegc)\oplus\bE(\liegc)) .
$$
Define now the weighted spaces $C^k_\delta$ (resp. the weighted Sobolev spaces
$L^{k,p}_\delta$) of sections $f$ of $\bE(\liegc)$ such that $\nabla^jf\in C^0_\delta$
(resp. $L^p_\delta$) for $j\leq k$. We will also use the refinement
$\hat{C}^k_\delta(E(\liegc))$ (resp.  $\hat{L}^{k,p}_\delta(\bE(\liegc))$) of
sections $f$ of $\bE(\liehc)$ such that $\nabla f\in C^{k-1}_\delta$ (resp.
$L^{k-1,p}_\delta$), but we ask nothing on $f$ itself.

Recall  from (\ref{eq:20}) the subalgebra
\begin{equation}
\lier_i=\ker(\Ad(e^{2\pi\imag   \alpha_i})-1)\cap \ker(\ad
s_i),\label{eq:22}
\end{equation}
and define furthermore $\lier_i'$ the commutator of the
$\mathfrak{sl}_2$-triple $(H_i,X_i,Y_i)$,
\begin{equation}
  \label{eq:21}
  \lier_i'=\ker(\ad H_i)\cap \ker(\ad X_i)\cap \ker(\ad Y_i) .
\end{equation}
For $\delta>0$, it is easy to see that, near a puncture, an element $f$ of
$\hat{C}^k_\delta$ (resp.  $\hat{L}^{k,p}_\delta(\lieg^\setC)$) can be decomposed as
$$ f = \Ad(e^{\imag\theta\alpha_i})f(0) + f_1 , \quad f(0)\in \lier_i\cap \lier_i', \quad f_1\in C^k_\delta
\text{ (resp. }L^{k,p}_\delta\text{).}$$
As before, this is well defined since
$f(0)\in\ker(\Ad(e^{2\pi\imag\alpha_i})-1)$. Therefore the elements of
$\hat{C}^2_\delta$ or $\hat{L}^{2,p}_\delta$ do not go to zero at the
punctures. Furthermore one checks easily that
$\hat{L}^{2,p}_\delta(\bE(\liegc))$ is a Lie algebra.

Finally the space of metrics in which we look for a solution of the
Hermite--Einstein equation is, for a small $\delta>0$ and a large $p$,
\begin{equation}
 \cH = \{h=h_0 e^s, \, s\in \hat{L}^{2,p}_\delta(\bE(\imag\lieh)) \} .\label{eq:27}
\end{equation}

From equation (\ref{eq:13}) it is clear that $F(h_0)\in L^p_\delta$ for any
$p$ and any $\delta>0$. We choose any fixed $\delta\in (0,1)$.

\subsection{Donaldson's functional}
\label{s:Donaldson-functional}
\newcommand{\Tr}{\operatorname {Tr}}

For a pair of metrics $h_0,h_0e^s\in\cH$ let
\begin{equation}\label{eq:variation-along-es}
M(h_0,h_0e^s) = \int_X\langle\imag \Lambda F(h_0),s\rangle_{h_0}  +\int_0^1 (1-t)\|\Ad(e^{ts/2})D''s\|_{L^2(X)}^2\,dt,
\end{equation}
where $D''s=\overline{\partial}^Es+[\varphi,s]$. Note that
$\overline{\partial}^Es$ belongs to $\Omega^{0,1}(E(\liehc))$
and $[\varphi,s]$ belongs to $\Omega^{1,0}(E(\liemc))$. Hence the two
summands are orthogonal and we can write
\begin{equation*}
\|\Ad(e^{ts/2})D''s\|_{L^2(X)}^2 = \|\Ad(e^{ts/2})\overline{\partial}^Es\|_{L^2(X)}^2   +
\|\Ad(e^{ts/2})[\varphi,s]\|_{L^2(X)}^2.
\end{equation*}
This allows to view $M$ as a particular case of the functionals
defined in \cite{mundet:2000,bradlow-garcia-prada-mundet:2003}
using a symplectic point of view (they are instances of integrals
of a moment map), see in particular \cite[(4.10)]{mundet:2000}. The arguments given in [op.cit.]
imply that $M$ satisfies the cocycle condition (\ref{eq:cocycle})
and property (\ref{eq:critical-points-solve-equation}).

The functional $M$ is also an analogue of Donaldson's functional
considered by Simpson in \cite{Sim88}. Indeed, we have the
following formula
$$\int_0^1 (1-t)\|\Ad(e^{ts/2})D''s\|_{L^2(X)}^2\,dt=\int_X\big(\psi(s)(D''s),D''s\big)_{h_0},$$ where we apply here the
notation introduced in Section \ref{ss:analytic-global-degree} to the
adjoint and the isotropy representations, and extend it to
sections of twists of $E(\liehc)$ and $E(\liemc)$, for the
function $\psi(t)=(e^{t}-t-1)/t^2$. This follows by decomposing $D''s$ on eigenspaces of the adjoint action of $s$, and from the calculation $\int_0^1(1-t)e^{t\lambda}dt = \psi(\lambda)$ for any eigenvalue $\lambda$ of $\ad s$. Also, $(.,.)$ is distinguished from $\langle.,.\rangle$: the latter is the Hermitian product on the bundle only, while the former also includes the Hermitian product on forms, so the result is a scalar (which is implicitly integrated against the volume form).

Hence Donaldson's functional can be rewritten in the following form
\begin{equation}
M(h_0,h_0e^s)=\int_X\langle\imag \Lambda F(h_0),s\rangle_{h_0}+\int_X\big(\psi(s)(D''s),D''s\big)_{h_0},\label{eq:42}
\end{equation}
which makes evident the relation to Simpson's definition.


\subsection{Reduction to the stable and semisimple case}
\label{sec:reduct-stable-semis}

Before solving the equation, we make two reductions.

 \emph{Jordan--H\"older reduction.}
Using similar arguments to the ones used in  the non-parabolic case \cite{garcia-prada-gothen-mundet:2009a},
one can prove  that if a $G$-Higgs bundle $(E,\varphi)$
is polystable there is a canonical reduction $(E',\varphi')$
of structure group to a $G'$-Higgs bundle, up to isomorphism, such that the reduced Higgs bundle
is stable. This is done by iterating the process, mentioned  in definition of polystability, of reduction
to a Levi given by a parabolic reduction for which the degree vanishes.
Moreover, this reduction also satisfies $\pardeg_\chi E=0$ for all characters of $\lieg'$. (Recall that we are in the case $c=0$).

Therefore, to prove existence of the Hermite-Einstein metric, we can suppose that $(E,\varphi)$ is actually stable.

\emph{Central part.}
The second step is to reduce to the semisimple part of $G$. Indeed, as is well-known, the Hermite-Einstein equation (\ref{eq:11}) decouples on the direct sum
\[ \lieh = (\liez(\lieg)\cap\lieh) \oplus \lieh_0, \] where $\lieh_0$ was defined in (\ref{eq:44}), into
\begin{align}
  \chi(R(h)) & = 0 \text{ for all characters $\chi$ of $\lieg$},\label{eq:37} \\
  R(h)_{\lieh_0}-[\varphi,\tau_h(\varphi)] & =0.\label{eq:38}
\end{align}
We have a trivial subbundle $E(\liez(\lieg))\subset E(\lieh)$, and for a section $s$ of $E(\imag\liez(\lieg))$ one has $\chi(R(h e^s))=\chi(R(h))+\dbar \partial \chi(s)$. Therefore, starting from the initial metric $h_0$, the equation (\ref{eq:37}) is achieved by $h=h_0 e^s$ if one can solve the equation, for all characters $\chi$ of $\lieg$,
\[ \Delta \chi(s) = 2i \Lambda \chi(R(h)). \]
This is just the Laplace equation on the trivial bundle $\imag\liez(\lieg)$. Since by hypothesis $\int \chi(R(h))=0$ for all characters $\chi$ of $\lieg$, this equation is solvable in the space $\hat L^{2,p}_\delta$ since $F(h_0)\in L^p_\delta$. (The non uniqueness of the solution of the Hermite-Einstein equation comes from the non uniqueness of such $s$, since one can add any constant section of $\imag\liez(\lieg)$).

Finally we can suppose that we are in the case of a stable bundle $(E,\varphi)$, and that our initial metric $h_0$ satisfies (\ref{eq:37}). We look for a solution of (\ref{eq:38}) of the form $h=h_0 e^s$ with $s$ a section of $E(\imag \lieh_0)$.

\subsection{Proof that polystability is sufficient}
The method consists in minimizing the Donaldson functional
$M(h_0,h)$ for $h\in \cH$ under the constraint
$\|F(h)\|_{L^{2,p}_\delta}\leq B$ for some large $B$. The
technical results in \cite{Biq97} reduce this problem to
obtaining a $C^0$-estimate on a minimizing sequence $h_i$,
which in turn is a consequence of stability, as we shall now see.

At the end, the
solution $h$ satisfies an elliptic equation, so has additional
regularity:
\begin{equation}
  \label{eq:25}
  h\in \cH^\infty = \{h_0 e^s,
  \, s\in \hat{C}^\infty_\delta(\bE(\imag\lieh_0)) \} .
\end{equation}
Actually it can be shown that there is a stronger decay of the
components of $s$ which are orthogonal to $\lier_i$.

Take some big $B$ and define
$$\cS^{\infty}(B)=\{s\in \hat{C}^\infty_\delta(\bE(\imag\lieh_0)),\,
 \,  \|F(h_0e^s)\|_{L^p_\delta}\leq B\}.$$
Assume that there do not exist any constants $C,C'$ such that
$$\sup |s|\leq C+C'M(h_0,h_0e^s)\qquad\qquad
\text{for any $s\in \cS^{\infty}(B)$}.$$ The bound on the
curvature implies  that
$$\sup|s|\leq \text{const.}(1+\|s\|_{L^1})$$
for some constant depending on $B$ (see Lemma 8.4 in
\cite{Biq97}). Hence our assumption implies that there neither exist
constants $C,C'$ such that
$$\|s\|_{L^1}\leq C+C'M(h_0,h_0e^s)\qquad\qquad
\text{for any $s \in \cS^{\infty}(B)$}.$$ It follows that we can take a
sequence $\{s_i\}\subset\cS^{\infty}(B)$ and positive real numbers $C_i\to\infty$ such
that
$$|s_i|_{L^1}\to\infty,\qquad\qquad
C_iM(h_0,h_0e^{s_i})\leq |s_i|_{L^1}=:\lambda_i.$$ Define $u_i=\lambda_i^{-1}s_i$, so
that $\|u_i\|_{L^1}=1$. Now the arguments in Section 5 of \cite{Sim88} imply
that, up to passing to a subsequence, the sections $u_i$ converge
(weakly and locally in $L^{1,2}$) to a nonzero and locally $L^{1,2}$
section $u_{\infty}$ of $\bE(\imag\lieh_0)|_{X\setminus D}$. This section satisfies
the following properties:
\begin{itemize}
\item the $L^2$ norm of $D''u_{\infty}$ is finite, and the projection of $D''u_\infty$ on eigenspaces of $\ad u_\infty$ corresponding to nonnegative eigenvalues of $\ad u_\infty$ is zero;
\item there exist locally $L^{1,2}$ orthogonal projections $(\pi_j)_{j=1...k}$ of $\End(E(\liehc))$, satisfying $[\pi_i,\pi_j]=0$,
  such that $\ad(u_{\infty})=\sum l_j\pi_j$, where $l_1<\dots<l_k$ are the eigenvalues of $\ad u_\infty$ on $E(\liehc)$;
\item for each $j$ let $\Pi^j=\pi_1+\dots+\pi_j$. Then $\Pi^j\circ\Pi^j=\Pi^j$ and $(1-\Pi^j)D''\Pi^j=0$.
\end{itemize}
The regularity result of Uhlenbeck and Yau \cite{uhlenbeck-yau} for locally $L^{1,2}$
subbundles implies that the image of $\Pi^j$ is a holomorphic subbundle
$E^j\subset E(\liehc)|_{X\setminus D}$ (see Proposition 5.8 in \cite{Sim88}). This
implies that $u_{\infty}$ is a smooth section of $\bE(\imag\lieh)|_{X\setminus D}\subset
E(\liehc)|_{X\setminus D}$, since $h_0$ is smooth on $X\setminus D$.

\newcommand{\qu}{/\kern-.7ex/}

\begin{lemma}
Take any $x,y\in X\setminus D$, choose trivializations of $\bE_x$
and $\bE_y$, and use them to identify $\bE(\liehc)_x$ and
$\bE(\liehc)_y$ with $\liehc$. Then the elements
$u_{\infty}(x),u_{\infty}(y)\in \liehc$ are conjugate.
\end{lemma}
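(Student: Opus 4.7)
The approach is to show that the characteristic polynomial of $\ad(u_\infty(x))$ as an endomorphism of $\lieh^\C$ is independent of $x$, and then invoke a Lie-theoretic fact about elements of $\imag\lieh$.

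For the first step I would use the Uhlenbeck--Yau regularity result already invoked in the preceding paragraph: each $\Pi^j$ is the orthogonal projection onto a holomorphic subbundle $E^j \subset \bE(\liehc)|_{X\setminus D}$. Since $X\setminus D$ is connected, the complex rank of $E^j$ is constant, so $r_j := \rank \pi_j = \rank E^j - \rank E^{j-1}$ is an integer independent of $x$. Because $u_\infty(x) \in \imag\lieh$, the operator $\ad(u_\infty(x))$ is semisimple with real eigenvalues, so the mutual orthogonality relations $\pi_j\pi_{j'}=\delta_{jj'}\pi_j$ and $\sum_j \pi_j = \id$, combined with $\ad(u_\infty) = \sum_j l_j \pi_j$, yield
\[
\det\bigl(\lambda\,\id_{\liehc} - \ad(u_\infty(x))\bigr) \;=\; \prod_{j=1}^{k} (\lambda - l_j)^{r_j},
\]
a polynomial independent of $x$.

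For the second step, the relevant Lie-theoretic input is that an element of $\imag\lieh$ is determined up to $\Ad(H)$-conjugacy (and thus $\Ad(H^\C)$-conjugacy) by the characteristic polynomial of its adjoint action on $\liehc$, modulo the centre $\liez \subset \liet$. Indeed, every $u \in \imag\lieh$ is $\Ad(H)$-conjugate to an element of a fixed maximal torus $\imag\liet$, and two elements of $\imag\liet$ sharing the same multiset of root-values lie in a common Weyl-group orbit. Applying this to $u_\infty(x)$ and $u_\infty(y)$ recovers the desired conjugacy, up to the central component.

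The main obstacle is handling this central component, since the spectral identity only controls $u_\infty$ modulo $\liez$. To deal with this I would exploit that $H$ acts trivially on $\liez$ via the adjoint representation, so $\bE(\liez^\C)$ is canonically trivialised and $u_\infty$ projects to a bona fide function $u_\infty^{\liez}\colon X \setminus D \to \imag\liez$. The condition $D'' u_\infty \in L^2$ decouples on type grounds, since $[\varphi,\cdot]$ sends $\liehc$ into $\liemc$, transverse to $\liez^\C \subset \liehc$; thus $\bar\partial u_\infty^{\liez} \in L^2$. Combining this with the weighted decay of $s$ in the space $\hat{L}^{2,p}_\delta(\bE(\imag\lieh))$ that defines $\cH$ gives enough control near each puncture to conclude that $u_\infty^{\liez}$ is holomorphic on $X\setminus D$ and bounded, hence extends across $D$ and is constant. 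Together with the spectral argument this yields $\Ad(H^\C)$-conjugacy of $u_\infty(x)$ and $u_\infty(y)$ for any $x,y \in X \setminus D$.
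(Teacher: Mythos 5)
Your first step is sound: the eigenvalues $l_j$ are fixed constants, and the ranks of the spectral projections $\pi_j$ are constant because the images of the $\Pi^j$ are holomorphic subbundles of $E(\liehc)|_{X\setminus D}$ over a connected base; hence the characteristic polynomial of $\ad(u_{\infty}(x))$ is indeed independent of $x$. The fatal problem is the Lie-theoretic claim in your second step. It is \emph{false} that an element of $\imag\lieh$ is determined up to conjugacy, even modulo the centre, by the characteristic polynomial of its adjoint action, and false that two elements of $\imag\liet$ with the same multiset of root-values lie in one Weyl orbit. Take $H=\SU(3)$, so $\liez=0$, and consider $u=\mathrm{diag}(2,-1,-1)$ and $v=\mathrm{diag}(-2,1,1)$ in $\imag\liesu(3)$: the multiset of root values is $\{3,3,0,0,-3,-3\}$ for both, so $\ad(u)$ and $\ad(v)$ have the same characteristic polynomial on $\liesl_3\CC$, namely $\lambda^4(\lambda-3)^2(\lambda+3)^2$; yet $u$ and $v$ have different matrix spectra, hence are not conjugate under $\SL_3\CC$ (nor under the Weyl group $S_3$, which permutes the diagonal entries). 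The coefficients of the adjoint characteristic polynomial generate only a proper subring of the invariant ring in general, and constancy of those particular invariants along $u_{\infty}$ does not pin down the conjugacy class. (The spectral argument you are imitating is Simpson's, and it works for $\GL_n$ precisely because there $u_{\infty}$ acts on the vector bundle itself, and the spectrum of the \emph{standard} representation does determine a Hermitian matrix up to conjugacy; that is what fails for a general group and the adjoint representation.)

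This is exactly where the paper's proof does something stronger: by Chevalley/closed-orbit theory, two \emph{semisimple} elements of $\liehc$ are $H^{\CC}$-conjugate if and only if \emph{every} $H^{\CC}$-invariant polynomial $p$ takes the same value on them, so the paper proves constancy of $p(u_{\infty})$ for all invariant $p$. Holomorphy of $p(u_{\infty})$ follows from the $L^2$-decay property of $D''u_{\infty}$ (invariance of $p$ kills the component of $\bar\partial^E u_{\infty}$ lying in the image of $\ad(u_{\infty})$, and the decay kills the rest); then, writing $p=a+\imag b$ with $a$ and $b$ invariant and real-valued on $\imag\lieh$, the functions $a(u_{\infty})$ and $b(u_{\infty})$ are holomorphic and real- (resp.\ imaginary-) valued, hence constant. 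Note that this also disposes of the central component automatically, since any linear functional on $\liez^{\CC}$ composed with the invariant projection $\liehc\to\liez^{\CC}$ is an invariant polynomial; your separate treatment of $u_{\infty}^{\liez}$ is in any case not justified as written, because $\bar\partial u_{\infty}^{\liez}\in L^2$ alone does not imply holomorphy. To salvage your approach you would have to replace the adjoint characteristic polynomial by the full collection of invariant polynomials --- which is precisely the paper's argument.
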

\begin{proof}
Denote by $\liehc\qu H^{\mathbb C}$ the affine quotient of the
adjoint action. Since semisimple elements have closed adjoint
orbits, two semisimple elements $x,y\in \liehc$ are conjugate if
and only if their images by the projection map $\liehc\to
\liehc\qu H^{\mathbb C}$ coincide. The later is equivalent to
showing that for any invariant polynomial $p\in
\Hom_{\CC}(S^*\liehc,\CC)^{H^{\mathbb C}}$ we have $p(x)=p(y)$.
Let $p$ be any such invariant polynomial. If $u\in\imag\liehc$ and $n\in\liehc$ is concentrated on eigenspaces for negative eigenvalues of $\ad u$, then $p(u+n)=p(\Ad(tu)(u+n))=p(u+\Ad(tu)n) \rightarrow_{t\rightarrow+\infty} p(u)$, so $p(u+n)=p(u)$ and $d_up(n)=0$. Applying this to $u_\infty$ and $D''u_\infty$, we obtain
\begin{equation}
  \dbar p(u_\infty) = d_{u_\infty}p(D''u_\infty) = 0.
\end{equation}
Hence $p(u_{\infty})$ is a holomorphic function. On the other hand using the isomorphisms
\begin{align*}
\Hom_{\CC}(S^*\liehc,\CC)^{H^{\mathbb C}} &\simeq
\Hom_{\CC}(S^*\liehc,\CC)^{H} \qquad \text{($H\subset H^{\CC}$ is
Zariski dense)}\\
&\simeq \Hom_{\setR}(S^*(\imag\lieh),\setR)^{H}\otimes_{\setR}\CC
\end{align*}
we may write $p=a+\imag b$, where $a(\imag\lieh)\subset\setR$ and
$b(\imag\lieh)\subset\setR$ and both $a$ and $b$ are $\HC$ invariant. Then
$a(u_{\infty})$ and $b(u_{\infty})$ are holomorphic functions and since $u_{\infty}$
is a section of $\bE(\imag\lieh)$ the function $a(u_{\infty})$ (resp.
$b(u_{\infty})$) is real (resp. imaginary) valued. Hence both $a(u_{\infty})$
and $b(u_{\infty})$ vanish. We have thus proved that $p(u_{\infty})$ is constant for any
invariant polynomial $p$. Since $u_{\infty}$ takes semisimple values, the
lemma is proved.
\end{proof}

Let $x\in X\setminus D$ be any point, take a trivialization of $\bE_x$, use it
to identify $\bE(\imag\lieh)_x$ with $\imag\lieh$, and let
$s\in\imag\lieh_0$ be the element corresponding to $u_{\infty}(x)$. Then the
section $u_{\infty}$ defines a reduction $\sigma$ of the structure group of
$E|_{X\setminus D}$ to the parabolic subgroup $P=P_s$. Furthermore, this
section is holomorphic, because the bundles $E^j\subset E(\liehc)$ are
holomorphic.

\begin{lemma}\label{lemma:extension}
  The reduction $\sigma$ extends into a holomorphic reduction of $E$ to $P$
  on the whole $X$.
\end{lemma}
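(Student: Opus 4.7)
The plan is to reduce to the ``good weights'' case where $\alpha_i\in\imag\eA'_{\lieg}$, and then run a parabolic version of the Uhlenbeck--Yau extension argument at each puncture. Concretely, I would formulate a good-case lemma, say Lemma~\ref{lemma:extension-good-case}, asserting the conclusion under the extra hypothesis that the eigenvalues of $\ad(\alpha_i)$ on $\lieg^{\CC}$ have modulus strictly less than $1$. Lemma~\ref{lemma:extension} would then follow by applying a suitable Hecke transformation $\tau_{\lambda}$ (Theorem~\ref{thm:Hecke}) that shifts each $\alpha_i$ into $\imag\eA'_{\lieg}$; the filtration $\{E^j\}\subset E(\lieh^{\CC})|_{X\setminus D}$ is preserved under the twist by the central cocharacters appearing in $\Theta$ (they commute with everything), and by Lemma~\ref{reduction} meromorphic equivalences give a bijection on holomorphic reductions to any parabolic $P$. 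So an extension of $\sigma$ for the Hecke-transformed bundle transports back to an extension for $(E,\varphi)$.

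In the good-weights case, I would work in the unitary gauge $\be=eg_0$ from \eqref{eq:unitary-holomorphic-gauge}, so that $h_0$ is essentially flat near $x_i$ modulo $(\ln|z|^2)^{H_i}$ factors, and so that the $L^{2,p}_{\delta}$-based functional framework of Subsection~4.3 applies. The rescaled sections $u_i=l_i^{-1}s_i$ are uniformly bounded in $L^{\infty}$ by construction and have $D''u_i$ under good control via the approximate Hermite--Einstein equation for $h_0$. This gives the limit $u_{\infty}$ a uniform $L^{\infty}$ bound together with $D''u_{\infty}\in L^2$ in the weighted norm adapted to $h_0$. Because the eigenvalues $l_1<\dots<l_k$ of $\ad u_\infty$ are constants (the previous lemma shows $u_\infty(x)$ is in a fixed conjugacy class), the spectral projections $\pi_j$ inherit the same $L^{\infty}\cap L^{1,2}_{\text{loc}}$ regularity, and the partial sums $\Pi^j$ satisfy $(1-\Pi^j)D''\Pi^j=0$ on $X\setminus D$.

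At this point, I would invoke the parabolic Uhlenbeck--Yau type regularity result, in the version already used by Simpson \cite{Sim90} and Biquard \cite{Biq97}: a locally $L^{1,2}$ weakly holomorphic sub-bundle of a Hermitian bundle that is bounded with respect to an adapted metric extends as a saturated holomorphic sub-sheaf across a codimension-one divisor, and the parabolic structure prescribes its extension at the puncture. Applied to each $\Pi^j\subset E(\lieh^{\CC})|_{X\setminus D}$, this yields holomorphic sub-bundles $\bar E^j\subset E(\lieh^{\CC})$ on all of $X$. The reduction $\sigma$ is recovered as the stabiliser of the flag $\bar E^1\subset\cdots\subset\bar E^k$ inside $E(\lieh^{\CC})$, which is a holomorphic reduction of the structure group of $E$ from $H^{\CC}$ to $P$.

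The main obstacle, and the reason for the preliminary Hecke reduction, is that when $\alpha_i$ lies in a bad wall of the alcove the section $u_{\infty}$ need not be bounded in the naive holomorphic trivialization of $E$: the gauge $g_0$ contains the genuinely unbounded factor $(-\ln|z|^2)^{\Ad(e^{\imag\theta\alpha_i})H_i/2}$ and, worse, meromorphic parabolic transformations in $PE(H^{\CC})$ can alter the underlying holomorphic bundle. Once we are in the good-weights range those issues disappear --- $\hat L^{2,p}_{\delta}$ sections have well-defined boundary values in $\lier_i\cap\lier_i'$, the parabolic structure is a plain flag reduction at $x_i$, and the Uhlenbeck--Yau extension applies verbatim to the projections $\Pi^j$. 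Chasing the construction back through the Hecke transformation yields $\sigma$ as a holomorphic reduction on all of $X$, completing the proof.
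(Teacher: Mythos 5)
Your overall architecture --- reduce to a good-weights case, then extend at each puncture --- is the same as the paper's, but both halves have gaps, and the second gap is where the actual content of the lemma lives.

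First, the reduction step as you state it fails. A Hecke transformation $\tau_\lambda$ only shifts each $\alpha_i$ by an element $\lambda_i$ with $2\pi\imag\lambda_i\in\Lambda_{\cochar}$, and such shifts cannot in general move a bad-wall weight into $\imag\eA'_{\lieg}$, nor even into $\imag W\eA'$. Concretely, for $H=\SU_2$ and $\alpha_i=\mathrm{diag}(1/2,-1/2)$ --- exactly the $\SL_2\CC$ example the paper highlights at the end of Section \ref{sec:defin-parab-g} --- every shift $\alpha_i+\lambda_i$ has the form $\mathrm{diag}(1/2+n,-1/2-n)$ with $n\in\ZZ$, whose $\ad$-eigenvalues on $\liehc$ are $\pm(1+2n)$, never of modulus smaller than $1$. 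This is precisely why Proposition \ref{prop-alcove}(4) only asserts $k\alpha+\lambda\in\imag W\eA'$ with an integer multiplier $k$, and why the paper's reduction pulls the bundle back under the ramified covering $p(z)=z^k$ (to realize multiplication by $k$) before twisting by the gauge $z^{\lambda}$, and must then descend the extension through the covering by an energy comparison (the energy of $f_\sigma\circ p$ is $k$ times that of $f_\sigma$). Without the covering, your reduction never reaches the good range in the semisimple cases the lemma is designed for. (A side remark: your justification that the cocharacters in $\Theta$ ``commute with everything'' is also incorrect --- they are cocharacters of $T^{\CC}$, not central; compatibility of Hecke transforms with parabolic reductions holds because $T^{\CC}$ lies in every standard parabolic, or, more robustly, via Lemma \ref{reduction}.)

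Second, in the good case you cite as a black box a ``parabolic Uhlenbeck--Yau extension across a codimension-one divisor'' attributed to \cite{Sim90} and \cite{Biq97}. Those references prove such statements for $\GL_n$, i.e.\ for vector bundles, and the paper explicitly warns that reducing the present lemma to them ``is not obvious'': Simpson's reduction to line bundles goes through a Pl\"ucker/Borel--Weil--Bott embedding attached to the character $\chi$, which requires $\chi$ to lift to a character of $P$, and here it need not. Your variant --- applying the vector-bundle theorem to the flag $\Pi^1\subset\dots\subset\Pi^k$ inside the adjoint bundle $E(\liehc)$ and recovering the $P$-reduction from the extended flag --- is a sensible attempt to sidestep the character issue (and the recovery step is sound, since $H^{\CC}/P$ is projective, hence closed in the product of Grassmannians of $\liehc$), but it is precisely the step that must be proved, not cited: one has to verify that the metric induced by $h_0$ on $E(\liehc)$ is acceptable in Simpson's sense near the punctures (its weights are the eigenvalues of $\ad\alpha_i$, which fill an interval of length two even after your reduction), and that $\int|\dbar\Pi^j|^2$ is finite in the required weighted sense. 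None of this appears in your proposal. The paper instead proves the good-case extension directly and at length (Lemmas \ref{lemma:energy-decays} through \ref{lemma:unitary-extends}): the section is treated as a finite-energy map $\psi$ into $F=H^{\CC}/P$ satisfying the drift equation $\partial_t\psi=I(\psi)\partial_{\theta}\psi+V_{\alpha}(\psi)-\tfrac{1}{2t}V_{\xi}(\psi)$, one establishes decay and convergence to a fixed point of the torus action, and then extends in the holomorphic gauge by Fourier analysis. What you present as a citation is the mathematical content of Lemma \ref{lemma:extension-good-case}, so the proposal as written does not constitute a proof.
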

\begin{proof}
Denote by $\plie_s$ the Lie algebra of $P_s$.
Let $\Gr$ denote the Grassmannian variety parametrizing $\dim\plie_s$-dimensional
subspaces of $\hlie^{\CC}$. The adjoint representation $H^{\CC}\to\GL(\hlie^{\CC})$
induces an action of $H^{\CC}$ on $\Gr$. By \cite[Proposition 7.83]{knapp} the
stabilizer of $\plie_s\in\Gr$ is equal to $P_s$, so there is a unique
$H^{\CC}$-equivariant map
$\iota_0:F=H^{\CC}/P_s\to\Gr$ sending $P_s$ to $\plie_s$,
and the $H^{\CC}$-orbit through
$\plie_s$ can be identified with the image of $\iota_0$. Let
$\iota:E(F)\to E(\Gr)$ be the map induced by $\iota_0$.

We know that the $L^2$ norm of $D''(u_{\infty})=\ov{\partial}^Eu_{\infty}+[\phi,u_{\infty}]$
is finite. Since, as already pointed out, $\ov{\partial}^Eu_{\infty}$ and $[\phi,u_{\infty}]$
are orthogonal, it follows that the $L^2$ norm of $\ov{\partial}^Eu_{\infty}$ is finite.
Denote as before $\ad(u_{\infty})=\sum l_j\pi_j$. Since each projector $\pi_j$ can be
expressed as a polynomial of $u_{\infty}$, and $u_{\infty}$ is bounded (because the eigenvalues
$l_j$ are constant), the boundedness of $\|\ov{\partial}^Eu_{\infty}\|_{L^2}$
implies that the $L^2$ norm of $\ov{\partial}^E\pi_j$ is finite.
This $L^2$ norm is computed with respect to the Hermitian
metric $h_0'$ on $E(\hlie^{\CC})|_{X\setminus D}$
induced by $h_0$.

Let $E^j$ be the image of $\Pi^j=\pi_1+\dots+\pi_j$. Since $(1-\Pi^j)D''\Pi^j=0$
the same orthogonality argument as before implies that
$(1-\Pi^j)\ov{\partial}^E\Pi^j=0$, from which it follows
that $E^j$ is a holomorphic subbundle of
$E(\hlie^{\CC})|_{X\setminus D}$. We claim that we can apply \cite[Lemma 10.6]{Sim88}
to $E^j$ and thus get a holomorphic extension of $E^j$ as a holomorphic subbundle
$\ov{E}^j$ of $E(\hlie^{\CC})$ defined over the entire $X$.
Indeed, the requirements
for \cite[Lemma 10.6]{Sim88} to apply are that $h_0'$ has polynomial growth
when expressed in terms of a local trivialisation of $E(\hlie^{\CC})$ in a neighborhood
of $D$, and that the curvature of $h_0'$ has bounded $L^1$ norm. Now the first property
follows from (\ref{eq:7}), and the second one follows from the first formula in (\ref{eq:13}).

Since $u_{\infty}$ is a nowhere vanishing section of $E(\hlie^{\CC})$ and $\ad(u_{\infty})(u_{\infty})=0$ there exists some $m$ such that $l_m=0$.
Then the fibers of $E^m\subset E(\hlie^{\CC})|_{X\setminus D}$ can be identified
with Lie algebras isomorphic to $\plie_s$, so $E^m$ defines a holomorphic section
$\rho_0$ of $E(\Gr)$ along $X\setminus D$ which actually comes from a section $\rho_1$
of $E(F)$ along $X\setminus D$. Hence, $\rho_0=\iota\circ\rho_1$.
The fact that $E^m$ extends to a holomorphic subbundle $\ov{E}^m$ of $E(\hlie^{\CC})$
implies that $\rho_0$ can be extended to a holomorphic section $\ov{\rho}_0$ of
$E(\Gr)$. Now, $F$ is closed in $\Gr$ (because $H^{\CC}/P_s$ is compact, precisely since $P_s$
is a parabolic subgroup of $H^{\CC}$), and consequently there exists a holomorphic extension
$\ov{\rho}_1$ of $\rho_1$ satisfying $\ov{\rho}_0=\iota\circ\ov{\rho}_1$.

For any $y\in X$ let $Q_y:=\{a\in E(H^{\CC})_y\mid a\cdot \ov{E}_y^m\subseteq \ov{E}_y^m\}$.
Then $Q_y$ is conjugate to $P_s$ for any trivialization of $E_y$ (because the normalizer of
$P_s$ is $P_s$ itself) and $Q=\bigcup_{y\in X}Q_y$ is a holomorphic subbundle of $E(H^{\CC})$.
By construction, $Q$ is a holomorphic extension of $P$ to the whole $X$.
\end{proof}

The element $s\in\imag\lieh_0$ which was used to define $P=P_s$ also
provides a strictly antidominant character $\chi:\liep\to\CC$, see appendix
\ref{sec:parabolic-subgroups-1}. The same arguments as in Lemma 5.4 in
\cite{Sim88} allow to prove that $\varphi\in H^0(X\setminus D,E_\sigma(\mlie_s)\otimes K)$, and
that the section $u_{\infty}$ satisfies (recall that $D''u_{\infty}$ is concentrated on negative eigenspaces of $\ad u_\infty$)
\begin{equation}
\begin{split}
  \imag \int_{X\setminus D}\langle\Lambda F(h_0)& ,u_{\infty}\rangle
  - \langle\varpi(u_{\infty})(\overline{\partial}u_{\infty}),\overline{\partial}u_{\infty}\rangle \\
& \leq \int_{X\setminus D}\langle\imag \Lambda F(h_0),u_{\infty}\rangle
-\big(\varpi(u_{\infty})(D''u_{\infty}),D''u_{\infty}\big) \\ & \leq 0.
\end{split}\label{eq:41}
\end{equation}
The function $\varpi$ appears because the rescaling $u_i=\lambda_i^{-1}s_i$ implies to replace in the Donaldson functional the function $\psi(t)$ by the rescaled function $\lambda_i\psi(\lambda_it)$ which converge when $\lambda_i\rightarrow \infty$ to the function $-\varpi(t)=-t^{-1}$ for $t<0$ and $+\infty$ for $t\geq 0$.

This contradicts stability since by  Lemma
\ref{lemma:analytic-degree-parabolic} the first expression in (\ref{eq:41})
 is $$2\pi\pardeg(E)(\sigma,\chi).$$

\subsection{Uniqueness up to automorphism}
Suppose that $h$ and $he^s\in\hH$ are two critical points of the Donaldson
functional $M(h_0,\cdot)$. Then (\ref{eq:cocycle}) and (\ref{eq:variation-along-es})
imply that $M(h,he^{ts})$ is a constant function of $t$.
Here we need to use the fact that $M(h,he^{ts})$ is a convex
function of $t$, which follows from (\ref{eq:variation-along-es}).
Now formulas (\ref{eq:cocycle}), (\ref{eq:critical-points-solve-equation})
and (\ref{eq:variation-along-es}) imply that $D''s=0$, and therefore
$s$ is an element of $PE(\liehc)$ and $[\varphi,s]=0$. Finally $e^s\in PE(H^\C)$ and stabilizes $\varphi$.

\section{Parabolic local systems}
\label{parabolic-local-systems}

In this section we take $c=0$ and we refer to $c$-(poly)stability
of a parabolic $G$-Higgs bundle simply as (poly)stability.

\subsection{From Higgs bundles to parabolic local systems}

Let $(E,\varphi)$ be a polystable parabolic $G$-Higgs bundle
with $\pardeg_\chi E=0$ for all characters $\chi$ of $G$. By
Theorem \ref{th:correspondence-1}, we get an Hermite--Einstein
metric $h$ on $E$, which is quasi-isometric to some given
adapted metric $h_0$. Equation \ref{eq:11} simply means that
$$ D=A(h)+\varphi-\tau_h(\varphi) $$
is a flat $G$-connection on the $G$-bundle obtained by extending
the structure group of the $H$-bundle given by the
Hermite--Einstein
metric. We therefore obtain a $G$-local system on
$X':=X\setminus\{x_1,\cdots,x_r\}$. Recall that a $G$-local system on a manifold can be
equivalently seen as a representation of the fundamental group
of the manifold in $G$, a $G$-bundle over the manifold equipped
with a flat $G$-connection, or a $G$-bundle with locally constant
transition functions.

Let us now examine the behaviour of the local system near the puncture.
Let us see that just on the local model: from
(\ref{eq:14}) and (\ref{eq:15}), we deduce that, in the orthonormal
frame $\be$,
\begin{equation}
  \label{eq:16}
  \begin{split}
    D = d &+ \big(s_i-\tau(s_i)-\Ad(e^{\imag\theta\alpha_i})\frac{Y_i+X_i}{\ln r^2}\big) \frac{dr}{r}\\
    &+ \imag\big(-\alpha_i+s_i+\tau(s_i)-\Ad(e^{\imag\theta\alpha_i})\frac{Y_i-H_i-X_i}{\ln r^2}\big)d\theta.
  \end{split}
\end{equation}
Observe that $Y_i-H_i-X_i=\Ad(e^{-X_i})Y_i$ is nilpotent. The monodromy
around $x_i$ is
\begin{equation}
  \label{eq:17}
  e^{2\pi\imag \alpha_i}e^{2\pi(-s_i-\tau(s_i)+Y_i-H_i-X_i)}.
\end{equation}
In this formula the monodromy appears as the product of two commuting
elements of $G$ (the first is compact, the second is non compact). The
$\frac{dr}r$ term has also an interpretation: taking
\begin{equation}
  \label{eq:18}
  f=\be r^{-s_i+\tau(s_i)}(-\ln r^2)^{\frac12\Ad(\imag \theta\alpha_i)(X_i+Y_i)},
\end{equation}
we get a $G$-trivialization, which is parallel along rays from the origin.
The metric along these parallel rays has the form
\begin{equation}
  \label{eq:19}
  h = r^{2(-s_i+\tau(s_i))}(-\ln r^2)^{\Ad(\imag \theta\alpha_i)(X_i+Y_i)} ,
\end{equation}
and in this trivialization the connection $D$ takes the form
\begin{equation}
  \label{eq:23}
  D=d-\imag\big(\alpha_i-s_i-\tau(s_i)+\Ad(\imag \theta\alpha_i)\frac{Y_i-H_i-X_i}2\big) d\theta .
\end{equation}
The logarithmic part of $h$ in (\ref{eq:19}) gives no new information,
because the triple $(H_i,X_i,Y_i)$ is already encoded in the unipotent part
$\exp(\imag \pi(Y_i-H_i-X_i))$ of the monodromy. But the semisimple part
$s_i-\tau(s_i)$ is an additional information: it gives the polynomial order of
growth of the harmonic metric $h$ near the point $x_i$, along parallel
rays. In the case $G=\GL_n\setC$, this additional structure transforms the local
system into a \textbf{filtered local system} in the sense of Simpson, that
is the fibre over the ray has a filtration with weights induced by
$s_i-\tau(s_i)$.

In our case, the metric on the ray is a map  to the symmetric
space of non compact type $G/H$, and $\exp((s_i-\tau(s_i))u)$, where
$u=-\ln r$, describes a geodesic in $G/H$, with some fixed speed,
depending on $s_i$. The corresponding geometric data is a point on the
geodesic boundary of $G/H$, and a positive real number describing the
(constant) speed of the geodesic. This data is equivalent to that of a
parabolic subgroup $P$ of $G$ with an antidominant character $\chi$ of
the Lie algebra of $P$. This leads to the following definition.

\begin{definition}\label{local-system}
Let $\beta_i\in \liem$ be semisimple and
$\beta=(\beta_1,\cdots, \beta_r)$.
  A \textbf{parabolic $G$-local system on} $X\setminus\{x_1,\cdots,x_r\}$
{\bf of weight} $\beta$ is defined by
  the  following data:
  \begin{enumerate}
  \item a $G$-local system $F$ on $X\setminus\{x_1,\cdots,x_r\}$,
  \item on a ray $\rho_i$ going to $x_i$, a choice of a parabolic
    subgroup $P_i$  of $F(G)|_{\rho_i}$ isomorphic to $P_{\beta_i}$,
invariant under the monodromy
    transformation around $x_i$, with a strictly antidominant character
$\chi_i$
    of the Lie algebra of $P_i$, where the $F_x(G)$ for $x\in \rho_i$ are
    identified by parallel transport.
  \end{enumerate}
\end{definition}

  Recall from Appendix \ref{sec:parabolic-subgroups-1}, that pairs
  $(P,\chi)$ consisting of a parabolic subgroup $P$ of $G$ and a strictly
  antidominant character of the Lie algebra of $\liep$ are in one-to-one
  correspondence with elements in $\liem$. In the definition we
take $\chi_i$ to be the character corresponding to $\chi_{\beta_i}$ under
the isomorphism of $F(G)|_{\rho_i}$ with $P_{\beta_i}$.
  Note that, in contrast with a parabolic bundle, at $x_i$ the
weights of the parabolic $G$-local system are not constrained to lie in
a Weyl alcove.

\begin{remark}\label{monodromy-action}
Note that a choice of $P_i\subset F(G)|_{\rho_i}$ isomorphic to $P_{\beta_i}$
is equivalent to choosing an orbit of the action of $P_{\beta_i}$ on
$F_{\rho_i}$, i.e. a point in the flag manifold $F_{\rho_i}/P_{\beta_i}$.
The second condition in the definition is asking  that this point be
fixed by the monodromy group, that is,  the image of the representation
 $\pi_1(X,x_i)\to G$ corresponding to the local system.
This condition  makes the data independent of the choice of $\rho_i$.
\end{remark}

Let us come back to the $G$-local system coming from a $G$-Higgs bundle
with Hermite--Einstein metric $h$. The formula (\ref{eq:19}) gives the
behaviour of $h$ for the model, but the formula remains valid for the
adapted metric $h_0$, up to lower order terms, and also for the
Hermite--Einstein metric $h\in \cH^\infty$, up to replacing $h_0$ by $h_0e^s$, for
some constant $s\in \lier_i\cap \lier_i'$. Thus we see that we have a well
defined parabolic structure induced at the point $x_i$, with the parabolic
subgroup and the character of its Lie algebra defined by $s_i-\tau(s_i)$.


\begin{proposition}\label{prop:filt-local-syst}
  Let $(E,\varphi)$ be a polystable parabolic $G$-Higgs bundle with $\pardeg_\chi
  E=0$ for all characters $\chi$ of $G$. Then the $G$-local system induced by the
  Hermite--Einstein metric constructed by Theorem \ref{th:correspondence-1}
  carries naturally a structure of parabolic $G$-local system.

  If $\alpha_i\in \imag \bar \eA $ is the parabolic weight of $E$ at
  $x_i$, and $\Gr\Res_{x_i}\varphi=s_i+Y_i$, then the weight
of the parabolic $G$-local system  is
  given by the element $s_i-\tau(s_i)\in \liem$, and the projection of the
  monodromy around $x_i$ on the Levi group defined by $s_i-\tau(s_i)$ is
  $$\exp(2\pi\imag \alpha_i)\exp(2\pi\imag(-s_i-\tau(s_i)+Y_i-H_i-X_i)).$$
\end{proposition}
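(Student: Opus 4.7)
The plan is to extract the parabolic local system structure from the asymptotics of the flat connection $D = A(h) + \varphi - \tau_h(\varphi)$ near each puncture, where $h$ is the Hermite--Einstein metric produced by Theorem \ref{th:correspondence-1}. The essential asymptotic computations have already been carried out in the preceding paragraphs for the model metric $h_0$, yielding formulas (\ref{eq:16})--(\ref{eq:23}); what remains is to upgrade from $h_0$ to the true solution $h$ and to repackage the resulting data in terms of Definition \ref{local-system}.

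First I would write $h = h_0 e^s$ with $s \in \hat{C}^\infty_\delta(\bE(\imag\lieh))$ via (\ref{eq:25}), and decompose $s = \Ad(e^{\imag\theta\alpha_i})c_0 + s_1$ near $x_i$, with $c_0 \in \lier_i \cap \lier_i'$ constant and $s_1 \in C^\infty_\delta$ decaying, where $\lier_i$ and $\lier_i'$ are as in (\ref{eq:20}) and (\ref{eq:21}). Since $c_0$ commutes with $\alpha_i$, with the whole Kostant--Rallis $\liesl_2$-triple $(H_i, X_i, Y_i)$, and with $s_i$, twisting the unitary frame (\ref{eq:unitary-holomorphic-gauge}) by $e^{c_0/2}$ produces a new $h$-orthonormal frame in which the local expressions (\ref{eq:14})--(\ref{eq:15}) for connection and Higgs field are unchanged at leading order, with the only correction lying in $C^\infty_\delta$. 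Consequently $D$ takes the model form (\ref{eq:16}) plus a decaying tail.

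Next I would perform the radial change of trivialization (\ref{eq:18}) to the $G$-frame $f$ parallel along radial rays. The connection reduces to the closed-form (\ref{eq:23}), still modified only by a $C^\infty_\delta$ remainder, and the metric takes the form (\ref{eq:19}). Computing the monodromy around $\theta \in [0, 2\pi]$ via a path-ordered exponential, which factorizes because $s_i, H_i, X_i, Y_i$ all lie in $\ker(\Ad(e^{2\pi\imag\alpha_i}) - 1)$ and because the decaying tail contributes trivially in the limit, yields precisely (\ref{eq:17}): a commuting product of the compact factor $\exp(2\pi\imag\alpha_i)$ and the non-compact factor $\exp(2\pi(-s_i-\tau(s_i)+Y_i-H_i-X_i))$. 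The polynomial growth $r^{2(-s_i+\tau(s_i))}$ in (\ref{eq:19}) describes a constant-speed geodesic ray in $G/H$, whose point at infinity corresponds via the correspondence of Appendix \ref{sec:parabolic-subgroups-1} between $\liem$ and antidominant pairs to exactly $(P_{\beta_i}, \chi_{\beta_i})$ with $\beta_i = s_i - \tau(s_i) \in \liem$.

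To complete the proof I would check the two conditions of Definition \ref{local-system}. The parabolic $P_i \subset F(G)|_{\rho_i}$ is well-defined because $s_i-\tau(s_i)$ is an intrinsic geometric invariant of the harmonic map, independent of auxiliary trivializations. The monodromy (\ref{eq:17}) stabilizes $(P_i, \chi_i)$ by a direct commutativity check: the generators $H_i, X_i, Y_i$ of the $\liesl_2$-triple commute with $s_i$ by the defining property of $\lier_i$ in (\ref{eq:20}), and with $\tau(s_i)$ by applying the commuting involutions $\tau, \theta$ together with the reality conditions (\ref{eq:6}); and the compact factor $\exp(2\pi\imag\alpha_i)$ preserves $\beta_i$ because $s_i \in \widetilde{\liem^0_i}$ is fixed by $\Ad(\exp(2\pi\imag\alpha_i))$. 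Taking the Levi projection then yields the expression stated in the proposition. The main obstacle in this outline is the second step: passing rigorously from ``model connection plus $C^\infty_\delta$ tail'' to the genuine monodromy (\ref{eq:17}) requires showing that the path-ordered exponentials along the shrinking loops $\{|z|=r\}$ converge as $r\to 0$ to the advertised expression, which relies on the integrability of the perturbation in the radial frame $f$.
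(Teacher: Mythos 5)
Your outline follows the same route as the paper's proof (model connection (\ref{eq:16}) plus a $C^\infty_\delta$ perturbation, then the radial trivialization (\ref{eq:18})), but it breaks down exactly at the step you flag in your last sentence, and the obstacle cannot be resolved in the way you propose. The perturbation $a\in C^\infty_\delta$ satisfies only $|a_\theta|=O(|\ln r|^{-1-\delta})$, a logarithmic decay. Passing to the radial frame $f$ replaces $a_\theta$ by $b(\theta)\,d\theta=\Ad(f^{-1})a_\theta\,d\theta$, and $f$ contains the polynomial factor $r^{-s_i+\tau(s_i)}$: on an eigenspace of $\ad(s_i-\tau(s_i))$ with eigenvalue $\lambda$ this conjugation scales $a_\theta$ by $r^{\lambda}$. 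For $\lambda<0$ the polynomial growth overwhelms the logarithmic decay, so $b$ need not tend to zero (nor even stay bounded) on those eigenspaces, and the path-ordered exponentials along the loops $\{|z|=r\}$ do \emph{not} converge to (\ref{eq:17}). Your assertion that ``the decaying tail contributes trivially in the limit,'' and hence that the full monodromy equals (\ref{eq:17}), is therefore false in general; the integrability of the perturbation in the frame $f$, which you correctly identify as the crux, is precisely what fails.

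The correct resolution --- and the reason the proposition asserts only the Levi projection of the monodromy --- is to identify \emph{where} the non-decaying components live. The strictly negative eigenspaces of $\ad(s_i-\tau(s_i))$ span exactly the nilpotent radical of the parabolic subalgebra $\plie_{\beta_i}$ attached to the weight $\beta_i=s_i-\tau(s_i)$ (recall that $\plie_s$ is the sum of the nonpositive eigenspaces, by Appendix \ref{sec:parabolic-subgroups-1}). Hence the actual monodromy is (\ref{eq:17}) multiplied by an element of the unipotent subgroup of $P_{\beta_i}$: it still preserves the parabolic structure required by Definition \ref{local-system}, but only its projection to the Levi group is pinned down and equal to the model value, which is exactly (and only) what the proposition claims. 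Your final commutativity checks should be replaced by this observation: one does not need the full monodromy to be the commuting product (\ref{eq:17}); one needs the uncontrolled part of the monodromy to lie in the unipotent radical of $P_{\beta_i}$, which it does, while the components of $b$ on the nonnegative eigenspaces do vanish in the limit by the same scaling computation, fixing the Levi part.
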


To be precise, the compatibility between the metric and the parabolic
structure is the following: on a ray going to $x_i$, we compare the metric
$h$, seen as an application into $G/H$, with a geodesic given by the
$(Q_i,\chi_i)$, parametrized by $(-\ln r)$, and the condition is that the
distance between them should grow at most like $|\ln r|^N$.  For
$G=\GL_n\C$, this is the property referred by Simpson in \cite{Sim90} as
\textbf{tameness}.

\begin{remark}
  If the $s_i-\tau(s_i)$ part of $\Gr\Res_{x_i}\varphi$ vanishes, then we
  simply get a $G$-local system on $X-\{x_i\}$ with monodromy around
  $x_i$ given by
$$
\exp(2\pi\imag \alpha_i-s_i-\tau(s_i))\exp(2\pi\imag(Y_i-H_i-X_i)).
$$
\end{remark}

\begin{proof}[Proof of Proposition \ref{prop:filt-local-syst}]
  We have already seen the model behaviour. In general we have a perturbed
  flat connection $D+a$, where $D$ is the model (\ref{eq:16}) and the
  perturbation $a\in C_\delta^\infty$, which implies $a=a_r \frac{dr}r + a_\theta d\theta$, with
  $|a_r|, |a_\theta|=O( |\ln r|^{-1-\delta} )$. The radial trivialization
  (\ref{eq:18}) is then modified by a bounded transformation (this does not
  change the parabolic weight $s_i-\tau(s_i)$ of the model), while the formula
  (\ref{eq:23}) for the connection in this radial trivialization comes with
  an additional term, depending only on the angle $\theta$,
  $$ b(\theta) d\theta = \Ad(f^{-1}) a_\theta d\theta . $$
  The term $r^{-s_i+\tau(s_i)}$ in $f$ and the initial bound on $a_\theta$ imply
  the vanishing of the components of $b$ on the eigenspaces of
  $\ad(s_i-\tau(s_i))$ corresponding to the nonnegative eigenvalues. On the
  contrary, there can be a nonzero contribution from the eigenspaces for
  the negative eigenvalues, which is an additional unipotent term in the
  monodromy, in the unipotent subgroup associated to the parabolic
  subgroup. Therefore, only the Levi part is fixed and is equal to that of
  the model.
\end{proof}

The  converse of Proposition \ref{prop:filt-local-syst} is given in
the next section.

\subsection{Harmonic metrics and polystability of $G$-parabolic local systems}

Given a flat $G$-bundle $(F,D)$ over $X'$ and a metric $h$ on $F$, that is,
a reduction of structure group to an $H$-bundle, we decompose
$D=D^+_h+\psi_h$, where $D^+_h$ is an $H$-connection and $\psi_h$ is a section of
$\Omega^1\otimes E(\liem)$. The metric $h$ is said to be \textbf{harmonic} if
\begin{equation}\label{harmonic-eq}
(D^+_h)^*\psi_h=0.
\end{equation}

If we regard the flat $G$-bundle as a representation $\rho:\pi_1(X')\lra G$,
then a harmonic metric is the same as a harmonic map from the universal
cover of $X'$ to the symmetric space $G/H$, which is equivariant with
respect to the action of the fundamental group on both sides.

A harmonic metric on a parabolic $G$-local system is a harmonic metric on
the local system, which is tamed in the sense defined in the previous
section. The existence of a harmonic metric on the a parabolic $G$-local
system is governed, like for the Hermite--Einstein equation, by a stability
condition.  To define this we first define the degree. This is simpler than
for $G$-Higgs bundles, since the global term of the degree vanishes here
due to the flatness of the connection.

Let $F$ be a parabolic $G$-local system with $(P_i,\chi_i)$ defining the
parabolic structure at the point $x_i$. Let $Q\subset G$ be a parabolic subgroup
of $G$ and $\chi$ be a strictly antidominant character of its Lie algebra $\lieq$. Let
$\sigma$ be a reduction of structure group of $F$ to a $Q$-bundle, which is
constant (invariant under the flat connection).  Using the relative degree
of two parabolic subgroups with antidominant characters, we define the
parabolic degree of $F$ with respect to the reduction to $(Q,\chi)$ by the
formula
\begin{equation}
\label{eq:locsys-def-par-deg-sigma}
  \pardeg(F)(Q,\chi,\sigma):=-\sum_i \deg\big( (P_i,\chi_i),(Q,\chi)\big).
\end{equation}
This makes sense since both $P_i$ and $Q$ can be identified to subgroups of
$F(G)$ near the marked point $x_i$.

We say that $F$ is {\bf semistable} if for any such reduction of the local
system one has
\begin{equation}\label{parabolic-stability}
\pardeg(F)(Q,\chi,\sigma) \geq 0.
\end{equation}
It is {\bf stable} if the inequality is strict for any non-trivial
reduction, and {\bf polystable} if it is semistable and equality
happens in (\ref{parabolic-stability}) if and only if there is a
reduction of the local system to a Levi subgroup $L\subset Q$ (as for Higgs
bundles, this means that there is a parabolic $L$-local system which
induces $F$ through the inclusion $L\hookrightarrow G$). In particular this definition implies a compatibility of the parabolic structure with the reduction. When there is no parabolic
structure, the condition only says that there is no reduction of the
local system to a parabolic subgroup, unless there is a reduction to a
Levi subgroup: the local system is reductive.

\begin{remark}\label{simplification-stability}
Like in the case of $G$-bundles when $G$ is complex, in the case of parabolic
$G$-local systems (for real or complex $G$) it is enough to check
(\ref{eq:locsys-def-par-deg-sigma}) for characters that lift to $Q$. In fact,
it  suffices to verify the condition for maximal parabolic subgroups and a
certain character $\chi_Q$.
\end{remark}

\begin{theorem}\label{th:filt-local-syst}
  Let $F$ be a parabolic $G$-local system, with vanishing parabolic degree with respect to all characters of $\lieg$. Then $F$ admits a harmonic metric $h$ (compatible with the parabolic structure near the marked points) if and only if $F$ is polystable. Moreover, any two such harmonic metrics are related by a parallel automorphism of $F$, preserving the parabolic structure at the marked points (i.e. the automorphism belongs to the parabolic group $P_i\subset F(G)|_{\rho_i}$ on the chosen ray $\rho_i$ going to the marked point $x_i$).

  The harmonic metric induces a polystable parabolic $G$-Higgs bundle, and the relation between the weights at the marked points is the same as in Proposition \ref{prop:filt-local-syst}.
\end{theorem}

The proof of this theorem can be made formally similar to that of Theorem \ref{th:correspondence-1}, see \cite[Theorem 6]{Sim90} and \cite[Section 11]{Biq97}, by replacing the symbols $(D'',D'=\partial^E-\tau(\varphi))$ by $(D,D^c=\imag((D^+)^{0,1}+\psi^{1,0}-(D^+)^{1,0}-\psi^{0,1})$ and the curvature $F=(D''+D')^2$ by the pseudocurvature $G=-\frac12(D-\imag D^c)^2$, so we will not give the details of the proof, but just sketch a few steps.

The first step is to construct an initial metric $h_0$ (a section of
$E(G/H)$) near a puncture $x_i$: start with a trivialization $f$ where the
flat connection $D$ is given by formula (\ref{eq:23}), up to terms in the
nilpotent part of the parabolic, and define the initial metric $h_0$ by
formula (\ref{eq:19}). Implicit in this construction is the choice of
appropriate Kostant--Sekiguchi $\liesl_2$-triples (see Appendix
Section~\ref{triples}). In the orthonormal trivialization $\be=fh_0^{-\frac12}$,
the flat connection has then the form (\ref{eq:16}), up to $C_\delta^\infty$
terms. Choose any extension of $h_0$ in the interior of $X$.

The second step is to define the functional space of metrics that we want
to use: the relevant choice here is
\begin{equation}
  \label{eq:24}
  \cH = \{ h=h_0e^s, \, s\in \hat{L}^{2,p}_\delta(\liem) \} .
\end{equation}
As in Section \ref{sec:correspondence}, this space preserves the fact that $h_0$
can change at the points $x_i$, since $s(x_i)\in \lier_i\cap \lier_i'$.  Of
course, this is only a technical space needed for the proof, since at the
end, we shall get by local elliptic regularity, as in (\ref{eq:25}),
\begin{equation}
  \label{eq:26}
  h\in \cH^\infty=\{h_0e^s, \, s\in \hat{C}^\infty_\delta(\liem)\}.
\end{equation}

To solve the problem in $\cH$, one first solves the abelian equation on the central part, therefore reducing to the semisimple part of $G$. Then one minimizes the relative energy
\begin{equation}
 N(h_0,h) = \int_X ( |\psi_h|_h^2-|\psi_{h_0}|_{h_0}^2 )\label{eq:40}
 \end{equation}
on $\cH$, under the constraint $\|(D^+_h)^*\psi_h\|_{L^p_\delta}\leq B$ (note the inaccuracy in \cite[p. 88]{Biq97}, where the second term was forgotten). It turns out that using the formalism $(D,D^c)$, the functional $N(h_0,h)$ coincides with the Donaldson functional $M(h_0,h)$ in formula \eqref{eq:variation-along-es}. Of course the reason to introduce this relative energy is that the usual harmonic map energy $\int_X |\psi_h|^2$ can be infinite in our case, while $N(h_0,h)$ is well defined. The fact that $N(h_0,h)$ might be unbounded below explains why a stability condition appears here, replacing Corlette's semisimplicity condition for harmonic maps.

The coincidence beetween \eqref{eq:40} and the Donaldson functional can be proved just by checking that they have the same gradient, but since this fact does not seem well known, we also give a direct proof:
\begin{lemma}
  One has $N(h_0,h_0e^s)=\int_X -((D^+_{h_0})^*\psi_{h_0},s)_{h_0}+\frac12(\psi(s)(Ds),Ds)_{h_0}$, where $\psi(t)=\frac{e^t-t-1}{t^2}$.
\end{lemma}
\begin{proof}
  We use the formulas from \cite[Section 11]{Biq97}. For $h=h_0e^s$, one has
  \begin{align*}
    \psi_h &= -\frac12 h^{-1}Dh \\
        &= \Ad(e^{-s})\psi_{h_0}-\frac12 e^{-s}D(e^s) \\
    &= \Ad(e^{-s})\psi_{h_0} -\frac12  \tfrac{1-e^{-t}}t(s)(Ds) .
  \end{align*}
  Since $\Ad(e^{\frac s2})\psi_h$ is a section of $E(\liem)$ and $\ad(s)$ exchanges $\lieh$ and $\liem$, we deduce
  \begin{align*}
    \Ad(e^{\frac s2})\psi_h &= \Ad(e^{-\frac s2})\psi_{h_0} - \tfrac{\sinh\frac t2}t(s) (Ds)\\
    &= \cosh(\tfrac t2)(s)(\psi_{h_0}) - \tfrac{\sinh\frac t2}t(s) (D^+_{h_0}s),
  \end{align*}
  and, since $|\psi_h|_h^2=|\Ad(e^{\frac s2})\psi_h|_{h_0}^2$,
  \begin{multline}
   |\psi_h|_h^2-|\psi_{h_0}|_{h_0}^2 = \big(\sinh^2(\tfrac t2)(s)(\psi_{h_0}),\psi_{h_0}\big)
    + \big(\tfrac{\sinh^2\frac t2}{t^2}(s) (D^+_{h_0}s),D^+_{h_0}s\big)\\
    - 2 \big( \cosh(\tfrac t2)(s)(\psi_{h_0}),\tfrac{\sinh\frac t2}t(s) (D^+_{h_0}s) \big).\label{eq:43}
  \end{multline}
  On the other hand, decomposing $Ds=D^+_{h_0}s+[\psi_{h_0},s]$, and taking the even and odd parts of $\psi(t)$, one has
  \begin{multline*}
    \big( \psi(s)(Ds),Ds \big) = \big( (\cosh t-1)(s)\psi_{h_0},\psi_{h_0}\big) + \big( \tfrac{\cosh t-1}{t^2}(s)(D^+_{h_0}s),D^+_{h_0}s\big) \\ - 2 \big( \tfrac{\sinh t-t}t(s)(D^+_{h_0}s),\psi_{h_0}\big).
  \end{multline*}
  Comparing with \eqref{eq:43}, we obtain
  \[ \frac12 \big( \psi(s)(Ds),Ds \big) = |\psi_h|_h^2-|\psi_{h_0}|_{h_0}^2 + \big( D^+_{h_0}s,\psi_{h_0}\big).\]
The lemma follows.
\end{proof}

The heart of the proof of Theorem \ref{th:filt-local-syst} consists in proving that non convergence of a minimizing sequence would imply the existence of a reduction of $E$ to a parabolic subgroup $P$, appearing with an antidominant character, which would contradict stability, and this is done exactly as in the Higgs bundle case, except that the $L^{1,2}$ holomorphic subbundles are replaced by $L^{1,2}$ parallel subbundles, so no subtle regularity theory is needed here.

The harmonic metric $h$ induces a $G$-Higgs bundle $(E,\varphi)$, where the
$\dbar$-operator of $E$ is $(D^+)^{0,1}$ and the Higgs field
$\varphi=\psi_h^{1,0}$. A priori, this defines $(E,\varphi)$ only in the interior of $X$,
and we have to extend it over the points $x_i$. Fortunately, because of the
good control (\ref{eq:26}) on the solution, this can be done relatively
easily. Because $h\in \cH^\infty$, we have, in an orthonormal trivialization
$\be$,
\begin{align*}
\dbar^E &= \dbar_0 + a, \quad \dbar_0= \dbar + \big(\alpha_i-\frac{\Ad(e^{\imag \theta \alpha_i})H_i}{\ln|z|^2}\big)
  \frac{d\zbar}{2\zbar} , \\
\varphi &= \varphi_0 + b, \quad \varphi_0=\big(s_i-\frac{\Ad(e^{\imag \theta\alpha_i})Y_i}{\ln|z|^2}\big)\frac{dz}{z} ,
\end{align*}
where the perturbations $a$ and $b$ belong to the space $C^\infty_\delta$.
\begin{lemma}
  There exists a gauge transformation $g\in\hat{C}^\infty_\delta(E(H^\setC))$,
  defined in a neighbourhood of $x_i$, such that
  $$ g(\dbar^E) = \dbar_0 . $$
\end{lemma}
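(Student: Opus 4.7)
The plan is to seek $g$ of the form $g=\exp(u)$ with $u\in\hat{C}^\infty_\delta(E(\liehc))$ defined in a small disk $\Delta$ around $x_i$. Since $\dbar^E$ acts as $\dbar_0+a$, the condition $g\cdot\dbar^E\cdot g^{-1}=\dbar_0$ is equivalent, via the Leibniz rule applied to $g$ viewed as a section of the endomorphism bundle, to the single PDE $\dbar_0 g=g\cdot a$, which after substituting $g=\exp(u)$ becomes the nonlinear equation
$$\dbar_0 u = a + Q(u,a),$$
where $Q$ collects all terms that are at least quadratic in $(u,a)$.

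The first step is to invert the linearization $\dbar_0\colon\hat{L}^{2,p}_\delta(E(\liehc))\to L^{1,p}_\delta(\Omega^{0,1}\otimes E(\liehc))$ on $\Delta$. From formula~(\ref{eq:d-bar-E}) one decomposes $E(\liehc)$ along the eigenspaces of $\ad(\alpha_i)$; on the $\lambda$-eigenspace the model operator reduces to a scalar weighted Cauchy--Riemann operator $\dbar+\lambda\frac{d\zbar}{2\zbar}$ plus a bounded logarithmic perturbation. For $|\lambda|<1$, standard weighted analysis as developed in \cite{Biq97} shows that this is Fredholm and, after shrinking $\Delta$, surjective. The eigenvalues $\lambda=\pm 1$, which can occur precisely in the ``bad wall'' case, are exactly the reason the refined space $\hat{L}^{2,p}_\delta$---allowing a non-decaying component along $\lier_i\cap\lier_i'$---was introduced, and on that component $\dbar_0$ is also invertible onto the appropriate target; this provides a bounded right inverse $R$ of $\dbar_0$.

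The second step is to solve the nonlinear equation by a contraction mapping argument. Writing the equation as $u=R(a+Q(u,a))$ on a sufficiently small disk around $x_i$, the weighted norm of $a$ can be made as small as we wish (since $a\in C^\infty_\delta$ with $\delta>0$), so the quadratic structure of $Q$ makes this map a contraction on a small ball of $\hat{L}^{2,p}_\delta$; Banach's fixed point theorem then yields a solution. Finally, elliptic bootstrapping for the Cauchy--Riemann equation $\dbar_0 u=a+Q(u,a)$, combined with the fact that $a$ is smooth of all orders in the $C^\infty_\delta$ scale, upgrades $u$ to $\hat{C}^\infty_\delta$ on the given neighbourhood of $x_i$, so that $g=\exp u$ lies in $\hat{C}^\infty_\delta(E(H^\setC))$ as required.

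The main obstacle is the linear analysis in Step~1 at ``bad'' weights where $\ad(\alpha_i)$ has eigenvalues $\pm 1$ on $\liehc$: on the $\lambda=-1$ eigenspace potential obstructions produce $1/z$-type poles, which must be absorbed by the meromorphic part of $PE(H^\setC)$ (sections of the form $\exp(n/z)$ with $n\in\lieq_i^1$), while on the $\lambda=+1$ eigenspace the cokernel must be carefully identified. Both features are precisely what the hatted weighted spaces and the parabolic gauge sheaf were designed to handle, so once the functional-analytic framework of \cite{Biq97} is in place, the argument goes through.
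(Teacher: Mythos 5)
Your proposal is correct and follows essentially the same route as the paper: substitute $g=\exp(u)$, observe that the linearization is $\dbar_0 u = a$, invert this weighted Cauchy--Riemann operator (the paper does so by citing the explicit Cauchy-kernel solution with estimates of \cite[Lemma 9.1]{Biq97}, where you instead sketch the eigenspace-by-eigenspace weighted analysis), and then run a fixed-point argument after shrinking the disk so that the $C^\infty_\delta$-smallness of $a$ gives a contraction. The only cosmetic difference is your closing remark about absorbing $\lambda=-1$ obstructions into meromorphic sections $\exp(n/z)$, which is not needed here --- the gauge transformation of this lemma lives in $\hat{C}^\infty_\delta(E(H^\setC))$, and the meromorphic ambiguity only enters later when passing to the holomorphic extension gauge.
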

\begin{proof}
  The equation to solve is
$$ g^{-1}\dbar_0 g = \Ad(g^{-1})a . $$
Writing $g=\exp(u)$, the equation becomes $e^{-u}\dbar_0e^u=e^{-\ad u}a$, with
linear part $\dbar_0u=a$.  One can deduce an explicit solution for the
linear problem from the Cauchy kernel, with suitable estimates
\cite[Lemma 9.1]{Biq97}. Shrinking to a smaller ball if necessary, a
fixed point argument gives the expected solution.
\end{proof}
We now have a $\hat{C}^\infty_\delta$-gauge $f=\be g$ of $\bE$ (seen as a $H^\setC$-bundle) in
which $\dbar^E$ is exactly the model $\dbar_0$, and $\varphi=\varphi_0+b'$ for
some $b'\in C^\infty_\delta$. We deduce an explicit holomorphic gauge,
$$ e = f (-\ln r^2)^{\Ad(e^{\imag \theta\alpha_i})H_i/2} r^{-\alpha_i}, $$
which we use to extend the holomorphic bundle $E$ over
$x_i$. Moreover, in the gauge $e$, the Higgs field becomes
$$ \varphi = \Ad\big(r^{\alpha_i}(-\ln r^2)^{-\Ad(e^{\imag \theta\alpha_i})H_i/2}\big) (\varphi_0+b'). $$
Here
$$\Ad(r^{\alpha_i}(-\ln r^2)^{-\Ad(e^{\imag \theta\alpha_i})H_i/2})\varphi_0 = (s_i + \Ad(z^{\alpha_i})Y_i) \frac{dz}z $$
is just our model for the Higgs field in the holomorphic
trivialization $e$, with $\Gr\Res_{x_i}=s_i+Y_i$, so we have to
analyse the behaviour of the remainder
$$ \varphi' = \Ad\big(r^{\alpha_i}(-\ln r^2)^{-\Ad(e^{\imag \theta\alpha_i})H_i/2}\big) b' , \quad
b'=O\big(\frac1{|\ln r|^\delta}\big) \frac{dz}{z\ln r}. $$ Here remind that
$b'$ is holomorphic outside the origin, and $0<\delta<1$.  This implies
that $\varphi'$ is meromorphic: decompose
$$\varphi'=\sum_\mu \varphi'_\mu\frac{dz}z$$
along the eigenvalues $\mu$ of $\ad(\alpha_i)$ on $\liemc$, and let us
analyse the pole of $\varphi'_\mu$. We certainly have
$$\varphi'_\mu=O(r^\mu|\ln r|^N)$$
which implies $v(\varphi'_\mu)\geq -\lfloor-\mu\rfloor$, as wanted for a parabolic $G$-Higgs
bundle. We can say something more on $\Gr\Res_{x_i}\varphi'$, that is on the
components $\varphi'_\mu$ for $\mu\in \setZ$: decompose further with respect to the
eigenvalues $\eta$ of $\ad(H_i)$ on $\liemc$, and we get
$$\varphi'_{\mu,\eta}=O(r^\mu|\ln r|^{-\frac\eta2-1-\delta}) . $$
This implies that the $(\mu,\eta)$-component for $\mu\in \setZ$
can be non zero only if $\eta<-2$, which implies that
$Y_i+\Gr\Res_{x_i}\varphi'$ is conjugate to $Y_i$, that is
$\Gr\Res_{x_i}\varphi$ remains conjugate to $s_i+Y_i$. This
finishes the proof of Theorem \ref{th:filt-local-syst}.\qed

Table \ref{table}  gives the relation between the singularities for the
Higgs bundle and for the corresponding local systems, in a similar way to
Simpson's table in \cite{Sim90}. The interesting feature here is that they correspond in
a way which extends the Kostant--Sekiguchi correspondence, see appendix
\ref{triples}. More specifically, in the nilpotent case, one gets exactly the
correspondence between $H^\setC$-nilpotent orbits in $\liem^\setC$ and
nilpotent $G$-orbits in $\lieg$ which is the Kostant--Sekiguchi correspondence
(this was turned by Vergne \cite{Ver95} into a diffeomorphism which can be
seen as a toy model of our correspondence theorem). The more general case with
semisimple residues corresponds to an extension of
Kostant--Sekiguchi--Vergne \cite{bielawski,biquard-note}.

\begin{table}
\begin{tabular}{|l|l|l|}
\hline
 & \textbf{Weight}
 & \textbf{Monodromy (projected to the Levi)} \\
\hline
$(E,\varphi)$ &  $\alpha$   & $s+Y= \Gr\Res_{x}\varphi$  \\
\hline
$(F,\nabla)$  &  $s-\tau(s)$   &  $\exp(2\pi\imag \alpha)\exp(2\pi\imag(-s-\tau(s)+Y-H-X))$     \\
\hline
\end{tabular}
\vspace{12pt}

\caption{Table of relations for weights and monodromies}
    \label{table}
\end{table}

\subsection{Deformations of the harmonicity equation and polystability}

As for parabolic $G$-Higgs bundles, there is also  a more general stability
condition for parabolic $G$-local systems depending on a parameter.
In this case the parameter is an element of the subspace of fixed points
of $\liem$ under the isotropy action of $H$, that is
$$
\liem^H=\{v\in \liem \;\;\;\mbox{such that}\;\;\; \Ad(h)(v)=v\;\;\;
\mbox{for every}\; h\in H\}.
$$

Let $F$ be a parabolic $G$-local system with $(P_i,\chi_i)$ defining the
parabolic structure. Let $Q\subset G$ be a parabolic subgroup
of $G$ and $\chi$ be an antidominant character of its Lie algebra $\lieq$. Let
$\sigma$ be a reduction of structure group of $F$ to a $Q$-bundle, which is
invariant under the flat connection.
Given an element $\zeta\in \liem^H$, we  define
$\zeta$-{\bf polystability} of $F$ by the condition
\begin{equation}\label{zeta-parabolic-stability}
\pardeg(F)(Q,\chi,\sigma) -\langle \zeta, s\rangle \geq 0,
\end{equation}
for every such $(Q,\chi)$ and reduction $\sigma$, where $s\in
\liem$ is the element corresponding to $(Q,\chi)$ (see Appendix
\ref{sec:parabolic-subgroups-1}).

This condition corresponds to a deformation of the harmonicity equation
(\ref{harmonic-eq}) given by
\begin{equation}\label{zeta-harmonic-eq}
(D^+_h)^*\psi_h=\zeta.
\end{equation}

Note that this equation is indeed $H$-gauge invariant since
$\zeta\in\liem^H$. Using the methods above one can prove the
following.

\begin{proposition}\label{zeta-correspondence}
A parabolic $G$-local system admits a reduction $h$  to $H$ satisfying
(\ref{zeta-harmonic-eq}) if and only if it is $\zeta$-polystable.
\end{proposition}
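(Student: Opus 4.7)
The plan is to adapt the argument for Theorem \ref{th:filt-local-syst} (the case $\zeta=0$) in the same way that the discussion after Theorem \ref{th:correspondence-1} adapts the Hitchin--Kobayashi argument from $c=0$ to nonzero $c$. All the hard analytic work---construction of initial metrics $h_0$ compatible with the parabolic weights, choice of the functional spaces $\cH$ and $\cH^\infty$ in \eqref{eq:24}--\eqref{eq:26}, and the Uhlenbeck--Yau style extraction of a destabilizing reduction---carries over verbatim, so I would only need to insert the term involving $\zeta$ in the correct places and check that the hypothesis $\zeta\in\liem^H$ is exactly what makes everything compatible with the $H$-gauge action.

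For the \emph{necessity} of $\zeta$-polystability, suppose $h$ satisfies $(D^+_h)^*\psi_h=\zeta$, and let $\sigma$ be a flat reduction of $F$ to a parabolic subgroup $Q=Q_s\subset G$ with antidominant character $\chi=\chi_s$ determined by some $s\in\liem$. The reduction $\sigma$ produces a section $s_{\sigma,h}\in\Gamma(\bE(\liem))$ by the same construction as in \eqref{eq:10}, and pairing the deformed harmonicity equation against $s_{\sigma,h}$, integrating over $X\setminus D$, and performing the same boundary analysis as in Lemma \ref{lemma:analytic-degree-parabolic} (together with the identification of the local boundary contributions with the relative degrees $\deg((P_i,\chi_i),(Q,\chi))$) yields the identity
\begin{equation*}
\pardeg(F)(Q,\chi,\sigma)-\langle\zeta,s\rangle=\frac{1}{2\pi}\int_{X\setminus D}|\psi_h^{\sigma,-}|^2,
\end{equation*}
where $\psi_h^{\sigma,-}$ is the component of $\psi_h$ in the negative eigenspaces of $\ad s$ (cf. Lemma 5.4 of \cite{Sim88}); this is nonnegative and vanishes precisely on a reduction to the Levi.

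For the \emph{sufficiency}, I would replace the unperturbed energy $\int(|\psi_h|^2-|\psi_{h_0}|^2)$ used in the proof of Theorem \ref{th:filt-local-syst} by the moment-map functional
\begin{equation*}
\mathcal{E}_\zeta(h)=\int_X\bigl(|\psi_h|^2-|\psi_{h_0}|^2\bigr)-2\int_X\langle\zeta,\log(h_0^{-1}h)\rangle\,\omega,
\end{equation*}
where the pairing uses the constant section of $\bE(\liem)$ determined by $\zeta\in\liem^H$. The hypothesis $\zeta\in\liem^H$ guarantees that the second term is well defined and $H$-gauge invariant, and an easy variational computation (as in \cite{bradlow-garcia-prada-mundet:2003, mundet:2000}) shows that the critical points of $\mathcal{E}_\zeta$ in $\cH$ are exactly the solutions of \eqref{zeta-harmonic-eq}. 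Minimizing $\mathcal{E}_\zeta$ under the constraint $\|(D^+_h)^*\psi_h-\zeta\|_{L^p_\delta}\leq B$ and following the Simpson/Biquard scheme, the only place where one uses stability is in obtaining a uniform $C^0$-bound on a minimizing sequence: failure of this bound produces, after rescaling, a weak $L^{1,2}_{\mathrm{loc}}$ section $u_\infty$ of $\bE(\liem)|_{X\setminus D}$ whose spectral projections are holomorphic subsheaves (Uhlenbeck--Yau), hence by the extension lemma of Subsection \ref{ss:extending-sigma} define a global reduction $\sigma$ to a parabolic $Q_s\subset G$, and the deformed version of the Simpson estimate gives
\begin{equation*}
\pardeg(F)(Q,\chi,\sigma)-\langle\zeta,s\rangle\leq 0,
\end{equation*}
contradicting $\zeta$-polystability (strict inequality unless a Levi reduction exists, in the polystable case). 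Finally, elliptic regularity promotes the $\hat L^{2,p}_\delta$ solution to $\cH^\infty$, and the tameness near punctures is controlled exactly as in Proposition \ref{prop:filt-local-syst}.

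The main obstacle I anticipate is purely bookkeeping: verifying that the extension lemma \ref{lemma:extension} and the decay/convexity estimates of Subsection \ref{ss:extending-sigma} are unaffected by the $\zeta$-perturbation---this should be the case because $\zeta$ is a lower-order, bounded term that does not alter the model behaviour of the connection near the punctures---and confirming that the relative-degree boundary contributions at the punctures are the same as in the $\zeta=0$ case, so that the only new term in the degree inequality is the global $\langle\zeta,s\rangle$ coming from pairing $\zeta$ with the constant-along-a-ray section $s_{\sigma,h}$ at infinity. Once these compatibilities are checked, the rest of the argument is a direct transcription of the proof of Theorem \ref{th:filt-local-syst}.
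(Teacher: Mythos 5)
Your proposal is correct and follows exactly the route the paper intends: the paper gives no detailed proof of Proposition \ref{zeta-correspondence}, saying only that it follows ``using the methods above,'' i.e.\ by adapting the minimization/destabilization scheme of Theorem \ref{th:filt-local-syst} with the linear $\zeta$-term inserted into the energy functional and into the degree inequality, which is precisely what you do. Your key checks --- that $\zeta\in\liem^H$ makes the perturbed functional $H$-gauge invariant with critical points the solutions of (\ref{zeta-harmonic-eq}), that the destabilizing section produced by a non-convergent minimizing sequence yields the deformed inequality $\pardeg(F)(Q,\chi,\sigma)-\langle\zeta,s\rangle\leq 0$, and that the $\zeta$-perturbation does not affect the model behaviour or the extension arguments near the punctures --- are exactly the compatibilities the paper's one-line argument implicitly relies on.
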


For $\zeta\neq 0$ a $\zeta$-polystable parabolic $G$-local system no longer
defines  a $G$-Higgs bundle since now $\varphi$ is not
a holomorphic section.
The equation $\dbar \varphi=0$ is replaced by
$$
\dbar \varphi=\eta\omega,
$$
with $\eta\in (\liem^\C)^{H^\C}$ and $\zeta=\eta-\tau(\eta)$,
where $\tau$ is the compact conjugation in $\lieg^\C$, and
$\omega$ is a K\"ahler form on $X$. However the Higgs field
$\varphi$  defines a holomorphic section if we replace the
bundle associated to $\liem^\C$ by    the bundle associated to
the $H^\C$-representation $\liem^\C/(\liem^\C)^{H^\C}$.
Strictly speaking this is no longer a $G$-Higgs bundle but it
is a Higgs pair in the more general sense mentioned in Remark
\ref{general-pairs}.

\begin{remark} The equation (\ref{zeta-harmonic-eq}) has been recently
studied  by Collins--Jacob--Yau \cite{CJY} for $G=\GL_n\C$, where they
prove in particular Proposition  \ref{zeta-correspondence} in this case.
\end{remark}

\section{Moduli spaces}\label{moduli}

\subsection{Moduli spaces of parabolic $G$-Higgs bundles}
\label{moduli-higgs-bundles}

We follow the notation of Section \ref{parabolic-higgs-bundles}.
Let $X$ be  a compact connected Riemann surface and let $S=\{x_1,\dots,x_r\}$
be  a finite set of different points of $X$. Let $D=x_1+\cdots +x_r$ be the
corresponding effective divisor. Let $(G,H,\theta,B)$ be a real reductive
Lie group  (see Appendix \ref{reductive}).
Consider parabolic weights $\alpha=(\alpha_1,\dots,\alpha_r)$ with
$\alpha_i\in \imag \bar\eA$, and let $c\in \liez$.


Let  $\cM_c(\alpha):=\cM_c(X,D,G,\alpha)$ be the  {\bf moduli
space of meromorphic equivalence classes of $c$-polystable
parabolic $G$-Higgs bundles $(E,\varphi)$  on $(X,D)$ with
parabolic weights $\alpha$}. (See Definition
\ref{def:meromorphic-equivalence} for the notion of meromorphic
equivalence.) Note that if none of the $\alpha_i$'s is
contained in a bad wall then we are simply considering
isomorphism classes in the definition of $\cM_c(\alpha)$. The
moduli space for $c=0$ will be simply denoted by $\cM(\alpha)$.

One can give an analytic  construction of the moduli space of
$c$-stable  parabolic $G$-Higgs bundles by means of the identification of
this moduli space with  the moduli space of solutions of the Hitchin
equations given  in section \ref{sec:correspondence}.
We do not give any details since this is by now fairly standard
(see \cite{biquard-thesis,konno,nakajima}), using the weighted Sobolev spaces
 for the connections, the Higgs fields and the gauge group.


We use the same notation as in Section \ref{sec:defin-parab-g}.
Let $L_i$ the Levi subgroup of $Q_i$ and $\widetilde{L_i}$ the subgroup
corresponding to (\ref{stabilizer}). Consider the spaces $\liem_i^0$
and $\widetilde{\liem_i^0}$ corresponding to (\ref{centralizer}).
Recall that if $\alpha_i\in \imag\eA'_{\lieg}$ (see \ref{very-good-guys}),
$\widetilde{L_i}=L_i$ and $\widetilde{\liem_i^0}=\liem_i^0$.
We denote by  $\widetilde{\liem_i^0}/\widetilde{L_i}$
the set of   $\widetilde{L_i}$-orbits.

There is  a map
$$
 \varrho: \cM_c(\alpha)\lra \prod_i (\widetilde{\liem_i^0}/\widetilde{L_i})
$$
defined by taking the $\widetilde{L_i}$-orbit  of
$\Gr\Res_{x_i}\varphi \in \widetilde{\liem_i^0}$.

We  fix now orbits $\LLL_i\in \widetilde{\liem_i^0}/\widetilde{L_i}$ and
denote $\LLL=(\LLL_1,\cdots, \LLL_r)$.
We consider the moduli space
$$
\cM_c(\alpha,\LLL):=\varrho^{-1} (\LLL).
$$

To ensure smoothness of $\cM_c(\alpha)$  at a point $(E,\varphi)$
one generally
needs that $(E,\varphi)$ be stable and simple.
A parabolic $G$-Higgs bundle
$(E,\varphi)$ is
said to be {\bf simple} if $\Aut(E,\varphi)=Z(H^\C)\cap \ker \iota$, where
$\iota$ is the isotropy representation (see Appendix \ref{reductive}), and
$\Aut(E,\varphi)$ is the group of parabolic automorphisms of $(E,\varphi)$.
If $G$ is complex we hence require $\Aut(E,\varphi)=Z(G)$.
Except for Lie groups
of type $A_n$ stability does not generally imply simplicity.
For a general reductive real Lie group $G$, to have smoothness
of $\cM_c(\alpha)$ at a point $(E,\varphi)$
 one also needs the
vanishing of  a certain  obstruction class in the
second hypercohomology group of the  deformation complex
associated to $(E,\varphi)$
(see \cite{garcia-prada-gothen-mundet:2009a}).
This condition is always satisfied if $G$ is  complex ---
it follows from stability and Serre duality. This  is also  satisfied
for $c=0$ when $G$ is a real form of a complex reductive group $G^\C$
 if the extension of $(E,\varphi)$ to a parabolic
$G^\C$-bundle is stable.

We have the following.

\begin{proposition}\label{moduli-general}
Let  $\cM_c^*(\alpha)\subset \cM_c(\alpha)$ and
$\cM_c^*(\alpha,\LLL)\subset \cM_c(\alpha, \LLL)$ be the subspaces of parabolic
$G$-Higgs bundles of weight $\alpha$ that are stable, simple and
have vanishing obstruction class. Then

(1) $\cM_c^*(\alpha)$ is a smooth Poisson manifold
foliated by symplectic leaves
$\cM_c^*(\alpha,\LLL)$, admitting a K\"ahler structure.

(2) If $G$ is complex $\cM_c^*(\alpha)$ is a holomorphic Poisson manifold
(in fact a hyperpoisson manifold)
foliated by holomorphic symplectic
leaves  $\cM_c^*(\alpha,\LLL)$, admitting a hyperk\"ahler structure.
\end{proposition}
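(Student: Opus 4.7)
The plan is to translate the whole statement into the language of Hitchin's equations via Theorem~\ref{th:correspondence-1}, and then obtain all the geometric structures by Kähler (resp.\ hyperkähler) reduction, treating the Poisson structure as coming from the variation of the coadjoint orbits at the punctures. First, I would fix appropriate weighted Sobolev completions of the space $\AAA\times \Omega^{1,0}(X\setminus D, E(\mlie^\CC))$ modelled on the adapted metric $h_0$ and the local forms for $\varphi$ constructed in Section~\ref{sec:initial-metric} (these are essentially the spaces already used in Section~\ref{hitchin-kobayashi}, namely $\hat L^{2,p}_{\delta}$ type spaces). For fixed $\alpha$ and fixed orbits $\LLL=(\LLL_1,\dots,\LLL_r)$, let $\AAA_c(\alpha,\LLL)$ be the configuration space of pairs with prescribed parabolic weights and with $\Gr\Res_{x_i}\varphi\in \LLL_i$; let $\GGG(\alpha)$ be the gauge group of $H$-valued gauge transformations preserving the parabolic structure and asymptotic to the identity at the punctures at the appropriate rate. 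Then Theorem~\ref{th:correspondence-1} (combined with the meromorphic equivalence of Lemma~\ref{lemma:stability-invariant}) identifies
\[
\cM_c(\alpha,\LLL)\;\simeq\;\{(A,\varphi)\in\AAA_c(\alpha,\LLL)\mid F(h)=0,\ \dbar^A\varphi=0\}/\GGG(\alpha).
\]

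Second, I would establish smoothness of $\cM_c^*(\alpha,\LLL)$ by a standard slice/implicit function theorem argument. The linearisation of the Hitchin equations at $(A,\varphi)$ together with the infinitesimal gauge action form an elliptic complex whose first cohomology is the Zariski tangent space and whose second cohomology is the obstruction space (this is the deformation complex of \cite{garcia-prada-gothen-mundet:2009a} in its parabolic version). Stability plus simplicity kills the zeroth cohomology (modulo $Z(H^\CC)\cap\ker\iota$), and the hypothesis that the obstruction class vanishes makes the second cohomology vanish, so the moduli space is smooth and finite-dimensional of the expected dimension. The Kähler structure on $\cM_c^*(\alpha,\LLL)$ then comes by Kähler reduction: the configuration space is an (infinite-dimensional) Kähler manifold with complex structure induced from the Riemann surface, and the curvature term $R(h)+\imag c\omega$ is the moment map for $\GGG(\alpha)$ acting by Kähler isometries, while $[\varphi,\tau_h(\varphi)]$ is absorbed as a hyperkähler moment map component; in either case one gets a smooth Kähler quotient once one has checked properness of the slice, which follows from the analysis carried out in Section~\ref{hitchin-kobayashi}.

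Third, for the Poisson structure on the total space $\cM_c^*(\alpha)$ I would allow $\LLL$ to vary and treat the map $\varrho:\cM_c^*(\alpha)\to\prod_i(\widetilde{\liem_i^0}/\widetilde{L_i})$ as a fibration whose fibres are the symplectic leaves. The construction is modelled on Bottacin's Poisson structure for meromorphic Higgs bundles on a curve, with its extension to the parabolic case by Logares--Martens and Markman. Concretely, at a smooth point $(E,\varphi)$ the Zariski tangent space sits in the hypercohomology of a two-term complex, and one writes down an explicit bivector using Serre duality on $(X,D)$; the residue pairing at the punctures yields, after passing to the quotient by $\widetilde{L_i}$, exactly the Poisson bracket whose symplectic leaves are the level sets of $\varrho$. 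The fact that these leaves coincide (as Kähler manifolds) with the symplectic quotients $\cM_c^*(\alpha,\LLL)$ obtained in the previous paragraph follows by computing the symplectic form on the tangent complex in both descriptions and checking they agree; this is a parabolic analogue of the computation in \cite{bottacin} and is a routine if tedious identification.

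Finally, when $G$ is complex, one has $\mlie^\CC=\lieg$ and the configuration space $T^*\AAA$ carries the standard flat hyperkähler structure $(I,J,K)$ of Hitchin. The three real moment maps for the $\GGG(\alpha)$-action at $c=0$ are exactly the three components of the Hitchin equation, and the hyperkähler quotient produces the hyperkähler structure on $\cM^*(\alpha,\LLL)$, with one complex structure (say $I$) coinciding with the Kähler structure above and another complex structure ($J$) making $\varrho$ holomorphic, so that the Poisson structure on $\cM^*(\alpha)$ is holomorphic in complex structure $J$; the combination of the three holomorphic symplectic forms on the leaves is what we call the hyperpoisson structure. The main obstacle, throughout, is purely analytic: one must check that the weighted Sobolev set-up is compatible with the meromorphic equivalence of Definition~\ref{def:meromorphic-equivalence} when some $\alpha_i$ lies in a bad wall, so that the model behaviour near the punctures encoded by the Kostant--Rallis triples $(H_i,X_i,Y_i)$ yields a well-defined smooth structure at the boundary of the alcove and the residue map $\varrho$ is smooth transverse to the orbit stratification. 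Once these analytic points are settled, all the linear-algebraic identifications (cohomology with coefficients in the deformation complex, Serre duality, hyperkähler moment map) go through essentially as in the non-parabolic case.
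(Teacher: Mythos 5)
Your skeleton coincides with the paper's own (very terse) justification: the paper gives essentially no proof of this proposition, saying only that the analytic gauge-theoretic construction of the moduli space via the identification with solutions of the Hitchin equation (Theorem \ref{th:correspondence-1}) ``is by now fairly standard'', citing \cite{biquard-thesis,konno,nakajima} for the weighted-Sobolev set-up and \cite{garcia-prada-gothen-mundet:2009a} for the deformation complex whose $H^0$ is killed by stability plus simplicity and whose second hypercohomology is the obstruction space --- exactly your first two steps --- and the (hyper)K\"ahler structures are likewise obtained as in those references by (hyper)K\"ahler reduction, with the Poisson structure read off from the foliation whose leaves are the fixed-$\LLL$ quotients (see the remark after Proposition \ref{moduli-real}: the symplectic form $\omega_1$ depends on $\alpha$, while $I_1$ and $\Omega_1=\omega_2+\imag\,\omega_3$ depend on $\LLL$). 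Where you genuinely diverge is the Poisson structure on the total space $\cM_c^*(\alpha)$: you propose an algebraic, Bottacin--Markman-type bivector built from Serre duality and the residue pairing. Note that for a general real reductive $G$ --- and in particular for weights on bad walls --- the paper has no algebraic or GIT model of these moduli spaces (it remarks that GIT constructions are only known for $G=\GL_n\CC$, see \cite{yokogawa}), so that construction cannot be quoted off the shelf; you would have to transplant it to the analytic category, whereas the gauge-theoretic route yields the Poisson structure directly from the fibration $\varrho$ with symplectic fibres.

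Two imprecisions in your sketch should be repaired. First, for real $G$ the configuration space of pairs is K\"ahler but in general not hyperk\"ahler: the moment map for the unitary gauge group action is the full operator $R(h)-[\varphi,\tau_h(\varphi)]+\imag c\omega$, so there is nothing to ``absorb as a hyperk\"ahler moment map component''; the hyperk\"ahler picture exists only when $G$ is complex, where $\liem^\CC\simeq\lieg$ and the pair space is $T^*\AAA$. Second, your gauge group should not be asymptotic to the identity at the punctures: as in the spaces $\hat{L}^{2,p}_\delta$ of Section \ref{hitchin-kobayashi}, gauge transformations must be allowed to limit to constants in the stabilizer subalgebras $\lier_i\cap\lier_i'$ (resp.\ the groups $\widetilde{L_i}$). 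With identity asymptotics you construct a framed moduli space carrying a residual $\prod_i\widetilde{L_i}$-action, and the identification of the leaves of $\varrho$ with $\cM_c^*(\alpha,\LLL)$ fails until you quotient by it.
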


For the Poisson structure and the K\"ahler structure of these
types of moduli spaces one can look at \cite{bottacin,markman}.

If $(G,H,\theta,B)$  satisfies the integral condition given  in
Remark \ref{integral-characters}, which happens in particular
if $G$ is complex, we can define a genericity condition for the
weights. We say that $\alpha=(\alpha_1,\cdots,\alpha_r)$ is
{\bf generic} if for any parabolic subgroup $P=P_s\subset H^\C$
and any antidominant character of $P$ in the finite collection
mentioned in Remark \ref{integral-characters},
 the relative degree satisfies $$\sum_i\deg((P_i,\alpha_i),(P,\chi))\notin\ZZ.$$
Since the choice of parabolic types of $P$ is also finite,
this defines walls dividing $\imag \bar\eA^r$ in chambers.
If we now take the stability parameter $c=0$ (which is
the relevant value in relation to local systems and representations) then,
since the first term in (\ref{stability-condition}) is an integer,
we have the following (recall that $\cM(\alpha)$ denotes the moduli
space for $c=0$).

\begin{proposition} \label{moduli-real}
If $(G,H,\theta,B)$  satisfies the integral condition in Remark
\ref{integral-characters}  (in particular if $G$ is complex) and
$\alpha$ is generic, then

(1) $\cM(\alpha)$ is a Poisson manifold (possibly with orbifold singularities)
foliated by  symplectic leaves $\cM(\alpha,\LLL)$, admitting a K\"ahler
structure. Moreover, if $\alpha$ and $\alpha'$ are in the same  chamber
$\cM(\alpha)= \cM(\alpha')$  and $\cM(\alpha,\LLL)=\cM(\alpha',\LLL)$
as real and complex orbifolds, respectively (the Poisson and symplectic
structures, respectively, depend on $\alpha$).

(2) If $G$ is complex $\cM(\alpha)$ is a holomorphic Poisson manifold (possibly with orbifold
singularities) foliated by holomorphic symplectic leaves $\cM(\alpha,\LLL)$,
admitting a hyperh\"ahler
structure. Moreover, if $\alpha$ and $\alpha'$ are in the same  chamber
$\cM(\alpha)= \cM(\alpha')$  as  holomorphic Poisson orbifolds
and $\cM(\alpha,\LLL)=\cM(\alpha',\LLL)$ as holomorphic symplectic orbifolds.
\end{proposition}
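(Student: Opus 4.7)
The plan is to derive the proposition from Proposition \ref{moduli-general} after showing that genericity rules out strictly semistable objects, and then to establish chamber invariance by a continuity argument on the stability condition. First I would verify that under the integrality condition of Remark \ref{integral-characters} together with genericity of $\alpha$, there are no strictly semistable parabolic $G$-Higgs bundles of weight $\alpha$ with $c=0$. Indeed, a strictly semistable object would produce a nontrivial reduction $(\sigma,\chi)$ with $\pardeg(E)(\sigma,\chi)=0$; by (\ref{eq:def-par-deg-sigma}) this gives
\[
\deg(E)(\sigma,\chi) = \sum_i \deg\bigl((Q_i,\alpha_i),(E_\sigma(P)_{x_i},\chi)\bigr),
\]
where the left-hand side is an integer by the integrality hypothesis on $\chi$, whereas by genericity the right-hand side lies in $\setR\setminus\setZ$, a contradiction. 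Hence every $c$-polystable object is stable, and combined with the freeness-up-to-centre provided by the integral condition, also simple. Applying Proposition \ref{moduli-general} then yields the Poisson, symplectic, and K\"ahler (resp.\ holomorphic Poisson, holomorphic symplectic, and hyperk\"ahler) structures on $\cM(\alpha)$ and $\cM(\alpha,\LLL)$.

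Next I would prove the chamber invariance. Let $C\subset \imag\bar\eA^r$ be a chamber and $\alpha,\alpha'\in C$. A key point is that the walls of the alcove are themselves among the walls bounding $C$: when $\alpha_i$ crosses a wall of $\bar\eA$ the subgroup $P_{\alpha_i}$ enlarges, picking up additional integer-valued characters, and the associated relative degree becomes integral, producing a stability wall. Hence $P_{\alpha_i}=P_{\alpha'_i}$ throughout $C$, so the category of parabolic principal $\HC$-bundles of weight $\alpha$ coincides with that of weight $\alpha'$. The parabolic degree $\pardeg(E)(\sigma,\chi)$ depends continuously on the weight and, by genericity, never crosses zero as $\alpha$ varies in $C$; thus for every reduction $(\sigma,\chi)$ the sign of the stability inequality is locked. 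Consequently the set of $0$-polystable pairs $(E,\varphi)$ is literally the same for $\alpha$ and $\alpha'$, establishing a set-theoretic identification $\cM(\alpha)=\cM(\alpha')$ (and similarly on each leaf $\cM(\alpha,\LLL)$).

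To upgrade this to an equality of orbifolds, I would use deformation theory: at a stable point $(E,\varphi)$, the tangent space to the moduli space is the first hypercohomology of the deformation complex $PE(\liehc)\to PE(\liemc)\otimes K(D)$ determined by $\ad(\varphi)$, whose complex structure depends only on the holomorphic data $(E,\{Q_i\},\varphi)$, which is independent of the position of $\alpha$ in $C$. This gives the identification as complex orbifolds for the symplectic leaves in (1), and as real orbifolds for $\cM(\alpha)$ itself. In the complex case (2), the holomorphic Poisson/symplectic structures are given algebraically by Killing-form pairings on hypercohomology via Serre duality, again independent of $\alpha$, so they are preserved within $C$. The Poisson and symplectic forms in (1) and the hyperk\"ahler metric in (2) do genuinely vary with $\alpha$, since they arise from the $L^2$-metric on Hitchin solutions whose Hermite--Einstein equations involve $\alpha$ via the adapted initial metric of Section \ref{sec:initial-metric}, so only the underlying orbifold and (in the complex case) holomorphic Poisson data are preserved.

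The main obstacle will be the verification that alcove walls are stability walls, so that $P_{\alpha_i}$ is constant on $C$; this requires checking that the extra characters appearing on a wall contribute integer terms to $\sum_i\deg((P_i,\alpha_i),(P,\chi))$, which can be done by a direct examination of the formula in Appendix \ref{sec:parabolic-subgroups-1}. Once this combinatorial point is secured, the continuity argument for stability and the deformation-theoretic identification of complex structures proceed as outlined, exactly paralleling the parameter-dependence analysis in Nakajima \cite{nakajima} in the $\GL_2\setC$ case.
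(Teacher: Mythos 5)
Your overall route --- genericity plus the integral condition kills every reduction of parabolic degree zero, hence semistable $=$ stable $=$ polystable; stability is then locked on each chamber because the global term of (\ref{eq:def-par-deg-sigma}) is an integer while the local term never crosses an integer; and the $\alpha$-dependence (or independence) of the various structures comes from the gauge-theoretic/hyperk\"ahler construction --- is exactly the (very terse) argument the paper intends, so the skeleton is right. However, two of the steps you add are genuinely wrong.

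First, your claim that stable objects are automatically simple (``freeness-up-to-centre provided by the integral condition'') is false: the integral condition of Remark \ref{integral-characters} concerns lifting antidominant characters to characters of $P$ and has nothing to do with automorphism groups, and the paper states explicitly that outside type $A_n$ stability does not imply simplicity. The orbifold singularities allowed in Proposition \ref{moduli-real} occur \emph{precisely} at stable, non-simple points; if your claim held, the conclusion would be smoothness, not ``possibly with orbifold singularities''. As a consequence you cannot obtain the structures on all of $\cM(\alpha)$ by citing Proposition \ref{moduli-general}, which only covers the stable, simple locus with vanishing obstruction class; you still need to argue that stable non-simple points contribute finite-quotient (orbifold) singularities, and, for real $G$, to address the obstruction class.

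Second, your ``key point'' --- that the walls of $\bar\eA$ are among the genericity walls, so that $P_{\alpha_i}$ is constant on each chamber --- fails, because genericity constrains only the \emph{sum} over the punctures of the local terms. For instance, with $r=2$ take $\alpha_1=0$ (a vertex of $\bar\eA$, where $P_{\alpha_1}=H^\C$ and the relative degree at $x_1$ is identically $0$) and take $\alpha_2$ in the open alcove so that all of its finitely many relative-degree values are irrational: every sum is then one of those irrational numbers, so $(0,\alpha_2)$ is generic, and it lies in the same chamber as $(\epsilon,\alpha_2)$ for small $\epsilon>0$, for which $P_{\alpha_1}$ is a proper parabolic. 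At these two weights the moduli problems are of different type (different parabolic structures at $x_1$, different sheaves $PE(\liemc)$, and different groups $\widetilde{L_1}$ and spaces $\widetilde{\liem^0_1}$, so even the leaf labels $\LLL$ do not match up), and your ``literally the same set of polystable pairs'' identification collapses there. Inside the open alcove the face structure is automatically constant (the set of roots negative on $\alpha_i$ cannot change on a connected set avoiding the affine root hyperplanes), so on interior weights your continuity argument does give the asserted equality; the gap sits exactly at weights on walls of $\bar\eA$, which is the case this paper's framework is built to include. To repair the argument you would need either to restrict the chamber-invariance claim to weights of the same face type, or to exhibit an actual isomorphism of moduli problems across such degenerations (e.g.\ via Hecke or meromorphic equivalences); neither is supplied.
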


\begin{remark}
Result (2) in Proposition \ref{moduli-real} is a consequence of the fact
that in  the hyperk\"ahler structure defined as a hyperk\"ahler
quotient by the Hitchin equations, the symplectic form $\omega_1$ depends
on $\alpha$, while the
complex structure $I_1$, and the $I_1$-holomorphic symplectic form
$\Omega_1=\omega_2+i \omega_3$ depend on  $\LLL$.
\end{remark}

\begin{remark}
The orbifold singularities mentioned in Proposition \ref{moduli-real}
 take  place at the stable but
not simple points.   If $(E,\varphi)$ is stable but not simple,
there is a reduction of structure group of $(E,\varphi)$ to a reductive
subgroup ---the centralizer in $G$  of $\Aut(E,\varphi)$
(see \cite{garcia-prada-oliveira}).
It is not clear whether one could define an extra genericity condition for
$\alpha$ and $\LLL$ to avoid this phenomenon.
\end{remark}

There should be a  GIT construction of these moduli spaces. As far
as we are aware this has only been done for $G=\GL_n\C$
(see \cite{yokogawa}). In the generality considered here, this
will need most likely to involve parahoric torsors (see
\cite{balaji-seshadri,boalch,heinloth}).

\subsection{Moduli spaces of parabolic $G$-local systems and representations}
\label{moduli-local-systems}

Let $X$ be  a compact connected Riemann surface and let $S=\{x_1,\cdots,x_r\}$
be  a finite set of different points of $X$.
Let $(G,H,\theta,B)$ be a real reductive
Lie group.
We use the notation of Section \ref{parabolic-local-systems}.
Let  $\beta=(\beta_1,\cdots,\beta_r)$ be as in Definition
\ref{local-system}, and  $\zeta\in \liem^H$.

We define  $\cS_\zeta(\beta):=\cS_\zeta(X,S,G,\beta)$ to be
the  {\bf moduli space of isomorphism classes of $\zeta$-polystable
parabolic $G$-local systems $F$   on $(X,S)$ with parabolic
weights $\beta$}. The moduli space for $\zeta=0$ will be simply denoted
by $\cS(\beta)$. Let $P_\beta$  be the parabolic subgroup
 of $G$ defined by  $\beta$, and $L_\beta\subset P_\beta$, the Levi subgroup.
We fix loops $c_i$  enclosing  the marked points $x_1,\cdots, x_r$ simply.
The monodromy of the loop $c_i$ around $x_i$ takes values in $P_{\beta_i}$,
and we
consider  its projection to $L_{\beta_i}$. Its  conjugacy class $\CCC_i$
in   $L_{\beta_i}$ is independent of the simple loop that we have taken.
This defines a map

$$
 \mu: \cS_\zeta(\beta)\lra \prod_i \Conj(L_{\beta_i})
$$
where $\Conj(L_{\beta_i})$ is the set of conjugacy classes of $L_{\beta_i}$.

Let $\CCC=(\CCC_1,\cdots,\CCC_r)$ with $\CCC_i\in \Conj(L_{\beta_i})$, and
consider the moduli space
$$
\cS_\zeta(\beta,\CCC)=\mu^{-1}(\CCC).
$$

As in the case of parabolic Higgs bundles,  using the same techniques, one can
give an analytic
construction of $\cS_\zeta(\beta)$ and $\cS_\zeta(\beta,\CCC)$, via the
identification  provided by Theorem \ref{th:filt-local-syst} of these spaces
with the spaces of
solutions to the Hermitian--Einstein equations (see also Proposition \ref{zeta-correspondence}
for $\zeta\neq 0$). This is done by Nakajima \cite{nakajima} for the
case $G=\GL_2\C$.

We say that a parabolic $G$-local system is {\bf irreducible} if its group
of automorphisms coincides with the centre of $G$.  We have the following.

\begin{proposition}
Let $\cS^*_\zeta(\beta)\subset \cS_\zeta(\beta)$ and
$\cS^*_\zeta(\beta,\CCC)\subset \cS_\zeta(\beta,\CCC)$ be the
subsets of stable and irreducible elements. Then

(1) $\cS^*_\zeta(\beta)$ is a smooth Poisson manifold foliated by
symplectic leaves  $\cS^*_\zeta(\beta,\CCC)$, admitting a K\"ahler structure.

(2) If $G$ is complex $\cS^*_\zeta(\beta)$ is a holomorphic Poisson manifold
foliated by holomorphic,
symplectic leaves  $\cS^*_\zeta(\beta,\CCC)$, admitting a hyperk\"ahler
structure.
\end{proposition}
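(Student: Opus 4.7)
The plan is to deduce smoothness and the various geometric structures from the non-abelian Hodge correspondence established in Section \ref{parabolic-local-systems}, which identifies $\cS^*_\zeta(\beta,\CCC)$ with a moduli space of solutions to the deformed harmonicity equation \eqref{zeta-harmonic-eq} modulo the action of the unitary (i.e.\ $H$-valued) gauge group on the punctured surface. This translates the problem into an infinite-dimensional gauge-theoretic set-up, analogous to the one used in Subsection \ref{moduli-higgs-bundles} for Higgs bundles, to which the usual Atiyah--Bott--Goldman--Hitchin machinery applies.

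First I would describe the deformation complex at a point $F\in\cS^*_\zeta(\beta)$: tangent vectors are represented, after a choice of harmonic metric $h$, by pairs $(a,\psi)\in \Omega^1(\bE(\lieh))\oplus \Omega^1(\bE(\liem))$ satisfying the linearisation of flatness and of \eqref{zeta-harmonic-eq}, modulo the image of the gauge action, all in the appropriate weighted Sobolev spaces $\hat{L}^{2,p}_\delta$ used in Section \ref{hitchin-kobayashi}. Stability and irreducibility ensure that the gauge action is free modulo the centre, and that the relevant obstruction (the cokernel of the linearised equations) vanishes; this gives a smooth manifold structure by an implicit function theorem argument, exactly as in \cite{biquard-thesis,konno,nakajima}.

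The K\"ahler structure on $\cS^*_\zeta(\beta,\CCC)$ arises by symplectic reduction: on the affine space of pairs $(D^+,\psi)$ the formula
\[
\omega\bigl((a,\psi_1),(a',\psi_2)\bigr)=\int_{X\setminus S}\bigl(B(a\wedge a')+B(\psi_1\wedge\psi_2)\bigr)
\]
defines a symplectic form (the boundary terms are controlled by the tameness condition and the fixed conjugacy classes $\CCC_i$), the flatness equation is the moment map for the $H$-gauge action, and the residual $\psi_h=0$ condition provides a compatible complex structure inherited from the Riemann surface structure on $X$. On the full space $\cS^*_\zeta(\beta)$ the conjugacy classes $\CCC_i$ are allowed to vary, and one obtains only a Poisson bracket whose symplectic leaves are precisely the fibres $\cS^*_\zeta(\beta,\CCC)$ of $\mu$; the leaf structure is identified via the Goldman--Boalch description of the Poisson structure on moduli of local systems with punctures, where the symplectic leaves are cut out by fixing the conjugacy classes of the boundary monodromies in the Levi quotients $L_{\beta_i}$.

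For part~(2), when $G$ is complex the target $G/H$ is a hyperk\"ahler symmetric space and the triple $(d^+\!A,\psi^{1,0},\psi^{0,1})$ fits into the standard Hitchin hyperk\"ahler moment map picture: one gets three symplectic forms $\omega_1,\omega_2,\omega_3$, with $\omega_1$ the K\"ahler form from (1), and $\Omega=\omega_2+i\omega_3$ the holomorphic symplectic form in the complex structure in which a parabolic $G$-local system is naturally a holomorphic object (the Betti/de Rham complex structure). Hyperk\"ahler reduction in weighted Sobolev spaces, as carried out by Biquard \cite{Biq97} and Konno, then gives the hyperk\"ahler structure on the leaves $\cS^*_\zeta(\beta,\CCC)$ and assembles them into a holomorphic Poisson manifold. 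The main obstacle I expect is the analytic verification that the symplectic/hyperk\"ahler reduction goes through in the weighted Sobolev set-up when the weights $\alpha_i$ lie on a bad wall of the alcove (so that $P_{\alpha_i}^1\neq 1$): one must check that the boundary contributions at the punctures from integration by parts vanish in the appropriate function spaces, using the decay provided by the adapted metric $h_0$ and the fact that elements of $\hat{L}^{2,p}_\delta$ have prescribed leading terms in $\lier_i\cap\lier_i'$ that are absorbed by the fixed conjugacy class condition.
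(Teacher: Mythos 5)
Your proposal is correct and follows essentially the same route as the paper, which in fact gives no detailed proof of this proposition: it simply invokes the gauge-theoretic construction of $\cS_\zeta(\beta)$ and $\cS_\zeta(\beta,\CCC)$ via the identification with solutions of the (deformed) harmonicity equation \eqref{zeta-harmonic-eq} (Theorem \ref{th:filt-local-syst} and Proposition \ref{zeta-correspondence}) in the weighted Sobolev spaces of Section \ref{hitchin-kobayashi}, deferring to \cite{biquard-thesis,konno,nakajima} for the standard deformation-theoretic and quotient arguments --- precisely the machinery you spell out.

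One justification in your part (2) is wrong, though not fatally so: for $G$ complex the symmetric space $G/H$ is \emph{not} hyperk\"ahler (for instance $\SL_2\C/\SU_2$ is real hyperbolic $3$-space, of odd dimension). The hyperk\"ahler structure does not come from the target of the harmonic metric but from the flat quaternionic structure on the configuration space of pairs $(A,\psi)$, whose tangent space is $\Omega^1(\bE(\lieg))$: since $\lieg$ is a complex Lie algebra and $X$ is a Riemann surface, the Hodge star on $1$-forms and multiplication by $\imag$ in $\lieg$ give anticommuting complex structures, and the flatness and harmonicity equations are the associated hyperk\"ahler moment maps for the $H$-gauge group, exactly as in \cite{hitchin,Biq97,konno}. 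Since the remainder of your argument appeals to this reduction picture (rather than to any property of $G/H$), the conclusion stands once that sentence is replaced.
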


As in the case of Higgs bundles, by Remark
\ref{simplification-stability}, we can define a genericity condition for
$\beta$. We say that $\beta=(\beta_1,\cdots,\beta_r)$ is {\bf generic} if
for any maximal parabolic subgroup $Q\subset G$, and its corresponding
preferred character $\chi=\chi_Q$ (see Remark \ref{simplification-stability})
we have
$$
\sum_i \deg\big( (P_i,\chi_i),(Q,\chi_Q)\big)\neq 0.
$$

We observe that the function $\mu$ defining the relative degree in (\ref{eq:32}),
as a function on $Q\backslash G\subset \liem$, has a finite number of values,
and since the number of types of maximal parabolic subgroups $Q$ is finite,
the condition $\sum_i \deg\big( (P_i,\chi_i),(Q,\chi_Q)\big)=0$ defines
a finite number of walls  dividing $\liem^r$ in chambers.
By (\ref{eq:locsys-def-par-deg-sigma}) for generic $\beta$ a semistable
parabolic $G$-local system is actually stable, and we have the following.

\begin{proposition} \label{moduli-real-local-systems}
If $(G,H,\theta,B)$  satisfies the integral
condition in Remark \ref{integral-characters} (in particular if $G$ is complex) and
$\beta$ is generic, then

(1) $\cS(\beta)$ is a Poisson manifold (possibly with orbifold singularities)
foliated by  symplectic leaves $\cS(\beta,\CCC)$, admitting a K\"ahler
structure. Moreover, if $\beta$ and $\beta'$ are in the same  chamber
$\cS(\beta)= \cS(\beta')$  and $\cS(\beta,\CCC)=\cS(\beta',\CCC)$
as real and complex orbifolds, respectively (the Poisson and symplectic
structures, respectively, depend on $\beta$).

(2) If $G$ is complex $\cS(\beta)$ is a holomorphic Poisson manifold
(possibly with orbifold
singularities) foliated by holomorphic symplectic leaves $\cS(\beta,\CCC)$,
admitting a hyperh\"ahler  structure. Moreover, if $\beta$ and $\beta'$ are in the same  chamber
$\cS(\beta)= \cS(\beta')$  as  holomorphic Poisson orbifolds
and $\cS(\beta,\CCC)=\cS(\beta',\CCC)$ as holomorphic symplectic orbifolds.
\end{proposition}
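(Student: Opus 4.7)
The plan is to prove Proposition \ref{moduli-real-local-systems} by transporting the analogous statements for parabolic Higgs bundles (Proposition \ref{moduli-real}) through the non-abelian Hodge correspondence of Theorem \ref{th:filt-local-syst}, while supplying the required smoothness and (hyper)K\"ahler structure directly via the gauge-theoretic description of $\cS(\beta)$.

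First I would show that under the genericity assumption on $\beta$, every semistable parabolic $G$-local system of weight $\beta$ is in fact stable, hence polystable decompositions are trivial. Indeed, combining Remark \ref{simplification-stability} with (\ref{eq:locsys-def-par-deg-sigma}), checking stability reduces to a finite collection of inequalities of the form $\sum_i\deg((P_i,\chi_i),(Q,\chi_Q))\geq 0$ for maximal parabolic $Q\subset G$. The genericity hypothesis excludes equality for any such $(Q,\chi_Q)$, so $\cS(\beta)$ consists only of stable objects; in particular, at irreducible points, $\cS(\beta)$ is set-theoretically equal to $\cS^*_\zeta(\beta)|_{\zeta=0}$ on the subset of generic weights. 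The conjugacy class $\CCC_i\in\Conj(L_{\beta_i})$ is constant on connected components of the stable locus, which gives the leaf decomposition $\cS(\beta)=\sqcup_\CCC\cS(\beta,\CCC)$.

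Next I would construct the (hyper)K\"ahler structure by realising $\cS(\beta)$ as a gauge-theoretic quotient, following the parabolic Hitchin construction of \cite{Biq97,nakajima,konno}: fix an underlying smooth $H$-bundle $\bE$ with the model behaviour of Section \ref{sec:initial-metric}, take the space of pairs $(D^+,\psi)$ with $D^+$ an $H$-connection in a suitable weighted Sobolev class and $\psi\in\Omega^1(\bE(\liem))$, and quotient the zero-set of the harmonicity moment map $(D^+)^*\psi$ and the flatness condition on $D=D^++\psi$ by the parabolic gauge group. This space carries a natural K\"ahler form whose symplectic part is obtained by pairing $(\delta_1 D^+,\delta_1\psi)$ and $(\delta_2 D^+,\delta_2\psi)$ via $B$ and integration, and whose class depends explicitly on $\beta$ through the weighted boundary terms; when $G$ is complex, the fact that $\liem=\imag\lieh$ allows one to rotate $(D^+,\psi)$ by the three imaginary quaternion units, giving three compatible complex structures and three real moment maps whose joint zero locus is cut out by the full Hitchin equations, producing the hyperk\"ahler quotient exactly as in the case without punctures. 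Smoothness of $\cS^*(\beta)$ at stable and irreducible points follows from a standard deformation argument: the tangent space at $[F]$ is the first hypercohomology of the deformation complex of the flat pair, the obstruction lies in the second, and stability plus simplicity makes this second group vanish by the local-system version of Serre duality, so $\cS^*(\beta)$ is a smooth manifold (with orbifold singularities only where the stabiliser strictly contains $Z(G)$).

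Finally, for the chamber-independence claim, I would argue as follows. The underlying complex-analytic object classified by $\cS(\beta,\CCC)$ is a flat $G$-bundle together with, at each puncture, a parabolic reduction to $P_{\beta_i}$ whose Levi-projection of monodromy lies in $\CCC_i$. The parabolic type $P_{\beta_i}$ and the Levi $L_{\beta_i}$ depend on $\beta_i$ only through which walls of $\liem$ it meets, so they are constant on a chamber; therefore the moduli functor classifying such tuples is independent of $\beta$ within a chamber, giving the identifications $\cS(\beta)=\cS(\beta')$ and $\cS(\beta,\CCC)=\cS(\beta',\CCC)$ as real (resp.\ holomorphic in the complex case) orbifolds. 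By contrast, the K\"ahler form and its real/holomorphic Poisson structure are built from the relative-degree pairing (\ref{eq:locsys-def-par-deg-sigma}), which varies continuously in $\beta$, explaining the stated dependence. The main obstacle in making this rigorous will be the analytic part: verifying in the parabolic setting with weights in $\imag\tlie$ (not merely $\imag\bar\eA$) that the weighted Sobolev theory used in \cite{Biq97} produces a Hausdorff quotient with the claimed smooth manifold structure, and that the variation of $\beta$ within a chamber gives a genuine isomorphism of real-analytic families rather than just a homeomorphism; this is where one has to control uniformly the model metrics and the associated deformation complex as the weights move.
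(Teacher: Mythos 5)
Your overall strategy is the same as the paper's (whose proof is itself only sketched): genericity plus Remark \ref{simplification-stability} forces semistable $=$ stable, the geometric structures come from the gauge-theoretic (hyper)K\"ahler quotient description of the moduli space supplied by Theorem \ref{th:filt-local-syst} and the weighted Sobolev framework of \cite{Biq97,konno,nakajima}, and chamber-independence comes from the wall structure. However, two of your steps contain genuine errors. The first is your justification of the foliation: the conjugacy class $\CCC_i$ of the Levi projection of the monodromy is \emph{not} constant on connected components of $\cS(\beta)$. The map $\mu\colon\cS(\beta)\to\prod_i\Conj(L_{\beta_i})$ is continuous but not locally constant (monodromies deform continuously), so the fibres $\cS(\beta,\CCC)=\mu^{-1}(\CCC)$ are in general of positive codimension; if they were unions of connected components, the Poisson structure would be nondegenerate on every component, contradicting the very statement you are proving. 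The correct mechanism, implicit in the paper's construction (compare Proposition \ref{moduli-general} and the remark following Proposition \ref{moduli-real}), is Poisson-geometric: the $\Ad$-invariant functions of the Levi-projected monodromies at the punctures are Casimir functions of the Poisson structure on $\cS(\beta)$, and the symplectic leaves are their common level sets, i.e.\ the symplectic (or hyperk\"ahler, for $G$ complex) quotients at fixed boundary data $\CCC$.

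The second gap is in the chamber-independence argument, where you omit precisely the point that the chambers were introduced to handle: stability. $\cS(\beta)$ is a moduli space of \emph{polystable} objects, and the stability inequality (\ref{parabolic-stability}) depends numerically on $\beta$ through the characters $\chi_i=\chi_{\beta_i}$ entering (\ref{eq:locsys-def-par-deg-sigma}); asserting that ``the moduli functor classifying such tuples is independent of $\beta$'' ignores this. What must be shown is that for $\beta,\beta'$ in the same chamber the \emph{same} local systems are stable, and this is exactly what the walls encode: for each of the finitely many types of maximal parabolic $Q\subset G$ and finitely many relative positions, the quantity $\sum_i\deg\big((P_i,\chi_i),(Q,\chi_Q)\big)$ varies continuously with $\beta$ and vanishes only on a wall, hence cannot change sign inside a chamber, so the stable locus — and therefore the underlying orbifold — is unchanged, while the K\"ahler/Poisson data, built from these degrees, varies with $\beta$. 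Your subsidiary claim that the type of $P_{\beta_i}$ is constant on a chamber also needs care: the genericity walls, where some total degree sum vanishes, do not a priori contain the Weyl walls of $\liem$ where the type of $P_{\beta_i}$ jumps (a single $\beta_i$ can degenerate to a smaller face while all the sums remain nonzero), so the identification of the objects classified for $\beta$ and $\beta'$ must in any case be restricted to weights of constant parabolic type, as the paper implicitly does.
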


We consider now the moduli space of representations of
$\pi_1(X\setminus S)$. By a \textbf{representation} of $\pi_1(X\setminus S)$ in
$G$ we mean a homomorphism $\rho\colon \pi_1(X\setminus S) \to G$.
A representation is \textbf{reductive}
if composed with the adjoint representation in the Lie algebra
of $G$ decomposes as a sum of irreducible representations.
If $G$ is algebraic this is equivalent to saying that the Zariski closure of the
image of $\pi_1(X\setminus S)$ in $G$ is a reductive group.
Define the
\textbf{moduli space of reductive representations of $\pi_1(X\setminus S)$ in $G$}
to be the orbit space
$$
\cR:=\cR(X,S,G)= \Hom^{+}(\pi_1(X\setminus S),G)/ G,
$$
where  $\Hom^{+}(\pi_1(X\setminus S),G)$ is the set of reductive representations
and $G$ acts by conjugation. This is a real analytic variety (algebraic if $G$
is algebraic). If $G$ is complex $\cR$ is the affine GIT quotient
$\Hom(\pi_1(X\setminus S),G)\sslash G$.
Let $c_i$ be a loop enclosing  $x_i$ simply.
Fix conjugacy classes $\CCC_i\in \Conj(G)$, $i=1,\cdots, r$,
 and let  $\CCC=(\CCC_1,\cdots,
\CCC_r)$. We define the moduli space of representations of
$\pi_1(X\setminus S)$ in $G$ with fixed conjugacy classes $\CCC$ as the
subvariety
$$
\cR(\CCC):=\{[\rho]\in \cR\;\;:\;\;  \rho([c_i])=\CCC_i,\;\;
i=1,\cdots, r \}.
$$

\begin{proposition}\label{correspondence-local-systems-representations}
Let $\CCC=(\CCC_1,\cdots,\CCC_r)$ with $\CCC_i\in \Conj(L_{\beta_i})$ like
in Section \ref{moduli-local-systems} and let
$\CCC'=(\CCC'_1,\cdots,\CCC'_r)$ with $\CCC_i'\in \Conj(P_{\beta_i})$.
Let $\pi_i:P_{\beta_i}\to L_{\beta_i}$ be  the
projection to the Levi subgroup.  Then there is a forgetful map
$$
\cS(\beta,\CCC)\to \bigcup_{\CCC'\;:\; \pi_i(\CCC_i')=\CCC_i}\cR(\CCC').
$$
In particular, if $\beta=0$, $L_{\beta_i}=G$, $\CCC_i\in \Conj(G)$ and
$$
\cS(0,\CCC)=\cR(\CCC).
$$
\end{proposition}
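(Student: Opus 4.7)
The plan is to construct the forgetful map explicitly and then verify its target and injectivity properties. Given a polystable parabolic $G$-local system $(F,\{P_i,\chi_i\})$ of weight $\beta$ with monodromy data $\CCC$, the underlying $G$-local system $F$ on $X\setminus S$ determines a conjugacy class of representations $\rho\colon\pi_1(X\setminus S)\to G$. By Definition \ref{local-system} and Remark \ref{monodromy-action}, after parallel transport along the ray $\rho_i$ the monodromy $\rho([c_i])$ lies in a conjugate of $P_{\beta_i}$ and projects via $\pi_i$ to a representative of $\CCC_i$. Consequently $\rho([c_i])$ defines a well-defined $P_{\beta_i}$-conjugacy class $\CCC'_i$ satisfying $\pi_i(\CCC'_i)=\CCC_i$, and the global $G$-conjugacy class of $\rho$ lies in $\cR(\CCC')$ once reductivity is verified. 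Independence of the auxiliary choices (the ray $\rho_i$, the chosen representative of $F$) is immediate, since any two such choices differ by conjugation in $G$.

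The key point is reductivity of $\rho$. For this I invoke Theorem \ref{th:filt-local-syst}: polystability furnishes a tame harmonic metric $h$ on $F$, and the corresponding equivariant harmonic map from the universal cover of $X\setminus S$ to $G/H$ obstructs any flat reduction to a parabolic subgroup $Q\subset G$ that does not come from a reduction to its Levi. Indeed, such a destabilizing flat reduction would give rise to a section of $F(G/Q)$ to which the argument of Subsection \ref{ss:extending-sigma} (suitably adapted to $G/H$ instead of $H^\C/H$) applies, producing a parabolic reduction violating the polystability inequality \eqref{parabolic-stability}. Thus $\rho$ is reductive and the forgetful map lands in $\bigcup_{\CCC'}\cR(\CCC')$, with the indicated constraint $\pi_i(\CCC'_i)=\CCC_i$.

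In the special case $\beta=0$ one has $L_{\beta_i}=P_{\beta_i}=G$, so the parabolic datum at each $x_i$ is vacuous (the unique choice is $P_i=F(G)|_{\rho_i}$ with $\chi_i=0$) and $\CCC'_i=\CCC_i\in\Conj(G)$; the forgetful map is therefore a bijection of underlying sets. Moreover, when $\chi_i=0$, every relative degree $\deg((P_i,\chi_i),(Q,\chi))$ vanishes, so by \eqref{eq:locsys-def-par-deg-sigma} we have $\pardeg(F)(Q,\chi,\sigma)\equiv 0$ for every flat parabolic reduction $\sigma$. Polystability then reduces \emph{exactly} to the requirement that any flat reduction to a parabolic subgroup of $G$ come from a reduction to a Levi subgroup, which is the reductivity of $\rho$. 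This yields the identification $\cS(0,\CCC)=\cR(\CCC)$. The main obstacle in the general case is precisely the reductivity step of the second paragraph: a destabilizing non-reductive reduction of $F$ could a priori carry strictly positive parabolic degree and hence not violate semistability formally, so one genuinely needs the harmonic-metric input of Theorem \ref{th:filt-local-syst} (rather than a purely formal manipulation of \eqref{eq:locsys-def-par-deg-sigma}) to rule it out.
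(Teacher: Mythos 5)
The paper itself offers no proof of this proposition: it is stated as definitional bookkeeping, and its only substantive assertion, the identity $\cS(0,\CCC)=\cR(\CCC)$, is precisely the parenthetical remark made right after the definition of polystability of parabolic $G$-local systems (``when there is no parabolic structure \dots the local system is reductive''). Your construction of the map in the first paragraph and your treatment of $\beta=0$ in the last paragraph are correct and coincide with that remark.

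The genuine gap is your second paragraph, in two respects. First, the argument is circular: you assert that a flat reduction to a parabolic $Q\subset G$ not coming from a Levi would ``violate the polystability inequality \eqref{parabolic-stability}'', but when the weights $\beta_i$ are nonzero such a reduction can have \emph{strictly positive} parabolic degree, which is perfectly compatible with stability (you concede exactly this in your final paragraph), and neither the harmonic metric of Theorem \ref{th:filt-local-syst} nor the argument of Subsection \ref{ss:extending-sigma} rescues the claim --- there is nothing to extend, since the flat reduction is already defined on all of $X\setminus S$, and tame harmonic metrics are designed precisely to accommodate such objects. Second, and worse, the intermediate statement you are trying to prove is false: polystability does \emph{not} imply reductivity of the underlying representation when $\beta\neq 0$. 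Take $G=\SL_2\C$, $X$ a once-punctured torus, $\pi_1(X\setminus S)$ free on $A,B$ with puncture loop $c=[A,B]$, and $\rho(A)=\left(\begin{smallmatrix}1&1\\0&1\end{smallmatrix}\right)$, $\rho(B)=I$, so $\rho(c)=I$. Since the local monodromy is trivial, Definition \ref{local-system} permits choosing $P_1$ to be the Borel \emph{opposite} to the unique $\rho$-invariant Borel $Q$ (the stabilizer of the $\rho(A)$-fixed line), with weight $\beta_1=\mathrm{diag}(a,-a)$, $a>0$. The only flat reduction is to $Q$, and every nontrivial antidominant character of $\lieq$ corresponds to $\tau$ which is a \emph{negative} multiple of $\beta_1$; since $e^{-t\tau}\in P_{\beta_1}$, the relative degree is $\deg((P_1,\chi_1),(Q,\chi_\tau))=\langle\beta_1,\tau\rangle<0$ (the two boundary points are Tits-antipodal), hence $\pardeg(F)(Q,\chi_\tau,\sigma)>0$ by \eqref{eq:locsys-def-par-deg-sigma}. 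This parabolic local system is therefore stable, yet $\rho$ is not reductive; it is exactly the kind of object Simpson's filtered theory (which Theorem \ref{th:filt-local-syst} generalizes) is built to handle. Consequently, for $\beta\neq 0$ the naive forgetful map does not land in $\cR$ as you construct it: to make the proposition precise one must compose with semisimplification, i.e.\ use that for algebraic $G$ the space $\cR$ is the GIT quotient $\Hom(\pi_1(X\setminus S),G)\sslash G$, so that every (possibly non-reductive) representation determines a point of it, with the constraint $\pi_i(\CCC_i')=\CCC_i$ surviving this operation. Only the $\beta=0$ identity admits the purely formal proof you give.
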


\subsection{Correspondences of moduli spaces}
\label{correspondences}

We formulate now in terms of moduli spaces the correspondences that we have
proved in this paper.  We follow the notation of Sections
\ref{moduli-higgs-bundles} and \ref{moduli-local-systems}.
From Proposition \ref{prop:filt-local-syst}
and  Theorem \ref{th:filt-local-syst} we have the following.

\begin{theorem} \label{correspondence-higgs-bundle-local-system}
Let $(G,H,\theta,B)$  be a real reductive group.
Let $(\alpha,\LLL)$ and $(\beta,\CCC)$ be  related
as in  Table \ref{table} (where $\beta=s-\tau(s)$). Then,
$\cM(\alpha,\LLL)$ and $\cS(\beta,\CCC)$ are homeomorphic.
In particular, if $\beta=0$, $\cM(\alpha,\LLL)$ and $\cR(\CCC)$
are homeomorphic. Moreover,
$\cM^*(\alpha,\LLL)$ and $\cS^*(\beta,\CCC)$ are diffeomorphic.
In particular, if $\beta=0$, $\cM^*(\alpha,\LLL)$ and $\cR^*(\CCC)$
are diffeomorphic, where $\cR^*(\CCC)$ is the subvariety of irreducible
representations.
\end{theorem}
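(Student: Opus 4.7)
The plan is to assemble the three spaces $\cM(\alpha,\LLL)$, $\cS(\beta,\CCC)$, and $\cR(\CCC)$ as quotients of a single space of solutions to the Hermite--Einstein equations, and to read off the homeomorphism/diffeomorphism from this common model. More precisely, let $\eW(\alpha,\LLL)$ be the set of triples $(E,\varphi,h)$, where $(E,\varphi)$ is a polystable parabolic $G$-Higgs bundle with weight $\alpha$ and $\varrho(E,\varphi)=\LLL$, and $h$ is a Hermite--Einstein metric on $E$ adapted to $\alpha$ in the sense of Section~\ref{sec:initial-metric}. By Theorem~\ref{th:correspondence-1} the projection $\eW(\alpha,\LLL)\to\cM(\alpha,\LLL)$ is surjective with fibres given by the unitary gauge orbits stabilising $(E,\varphi)$, so $\cM(\alpha,\LLL)$ is naturally the quotient of $\eW(\alpha,\LLL)$ by this action. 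On the other hand, the construction $h\mapsto D=A(h)+\varphi-\tau_h(\varphi)$ of Section~\ref{parabolic-local-systems} sends $(E,\varphi,h)$ to a flat $G$-connection whose monodromy and asymptotic behaviour near each $x_i$, as computed in Proposition~\ref{prop:filt-local-syst}, define a polystable parabolic $G$-local system with weight $\beta=s_i-\tau(s_i)$ and monodromy conjugacy class determined by Table~\ref{table}. This gives a map $\Phi:\cM(\alpha,\LLL)\to\cS(\beta,\CCC)$.

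To build the inverse, I would apply Theorem~\ref{th:filt-local-syst} to each polystable parabolic $G$-local system to produce a harmonic metric, whose $(0,1)$-part of the induced $H$-connection together with the $(1,0)$-part of the $\liem$-valued form yields a polystable parabolic $G$-Higgs bundle. The asymptotic analysis at the end of the proof of Theorem~\ref{th:filt-local-syst} shows that the graded residue lies in the orbit $\LLL$ corresponding to $\CCC$ under the Kostant--Sekiguchi correspondence, so the output lies in $\cM(\alpha,\LLL)$. Uniqueness of the Hermite--Einstein metric (up to the residual automorphism group) and uniqueness of the harmonic reduction of a parabolic $G$-local system (again up to automorphisms) imply that the two constructions are mutually inverse on isomorphism classes, respectively on meromorphic equivalence classes on the Higgs side. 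Continuity in both directions follows from continuous dependence of the solutions on the data in the weighted Sobolev framework sketched in Section~\ref{hitchin-kobayashi} and Section~\ref{parabolic-local-systems}, giving the homeomorphism $\cM(\alpha,\LLL)\cong\cS(\beta,\CCC)$. The specialisation to representations when $\beta=0$ is then immediate from Proposition~\ref{correspondence-local-systems-representations}, which identifies $\cS(0,\CCC)$ with $\cR(\CCC)$.

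For the smooth statement, I would restrict to $\cM^*(\alpha,\LLL)$ and note that stable simple parabolic $G$-Higgs bundles correspond under $\Phi$ exactly to stable irreducible parabolic $G$-local systems, the stability/irreducibility conditions on both sides matching because any nontrivial holomorphic reduction to a parabolic is preserved by $D$ (and vice versa). Over the stable locus, the gauge-theoretic construction realises both $\cM^*(\alpha,\LLL)$ and $\cS^*(\beta,\CCC)$ as smooth manifolds (or orbifolds) that are symplectic quotients of the same Banach manifold of solutions to the Hitchin equations on weighted Sobolev spaces, by the action of the gauge group preserving the parabolic structure. A standard implicit function theorem argument, together with the parameter-dependent regularity estimates of \cite{Biq97}, upgrades the continuous map $\Phi$ to a diffeomorphism of smooth manifolds on the stable loci, giving $\cM^*(\alpha,\LLL)\cong\cS^*(\beta,\CCC)$ and, when $\beta=0$, $\cM^*(\alpha,\LLL)\cong\cR^*(\CCC)$.

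The main obstacle will be verifying that the bijection at the level of sets is compatible with the meromorphic equivalence relation defining $\cM(\alpha,\LLL)$ when some $\alpha_i$ lies in a bad wall of the Weyl alcove. Concretely, two meromorphically equivalent parabolic $G$-Higgs bundles must yield the same parabolic $G$-local system up to isomorphism, and conversely different choices of holomorphic representative of the Higgs solution out of a given harmonic metric differ only by a meromorphic gauge transformation in $PE(H^\setC)$. This follows from Lemma~\ref{lemma:stability-invariant} and Remark~\ref{rmk:meromorphic-equivalence}, but it requires care because the initial metric $h_0$ is only defined up to quasi-isometry and the holomorphic trivialisation used to extract $(E,\varphi)$ from the harmonic metric in Section~\ref{parabolic-local-systems} involves a choice which is precisely of meromorphic type, so this is where Definition~\ref{def:meromorphic-equivalence} and the discussion of parahoric bundles in Section~\ref{s:parahoric} must be invoked to close the argument.
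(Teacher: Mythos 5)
Your proposal is correct and takes essentially the same route as the paper: the paper deduces this theorem directly from Proposition \ref{prop:filt-local-syst} and Theorem \ref{th:filt-local-syst} (the two mutually inverse constructions via Hermite--Einstein and harmonic metrics, with weights and monodromies matched through Table \ref{table}), together with the gauge-theoretic realisation of both moduli spaces as spaces of solutions to the Hitchin equations, exactly as you assemble it. Your closing concern about compatibility with meromorphic equivalence on bad walls is likewise the point the paper disposes of via Lemma \ref{lemma:stability-invariant} and Remark \ref{rmk:meromorphic-equivalence}.
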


\begin{theorem}\label{generic-alpha-beta}
Let $(G,H,\theta,B)$  satisfy the integral condition given in
Remark \ref{integral-characters}
(which is satisfied in particular if $G$ is complex). Let
 $(\alpha,\LLL)$ and $(\beta,\CCC)$ be related
by Table \ref{table}, with
$s=\tau(s)$. Then if $\alpha$ and $\beta$ are generic
$\cM(\alpha,\LLL)$ and $\cS(\beta,\CCC)$ are diffeomorphic.
\end{theorem}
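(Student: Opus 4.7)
The plan is to upgrade the homeomorphism $\Phi\colon\cM(\alpha,\LLL)\to\cS(\beta,\CCC)$ of Theorem \ref{correspondence-higgs-bundle-local-system} to a diffeomorphism, using the genericity hypotheses to eliminate the obstruction caused by strictly polystable objects. First I would note that, by Proposition \ref{moduli-real}(1), generic $\alpha$ forces the parabolic stability inequality (\ref{stability-condition}) to be strict on every nontrivial reduction at every polystable parabolic $G$-Higgs bundle of weights $\alpha$ with prescribed graded residues in $\LLL$, so that $\cM(\alpha,\LLL)$ has no strictly polystable points and is naturally a K\"ahler orbifold whose singularities sit over stable-but-non-simple bundles. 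Symmetrically, Proposition \ref{moduli-real-local-systems} endows $\cS(\beta,\CCC)$ with the same kind of orbifold structure under genericity of $\beta$. A short computation using Table \ref{table} together with the relation $s=\tau(s)$ then shows that the wall-and-chamber structures in $\imag\bar\eA^r$ and in $\liem^r$ match under the correspondence, so the two genericity hypotheses are compatible.

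Next I would verify that $\Phi$ respects the stratification by automorphism groups. The Hermite--Einstein metric of Theorem \ref{th:correspondence-1} identifies $\Aut(E,\varphi)$ with the centralizer in $H$ of the monodromy of the associated flat connection, so $\Phi$ takes stable-and-simple Higgs bundles to stable-and-irreducible local systems and carries the orbifold strata into each other bijectively. By Theorem \ref{correspondence-higgs-bundle-local-system} the restriction $\cM^*(\alpha,\LLL)\to\cS^*(\beta,\CCC)$ of $\Phi$ to the smooth loci is already a diffeomorphism. To extend this across the orbifold singularities I would use Kuranishi-type slice theorems on both sides: around a stable point $[(E,\varphi)]$ a neighbourhood in $\cM(\alpha,\LLL)$ is modelled by the quotient of an open subset of the first hypercohomology of the deformation complex of $(E,\varphi)$ by the action of $\Aut(E,\varphi)$, and around the corresponding local system $F$ a neighbourhood in $\cS(\beta,\CCC)$ is modelled by the quotient of an open subset of $H^1(\pi_1(X\setminus D),F(\lieg))$ by the same group. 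A Hodge-theoretic identification of the two cohomology groups via harmonic representatives with respect to the adapted metric $h$ on $X\setminus D$ is equivariant for $\Aut(E,\varphi)$ and identifies the two orbifold charts.

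The main obstacle is this last Hodge-theoretic identification in the parabolic setting, in particular at punctures where $\alpha_i$ lies in a bad wall of the Weyl alcove so that $P_{\alpha_i}^1\neq\{1\}$. The weighted Sobolev spaces $\hat{L}^{2,p}_\delta$ and the boundary analysis at the punctures developed in Section \ref{hitchin-kobayashi} provide the correct analytic framework, but one must check that the harmonic projector for the deformation complex commutes with the relevant boundary decay conditions and with the $\Aut$-action, uniformly on the generic chamber, and that the meromorphic gauge equivalences of Subsection \ref{ss:meromorphic-maps} do not alter the tangent space computation. This extends Simpson's Hodge-theoretic argument from the $\GL_n\setC$ case of \cite{Sim90} to the present real reductive setting.
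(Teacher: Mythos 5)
Your skeleton is the right one, and it coincides with the paper's (largely implicit) argument: genericity of $\alpha$ makes the parabolic degrees in (\ref{stability-condition}) non-integral minus integral, hence nonzero, and genericity of $\beta$ makes the degrees in (\ref{eq:locsys-def-par-deg-sigma}) nonzero, so on both sides semistable objects are stable (Propositions \ref{moduli-real} and \ref{moduli-real-local-systems}); the only remaining singularities are orbifold ones at stable-but-non-simple points, and the task is to see that the homeomorphism of Theorem \ref{correspondence-higgs-bundle-local-system} is smooth across them. The problem is that your proposal does not actually carry out this last step. You reduce it to a Hodge-theoretic identification of Kuranishi models (hypercohomology of the deformation complex versus $H^1(\pi_1(X\setminus D),F(\lieg))$), equivariant under $\Aut(E,\varphi)$ and compatible with the weighted decay conditions, the bad walls, and the meromorphic gauge equivalences — and then you state yourself that verifying all of this is ``the main obstacle''. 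Naming the obstacle is not overcoming it: as written, the proof has a genuine gap exactly at the point where the theorem goes beyond Theorem \ref{correspondence-higgs-bundle-local-system}.

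Moreover, this gap is an artifact of the route you chose. In the paper, both moduli spaces receive their smooth (orbifold) structures from one and the same gauge-theoretic construction: $\cM(\alpha,\LLL)$ is identified with the space of solutions $(A,\varphi)$ of the Hitchin equations modulo gauge (Section \ref{moduli-higgs-bundles}, via Theorem \ref{th:correspondence-1}), and $\cS(\beta,\CCC)$ with the space of flat connections admitting a tame harmonic metric modulo gauge (Section \ref{moduli-local-systems}, via Theorem \ref{th:filt-local-syst}), in the same weighted Sobolev framework of Section \ref{hitchin-kobayashi}. The correspondence is induced by the smooth, gauge-equivariant assignment $(A,\varphi)\mapsto D=A+\varphi-\tau_h(\varphi)$ and its inverse $D\mapsto (D_h^+,\psi_h^{1,0})$; these identify the two solution spaces, so once genericity guarantees that every point is stable, the slice charts on the two sides are quotients of the same slice by the same compact automorphism group, and the homeomorphism is a diffeomorphism of orbifolds with no separate Hodge-theoretic comparison needed. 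Two further points. First, your claim that ``a short computation'' shows the $\alpha$-walls and $\beta$-walls match is unsubstantiated and also unnecessary: the walls for $\alpha$ are integrality conditions on relative degrees of parabolics of $H^\CC$, while those for $\beta$ are vanishing conditions on relative degrees of parabolics of $G$; these live on different parameter spaces, the paper never claims they correspond (joint genericity is simply hypothesized here, and in the following theorem it is handled by a codimension-two argument, not by matching walls). Second, your identification of $\Aut(E,\varphi)$ with a centralizer of the monodromy must be made at the level of the Hermite--Einstein/harmonic-metric representatives, where both groups are compact; otherwise the stratification comparison you invoke does not get started.
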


Theorem \ref{generic-alpha-beta}, combined with Propositions
\ref{moduli-real} and \ref{moduli-real-local-systems}, imply
the following.

\begin{theorem}\label{integral-case}
Let $(G,H,\theta,B)$  satisfy the integral
condition given in Remark \ref{integral-characters}
(which is satisfied in particular if $G$ is complex). Let
$(\alpha,\LLL)$ and $(\beta,\CCC)$, and  $(\alpha',\LLL')$ and $(\beta',\CCC')$,
be related
by Table \ref{table}.
Then if $\alpha$, $\alpha'$, $\beta$ and $\beta'$ are generic

(1)  $\cM(\alpha,\LLL)$ and $\cM(\alpha',\LLL')$ are diffeomorphic.

(2) $\cS(\beta,\CCC)$ and $\cS(\beta',\CCC')$ are diffeomorphic.
\end{theorem}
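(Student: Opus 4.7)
The plan is to establish part (2) first by a deformation-and-wall-crossing argument, and then deduce part (1) from it via the parabolic Higgs bundle / local system correspondence. Concretely, for generic $\alpha$ and $\alpha'$, pair them with generic $\beta$ and $\beta'$ via Table \ref{table} so that Theorem \ref{generic-alpha-beta} (respectively Theorem \ref{correspondence-higgs-bundle-local-system} on the stable and irreducible loci) yields diffeomorphisms $\cM(\alpha,\LLL) \cong \cS(\beta,\CCC)$ and $\cM(\alpha',\LLL') \cong \cS(\beta',\CCC')$. Part (2) then gives (1).

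For (2), the plan is to connect $(\beta,\CCC)$ and $(\beta',\CCC')$ by a piecewise smooth path $(\beta_t,\CCC_t)_{t\in[0,1]}$ of parameters crossing only finitely many walls transversally. Within a single chamber Proposition \ref{moduli-real-local-systems} already provides a literal equality of moduli spaces (hence a diffeomorphism), so the problem is reduced to finitely many wall crossings. At such a crossing, I would exploit the gauge-theoretic realization provided by Theorem \ref{th:filt-local-syst} and Proposition \ref{zeta-correspondence}: $\cS(\beta_t,\CCC_t)$ is the moduli of solutions to the deformed harmonicity equation (\ref{zeta-harmonic-eq}) on the corresponding local system, modulo the $H$-gauge group. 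Varying $(\beta_t,\CCC_t)$ deforms the equation continuously; genericity on either side of the wall ensures that the action is free modulo the centre of $G$, so an implicit-function-theorem argument on the weighted Sobolev spaces of Section \ref{sec:correspondence} produces a smooth family of moduli spaces on each side. A more geometric route is to use the forgetful map of Proposition \ref{correspondence-local-systems-representations} to identify $\cS(\beta_t,\CCC_t)$ for generic $\beta_t$ with a conjugacy-class locus in the character variety of $\pi_1(X\setminus S)$; the underlying $G$-conjugacy classes vary smoothly with $t$, producing an obviously smooth family.

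The main obstacle will be gluing the diffeomorphisms across a wall at $t=t_0$. At the wall, strict polystability appears and the moduli space may acquire an orbifold stratum, so one cannot simply extend by continuity. The resolution I have in mind is to show that the space of solutions to (\ref{zeta-harmonic-eq}) for $t$ in a small neighbourhood of $t_0$ forms a continuous family whose quotient admits a consistent smooth structure off a closed subset of positive codimension, thereby extending the diffeomorphisms from both chambers. Equivalently, in the character-variety approach one must verify that the $G$-conjugacy class chambers on the two sides of a parabolic wall coincide, which follows from the fact that the wall structures defining genericity of $\beta$ on the one hand and of $\CCC'$ on the other are governed by different and independent integrality conditions arising in Remark \ref{integral-characters}.
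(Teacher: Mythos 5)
Your opening reduction is fine: under the theorem's hypotheses all four parameters are generic, so Theorem \ref{generic-alpha-beta} gives $\cM(\alpha,\LLL)\cong\cS(\beta,\CCC)$ and $\cM(\alpha',\LLL')\cong\cS(\beta',\CCC')$, and (2) would indeed imply (1). The genuine gap is in your proof of (2), at exactly the point you flag as the ``main obstacle'': the wall crossing. Inside a chamber Proposition \ref{moduli-real-local-systems} does all the work, but at a wall $t=t_0$ strictly polystable (reducible) parabolic local systems appear, the implicit-function-theorem argument degenerates (extra automorphisms and obstructions in the deformation theory), and there is no a priori relation between $\cS(\beta_{t_0-\epsilon},\CCC_{t_0-\epsilon})$ and $\cS(\beta_{t_0+\epsilon},\CCC_{t_0+\epsilon})$: for moduli of parabolic objects, crossing a stability wall typically produces a birational modification (a flip), not a diffeomorphism (cf.\ \cite{boden-hu}). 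Your proposed resolution --- that the family of solution spaces ``admits a consistent smooth structure off a closed subset of positive codimension, thereby extending the diffeomorphisms from both chambers'' --- is a restatement of the desired conclusion, not an argument: nothing in the proposal produces a map between the two sides of the wall. The character-variety variant fails for the same reason (relative character varieties do change when the conjugacy classes cross the walls where stability/reducibility jumps, so ``the conjugacy classes vary smoothly'' yields no smooth family), and the closing appeal to independence of the walls for $\beta$ and for $\CCC'$ via Remark \ref{integral-characters} is not the relevant independence.

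The missing idea --- and the paper's actual one-line proof --- is that one never crosses a wall while staying on one side of the correspondence; one jumps to the other moduli space, where nothing is happening. The $\alpha$-walls and the $\beta$-walls are cut out by conditions on independent parameters (the weights $\alpha$, respectively the semisimple residue data through $\beta=s-\tau(s)$), so their pairwise intersections have codimension $\geq 2$ in the combined parameter space; hence a generic path from $(\alpha,\LLL;\beta,\CCC)$ to $(\alpha',\LLL';\beta',\CCC')$ meets at most one type of wall at any given time. When the path crosses a $\beta$-wall at $t_0$, the weight $\alpha_t$ remains generic in a whole neighbourhood of $t_0$, so $\cM(\alpha_t,\LLL_t)$ is constant there by Proposition \ref{moduli-real}, and Theorem \ref{generic-alpha-beta} gives
$\cS(\beta_{t_0-\epsilon},\CCC_{t_0-\epsilon})\cong\cM(\alpha_{t_0-\epsilon},\LLL_{t_0-\epsilon})=\cM(\alpha_{t_0+\epsilon},\LLL_{t_0+\epsilon})\cong\cS(\beta_{t_0+\epsilon},\CCC_{t_0+\epsilon})$;
symmetrically, $\alpha$-walls are crossed using the constancy of $\cS$ from Proposition \ref{moduli-real-local-systems}. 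Chaining these identifications along the path proves (1) and (2) simultaneously, with no analysis at the walls at all. If you want to salvage your write-up, replace your wall-crossing step by this zigzag; the analytic and character-variety machinery you invoke is then unnecessary.
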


This is simply because the combined walls defining genericity  for $\alpha$ and
$\beta$ have codimension bigger or equal than 2 in the space of parameters
defining a connected chamber for the generic parameters.

To show that the correspondences in Theorems
\ref{generic-alpha-beta} and \ref{integral-case} are homeomorphisms or diffeomorphisms (in fact real analytic isomorphisms) one can argue as in \cite{kobayashi,konno,Sim90}.

\begin{remark}
It would be interesting to explore the possible extension of this
correspondence to the moduli spaces $\cM_c(\alpha,\LLL)$ and
$\cS_\zeta(\beta,\CCC)$ for non-zero values of $c\in \liez(\lieh)$ and
$\zeta\in \liem^H$.
\end{remark}

\section{Examples}
\label{sec:examples}

We give some illustrations of the use of parabolic Higgs bundles for
defining certain classical components in the moduli space of representations
$\cR(G)$. We do not give details since this
follows more or less directly from the arguments in the compact case
and the parabolic machinery developed here.

\subsection{Teichm\"uller--Hitchin component of split groups}
\label{sec:teichm-comp-split}

We begin by the case $G=\PSL_2\setR$: this gives a parametrization of the
space of hyperbolic metrics with cusps at the marked points. We have
$H=\U_1/\setZ_2$ where the $\U_1$ is seen as a maximal compact subgroup of
$\SL_2\setR$. The construction on an unpunctured surface requires a square
root of $K$; in the punctured case we need a square root of
$K(D)$. The $\setC^*$-bundle $K(D)$ has not always a square root (this
requires the degree of $D$ to be even), but as a $\setC^*/\setZ_2$ bundle it
always has such a square root: let $E$ be such a choice (here $E$ is a
principal holomorphic $\setC^*/\setZ_2$ bundle). Equip $E$ with a trivial
parabolic structure at the marked points. Then
$$E(\liem^ \setC)=K(D)\oplus K(D)^{-1},$$
and we consider the meromorphic Higgs field $\varphi\in H^0(X,E(\liem^ \C)\otimes K(D))$
$$ \varphi = q_2 \oplus 1, \quad q_2\in H^0(X,K^2(D)),$$
where $1$ has a simple pole at the marked points, while $q_2$ appears
to be holomorphic at the marked points. In particular $\Res_{x_i}\varphi$ is
nilpotent and nonzero.

The Higgs bundles $(E,\varphi)$ for $q_2\in H^0(X,K^2(D))$ are stable, and, as
in \cite{hitchin}, the corresponding solutions of the Hitchin selfduality
equations (Hermite--Einstein) provide hyperbolic metrics on $X\setminus D$, whose monodromy is the
representation $\pi_1(X\setminus D)\to \PSL_2\setR$ corresponding to $(E,\varphi)$. In
particular, its monodromy around $x_i$ is unipotent so we obtain a
cusp at $x_i$. This gives a complete parametrization of the
Teichm\"uller space for the punctured surface $(X,D)$ by the space of
quadratic differentials $H^0(X,K^2(D))$.

If the degree of $D$ is even, then a choice of square root $L$ of
$K(D)$ gives a lifting of this $\PSL_2\setR$ component to $\SL_2\setR$, with
unipotent monodromies at the punctures. It is also well known that for
all degrees, it is possible to lift the component to $\SL_2\setR$ with
minus unipotent monodromies: in the Higgs bundle formalism, this
amounts to considering a parabolic structure with weight at the
boundary of the alcove, in the following way. One considers a square
root $L$ of $K$ with a parabolic weight $-1/2$ at each puncture (this
is morally a square root of $K(D)$, in particular $\pardeg
L=g-1+\frac12\deg D$). Then the same Higgs field as above,
$$ \varphi = \begin{pmatrix} 0 & q_2 \\ 1 & 0 \end{pmatrix} $$
is now a parabolic Higgs bundle in our sense, with nilpotent
residue $(\begin{smallmatrix} 0 & 0 \\ 1 & 0
\end{smallmatrix})$, this example is discussed as the end of
Section \ref{sec:defin-parab-g}. This bundle induces the
previous $\PSL_2\setR$ Higgs bundle after applying the Hecke
transform $z^{1/2}$ near each puncture (this exists in
$\setC^*/\setZ_2$). (See
\cite{nasatyr-steer,biswas-gastesi-govindarajan} for a related
description.)

Now pass to other split real groups. The generalisation of
\cite{hitchin-teichmueller} is straightforward. We work with $G$ a
split real group of adjoint type, and we consider an irreducible
representation $\rho:\SL_2\setR\to G$, sending $\U_1$ in the maximal compact $H$
of $G$. Choose as above a $\setC^*/\setZ_2$ principal holomorphic bundle $E$,
square root of $K(D)$. We have a decomposition $\lieg^ \setC=\oplus_1^l V_j$
into irreducible pieces under $\rho$, such that $V_1$ is the image of
$\rho$, and we choose a highest weight vector $e_j\in V_j$. If $(H,X,Y)$ is
a standard $\lies\liel_2$ basis, with $H\in \sqrt{-1}\lieu_1$, then $e_1=\rho(X)$,
and we also define $e_{-1}=\rho(Y)$. Moreover there exist a basis $(p_j)$
of invariant polynomials on $\lieg^ \setC$, of degrees $m_j+1$ (determined
by the fact that $\ad H$ acts with eigenvalue $m_j$ on $e_j$), such
that for any element $f=e_{-1}+f_1e_1+\cdots +f_le_l$ one has $f_j=p_j(f)$.
Now, adapting \cite{hitchin-teichmueller} to the punctured case, we
consider the Higgs bundle $\rho(E)$ with trivial parabolic structure at
the punctures, and Higgs field
$$ \varphi = e_{-1} + \sum_1^l q_j e_j, \quad q_j\in H^0(K^{m_j+1}(m_jD)). $$
It turns out that $\varphi$ is meromorphic with simple poles, and
$\Res_{x_j}\varphi=e_{-1}$ is regular nilpotent. This provides the expected
Teichm\"uller--Hitchin component.

One can remark that there is a natural deformation keeping the same Higgs bundles but changing the parabolic structure: one can consider at each marked point the parabolic group $P_{\rho(H)}$, with some strictly antidominant character $\alpha_i$. Then $e_{-1}\in \liep_{\rho(H)}$ so the space of Higgs fields with $\Res_{x_i}\varphi\in \liep_{\alpha_i}$ remain the same. This gives a
deformation of the Teichm\"uller--Hitchin component to a space of representations with fixed
compact monodromy around the punctures. One can obtain similarly any
regular semisimple monodromy at the punctures by allowing $q_j\in
H^0(K^{m_j+1}((m_j+1)D))$ and by fixing the highest order term of each
$q_j$ at each puncture: this modifies the residue of $\varphi$, hence the
noncompact part of the monodromy.

\subsection{Hermitian groups and the Milnor--Wood inequality}
\label{sec:herm-groups-miln}

Another case where there is a distinguished component in the space of
representations of $\pi_1(X\setminus D)$ into $G$ is when $G$ is Hermitian, that
is $G/H$ is a Hermitian symmetric space of noncompact type. The
general theory from the Higgs bundle viewpoint is done in \cite{BGR}
and can be generalised to the punctured case. In particular one
recovers the Milnor--Wood inequality of Burger--Iozzi--Wienhard
\cite{biw} in the punctured case. Again we do not give details of the
proofs, which can be adapted from \cite{BGR} to the parabolic case.

In the Hermitian case, $\lieh$ has a 1-dimensional centre, generated
by an element $J$ which induces the complex structure of $G/H$. In
particular it decomposes $\liem^ \setC$ into $\pm i$ eigenspaces: $\liem^
\setC=\liem^+\oplus\liem^-$. Now consider a parabolic $G$-Higgs bundle $(E,\varphi)$,
so $\varphi$ decomposes as $\varphi=\varphi^++\varphi^-$. We can define a Toledo invariant in
the following way: it is proved in \cite{BGR} that there exists a
character, called the Toledo character, $\chi_T:H^ \setC\to\setC^*$ and a
polynomial $\det:\liem^+\to\setC$ of degree $r=\rk(G/H)$, such that for any
$h\in H^ \setC$ one has $\det(h\cdot x)=\chi_T(h)\det(x)$. The Toledo invariant on
a compact surface is $\deg E(\chi_T)$, and on a punctured surface we
define the Toledo invariant by
$$ \tau(E) = \pardeg(E,\chi_T), $$
where the reduction of $E$ is $E$ itself. This is actually equal to
the parabolic degree of the line bundle $E(\chi_T)$ equipped with the
parabolic weight $\chi_T(\alpha_i)$ at each puncture $x_i$.

The proof of Theorem 4.5 in \cite{BGR} extends to give the following
Milnor--Wood inequality: if $(E,\varphi)$ is a semistable parabolic $G$-Higgs
bundle on the Riemann surface $(X,D)$ with $n$ punctures, then
\begin{equation}
 -\rk(\varphi^+) (2g-2+n) \leq \tau(E) \leq \rk(\varphi^-) (2g-2+n),\label{eq:34}
\end{equation}
where the rank of $\varphi^\pm$ is the generic rank (there is a well defined
notion of rank for elements of $\liem^\pm$, it is bounded by $r$). In
particular, one has $|\tau(E)|\leq r(2g-2+n)$, which gives another proof of
the Milnor--Wood inequality of \cite{biw} in the punctured case, when
the Higgs bundle comes from a representation.

The case of equality in the Milnor--Wood inequality is of interest (the
corresponding representations are called maximal representations), and
leads to a nice description of the moduli space. Restrict to the case
$G/H$ is of tube type. One way to state this condition is to say that
the Shilov boundary of $G/H$ is itself a symmetric space $H/H'$. It is
proved in \cite{BGR} that there is a Cayley transform, that is the
moduli space is isomorphic to a moduli space of $K^2$-twisted
$H^*$-Higgs bundles, where $H^*/H'$ is the noncompact dual of
$H/H'$. Here $K^2$-twisted means that the Higgs field takes values in
$E(\liem^ \setC)\otimes K^2$ rather than $E(\liem^ \setC)\otimes K$. This can be extended
to the punctured case in the following way. For simplicity, suppose
that the parabolic weights lie in $\sqrt{-1}\AAA'_\lieg$, which means
that all eigenvalues of $\ad \alpha_i$ on $\liem^ \setC$ have modulus smaller
than $1$. Then one can similarly prove that in the maximal case, one
has $\alpha_i\in \lieh'$ and the moduli space of polystable $G$-Higgs bundle
is isomorphic to a moduli space of $K(D)^2$-twisted polystable
$H^*$-Higgs bundles, with parabolic structure $\alpha_i$ at the
punctures. Such an isomorphism remains true if one drops the condition
that $\alpha_i\in \sqrt{-1}\AAA'_\lieg$, but then one has only $e^{2\pi
  \sqrt{-1}\alpha_i}\in H'$ and a Hecke transformation is needed to obtain
the parabolic weights of the Cayley transformed bundle.

This fact on $\alpha_i$ also implies that the monodromy around $x_i$ fixes
a point of the Shilov boundary, a fact also known from \cite{biw}.

\appendix

\section{Lie theory}
\label{s:Lie-theory}
\subsection{Weyl alcoves and conjugacy classes of a compact Lie group}
\label{alcoves}

For the following see e.g. \cite{BD}.

Let $H$ be a compact Lie group with Lie algebra $\lieh$. Let
$\langle\cdot,\cdot\rangle$ be a $H$-invariant inner product on
$\lieh$. Let $T\subset H$ be a Cartan subgroup, i.e. a maximal
torus, and $\liet\subset\lieh$ be its Lie algebra (a Cartan
subalgebra). Fix a system of real simple roots (see e.g.
\cite[Chap V, Def 1.3]{BD}) and denote by $R_+$ the set of
positive roots.
Consider the family of affine hyperplanes in $\liet$
$$
\HHH_{\lambda n}=\lambda^{-1} (n),\;\; \lambda\in R^+,\;\; n\in\Z
$$
together with the union $\liet_s=\cup_{\lambda,n} \HHH_{\alpha n}$.
As shown in \cite{BD}, this family is given by the critical points of the exponential map
\begin{equation}\label{exp}
\exp: \liet \lra T.
\end{equation}

The set $\liet - \liet_s$ decomposes
into convex connected components which are called
the {\bf alcoves} of $H$ (sometimes also referred as {\bf Weyl
  alcoves}). Note that, by definition, alcoves are open. So, a choice of an
alcove is basically a choice of a logarithm for (\ref{exp}).
A {\bf wall} of an alcove $\eA$  is one of the  subsets of
$\bar\eA \cap \HHH_{\lambda,n}$ of $\liet$ that have dimension
$k-1$, where $\bar\eA$ is the closure of $\eA$ and
$k=\rank(H)$.

Let $W:=N(T)/T$ be the Weyl group of $H$. The $H$-invariant inner product on
$\lieh$ induces
a $W$-invariant inner product in $\liet$.
The  {\bf co-character lattice} $\Lambda_{\cochar}\subset \liet$ is defined
as the kernel of the exponential map (\ref{exp}).
Recall  that the {\bf co-roots} are the elements of $\liet$
defined by
$$
\lambda^*=2b^{-1}(\lambda)/\langle \lambda,\lambda\rangle,
$$
where $b$ is the isomorphism $b:\liet \lra \liet^*$ defined by
the inner product $\langle \cdot,\cdot\rangle$.

The co-roots define a lattice $\Lambda_{\coroot}\subset \liet$.
We have that $\Lambda_{\coroot}\subset \Lambda_{\cochar}$ and
$\pi_1(H)=\Lambda_{\cochar}/\Lambda_{\coroot}$. In particular,
$\Lambda_{\coroot}= \Lambda_{\cochar}$ if $H$ is simply connected.

The {\bf affine Weyl group} is defined as
$$
W_{\aff}=\Lambda_{\cochar} \rtimes W
$$
where $\Lambda_{\cochar}$ acts on $\liet$ by translations.

The alcoves of $H$  are important for us because of their relation with
conjugacy classes of $H$. If $H$ is connected, every element of $H$ is
conjugate to an element of $T$, in particular every element of $H$ lies
in a Cartan subgroup. If  $\Conj(H)$ is the space of
{\bf conjugacy classes} of $H$ we have then homeomorphisms
$$
\Conj(H)\simeq T/W\simeq \liet/W_{\aff}.
$$
We  have the following.

\begin{proposition}
Let $H$ be a connected compact Lie group.
The closure $\bar \eA$ of any alcove $\eA$ contains a fundamental domain
$\eA^f$ for the action of $W_{\aff}$ on $\liet$ , i.e. every $W_{\aff}$-orbit
meets $ \eA^f$ in exactly one point. Hence the  space of conjugacy
classes of $H$ is in bijection with $\eA^f$.
\end{proposition}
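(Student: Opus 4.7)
The plan is to prove two things in sequence: first, that every $W_{\aff}$-orbit in $\liet$ meets $\bar{\eA}$; second, that one can single out a subset $\eA^f \subset \bar{\eA}$ (consisting of $\eA$ together with some of its boundary faces) meeting every orbit in exactly one point. From these, the final homeomorphism $\Conj(H) \simeq \eA^f$ follows by combining with the standard surjection $\liet \to T \to \Conj(H)$.

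For the existence of a representative in $\bar{\eA}$, I would fix an arbitrary interior point $p \in \eA$ and, for a given $x \in \liet$, consider the orbit $W_{\aff}\cdot x$. Because $W_{\aff}$ acts properly discontinuously on $\liet$ (its generators are the orthogonal reflections across the hyperplanes $\HHH_{\lambda n}$, and the isotropy of any point is finite), the orbit is a closed discrete set, so there is some $x' \in W_{\aff}\cdot x$ that minimizes the Euclidean distance $\|x' - p\|$. I claim $x' \in \bar{\eA}$: otherwise there is a wall $\HHH_{\lambda n}$ of $\bar{\eA}$ such that $p$ and $x'$ lie on strictly opposite sides, and the reflection $r_{\lambda n} \in W_{\aff}$ across this wall moves $x'$ strictly closer to $p$, contradicting minimality.

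For uniqueness and the construction of $\eA^f$, I would first check that $W_{\aff}$ acts freely on the interior $\eA$. Indeed, if some nontrivial $w \in W_{\aff}$ fixed a point of $\eA$, then $w$ (being a product of affine reflections) would have to fix a hyperplane meeting $\eA$, which is absurd since by definition $\eA$ is disjoint from all the $\HHH_{\lambda n}$. Hence interior points of $\eA$ are pairwise inequivalent. The main obstacle is the boundary: distinct faces of $\bar{\eA}$ can be $W_{\aff}$-equivalent, so one must select representatives. I would proceed inductively on the codimension of faces: having built a fundamental domain for faces of codimension $<k$, enumerate the open faces of codimension $k$ of $\bar{\eA}$ and add to $\eA^f$ exactly one representative from each $W_{\aff}$-orbit. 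The fact that the action is properly discontinuous ensures only finitely many faces of each codimension need to be considered at each step, and the reflection-group structure ensures each orbit of a boundary face of $\bar{\eA}$ is represented by some face of $\bar{\eA}$ (any minimizing argument analogous to the one above applied to a relative interior point shows this).

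Finally, for the statement about conjugacy classes, since $H$ is connected and compact, every element is conjugate to an element of $T$, and two elements of $T$ are $H$-conjugate if and only if they lie in the same $W$-orbit; hence $\Conj(H) \simeq T/W$. The exponential map $\exp\colon \liet \to T$ is a surjective Lie group homomorphism with kernel $\Lambda_{\cochar}$, and it intertwines the $W$-action on $\liet$ with the $W$-action on $T$, so
$$ \Conj(H) \simeq T/W \simeq \liet/(\Lambda_{\cochar} \rtimes W) = \liet/W_{\aff}. $$
Combining with the first part yields the homeomorphism $\Conj(H) \simeq \eA^f$, completing the proof.
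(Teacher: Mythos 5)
Your first half --- that every $W_{\aff}$-orbit meets $\bar\eA$ --- is the standard distance-minimizing argument and is correct. The uniqueness half, however, contains a genuine error, and it is precisely the point where the proposition is delicate. Your claim that a nontrivial element of $W_{\aff}$ fixing a point of $\eA$ must be a product of affine reflections rests on a false premise: the reflections in the hyperplanes $\HHH_{\lambda n}$ generate only $\Lambda_{\coroot}\rtimes W$, whereas $W_{\aff}=\Lambda_{\cochar}\rtimes W$ is strictly larger whenever $\pi_1(H)=\Lambda_{\cochar}/\Lambda_{\coroot}\neq 1$. Concretely, for $H=\SO(3)$ the alcove is an open interval and the stabilizer $\Omega=\Stab_{W_{\aff}}(\eA)\cong\ZZ/2$ acts on it by the flip about its midpoint: it fixes an interior point (so the action on $\eA$ is \emph{not} free), and it identifies pairs of distinct interior points. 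Moreover, even where the action is free, your inference ``free on $\eA$ implies interior points are pairwise inequivalent'' is a non sequitur: for $H=\U_1$ there are no roots, so $\eA=\liet=\RR$ and $W_{\aff}=\ZZ$ acts freely by translations, yet every orbit meets $\eA$ infinitely often.

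Because of this, your proposed $\eA^f$ (all of $\eA$ together with selected boundary faces) cannot be a fundamental domain in general: the required identifications are not confined to the boundary of the alcove but cut through its interior whenever $\pi_1(H)\neq 1$ (for $\SO(3)$ a correct choice is ``half'' of the closed interval; for $\U_1$ it is a half-open interval strictly inside $\eA=\RR$). What your argument is missing is the following structure: (i) the Coxeter-theoretic theorem that $\bar\eA$ is a \emph{strict} fundamental domain for the reflection subgroup $\Lambda_{\coroot}\rtimes W$ --- the ``exactly once'' statement requires the simply transitive action of the reflection group on alcoves (via the exchange/deletion condition or a covering argument), not just the minimization argument, which only yields ``at least once''; and (ii) the decomposition $W_{\aff}=(\Lambda_{\coroot}\rtimes W)\rtimes\Omega$ with $\Omega=\Stab_{W_{\aff}}(\eA)\cong\pi_1(H)$, after which $\eA^f$ must be chosen as a fundamental domain for the residual action of $\Omega$ on $\bar\eA$. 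In effect your proof implicitly assumes $\Omega=1$, i.e.\ that $H$ is semisimple and simply connected, which is exactly the case the proposition is careful not to restrict to.
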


\newcommand{\Spec}{\operatorname{Spec}}

Define
$$ \imag\eA' = \{ \alpha\in
    \imag\bar\eA\mid
\Spec(\ad(\alpha))\subset (-1,1)\},
$$
and let
$$\imag W\aA'=\bigcup_{w\in W}\imag w\eA'.$$
The following will play a crucial role in our definition of
parabolic Higgs bundle and in the analysis involved in the
Hitchin--Kobayashi correspondence.

\begin{proposition}\label{prop-alcove}
Let $\eA\subset\tlie$ be an alcove of $H$ such that $0\in \bar\eA$. Then:
\begin{enumerate}
\item If $\alpha\in \imag\bar \eA$ then
    $\Spec(\ad(\alpha))\subset [-1,1]$.
\item We have  $\imag\eA\subset \imag\eA'$.
\item We have $ \imag W\eA' = \{ \alpha\in \imag\tlie\mid
    \Spec(\ad(\alpha))\subset(-1,1)\}.$

\item For any $\alpha\in \imag\bar \eA$ there exist $k\in \ZZ$
    and $\lambda\in\imag(2\pi)^{-1}\Lambda_{\cochar}$ such
    that $$k\alpha+\lambda\in\imag W\aA'.$$
\end{enumerate}
\end{proposition}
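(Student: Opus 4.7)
The plan is to handle the four parts in order, reducing everything to elementary root system combinatorics together with one closure-of-cyclic-group argument. Since any two alcoves whose closures contain $0$ are Weyl conjugate, and the right-hand sides are $W$-equivariant (once Weyl translates are folded into $\imag W\eA'$ in parts (3) and (4)), one may assume $\eA$ is the standard fundamental alcove, bounded by the simple root hyperplanes $\mu_i=0$ and the highest-root hyperplane $\tilde\mu=1$.

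For (1), I would use that any positive root expands as $\mu=\sum c_i\mu_i$ with integer coefficients $0\leq c_i\leq\tilde c_i$ (where $\tilde\mu=\sum\tilde c_i\mu_i$), so for $\alpha\in\imag\bar\eA$ one has $0\leq\mu(\alpha)\leq\tilde\mu(\alpha)\leq 1$; the eigenvalues of $\ad\alpha$ on the positive (resp.\ negative) root spaces then lie in $[0,1]$ (resp.\ $[-1,0]$). Part (2) is then immediate: the open alcove $\eA$ avoids every singular hyperplane $\mu^{-1}(n)$ with $n\neq 0$, so $|\mu(\alpha)|\neq 1$ strictly, and combined with (1) this gives $|\mu(\alpha)|<1$. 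For (3), the inclusion $\subset$ is the $W$-invariance of the right-hand side; for $\supset$, given $\alpha\in\imag\tlie$ with $|\mu(\alpha)|<1$ for every root, I would apply an element of $W$ to move $\alpha$ into the closure of the Weyl chamber containing $\eA$, so that $\mu_i(\alpha)\geq 0$ for every simple root, and then use $\tilde\mu(\alpha)<1$ to place the translate in $\imag\eA'$.

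For part (4) I would argue via the map $\imag\tlie\to T$ given by $\beta\mapsto\exp(2\pi\imag\beta)$, which is a surjective local diffeomorphism with kernel exactly $\imag(2\pi)^{-1}\Lambda_{\cochar}$; let $T_0\subset T$ denote the image of $\imag W\eA'$, which is therefore open in $T$ and contains $1$ (since $0\in\imag\eA'$). Given $\alpha\in\imag\bar\eA$, set $t=\exp(2\pi\imag\alpha)$. The key claim is that some positive integer power $t^k$ lies in $T_0$: the closure $\overline{\langle t\rangle}\subset T$ is a compact abelian Lie subgroup containing $1$, its intersection with the open set $T_0$ is a nonempty neighbourhood of $1$ in $\overline{\langle t\rangle}$, and the density of $\{t^k\}_{k\geq 1}$ in $\overline{\langle t\rangle}$ then produces the desired $k$ (with $t$ of finite order $n$ handled trivially by $k=n$). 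Picking $\beta\in\imag W\eA'$ with $\exp(2\pi\imag\beta)=t^k$, the element $\lambda:=\beta-k\alpha$ automatically lies in $\imag(2\pi)^{-1}\Lambda_{\cochar}$ and $k\alpha+\lambda=\beta$ is the required translate. The only mildly substantive step is this density argument in (4), and the main source of potential difficulty throughout is bookkeeping the conventions linking roots (normalized so that alcove walls sit at integer values), the cocharacter lattice, and the $2\pi\imag$ factor in the exponential map.
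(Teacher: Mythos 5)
Your proof is correct and takes essentially the same route as the paper's: parts (1)--(3) are precisely the root-system bookkeeping that the paper dismisses as ``immediate consequences of the definitions,'' and your part (4) is the paper's argument in mild disguise --- the paper works in the compact quotient $\imag\tlie/\Gamma$, $\Gamma=\imag(2\pi)^{-1}\Lambda_{\cochar}$, and extracts $k$ from accumulation of the sequence $[\alpha],[2\alpha],[3\alpha],\dots$ (a pigeonhole/recurrence step), which is the same mechanism underlying your density of $\{t^k\}_{k\geq 1}$ in $\overline{\langle t\rangle}\subset T$. Both versions rest on the same two facts, compactness of the torus and openness of $\imag W\eA'$ (i.e.\ your part (3)), so the arguments match.
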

\begin{proof}
Items (1), (2) and (3) are immediate consequences of the
definitions. We now prove (4). Let
$\Gamma=\imag(2\pi)^{-1}\Lambda_{\cochar}$, and note that
$\imag\tlie/\Gamma$ is compact. For any $\beta\in\imag\tlie$
let $[\beta]$ denote its class in $\imag\tlie/\Gamma$. Given
$\alpha\in\imag\tlie$ consider the sequence
$[\alpha],[2\alpha],[3\alpha],\dots$. By compactness this
sequence must accumulate somewhere. So one may take elements
of the form $[\mu\alpha]$ and
$[\nu\alpha]$ with $\mu\neq\nu$ in such a way that
$[\nu\alpha]-[\mu\alpha]=[(\nu-\mu)\alpha]$ is arbitrary close to $[0]$. In particular, since $\imag W\aA'$ is
a neighborhood of $0$ we can find $k$ and  $\lambda$ such that
$k\alpha+\lambda\in\imag W\aA'$.
\end{proof}

\begin{proposition}
Let $G$ be the complexification of a connected compact Lie group $H$, and
let $T^\CC$ be the complexification of a Cartan subgroup of $H$. Then every
semisimple element of $G$ is conjugate to an element of $T^\CC$, which in
particular
can be written as $\exp(\alpha)\exp(s)$ with $\alpha\in \bar\eA$,
$s\in \imag\liet$ and $[\alpha,s]=0$.
\end{proposition}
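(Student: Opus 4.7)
The statement cannot literally hold for every $g \in G$: elements of $T^\CC$ are semisimple, while $G$ contains non-semisimple elements (which can never be conjugated into $T^\CC$). The proposition must be read as applying to semisimple elements, which is consistent with how it is used in the body of the paper, where the semisimple part of a monodromy is encoded by an alcove element and the nilpotent/unipotent part is handled separately via Kostant--Sekiguchi. So the plan is to prove the assertion for semisimple $g \in G$.

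The first step will invoke the classical structure theory of complex reductive Lie groups: every semisimple $g \in G = H^{\CC}$ is contained in a maximal (algebraic) complex torus, and any two maximal complex tori of $G$ are conjugate. Since $T^{\CC}$ is a maximal complex torus, we may after conjugation replace $g$ by some $t \in T^{\CC}$. The second step uses the polar decomposition of the complex torus. Writing $\liet^{\CC} = \liet \oplus \imag\liet$, the exponential map $\liet^{\CC} \to T^{\CC}$ is surjective with kernel $\Lambda_{\cochar} \subset \liet$, and the subgroup $\exp(\imag\liet) \subset T^{\CC}$ is closed and meets $T$ only at $1$. Hence every $t \in T^{\CC}$ decomposes uniquely as $t = \exp(\alpha_0)\exp(s)$ with $\alpha_0 \in \liet/\Lambda_{\cochar}$ and $s \in \imag\liet$; abelianness of $\liet^{\CC}$ makes $[\alpha_0, s] = 0$ automatic.

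The third step normalizes $\alpha_0$ into $\bar\eA$ using the alcove material of the appendix. Pick any lift $\alpha_0' \in \liet$. Since $\bar\eA$ is a fundamental domain for $W_{\aff} = \Lambda_{\cochar} \rtimes W$ on $\liet$, there exist $w \in W$ and $n \in \Lambda_{\cochar}$ with $\alpha := w\alpha_0' + n \in \bar\eA$. Lifting $w$ to $\tilde w \in N_H(T) \subset G$ and conjugating,
\begin{equation*}
\tilde w\, t\, \tilde w^{-1} \;=\; \exp(w\alpha_0')\exp(ws) \;=\; \exp(w\alpha_0' + n)\exp(ws) \;=\; \exp(\alpha)\exp(s'),
\end{equation*}
where $s' := ws \in \imag\liet$ (the Weyl action preserves $\liet$, hence its imaginary form), and $\exp(n) = 1$ since $n \in \ker(\exp|_{\liet})$. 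The commutation $[\alpha, s'] = 0$ holds because $\liet^{\CC}$ is abelian.

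I do not expect a genuine obstacle. The one nontrivial input is Step 1 (that semisimple elements lie in $T^{\CC}$ up to conjugacy), which is a standard structural fact for complex reductive groups; the remainder is bookkeeping that uses only the alcove description recalled earlier in the appendix and the abelianness of $\liet^{\CC}$.
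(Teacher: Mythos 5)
The paper states this proposition with no proof at all --- it closes Appendix \ref{alcoves} and is followed immediately by the subsection on real reductive groups --- so there is no internal argument to compare yours against; what can be judged is your emendation of the statement and your proof of the emended statement, and both are correct. Your objection to the literal statement is well founded: elements of $T^{\CC}$ are semisimple and conjugation preserves semisimplicity, so no nontrivial unipotent element of $G=H^{\CC}$ can be conjugated into $T^{\CC}$. In fact the paper itself furnishes the counterexample later in the same appendix, when it recalls that $\left(\begin{smallmatrix}1&1\\0&1\end{smallmatrix}\right)\in\SL_2\RR$ lies in no Cartan subgroup; the same element, viewed in $\SL_2\CC$, violates the proposition as printed. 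Your reading --- restrict to semisimple elements --- is the right one: it is what the paper actually uses (alcove weights encode the elliptic and hyperbolic parts of a monodromy, while unipotent parts enter separately through the multiplicative Jordan decomposition $g=g_eg_hg_u$ and the Kostant--Sekiguchi correspondence), and it is consistent with the fact that the neighbouring real-group proposition is carefully stated only for elements of a $\Theta$-invariant Cartan subgroup $T'$, not for all of $G$.

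Your proof of the corrected statement is sound and uses exactly the ingredients the appendix provides. Step 1 (a semisimple element of a connected complex reductive group lies in a maximal torus, and all maximal tori are conjugate) is the standard structural input and is the only nontrivial one. Step 2, the polar decomposition $T^{\CC}=T\cdot\exp(\imag\liet)$ with $\ker(\exp|_{\liet})=\Lambda_{\cochar}$, gives $t=\exp(\alpha_0')\exp(s)$ uniquely. Step 3 invokes the fundamental-domain property of $\bar\eA$ under $W_{\aff}=\Lambda_{\cochar}\rtimes W$ (the first proposition of Appendix \ref{alcoves}); note that proposition says $\bar\eA$ \emph{contains} a fundamental domain, but all you need is that every $W_{\aff}$-orbit meets $\bar\eA$, which it gives. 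Lifting $w$ to $N_H(T)$, absorbing the translation by $n\in\Lambda_{\cochar}$ via $\exp(n)=1$, and observing that $[\alpha,s']=0$ is automatic in the abelian algebra $\liet^{\CC}$ completes the argument. This route mirrors the proof the paper does write out for the real analogue (conjugating the compact factor into $\bar\eA$ by a Weyl group element), so it is the natural completion of what the paper left unproved.
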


\subsection{Real reductive Lie groups}\label{reductive}

Following  Knapp
\cite[p.~384]{knapp},  a \textbf{real reductive Lie group}
is defined as a $4$-tuple
$(G,H,\theta,B)$, where $G$ is a real Lie group,  $H \subset G$ is a maximal
compact subgroup, $\theta\colon \lieg
\to \lieg$ is a Cartan involution, and $B$ is a
non-degenerate bilinear form on $\lieg$, which is $\Ad(G)$-invariant
and $\theta$-invariant.  The data $(G,H,\theta,B)$ has to satisfy in
addition that
\begin{itemize}
\item
the Lie algebra $\lieg$ of $G$ is reductive
\item
$\theta$ gives  a decomposition  (the Cartan decomposition)
\begin{displaymath}
 \lie{g} = \lieh \oplus \liem
\end{displaymath}
into its $\pm1$-eigenspaces, where  $\lieh$ is the Lie
algebras of $H$, so we have
$$[\hlie,\hlie]\subset\hlie,\qquad
[\hlie,\mlie]\subset\mlie,\qquad [\mlie,\mlie]\subset\hlie,$$

\item
$\lieh$ and $\liem$ are orthogonal under $B$, and $B$ is positive definite
on $\liem$ and negative definite on $\lieh$,

\item
multiplication as a map from $H\times \exp\liem$ into $G$ is an onto
diffeomorphism.

\item Every automorphism $\Ad(g)$ of $\lieg^\CC$ is inner for $g\in G$,
i.e., is given by some $x$ in $\Int\lieg$.

\end{itemize}


Of course a compact real Lie group $G$ whose Lie algebra is
equipped with a non-degenerate $\Ad(G)$-invariant bilinear form
belongs to a reductive tuple $(G,H,\theta,B)$ with $H=G$ and
$\theta=\Id$. Also the underlying real structure of a the
complexification $G$ of a compact Lie group $H$, whose Lie
algebra $\lieh$ is equipped with a non-degenerate
$\Ad(H)$-invariant bilinear form can be endowed with a natural
reductive structure.

 If $G$ is semisimple, the data $(G,H,\theta,B)$ can be
 recovered (to be precise, the quadratic form $B$ can only
   recovered up to a scalar but this will be sufficient for everything
   we do in this paper)  from the choice of a maximal compact
 subgroup $H \subset G$. There are other situations where less
 information is enough, e.g. for certain linear groups (see
 \cite[p.~385]{knapp}).

 The bilinear form $B$ does not play any role in the definition of
a parabolic $G$-Higgs bundle but
 is essential for defining  the  stability condition and the gauge equations
involved in the Hitchin--Kobayashi correspondence.

Note that  the compactness of $H$ together with the last
property above say that $G$ has only finitely many components.

Let $\liegc$ and $\liehc$ be the complexifications of
$\lieg$ and $\lieh$ respectively, and let
$H^\C$ be the complexification of $H$. Let

\begin{equation}\label{eqn:CCartan}
\liegc=\liehc \oplus \liemc
\end{equation}
be the complexification of the Cartan decomposition.
The group $H$ acts linearly on $\mlie$ through the adjoint
representation, and this action extends to a linear holomorphic action
of $\HC$ on $\liemc$, the \textbf{isotropy  representation}
that we will denote by
$$
\iota: H^\C\lra \Aut(\liemc),
$$
or sometimes by $\Ad$ since it is obtained by restriction
of the adjoint representation of $G$.

If $G$ is complex with maximal compact subgroup $H$, then $\lieg=\lieh\oplus
\imag\lieh$. We thus have that $\liem=\imag\lieh$, and the isotropy representation
coincides with the adjoint representation $\Ad: G\lra \Aut(\lieg)$

If $G$ is a reductive group, then the map $\Theta:G\lra G$ defined by
\begin{equation}\label{global-cartan}
\Theta(h\exp A)=h\exp(-A)\;\;\;\mbox{for}\;\;\; h\in H\;\;\mbox{and}\;\;
A\in \liem
\end{equation}
is an automorphism of $G$ and its differential is $\theta$. This is called
the {\bf global Cartan involution}.

\subsection{$\liesl_2$-triples and orbit theory}\label{triples}

We consider a reductive group $G$ as defined in Appendix \ref{reductive}.

An ordered triple of elements $(x,e,f)$ in $\lieg$ (or
$\lieg^\C$) is called a $\liesl_2$-{\bf triple} if the bracket
relations $[x,e]=2e$, $[x,f]=-2f$, and $[e,f]=x$ are satisfied.
One has that the elements $e$ and $f$ are nilpotent.  An
$\liesl_2$-triple $(x, e, f)$ in $\lieg^\CC$ is called {\bf
normal} if $e,f\in \liem^\CC$ and $x\in\lieh^\CC$. Some times
we refer to a normal triple as a Kostant--Rallis triple (see
\cite{KR}).

Let $\NNN(\lieg)$ and $\NNN(\liem^\CC)$ be the set of nilpotent
elements in $\lieg$ and $\liem^\CC$ respectively. One has the
following.

\begin{proposition}\label{nilpotent-orbits}

(1) Every element $0\neq e\in \NNN(\lieg)$ can be embedded in a
$\liesl_2$-triple $(x,e,f)$, establishing a 1--1 correspondence
between the set of all $G$-orbits in $\NNN(\lieg)$ and the set
of all $G$-conjugacy classes of $\liesl_2$-triples in $\lieg$.

(2) Every element $0\neq e\in \NNN(\liem^\CC)$ can be embedded
in a normal $\liesl_2$-triple $(x,e,f)$, establishing a 1--1
correspondence   between the set of all $H^\CC$-orbits in
$\NNN(\liem^\CC)$ and the set of all $H^\CC$-conjugacy classes
of normal $\liesl2$-triples in $\lieg^\CC$.
\end{proposition}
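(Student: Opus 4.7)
The plan is to reduce each statement to two ingredients: an existence result (every nonzero nilpotent is the nilpositive element of an appropriate $\liesl_2$-triple) and a uniqueness result (any two triples containing the same nilpositive element are conjugate by the stabilizer of that element). Together, these give both that the map $(x,e,f)\mapsto e$ is surjective onto nonzero nilpotents and that it descends to a bijection on the level of conjugacy classes.

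For part (1), I would invoke the classical Jacobson--Morozov theorem: given $0\neq e\in\NNN(\lieg)$, one constructs a semisimple $x\in\lieg$ with $[x,e]=2e$ by first finding, via Engel's theorem applied to $\ad e$, an element $h$ with $[h,e]=2e$, and then producing $x$ by splitting off the nilpotent part through a fixed-point argument in the affine space $h+\ker(\ad e)$; once $x$ is in hand, $f$ is obtained by solving $[e,f]=x$ in the $-2$-eigenspace of $\ad x$, where $\ad e$ is surjective by $\liesl_2$-representation theory. This gives surjectivity of $[(x,e,f)]\mapsto [e]$. For injectivity, suppose $(x,e,f)$ and $(x',e,f')$ both contain $e$. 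The Kostant uniqueness theorem asserts that these two triples are conjugate by an element of the centralizer $Z_G(e)$; the proof uses the fact that any two elements $x,x'$ with $[x,e]=[x',e]=2e$ differ by an element of $\ker(\ad e)$, which one integrates into a unipotent element of $Z_G(e)$ intertwining the triples, and then a similar step for $f$.

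For part (2), the analogous existence result is the Kostant--Rallis theorem, which refines Jacobson--Morozov to the symmetric pair $(\lieg^\CC,\liehc)$ with Cartan decomposition (\ref{eqn:CCartan}): given $0\neq e\in\liemc$ nilpotent, one can find a triple $(x,e,f)$ with $x\in\liehc$ and $f\in\liemc$. The proof runs parallel to Jacobson--Morozov but one exploits the $\theta$-grading: by averaging an arbitrary Jacobson--Morozov triple for $e$ over the $\setZ/2$-action induced by $\theta$, one extracts the $+1$ eigenpart of the neutral element (which lies in $\liehc$) and the $-1$ eigenpart of $f$ (which lies in $\liemc$), and then corrects using a Kostant-type fixed-point argument inside the symmetric pair. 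Uniqueness up to $H^\CC$-conjugacy follows from the corresponding relative version: two normal triples containing the same $e\in\liemc$ are conjugate by an element of $Z_{H^\CC}(e)$, proved by the same cohomological scheme as before but carried out within the $\theta$-invariant subgroup.

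The main obstacle is the uniqueness step in part (2): one must ensure that the conjugating element in $Z_G(e)$ supplied by the classical Kostant argument can be chosen inside $H^\CC$, not merely in $G^\CC$. This is handled by using the $\theta$-stability of the relevant centralizers and kernels of $\ad e$ in the symmetric pair setup, together with the exponential map from $\ker(\ad e)\cap\liehc$ into $Z_{H^\CC}(e)$; in effect, one runs the classical proof verbatim but with every space replaced by its intersection with $\liehc$ or $\liemc$ as appropriate, which is valid because $e$ is normal and the $\theta$-grading is respected at each stage. With these two ingredients in place, the bijection between $H^\CC$-orbits in $\NNN(\liemc)$ and $H^\CC$-conjugacy classes of normal $\liesl_2$-triples is immediate.
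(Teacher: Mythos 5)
Your proposal is correct and takes essentially the same route as the paper: the paper gives no proof of this proposition but simply cites the classical results you sketch, namely \cite{Kostant} for part (1) (Jacobson--Morozov existence together with conjugacy under $Z_G(e)$ of triples sharing the nilpositive element $e$) and \cite[Prop.~4]{KR} for part (2) (the same two steps carried out in the $\theta$-graded setting $\liegc=\liehc\oplus\liemc$, with the conjugating element taken in $Z_{H^\CC}(e)$), which is exactly the existence-plus-uniqueness architecture you describe. One cosmetic slip: in the Jacobson--Morozov step the element $h$ with $[h,e]=2e$ is produced not by Engel's theorem but by invariance of the Killing form, which shows that $e$ is orthogonal to $\liez_\lieg(e)$ and hence lies in $\im(\ad e)$; this does not affect the structure of your argument.
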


Statement (1) is a real version of a refinement of the
Jacobson--Morozov theorem, proved in \cite{Kostant}. For (2)
see \cite[Prop 4, 38]{KR}

We say that an  $\liesl_2$-triple in $\lieg$ is a
Kostant--Sekiguchi triple if $\theta(e)=-f$, and hence $\theta(x)=-x$.
A normal $\liesl_2$-triple in $\lieg^\C$ is called
Kostant--Sekiguchi triple if $f=\sigma(e)$
where  $\sigma$ is  the conjugation  of $\lieg^\C$
defining $\lieg$.

One has the following (see \cite{sekiguchi}).

\begin{proposition}\label{KR-KS}
(1) Every ${\liesl}_2$-triple in $\lieg$  is conjugate under
$G$ to a Kostant--Sekiguchi triple in $\lieg$. Two
Kostant--Sekiguchi triples are $G$-conjugate
if and only if they are conjugate under $H$.

(2) Every normal 
triple in $\lieg^\C$ is conjugate under $H^\C$ to a
Kostant--Sekiguchi triple. Two Kostant--Sekiguchi triples are
conjugate under $H^\C$  to the same  Kostant--Rallis triple  if
and only if they are conjugate under $H$.
\end{proposition}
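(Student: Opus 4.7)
My plan for proving Proposition \ref{KR-KS} (the Kostant--Sekiguchi part) is to treat (1) and (2) in parallel, using in each case a Cayley-type existence argument followed by a polar-decomposition rigidity argument.

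For the existence half of (1), I would start with an arbitrary $\liesl_2$-triple $(x,e,f)$ in $\lieg$ and let $\phi:\liesl_2(\R)\to\lieg$ be the corresponding embedding, with integrated subgroup $S\subset G$. The key point is that $\theta|_S$ need not coincide with the Cartan involution coming from the $\liesl_2$-structure, but by Mostow's theorem on conjugacy of maximal compact subgroups of reductive Lie groups (applied inside $S$, together with a matching of maximal compacts between $S$ and the ambient $G$), one finds $g\in G$ such that $\theta(gSg^{-1})=gSg^{-1}$ and $\theta$ restricts on $\Ad(g)\lies$ to the standard Cartan involution $H\mapsto -H$, $E\mapsto -F$, $F\mapsto -E$ transported via $\Ad(g)\circ\phi$. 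This is exactly the KS condition $\theta(e')=-f'$ for $(x',e',f'):=\Ad(g)(x,e,f)$.

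For the uniqueness half of (1), suppose $(x,e,f)$ and $(x',e',f')=\Ad(g)(x,e,f)$ are both KS triples. Writing the polar decomposition $g=k\exp(Y)$ with $k\in H$ and $Y\in\liem$, it suffices to handle the case $k=1$. Applying $\theta$ to $\Ad(\exp Y)(x,e,f)$ and using $\theta\circ\Ad(\exp Y)=\Ad(\exp(-Y))\circ\theta$ together with the KS conditions on both sides forces $\Ad(\exp 2Y)$ to fix the triple. Thus $Y$ lies in the $\theta$-stable centralizer $\lier:=\lieg^{(x,e,f)}$, intersected with $\liem$. Because $Z_G((x,e,f))$ is reductive with Cartan decomposition induced from that of $\lieg$, the element $\exp(Y)$ itself lies in the stabilizer of the triple, and the equation $\Ad(g)(x,e,f)=(x,e,f)$ means that $g$ can be replaced by $k\in H$.

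Part (2) is proved by the same two-step scheme with the dictionary: replace $G$ by $H^\C$, the Cartan involution $\theta$ by the real-form conjugation $\sigma$ of $\liegc$, the polar decomposition $G=H\exp\liem$ by $H^\C=H\exp(\imag\lieh)$, and $\liesl_2$-triples in $\lieg$ by normal triples in $\liegc$. A normal triple integrates to a holomorphic homomorphism whose image is an $\liesl_2$-subgroup of $H^\C$ acting irreducibly on a subspace of $\liemc$; a Mostow-type argument inside this $SL_2$-image produces a conjugate triple with $f=\sigma(e)$, and the polar-decomposition rigidity argument is formally identical.

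The main obstacle, in both parts, is the rigidity step: showing that the noncompact factor $\exp(Y)$ (respectively $\exp(\imag Z)$) coming from the polar decomposition cannot produce a genuine extra identification between KS triples. The crux is the $\theta$- (resp.\ $\sigma$-)stability of the centralizer of the triple and the injectivity of $\exp$ on its $\liem$- (resp.\ $\imag\lieh$-)part; this is exactly where the reductive hypothesis on $(G,H,\theta,B)$ in Appendix \ref{reductive}, ensuring a well-behaved polar decomposition even when $G$ is disconnected, is used decisively.
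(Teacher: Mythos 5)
First, a point of reference: the paper does not prove this proposition at all --- it simply cites Sekiguchi \cite{sekiguchi} --- so your proposal has to be judged on its own merits rather than against an argument in the text. Judged that way, part (1) and both rigidity steps are essentially correct. The existence step in (1) is the standard route: by the Karpelevich--Mostow theorem the span $\lies$ of the triple can be conjugated to a $\theta$-stable subalgebra, and then, since any two Cartan involutions of $\liesl_2(\setR)$ are conjugate by an inner automorphism, one adjusts inside $\lies$ so that $\theta$ acts standardly on the triple. Your rigidity computation is also right: if $(x,e,f)$ and $\Ad(\exp Y)(x,e,f)$ are both KS triples with $Y\in\liem$, then $\theta\circ\Ad(\exp Y)\circ\theta=\Ad(\exp(-Y))$ forces $\Ad(\exp 2Y)$ to fix the triple; the pointwise centralizer of the triple equals the centralizer of its span, which is $\theta$-stable and reductive with induced Cartan decomposition, so uniqueness of the polar decomposition of $G$ puts $Y$ in that centralizer, and the conjugating element can be replaced by its $H$-part. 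The same computation with $\sigma$ in place of $\theta$ and $H^\C=H\exp(\imag\lieh)$ in place of $G=H\exp\liem$ disposes of the rigidity half of (2).

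The genuine gap is the existence half of (2). You assert that a normal triple integrates to a homomorphism "whose image is an $\liesl_2$-subgroup of $H^\C$" and propose to run the Mostow argument inside that image. But the span $\lies$ of a normal triple is \emph{not} contained in $\liehc$: by definition $e,f\in\liemc$ and $\liemc\cap\liehc=0$, so the integrated $\SL_2(\C)$ sits in $G^\C$ transversally to $H^\C$. Conjugating by elements of this $\SL_2$ is therefore (a) not conjugation under $H^\C$, which is what the proposition demands, and (b) not normality-preserving: e.g.\ $\Ad(\exp(te))$ sends $x\mapsto x-2te$, whose neutral element no longer lies in $\liehc$, so generic inner automorphisms of $\lies$ destroy the splitting $\lies=(\lies\cap\liehc)\oplus(\lies\cap\liemc)$. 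What the argument actually requires is a \emph{relative} Mostow statement adapted to the symmetric pair: writing $\theta$ also for the grading involution of $\liegc$ ($+1$ on $\liehc$, $-1$ on $\liemc$), which does preserve $\lies$ when the triple is normal, one must (i) construct a compact conjugation of $\liegc$ commuting with $\theta$, preserving $\lies$, and acting standardly on the triple (on $\lies$ one can take the conjugation fixing $\mathrm{span}_\setR(\imag x,\,e-f,\,\imag(e+f))$, but it must then be extended to $\liegc$ compatibly with $\theta$), and (ii) invoke the fact that two compact conjugations of $\liegc$ commuting with $\theta$ are conjugate by an element of the $\theta$-fixed subgroup, i.e.\ essentially by $H^\C$. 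Neither ingredient follows from the plain conjugacy of maximal compact subgroups that suffices in part (1); this graded refinement is precisely the content of Sekiguchi's lemma that the paper is quoting, so as it stands your proof of (2) has a hole exactly where the symmetric-pair structure enters.
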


Propositions \ref{nilpotent-orbits} and \ref{KR-KS} can be
combined, together with a linear transformation sometimes
called Cayley transform (see e.g. \cite[p. 579]{DJ})  to obtain
the Kostant--Sekiguchi correspondence:

\begin{proposition} There is a one-to-one correspondence
(see \cite{sekiguchi,Ver95}).
$$
\NNN(\lieg)/G \longleftrightarrow \NNN(\liem^\C)/H^\C.
$$
\end{proposition}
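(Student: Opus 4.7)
The strategy is to chain together the four bijections supplied by the preceding two propositions, using a Cayley transform to identify the two flavours of Kostant--Sekiguchi triples. Explicitly, Proposition \ref{nilpotent-orbits}(1) identifies $\NNN(\lieg)/G$ with the set of $G$-conjugacy classes of $\liesl_2$-triples in $\lieg$, and Proposition \ref{KR-KS}(1) shows that every such class has a representative which is Kostant--Sekiguchi (i.e.\ $\theta(e)=-f$) and that two Kostant--Sekiguchi triples are $G$-conjugate precisely when they are $H$-conjugate. Hence
\[ \NNN(\lieg)/G \;\longleftrightarrow\; \{\text{KS triples in }\lieg\}/H. \]
Symmetrically, Proposition \ref{nilpotent-orbits}(2) identifies $\NNN(\liem^\C)/H^\C$ with the set of $H^\C$-conjugacy classes of normal $\liesl_2$-triples in $\lieg^\C$, and Proposition \ref{KR-KS}(2) reduces these to $H$-conjugacy classes of Kostant--Sekiguchi triples in $\lieg^\C$ (those normal triples with $f=\sigma(e)$):
\[ \NNN(\liem^\C)/H^\C \;\longleftrightarrow\; \{\text{KS triples in }\lieg^\C\}/H. \]

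First I would fix, once and for all, a Cartan decomposition $\lieg=\lieh\oplus\liem$ and the commuting conjugations $\theta$ (compact on $\lieg^\C$ is $\tau=\theta\sigma$, where $\sigma$ is conjugation with respect to $\lieg$). The heart of the argument is then to exhibit a natural $H$-equivariant bijection
\[ c:\{\text{KS triples in }\lieg\}\;\longrightarrow\;\{\text{KS triples in }\lieg^\C\}. \]
The classical Cayley transform does the job: given $(x,e,f)$ in $\lieg$ with $\theta(e)=-f$, set
\[ x_c = \imag(e-f),\qquad e_c = \tfrac12\bigl(e+f + \imag x\bigr),\qquad f_c=\tfrac12\bigl(e+f-\imag x\bigr). \]
I would verify (i) $(x_c,e_c,f_c)$ is again an $\liesl_2$-triple, (ii) $x_c\in\lieh^\C$ and $e_c,f_c\in\liem^\C$ (so it is normal), (iii) $\sigma(e_c)=f_c$ (so it is Kostant--Sekiguchi in the complex sense), and (iv) the inverse transform is the analogous formula, giving a bijection. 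The key identities use $\theta(e)=-f$, $\theta(x)=-x$, and the fact that $\sigma$ and $\theta$ commute on $\lieg^\C$. Equivariance under $H$ is automatic because the formulas are polynomial in the triple with $H$-invariant coefficients, and $H$ fixes $\imag$, $\theta$ and $\sigma$.

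Concatenating the four bijections produces the desired Kostant--Sekiguchi correspondence
\[ \NNN(\lieg)/G \;\longleftrightarrow\; \NNN(\liem^\C)/H^\C. \]
The only real obstacle is the verification that the Cayley transform $c$ is well defined on $H$-conjugacy classes and lands in the Kostant--Sekiguchi triples of $\lieg^\C$; everything else is a formal chaining of the propositions already proved. A minor point to check is the behaviour on the zero orbit, which corresponds on both sides to the zero element and is compatible with the formulas above by convention.
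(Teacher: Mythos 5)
Your proposal follows exactly the paper's route: the paper obtains this proposition by combining Proposition \ref{nilpotent-orbits} (nilpotent orbits $\leftrightarrow$ conjugacy classes of $\liesl_2$-triples, resp.\ normal triples) with Proposition \ref{KR-KS} (reduction to Kostant--Sekiguchi triples up to $H$-conjugacy on both sides), glued by the Cayley transform, citing \cite{DJ} for the latter. Your explicit formulas $x_c=\imag(e-f)$, $e_c=\tfrac12(e+f+\imag x)$, $f_c=\tfrac12(e+f-\imag x)$ and the verifications (bracket relations, normality, $\sigma(e_c)=f_c$, $H$-equivariance) are correct and simply spell out the transform the paper leaves to the reference, so this is the same argument in fuller detail.
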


A similar correspondence can be established for orbits of arbitrary
elements (see \cite{bielawski,biquard-note}).

\subsection{Conjugacy classes of a real reductive Lie group}

Now let $(G,H,\theta,B)$ be a reductive group as defined in
Appendix \ref{reductive}. We can also give in  this case a
description of conjugacy classes of $G$.
A {\bf Cartan subgroup} of $G$ is defined as the centralizer in
$G$ of a Cartan subalgebra of $\lieg$. When $G$ is non-compact,
it is no longer true that every element of $G$ lies in a Cartan
subgroup. For example, for $G=\SL_2\RR$, the element
$\begin{pmatrix} 1 & 1 \\ 0 & 1
\end{pmatrix}$ does not lie in any Cartan subgroup (see
\cite{knapp}, p. 487).

We know (see \cite{knapp}) that any Cartan subalgebra is
conjugate via $\Int\lieg$ to a $\theta$-invariant Cartan
subalgebra, and that there are only finitely many conjugacy
classes of Cartan subalgebras. Consequently any Cartan subgroup
of $G$ is conjugate via $G$ to a $\Theta$-stable Cartan
subgroup, where $\Theta$ is given in (\ref{global-cartan}), and
there are only finitely many conjugacy classes of Cartan
subgroups. Moreover, a $\Theta$-invariant Cartan subgroup  is
reductive.

Cartan subgroups of a  non-compact real reductive Lie group can be
non-connected even if the group $G$ is connected. This already happens
for $\SL_2\RR$ (see \cite{knapp}). In the following we will see how alcoves
can be useful to deal with elements of a
Cartan subgroup  that are not in
the identity component.

\begin{proposition}
Let $T'$ be a $\Theta$-invariant Cartan subgroup. Every  element $t\in T'$
is conjugate to an element of the form  $\exp(\alpha)\exp(s)$ where
$\alpha\in \bar \eA$ and  $s\in\liem$, such that
$\exp(\alpha)$ and $\exp(s)$ commute.

\end{proposition}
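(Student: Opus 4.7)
The plan is to combine the global Cartan decomposition $G = H\cdot\exp(\liem)$ with the $\Theta$-invariance of $T'$, and then exploit the fact that $\bar\eA$ parametrizes conjugacy classes in a maximal torus of $H$.

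First, I would write $t = h\exp(s)$ with $h\in H$, $s\in\liem$ by the global Cartan decomposition. The identity $\Theta(t) = h\exp(-s)$ combined with $\Theta$-invariance of $T'$ gives $\Theta(t)\in T'$, and hence $t\,\Theta(t)^{-1} = h\exp(2s)h^{-1} = \exp(2\Ad(h)s)$ lies in $T'$. Since $T'$ is the centralizer of its Cartan subalgebra $\lieh'$, this element centralizes $\lieh'$, i.e.\ $e^{2\ad(\Ad(h)s)}$ acts as the identity on $\lieh'$. Because $\Ad(h)s\in\liem$ and the operator $\ad(X)$ is symmetric with respect to $B_\theta(Y,Z):=-B(Y,\theta Z)$ for every $X\in\liem$, the operator $\ad(\Ad(h)s)$ has real eigenvalues; hence $\ad(\Ad(h)s)|_{\lieh'} = 0$, and self-centralization of $\lieh'$ forces $\Ad(h)s\in\lieh'\cap\liem$.

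Second, this conclusion is equivalent to $u := h\exp(s)h^{-1} = \exp(\Ad(h)s)\in T'$. Since $h = u^{-1}t$ and both $u,t\in T'$, we deduce $h\in T'\cap H$; then $\exp(s) = h^{-1}t\in T'$ as well, and because $\exp$ is a diffeomorphism from $\liem$ onto $\exp(\liem)$ with $T'\cap\exp(\liem) = \exp(\lieh'\cap\liem)$, we obtain $s\in\lieh'\cap\liem$. In particular, $h$ and $\exp(s)$ both lie in the abelian group $T'$, and therefore commute.

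Third, conjugate $h$ into a fixed maximal torus $T\subset H$: since $H$ is compact and connected, some $g_1\in H$ satisfies $g_1^{-1}hg_1\in T$; replacing $t$ by $g_1^{-1}tg_1$, I may assume $h\in T$, while $s\in\liem$ still commutes with $h$. Choose $\alpha_0\in\liet$ with $\exp(\alpha_0)=h$, and use that $\bar\eA$ is a fundamental domain for the action of $W_{\aff}=\Lambda_{\cochar}\rtimes W$ on $\liet$: there exist $w\in W$, realized by $\tilde w\in N_H(T)$, and $\alpha\in\bar\eA$ with $\Ad(w)\alpha_0\equiv\alpha\pmod{\Lambda_{\cochar}}$. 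Setting $s'' = \Ad(\tilde w)s\in\liem$, the element $\tilde w\,t\,\tilde w^{-1}$ equals $\exp(\alpha)\exp(s'')$ with $\alpha\in\bar\eA$, and the two factors still commute since conjugation preserves the commutation relation.

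The main obstacle is the first two steps, where one must pass from the information that the \emph{product} $h\exp(s)$ lies in $T'$ to the conclusion that \emph{each factor} lies in $T'$ individually. This crucially uses $\Theta$-invariance of $T'$ together with the real-spectrum property of $\ad(s)$ for $s\in\liem$; both ingredients are standard but their combination is what makes the argument work. Once the Cartan decomposition of $t$ is aligned with $T'$, the final alcove step is a routine application of the theory of conjugacy classes in a compact connected Lie group.
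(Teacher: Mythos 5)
Your proof is correct, and its skeleton is the same as the paper's: write $t=h\exp(s)$ with both factors eventually shown to lie in $T'$, conjugate the compact factor into $\exp(\bar\eA)$ using connectedness of $H$ and the affine Weyl group $W_{\aff}=\Lambda_{\cochar}\rtimes W$, and observe that commutation survives conjugation. Where you genuinely differ is in how the first step is obtained. The paper gets $t=h\exp(s')$ with $h\in T'\cap H$ and $s'\in\liet'\cap\liem$ in one line, ``by the reductivity and $\Theta$-invariance of $T'$'' --- that is, by citing the structure theory of Knapp's class, according to which a $\Theta$-stable reductive subgroup inherits a global Cartan decomposition compatible with that of $G$. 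You instead prove this compatibility by hand: $\Theta$-invariance gives $t\Theta(t)^{-1}=\exp(2\Ad(h)s)\in T'$; since $\ad(\Ad(h)s)$ is symmetric for the positive definite form $B_\theta$ (hence diagonalizable with real spectrum) and $e^{2\ad(\Ad(h)s)}$ fixes the Cartan subalgebra $\liet'$ pointwise, you conclude $[\Ad(h)s,\liet']=0$, and self-centralization of $\liet'$ gives $\Ad(h)s\in\liet'\cap\liem$, from which each factor of $t$ separately lies in $T'$. This is a nice self-contained derivation of precisely the fact the paper invokes; it costs length but makes explicit the crux you correctly identify (passing from membership of the product to membership of the factors), whereas the paper's citation keeps the proof short at the price of hiding that point in the general theory of $\Theta$-stable reductive subgroups.

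Two minor remarks. First, your closing appeal to ``the abelian group $T'$'' invokes an uncited fact (abelianness of Cartan subgroups is a genuine theorem, valid in the Harish--Chandra class); it is also unnecessary, since you have already shown $h\in Z_G(\liet')$ and $s\in\liet'$, so $\Ad(h)s=s$ and the two factors commute --- which is in effect the argument the paper uses as well. Second, you assume $H$ connected in order to conjugate $h$ into $T$; the paper's proof rests on the same implicit assumption (its alcove results are stated for connected compact groups), so this is consistent with the intended setting, but it is worth noting that the hypothesis is not explicit in the statement.
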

\begin{proof}

Let $t\in T'$. Then by the reductivity and $\Theta$-invariance of $T'$ we have
that $t=h\exp(s')$ with $h\in T'\cap H$ and  $s'\in \liet'\cap\liem$.
We can extend $\liet'$ to a Cartan subalgebra
$\liet$ of $\lieh$ with corresponding Cartan subgroup $T$, and choose
an alcove $\eA$ containing $0$ such that $h$ is conjugate to (via an
element of the Weyl  group of $T$) to an element of the form $\exp(\alpha)$
with $\alpha\in \bar \eA$. So $h=k^{-1}\exp(\alpha)k$ with $k\in H$.
Then $ktk^{-1}=\exp(\alpha)k\exp(s')k^{-1}$. The element $k\exp(s')k^{-1}$ is
in $\exp(\liem)$, and can thus be written $k\exp(s')k^{-1}=\exp(s)$ with
$s\in\liem$.
Clearly, since $h$ and $\exp(s')$  (both are elements in $T'$) we have that
$\exp(\alpha)$ and $\exp(s)$ commute.

\end{proof}
For the correspondences proved in this paper
we will be considering conjugacy classes of $G$ in which one can find a representative of the form
$$
g=g_eg_hg_u
$$
so that all the factors commute two by two,  with
\begin{itemize}

\item
$g_e=\exp(2\imag \pi \alpha)$, with $\alpha\in \imag \bar\eA$ ({\bf elliptic}
  element)

\item
$g_h=\exp(s-\tau(s))$ with $s\in \liea$ ({\bf hyperbolic} element)

\item $g_u=\exp (n)$, where $n\in \liem$ is a nilpotent element
of the form $i(Y-X-H)$ where $(H,X,Y)$ is an appropriate $\liesl_2$-triple
 ({\bf unipotent} element).

\end{itemize}

The decomposition $g=g_eg_hg_u$ is the {\bf multiplicative
Jordan decomposition}, which is proved in Helgason
\cite{helgason} for $\GL_n\R$ and  in Eberlein \cite{eberlein}
for the connected component of the isometry group of a
symmetric space of non-compact type, hence for the connected
component of the adjoint group.

\section{Parabolic subgroups and relative degree}
\label{sec:parabolic-subgroups}

In this section we recall some basics on parabolic subgroups and define
precisely the relative degree of two parabolic subgroups.

\subsection{Parabolic subgroups}
\label{sec:parabolic-subgroups-1}

Let $\Sigma=H\backslash G$ a symmetric space of non compact
type. (The action is taken to be a right action, because this
fits better with the way symmetric spaces arise in K\"ahler
quotients.) Let $\glie=\hlie \oplus \mlie$ be the Cartan
decomposition, and $\alie \subset \mlie$ a maximal abelian
subalgebra (the dimension of $\alie$ is the rank of $\Sigma$).
Let $\Phi\subset \alie^*$ the roots of $\glie$, so that
$\glie=\glie_0\oplus\oplus_{\lambda\in \Phi}\glie_\lambda$,
where $\glie_0$ is the centralizer of $\alie$.

Choose a positive Weyl chamber $\alie^+ \subset \alie$, and let $\Phi^\pm \subset\Phi$
(resp. $\Delta\subset\Phi$) be the set of positive/negative roots (resp. simple roots).  For
$I\subset \Delta$, denote $\Phi^I\subset \Phi$ the set of roots which are linear combinations
of elements of $I$, then we define a standard parabolic subalgebra
$$ \plie_I = \glie_0 \oplus \sum_{\lambda\in \Phi^I\cup \Phi^-}\glie_\lambda, $$
and $P_I\subset G$ the corresponding subgroup. Any parabolic subalgebra of
$\glie$ is conjugate to one of the standard parabolic subalgebras.

Given an element $s\in \alie^+$, one picks a standard parabolic subgroup
$P_s$ in the following way. When $t\to+\infty$, the geodesic $t\mapsto* e^{ts}$ in $\Sigma$
(where $*$ is the base point, fixed by $H$), hits the visual boundary
$\partial_\infty\Sigma$ in a point, whose stabilizer in $G$ is the parabolic group
$P_s$. It follows that $P_s$ and its Lie algebra $\plie_s$ are defined by
\begin{align*}
  P_s &= \{ g\in G, e^{ts}ge^{-ts} \text{ is bounded as }t\to\infty \},\\
  \plie_s &= \{ x\in \glie, \Ad(e^{ts})x \text{ is bounded as }t\to\infty \}.
\end{align*}
It is immediate that
$$ \plie_s = \sum_{\lambda\in R, \; \lambda(s)\leq 0} \glie_\lambda = \plie_I $$
for $I=\{\lambda\in \Delta, \lambda(s)= 0\}$. The element $s$ also defines a Levi subgroup
$L_s\subset P_s$ and a Levi subalgebra $\llie_s\subset \plie_s$ by
$$
  L_s = \{ g\in G, \Ad(g)(s)=s \},\quad \llie_s = \{ x\in \glie, [s,x]=0 \}.
$$

A character $\chi$ of $\plie_s$ is given by an element in the dual of the Lie algebra of the centre of $L_s$. Via the invariant metric, this element defines an
element $s_\chi$ in the Lie algebra of the centre of $L_s$ and hence in $i\hlie$.When $\plie_s\subset  \plie_{s_\chi}$ we say that $\chi$ is {\bf antidominant}. When $\plie_s =  \plie_{s_\chi}$ we say that  $\chi$ is {\bf strictly antidominant}
The mapping $\chi_s:\plie_s\to\setR$ defined by
$$ \chi_s(x) = \langle s,x\rangle $$
is hence a strictly antidominant character of $\plie_s$.

\subsection{Relative degree}
\label{sec:relative-degree}

We now define a function which is important in the paper, since it
calculates the contribution to the parabolic degree at the
punctures. The setting is the following.

Let $\cO_H\subset \mlie$ be an $H$-orbit in $\mlie$. As is well
known, $\cO_H$ is also a $G$-homogeneous space. This can be
seen in the following way: given $s\in \cO_H$, one can consider
$\eta(s)=\lim_{t\to+\infty}*e^{ts}\in \partial_\infty\Sigma$.
It turns out that the image of $\cO_H$ under $\eta$ is a
$G$-orbit in $\partial_\infty\Sigma$. Of course the stabilizer
of $\eta(s)$ is the parabolic group $P_s$ defined above, so one
gets an identification
$$ \eta : \cO_H=H/(P_s\cap H) \subset \mlie \longrightarrow P_s\backslash G \subset \partial_\infty\Sigma. $$
The action of $g\in G$ on $\cO_H$ will be denoted by $s \cdot g$; if one
decomposes $g=ph$ with $h\in H$ and $p\in P$, then $s\cdot g=s\cdot h=\Ad(h^{-1})s$.

Now take another element $\sigma\in \mlie$. As we shall see below in the
proof of the proposition, the function
$$ t \mapsto \langle s\cdot e^{-t\sigma}, \sigma\rangle $$
is a nonincreasing function of $t$, so we can define a function
$$ \mu_s:\mlie \longrightarrow \setR, \quad \mu_s(\sigma) = \lim_{t\to+\infty} \langle s\cdot e^{-t\sigma},\sigma\rangle.$$
This function is actually (up to a normalization) a function defined
on $\partial_\infty\Sigma$, as follows from the following proposition.

\begin{proposition}
  Suppose $s$ and $\sigma$ normalized so that $|s| = |\sigma| = 1$. Then
  one has $$\mu_s(\sigma) = \cos \angle_{\mathrm{Tits}}(\eta(\sigma),\eta(s)),$$ where
  $\angle_{\mathrm{Tits}}$ is the Tits distance on $\partial_\infty\Sigma$. In particular,
  one has the reciprocity $\mu_\sigma(s)=\mu_s(\sigma)$.
\end{proposition}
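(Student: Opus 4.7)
The plan is to identify $f(t) := \langle s \cdot e^{-t\sigma}, \sigma\rangle$ with minus the derivative of a Busemann function along a geodesic, and then read off both the asserted monotonicity and the Tits-distance interpretation from standard CAT(0) facts.

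First I would check, using the $G$-equivariance of $\eta : \cO_H \to \partial_\infty \Sigma$ and the $G$-invariance of the metric on $\Sigma$, that the unit tangent vector at the point $x_t = *\,e^{t\sigma}$ pointing along the geodesic ray from $x_t$ to $\eta(s)$ is, pulled back to $T_*\Sigma = \mlie$ by the right action of $e^{-t\sigma}$, precisely the vector $s \cdot e^{-t\sigma}$. Indeed, translation by $e^{t\sigma}$ sends the pair $(*,\eta(s \cdot e^{-t\sigma}))$ to $(x_t,\eta(s))$, and by construction the unit tangent at $*$ to the ray going to $\eta(s')$ is $s' \in \cO_H \subset \mlie$. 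Consequently $f(t)$ is the cosine of the Riemannian angle at $x_t$ between $\dot\gamma_\sigma(t)$ and the ray from $x_t$ to $\eta(s)$.

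Next I would invoke the Busemann function $b_{\eta(s)}$ associated to the unit-speed ray $\gamma_s$. On a symmetric space of non-compact type it is smooth, and its gradient at $x$ is the negative of the unit tangent at $x$ to the ray from $x$ to $\eta(s)$, so the previous step identifies
\begin{equation*}
f(t) \;=\; -\,\frac{d}{dt}\, b_{\eta(s)}(\gamma_\sigma(t)).
\end{equation*}
Since $b_{\eta(s)}$ is convex on the CAT(0) space $\Sigma$ and $1$-Lipschitz, its restriction to the unit-speed geodesic $\gamma_\sigma$ is a convex function of $t$ with derivative valued in $[-1,1]$. This simultaneously gives the monotonicity claimed in the excerpt (the derivative of a convex function is nondecreasing, so $f$ is nonincreasing) and the existence of $\mu_s(\sigma) = \lim_{t\to\infty} f(t)$.

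The final step is the identification of the limit with $\cos\angle_{\mathrm{Tits}}$. For this I would use the standard asymptotic formula
\begin{equation*}
\lim_{t \to \infty}\; \frac{b_{\eta(s)}(\gamma_\sigma(t))}{t} \;=\; -\cos\angle_{\mathrm{Tits}}(\eta(s),\eta(\sigma))
\end{equation*}
valid on any CAT(0) space between unit-speed geodesic rays; this is essentially a reformulation of the definition of the Tits angle via the asymptotic cosine law. For a convex $C^1$ function with bounded derivative, the asymptotic slope coincides with the limit of the derivative, so $\mu_s(\sigma) = \cos\angle_{\mathrm{Tits}}(\eta(s),\eta(\sigma))$. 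Reciprocity $\mu_s(\sigma) = \mu_\sigma(s)$ then follows immediately from the symmetry of the Tits distance. The only delicate point I foresee is the careful bookkeeping in the first step of the $G$-action on $\cO_H$, the identification $T_*\Sigma = \mlie$, and the sign conventions; once these are straight, everything reduces to the general CAT(0)/Busemann function formalism.
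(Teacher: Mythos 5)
Your proof is correct, and its first half coincides in substance with the paper's: the paper obtains the same identification of $\langle s\cdot e^{-t\sigma},\sigma\rangle$ with the cosine of the angle at $x_t=*e^{t\sigma}$ between the rays to $\eta(s)$ and $\eta(\sigma)$ by decomposing $e^{-t\sigma}=p_th_t$ with $p_t\in P_s$, $h_t\in H$, and checking that $u\mapsto *e^{u\Ad(h_t^{-1})s}e^{t\sigma}=*e^{us}p_t^{-1}$ stays at bounded distance from $*e^{us}$ --- which is exactly the verification hidden inside the $G$-equivariance of $\eta$ that you invoke, so your bookkeeping via translation by $e^{t\sigma}$ is a repackaging of the same computation. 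Where you genuinely diverge is in the second half: the paper simply cites as well known that the angle subtended at $\gamma_\sigma(t)=*e^{t\sigma}$ by $\eta(s)$ and $\eta(\sigma)$ is nondecreasing in $t$ and converges to the Tits angle, whereas you derive both statements from the Busemann function $b_{\eta(s)}$ --- monotonicity from convexity of $b_{\eta(s)}\circ\gamma_\sigma$, and the value of the limit from the asymptotic slope formula $b_{\eta(s)}(\gamma_\sigma(t))/t\to-\cos\angle_{\mathrm{Tits}}(\eta(s),\eta(\sigma))$ combined with the elementary fact that a convex $C^1$ function with bounded derivative has asymptotic slope equal to the limit of its derivative. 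Since $-\tfrac{d}{dt}\,b_{\eta(s)}(\gamma_\sigma(t))$ is precisely the cosine of the angle the paper tracks, the two routes encode the same CAT(0) facts; yours is more self-contained, makes the monotonicity mechanism (convexity) explicit, and matches the ``asymptotic slope'' point of view of Kapovich--Leeb--Millson that the paper itself mentions right after the proposition, at the modest cost of invoking smoothness (or at least one-sided differentiability) of Busemann functions, which is harmless on a symmetric space of noncompact type. The paper's version is shorter but delegates more to the reader's background in comparison geometry.
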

\begin{proof}
  Decompose $e^{-t\sigma}=p_th_t$ with $h_t\in H$ and $p_t\in P_s$. Then
  $$ \langle s\cdot e^{-t\sigma}, \sigma\rangle=\langle\Ad(h_t^{-1})s,\sigma\rangle=\cos \angle(\Ad(h_t^{-1})s,\sigma). $$
  On the other hand, the distance
  $$ d(*e^{u\Ad(h_t^{-1})s}e^{t\sigma},*e^{us})=d(*e^{us}p_t^{-1},*e^{us})$$
  is bounded when $u\to+\infty$, so $u\to *e^{u\Ad(h_t)s}e^{t\sigma}$ is the geodesic
  emanating from $*e^{t\sigma}$ and going to $\eta(s)$. So the angle between
  the geodesics emanating from $*e^{t\sigma}$ and converging to $\eta(\sigma)$ and
  $\eta(s)$ is the angle between $\Ad(h_t^{-1})s$ and $\sigma$. It is
  well known that this angle is increasing and converges when $t\to\infty$ to
  the Tits distance between $\eta(s)$ and $\eta(\sigma)$, and the proposition
  follows.
\end{proof}

The function $-\mu_\sigma$ is the `asymptotic slope' of \cite{KLM}. In
\cite{M3}, the complex case is studied: if $G=H^\setC$ then the adjoint
orbit $\cO_H$ is a K\"ahler manifold, and the asymptotic slope can be
reinterpreted in terms of maximal weights of the action of $G$ on
$\cO_H$.

We will use this notion to define a relative degree. From the proposition,
$\mu_s(\sigma)$ depends only on giving two pairs $(P,s)$ and $(Q,\sigma)$ of a
parabolic subgroup of $G$ and an antidominant character on the parabolic
subgroup. So we can define the {\bf relative degree} of $(P,s)$ and
$(Q,\sigma)$ as
\begin{equation}
  \label{eq:32}
  \deg\big( (P,s), (Q,\sigma) \big) = \mu_s(\sigma) .
\end{equation}Observe, again from the proposition, that $\deg$ is a symmetric
function of its two arguments.


\begin{thebibliography}{Kro90b}

\bibitem{atiyah-bott}
M.~F. Atiyah and R.~Bott, \emph{The {Y}ang-{M}ills equations over {R}iemann
surfaces}, Philos. Trans. Roy. Soc. London Ser. A \textbf{308} (1982),
523--615.


\bibitem{balaji-biswas-pandey} V. Balaji, I. Biswas,  and Y. Pandey,
\emph{Connections on  parahoric torsors over curves}, Publ.RIMS, Kyoto Univ
{\bf 53} (2017)  551--585.


\bibitem{balaji-seshadri} V. Balaji and C.S. Seshadri,
\emph{Moduli of parahoric $\mathcal G$-torsors on a compact Riemann surface},
Journal of Algebraic Geometry, \textbf{24} (2015), 1--49.

\bibitem{bhosle-ramanathan}
U. Bhosle and A. Ramanathan, \emph{Moduli of Parabolic $G$-bundles on Curves},
 Mathematische Zeitschrift {\bf 202} (1989), 161--180.

\bibitem{bielawski}
R. Bielawski, \emph{Lie groups, Nahm's equations and
hyper-K\"ahler   manifolds}, in: Tschinkel, Yuri (ed.),
Algebraic groups. Proceedings of the
summer school, G\"ottingen, June 27-July 13, 2005.
G\"ottingen: Universit\"atsverlag G\"ottingen.
Universit\"atsdrucke G\"ottingen. Seminare Mathematisches
Institut, 1--17 (2007).

\bibitem{biquard-thesis} O. Biquard,
\emph{Fibr\'es paraboliques stables et connexions singuli\`eres plates}, Bull.
Soc. Math. France {\bf 119} (1991) 231--257.

\bibitem{Biq96b}
\bysame, \emph{Sur les \'equations de {N}ahm et la structure de
  {P}oisson des alg\`ebres de {L}ie semi-simples complexes}, Math. Ann.
  \textbf{304} (1996),  253--276.

\bibitem{Biq97}
\bysame, \emph{Fibr\'es de {H}iggs et connexions int\'egrables: le cas
  logarithmique (diviseur lisse)}, Ann. Sci. \'Ecole Norm. Sup. (4) \textbf{30}
  (1997),  41--96.

\bibitem{biquard-note}
\bysame,  \emph{Extended correspondence of Kostant--Sekiguchi--Vergne},
unpublished, available at \texttt{http://www.math.ens.fr/\textasciitilde{}biquard/eksv2.pdf}.


\bibitem{crm-notes}
O. Biquard, O. Garc\'{\i}a-Prada and I. Mundet i Riera, \emph{An Introduction
to Higgs Bundles}, Lecture Notes of the Second International School on
Geometry and Physics
``Geometric Langlands and Gauge Theory''
Centre de Recerca Matem\`atica,
Bellaterra (Spain),
17--26 March 2010.
Available on
http://citeseerx.ist.psu.edu/viewdoc/download?doi=10.1.1.179.513\&rep=rep1\&type=pdf

\bibitem{BGR}
O. Biquard, O. Garc\'{\i}a-Prada and R. Rubio,
\emph{Higgs bundles, the Toledo invariant and the Cayley correspondence},
Journal of Topology 10 (2017) 795--826.

\bibitem{biswas-gastesi-govindarajan}
I. Biswas, P.A. Gastesi, S. Govindarajan,
\emph{Parabolic Higgs bundles and Teichm\"uller spaces for punctured
surfaces}, Transactions of the AMS \textbf{349} (1997) 1551--1560.

\bibitem{boalch}
P. Boalch, \emph{Riemann--Hilbert for tame complex parahoric connections},
Transform. Groups  \textbf{16}  (2011),   27--50.

\bibitem{boden-hu}
H.U. Boden and Y. Hu, \emph{Variations of moduli of parabolic bundles},
Math. Ann. \textbf{301} (1995) 539--559.

\bibitem{boden-yokogawa} H.U. Boden and K. Yokogawa, \emph{Moduli spaces of
parabolic Higgs bundles and parabolic $K(D)$ pairs over smooth
curves. I}, Internat.\ J. Math. {\bf 7}  (1996) 573--598.

\bibitem{bottacin}
F.  Bottacin, \emph{Symplectic geometry on moduli spaces of stable pairs}, Ann.
 Sci. \'Ecole
Norm. Sup. {\bf 28} (1995) 391--433.


\bibitem{BGG03}
S.B. Bradlow, O. Garc{\'{\i}}a-Prada, and P.B. Gothen.
\emph{Surface group representations and {${\rm U}(p,q)$}-{H}iggs bundles},
 J. Differential Geom., {\bf 64} (2003), 111--170.

\bibitem{BGG06}
\bysame
\emph{Maximal surface group representations in isometry groups of classical
 {H}ermitian symmetric spaces} Geom. Dedicata, {\bf 122} (2006), 185--213.


\bibitem{bradlow-garcia-prada-mundet:2003}
S.~B. Bradlow, O.~Garc{\'{\i}}a-Prada, and I.~Mundet~i Riera,
\emph{Relative
  {H}itchin-{K}obayashi correspondences for principal pairs}, Quart. J. Math.
  \textbf{54} (2003), 171--208.

\bibitem{BD} T. Br\"ocker and T. tom Dieck,
\emph{Representations of Compact Lie Groups}, GTM 98, Springer 1985.

\bibitem{biw}
M. Burger, A. Iozzi and A. Wienhard,
\emph{Surface group representations with maximal Toledo invariant},
Ann. Math. (2) \textbf{172} (2010),  517--566.

\bibitem{CJY} T.C. Collins, A. Jacob and S.-T. Yau,
\emph{Poisson metrics on flat vector bundles over non-compact curves},
Comm. Anal. Geom., to appear.


\bibitem{corlette}
K.~Corlette, \emph{Flat ${G}$-bundles with canonical metrics}, J. Differential
  Geom., \textbf{28} (1988), 361--382.

\bibitem{DJ} D.\v{Z}. {\DJ}okovi\'c, \emph{Proof of a
    Conjecture
    of Kostant}, Trans. Amer. Math. Soc. {\bf 302} (1987),
    577--585.

\bibitem{donaldson-ns}
S.K. Donaldson, \emph{A new proof of a theorem of Narasimhan and Seshadri},
 J. Differential Geom. {\bf 18} (1983), 269--277.

\bibitem{donaldson-twisted}
S.K. Donaldson, \emph{Twisted harmonic maps and the self-duality equations},
Proc. London Math. Soc. (3) \textbf{55} (1987), 127--131.

\bibitem{eberlein}P.B. Eberlein, {\em Geometry of Nonpositively Curved
Manifolds}, Chicago Lectures in Mathematics, 1996.

\bibitem{garcia-prada}
O.~Garc{\'\i}a-Prada,
 {\em Higgs bundles and higher Teichm\"uller spaces}, Handbook on Teichm\"uller theory, to appear.

\bibitem{garcia-prada-gothen-mundet:2009a}
O.~Garc{\'\i}a-Prada, P.~B. Gothen, and I.~Mundet i~Riera,
 { \em The {H}itchin-{K}obayashi correspondence, {H}iggs pairs and surface
 group representations}, version 3: 2012, \texttt{arXiv:0909.4487}.

\bibitem{GGM} \bysame
\emph{Representations of surface groups in real symplectic
groups}, Journal of Topology, {\bf 6} (2013), 64--118.

\bibitem{GGMu} O. Garc\'{\i}a-Prada, P.B. Gothen, and V.  Mu\~noz,
{\em Betti numbers of the moduli space of rank 3  parabolic Higgs bundles},
Memoirs AMS, {\bf 187, No 879} (2007).

\bibitem{GLM}
O.~Garc{\'\i}a-Prada, M. Logares, V. Mu\~{n}oz,
{\em Moduli spaces of $\U(p,q)$-Higgs bundles}, Q. J. Math., {\bf 60} (2009)
183--233.

\bibitem{garcia-prada-oliveira}
O.~Garc{\'\i}a-Prada, and A. Oliveira,
\emph{Connectedness of Higgs bundle moduli for complex reductive Lie groups},
Asian Journal of Mathematics, {\bf 21} (2017) 791--810.


\bibitem{helgason}
S.~Helgason, \emph{Differential geometry, {L}ie groups, and symmetric spaces},
  Mathematics, vol.~80, Academic Press, San Diego, 1998.

\bibitem{heinloth}
J. Heinloth, {\em Uniformization of $G$-bundles}, Math. Ann., {\bf 347},
(2010), 499--528.

\bibitem{hitchin}
N.~J. Hitchin, \emph{The self-duality equations on a {R}iemann surface},
Proc. London Math. Soc., \textbf{55} (1987) 59--126.

\bibitem{hitchin-duke}
 \bysame  \emph{Stable bundles and integrable systems}, Duke Math. J. 54
(1987),  91--114.

\bibitem{hitchin-teichmueller}
\bysame \emph{Lie groups and Teichm\"uller space}, Topology {\bf 31} (1992),
449--473.

\bibitem{knapp}
A. W. Knapp, \emph{Lie Groups beyond an Introduction}, second ed.,
Progress in Mathematics, vol 140, Birkh\"auser Boston Inc., Boston,
MA, 1996.


\bibitem{KLM} M. Kapovich, B. Leeb and J. Millson;
\emph{Convex functions on symmetric spaces, side lengths of polygons and
the stability inequalities for weighted configurations at infinity},
J. Differential Geom. \textbf{81} (2009),  297--354.

\bibitem{kobayashi}
S. Kobayashi, {\em Differential Geometry of Complex Vector Bundles}, Princeton University Press, New Jersey, 1987.


\bibitem{konno} H. Konno, \emph{Construction of the moduli space of stable
parabolic Higgs bundles on a Riemann surface}, J. Math. Soc. Japan \textbf{45}
(1993) 253--276.

\bibitem{Kostant} B. Kostant, \emph{The principal three
    dimensional subgroup and the Betti numbers ofa complex
    simple Lie group}, Amer. J. Maht. \textbf{81} (1959),
    367--371.

\bibitem{KR}
B. Kostant and S. Rallis, \emph{Orbits and representations associated with
symmetric spaces}. Amer. J. Math.  \textbf{93}  (1971), 753--809.

\bibitem{Kro90b}
P.~B. Kronheimer, \emph{{A hyper-K\"ahlerian structure on coadjoint orbits of a
  semisimple complex group}}, J. Lond. Math. Soc. \textbf{42} (1990), 193--208.



\bibitem{Kro90a}
\bysame, \emph{Instantons and the geometry of the nilpotent variety}, J.
  Differ. Geom. \textbf{32} (1990), 473--490.


\bibitem{Log06}
M. Logares, {\em Betti Numbers of parabolic $\U(2,1)$-Higgs bundles moduli
spaces},
Geom. Dedicata {\bf 123} (2006) 187--200.

\bibitem{markman}
E. Markman, {\em Spectral curves and integrable systems}, Compositio Math.
{\bf 93} (1994) 255--290.

\bibitem{mehta-seshadri}
V. Mehta and C.S. Seshadri, {\em Moduli of vector bundles on curves with
parabolic structures}, Math. Ann, {\bf 248}, (1980), 205--239.

\bibitem{mundet:2000}
I.~Mundet i Riera, \emph{A Hitchin--Kobayashi correspondence for
K\"ahler fibrations}, J. Reine Angew. Math. {\bf 528} (2000),
41--80.

\bibitem{M2} \bysame, \emph{A Hilbert-Mumford criterion for
    polystability in K\"ahler geometry}, Trans. Am. Math. Soc. \textbf{362}
  (2010),  5169-5187.

\bibitem{M3} \bysame, \emph{Maximal weights in K\"ahler
  Geometry: Flag manifolds and Tits distance} (with an appendix by
  A. H. W. Schmitt), in: O. Garc\'\i a-Prada et. al. (eds.), Vector bundles
  and complex geometry. Conference on vector bundles in honor of S. Ramanan
  on the occasion of his 70th birthday, Madrid, Spain, June 16--20,
  2008. Providence, RI: American Mathematical Society (AMS). Contemporary
  Mathematics \textbf{522} (2010), 113--129.

\bibitem{ignasi-hitchin70} \bysame, \emph{Parabolic Higgs bundles for real reductive
Lie groups: A very basic introduction}, Geometry and
Physics: A Festschrift in honour of Nigel Hitchin, Oxford University Press,
2018.

\bibitem{MT} I. Mundet i Riera, G. Tian, \emph{A compactification of
the moduli space of twisted holomorphic maps}, Adv. Math. {\bf 222} (2009),  1117-1196.

\bibitem{nakajima} H. Nakajima, Hyper-K\"ahler structures on moduli spaces of
  parabolic Higgs bundles on Riemann surfaces. Moduli of vector bundles
(Sanda, 1994; Kyoto, 1994), 199--208, Lecture Notes in Pure and Appl. Math.
179, Dekker, 1996.

\bibitem{NS}
M.S. Narasimhan, C.S. Seshadri,
\emph{Stable and unitary vector bundles on a compact Riemann surface},
Ann. Math. (2) \textbf{82} (1965), 540-567.

\bibitem{nasatyr-steer} B. Nasatyr and B. Steer, \emph{Orbifold Riemann surfaces
and the Yang--Mills--Higgs equations},  Ann.\ Scuola Norm.\
Sup.\ Pisa Cl.\ Sci.\ {\bf 22} (1995) 595--643.

\bibitem{pappas-rapoport}
G. Pappas, M. Rapoport. Some questions about ${\mathcal G}$-bundles on curves. Algebraic and arithmetic
structures of moduli spaces (Sapporo 2007), 159--171, Adv. Stud. Pure Math., 58, Math.
Soc. Japan, Tokyo, 2010.




\bibitem{ramanathan}
A. Ramanathan, \emph{Stable principal bundles on a compact Riemann surface},
Math. Ann., {\bf 213}, (1975), 129--152.


\bibitem{schmitt}
A. H. W. Schmitt, \textsl{Geometric Invariant Theory and
Decorated Principal Bundles}, Zurich Lectures in Advanced Mathematics,
European Mathematical Society, 2008.

\bibitem{seshadri}
C.S. Seshadri,  \emph{Moduli of vector bundles on curves with parabolic
structures}, Bull. Amer. Math. Soc. {\bf 83} (1977), 124--126.

\bibitem{sekiguchi} J. Sekiguchi, \emph{Remarks on real nilpotent
    orbits of a symmetric pair}, J. Math. Soc. Japan {\bf 39} (1987),
   127--138.

\bibitem{Sim88}
C.T. Simpson, \emph{Constructing variations of {H}odge structure using
  {Y}ang-{M}ills theory and applications to uniformization}, J. Amer. Math.
  Soc. \textbf{1} (1988),  867--918.

\bibitem{Sim90}
\bysame, \emph{Harmonic bundles on noncompact curves}, J. Amer. Math. Soc.
  \textbf{3} (1990),  713--770.

\bibitem{Sim92}
\bysame, \emph{Higgs bundles and local systems}, Inst. Hautes \'Etudes Sci.
  Publ. Math. (1992),  5--95.

\bibitem{singer}
I. M. Singer, \emph{The geometric interpretation of a special connection},
Pacific J. Math. \textbf{9} (1959) 585--590.

\bibitem{teleman-woodward}
C. Teleman and C. Woodward,
\emph{Parabolic bundles, products of conjugacy classes, and
quantum cohomology}, Annales de L'Institut Fourier {\bf 3}
(2003), 713--748.

\bibitem{uhlenbeck-yau}
K. Uhlenbeck, S.T. Yau, \emph{On the existence of Hermitian--Yang--Mills connections in stable vector bundles.} Comm. Pure Appl.
Math. \textbf{39-S} (1986), 257--293.

\bibitem{Ver95}
M. Vergne, \emph{Instantons et correspondance de {K}ostant-{S}ekiguchi},
  C. R. Acad. Sci. Paris S\'er. I Math. \textbf{320} (1995),  901--906.

\bibitem{yokogawa} K. Yokogawa, \emph{Compactification of moduli of parabolic
  sheaves and moduli of parabolic Higgs sheaves}, J. Math. Kyoto Univ.
\textbf{33} (1993) 451--504.


\end{thebibliography}
\end{document}